    \def\re{\textnormal {Re}}
    \def\im{\textnormal {Im}}
    \def\p{{\mathbb P}}
    \def\e{{\mathbb E}}
    \def\r{{\mathbb R}}
    \def\c{{\mathbb C}}
    \def\d{{\textnormal d}}
    \def\i{{\textnormal i}}
\begin{document}

\begingroup

\title*{The theory of scale functions for spectrally\\ negative L\'evy processes}
\author{Alexey Kuznetsov,  Andreas E. Kyprianou and Victor Rivero}
\institute{
Alexey Kuznetsov \at Department of Mathematics and Statistics
York University
4700 Keele Street
Toronto, Ontario
Canada, M3J 1P3 \email{kuznetsov@mathstat.yorku.ca}; homepage: \url{http://www.math.yorku.ca/~akuznets/}
\and
Andreas E. Kyprianou \at Department of Mathematical Sciences, University of Bath, Claverton Down, Bath, BA1 2UU. 
\email{a.kyprianou@bath.ac.uk}; homepage: \url{http://www.maths.bath.ac.uk/~ak257/}
\and Victor Rivero \at Centro de Investigaci{\'o}n en Matem{\'a}ticas A.C.,
Calle Jalisco s/n,
C.P. 36240 Guanajuato, Gto. Mexico \email{rivero@cimat.mx}; homepage: \url{http://www.cimat.mx/~rivero/} 
}


\maketitle

\numberwithin{equation}{chapter} \smartqed
\numberwithin{section}{chapter}

\abstract{The purpose of this review article is to give an up to date account of the theory and application of scale functions for spectrally negative L\'evy processes. Our review also includes the first extensive overview of how to work numerically with scale functions.
Aside from being well acquainted with the general theory of probability, the reader is  assumed to have some elementary knowledge of L\'evy processes, in particular a reasonable understanding of the L\'evy-Khintchine formula and its relationship to the L\'evy-It\^o decomposition. We shall also touch on more general topics such as excursion theory and semi-martingale calculus. However, wherever possible, we shall try to focus on key ideas taking a selective stance on the technical details. For the reader who is less familiar with some of the mathematical theories and techniques which are used at various points in this review, we note that all the necessary technical background can be found in the following texts on L\'evy processes; Bertoin (1996), Sato (1999), Applebaum (2004), 
 Kyprianou (2006) and Doney (2007).
 \\
\noindent\textbf{AMS Subject Classification 2000:}\\
Primary: 60G40, 60G51, 60J45                      \\
Secondary:  91B70
\keywords{Scale functions, Spectrally negative L\'evy processes, excursion theory, fluctuation theory, first passage problem, applied probability, Laplace transform. 
} 
}

\chapter{Motivation}\label{motivation}

\section{Spectrally negative L\'evy processes}

Let us begin with a brief overview of what is to be understood by a spectrally negative L\'evy process.
Suppose that $(\Omega,\mathcal{F},\mathbb{F},P)$ is a filtered
probability space with filtration $\mathbb{F}=\{\mathcal{F}_{t}:t\geq
0\}$ which is naturally enlarged (cf.  Definition 1.3.38 of \cite{Bichteler}). A L\'{e}vy process on this space is  
a strong Markov,
$\mathbb{F}$-adapted process $X=\{X_{t}:t\geq 0\}$ with c\`adl\`ag paths having the properties that $P(X_0=0)=1$ and for each
$0\leq s\leq t$, the increment $X_{t}-X_{s}$ is independent of
$\mathcal{F}_{s}$ and has the same distribution as $X_{t-s}$. In this
sense, it is said that a L\'{e}vy process has stationary independent
increments.

On account of the fact that the process has stationary and independent
increments, it is not too difficult to show that there is a function $\Psi:\mathbb{R}\mapsto \mathbb{C},$ such that
\begin{equation*}
	E\bigl(\E^{\I\theta X_{t}}\bigr) =\E^{-t\Psi(\theta )}, \qquad t\geq 0, \theta\in\mathbb{R},
\end{equation*}
where $E$ denotes expectation with respect to $P$.
 The
L\'{e}vy--Khintchine formula gives the general form of the function
$\Psi$. That is,
\begin{equation}
	\Psi(\theta)=\I\mu \theta +\frac{\sigma^{2}}{2}\,\theta^{2}
	+\int_{(-\infty,\infty)}\bigl(1-\E^{\I\theta x}+\I\theta
	x\,\mathbf{1}_{|x|<1}\bigr)\,\Pi (\D x),
\label{kyprianou-palmowski:LH}
\end{equation}
for every $\theta \in \mathbb{R},$ where $\mu \in \mathbb{R}$, $\sigma
\in\mathbb{R}$ and $\Pi $ is a measure on $\mathbb{R}\backslash \{0\}$ such
that $\int (1\wedge x^{2})\,\Pi (\D x)<\infty $. Often we wish to specify the law of $X$ when issued from $x\in \mathbb{R}$. In that case we write $P_x$, still reserving the notation $P$ for the special case $P_0$. Note in particular that $X$ under $P_x$ has the same law as $x+X$ under $P$. The notation $E_x$ will be used for the obvious expectation operator.

We say that $X$ is {\it spectrally negative} if the measure $\Pi$ is carried by $(-\infty,0),$ that is $\Pi(0,\infty)=0$.  We exclude from the discussion
however the case of monotone paths. Under the present circumstances that means we are excluding
the case of a descending subordinator and the case of a pure positive linear drift. Included in
the discussion however are descending subordinators plus a positive
linear drift  and a Brownian motion with drift. Also included are processes
such as asymmetric $\alpha $-stable processes for $\alpha \in (1,2)$
which have unbounded variation and zero quadratic variation. 
These processes are  by no means representative of the true variety of processes that populate the class of spectrally negative L\'evy processes. 
Indeed in the forthcoming text we shall see many different explicit examples of spectrally negative L\'evy processes.
Moreover, by
adding independent copies of any of the aforementioned, and/or other, spectrally negative
processes together, the resulting process remains within the class of spectrally negative L\'{e}vy
process. Notationally we say  that a L\'evy process $X$ is {\it spectrally positive} when $-X$ is spectrally negative.

Thanks to the fact that there are no positive jumps, it is possible to talk of
the Laplace exponent $\psi(\lambda)$ for a spectrally negative L\'evy process, defined by
\begin{equation}
	E\bigl(\E^{\lambda X_{t}}\bigr)
	=\E^{\psi(\lambda) t},
\label{kyprianou-palmowski:Laplace-functional}
\end{equation}
for at least $\lambda\geq 0$.  In other words, taking into account the analytical extension of the characteristic exponent, we have  $\psi(\lambda) =-\Psi(-\I\lambda)$. 
In particular 
\begin{equation}
\psi(\lambda)  = -\mu \lambda +\frac{\sigma^{2}}{2}\,\lambda^{2}
	+\int_{(-\infty,0)}\bigl(\E^{\lambda x}-1-\lambda
	x\,\mathbf{1}_{|x|<1}\bigr)\,\Pi (\D x).
	\label{laplaceLK}
\end{equation}
 Further, using H\"older's inequality and the fact that $\int_{(-\infty,0)}(1\wedge x^2)\Pi(\D x)<\infty$, it is easy to check that
$\psi$ is strictly convex and tends to infinity as $\lambda$ tends to
infinity. This allows us to define for $q\geq 0$,
\begin{equation*}
	\Phi(q)=\sup \{\lambda \geq 0:\psi(\lambda) = q\},
\end{equation*}
the largest root of the equation $\psi(\lambda) =q$. Note that there exist  two roots when $q=0$ (there is  always a root at zero on account of the fact that $\psi(0)=0$) and
precisely one root when $q>0$. Further, from a straightforward differentiation of (\ref{kyprianou-palmowski:Laplace-functional}), we can identify $\psi'(0^+)
=E(X_{1}) \in [-\infty,\infty)$ which determines the long term behaviour of the process.
Indeed when $\pm\psi'(0+)>0$ we have $\lim_{t\uparrow\infty}X_t =\pm\infty$, that is to say that the process drifts towards $\pm\infty$,  and when $\psi'(0+) = 0$ then $\limsup_{t\uparrow\infty} X_t = - \liminf_{t\uparrow\infty} X_t = \infty$, in other words the process oscillates.

\section{Scale functions and applied probability}

Let us now turn our attention to the definition of  {\it scale functions} and motivate the need for a theory thereof.

\begin{definition}\label{scaledef}\rm 
For a given spectrally negative L\'evy process, $X,$ with Laplace exponent $\psi$, we define a family of functions indexed by $q\geq 0$,
 $W^{(q)}:\mathbb{R}\rightarrow [0,\infty),$ as follows. For each given $q\geq 0,$ we have
 $W^{(q)}(x)=0$ when $x<0,$ and otherwise on $[0,\infty)$, $W^{(q)}$ is the unique right continuous function whose Laplace transform is given by 
 \begin{equation}
 \int_0^\infty \E^{-\beta x}W^{(q)}(x)\D x = \frac{1}{\psi(\beta)  - q}
 \label{LTdefW}
 \end{equation}
 for $\beta>\Phi(q)$. For convenience we shall always write $W$ in place of $W^{(0)}$. Typically we shall refer the functions $W^{(q)}$ as {\it $q$-scale functions}, but we shall also refer to  $W$ as just the {\it scale function}.
\end{definition}
 
\noindent From the above definition alone, it is not clear why one would be interested in such functions. Later on we shall see that scale functions appear in the vast majority of known identities concerning boundary crossing problems and related path decompositions. This in turn has consequences for their use in a number of classical applied probability models which rely heavily on such identities. To give  but one immediate example of a boundary crossing identity which necessitates Definition \ref{scaledef}, consider the following result, the so-called {\it two-sided exit problem}, which has a long history; see for example \cite{zol64, tak67, rog90, ber96,  ber97}. 
\begin{theorem}\label{twosided-up}Define 
\[
\tau^+_a  = \inf\{t>0 : X_t >a\}\text{ and }\tau^-_0 = \inf\{t> 0 : X_t <0\}.
\]
For all $q\geq 0,$ and $x<a$, 
\begin{equation}
E_x(\E^{-q\tau^+_a}\mathbf{1}_{\{\tau^+_a<\tau^-_0\}})  = \frac{W^{(q)}(x)}{W^{(q)}(a)}.
\label{whatsinaname}
\end{equation}
\end{theorem}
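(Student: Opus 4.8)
\noindent\emph{Proof strategy.} The plan is to use the Esscher (exponential) change of measure to reduce \eqref{whatsinaname} to the single clean case ``$q=0$ and $X$ drifts to $+\infty$'', and then to settle that case directly.

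For $c\geq0$, let $P^c_x$ be the measure whose density process with respect to $P_x$ on $\mathcal F_t$ is $\E^{c(X_t-x)-\psi(c)t}$, $t\geq0$; since this is a unit-mean martingale the definition is consistent, and under $P^c_x$ the process $X$ is again spectrally negative with Laplace exponent $\psi_c(\lambda)=\psi(c+\lambda)-\psi(c)$. Writing $W_c$ for its $0$-scale function, a Laplace transform comparison gives $W^{(q)}(x)=\E^{\Phi(q)x}W_{\Phi(q)}(x)$, because $\int_0^\infty\E^{-\beta x}\E^{cx}W_c(x)\,\D x=1/\psi_c(\beta-c)=1/(\psi(\beta)-\psi(c))$ and $\psi(\Phi(q))=q$. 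Now fix $q\geq0$ and take $c=\Phi(q)$, assuming first that $c>0$ (which holds whenever $q>0$, and also when $q=0$ if $X$ drifts to $-\infty$); then $\psi_c'(0+)=\psi'(\Phi(q))>0$, so $X$ drifts to $+\infty$ under $P^c$. Since $X$ has no positive jumps it creeps upward, so $X_{\tau^+_a}=a$ on $\{\tau^+_a<\infty\}$; feeding this into the change-of-measure formula at the stopping time $\tau^+_a$ (with $q=\psi(c)$) yields
\begin{equation*}
E_x\bigl(\E^{-q\tau^+_a}\mathbf{1}_{\{\tau^+_a<\tau^-_0\}}\bigr)=\E^{-c(a-x)}\,P^c_x\bigl(\tau^+_a<\tau^-_0\bigr).
\end{equation*}
Hence, once we have the parameter-$0$ statement for the $P^c$-process, i.e. $P^c_x(\tau^+_a<\tau^-_0)=W_c(x)/W_c(a)$, the right-hand side equals $\E^{-c(a-x)}W_c(x)/W_c(a)=\E^{cx}W_c(x)/(\E^{ca}W_c(a))=W^{(q)}(x)/W^{(q)}(a)$. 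The one remaining case, $q=0$ with $X$ oscillating, I would obtain by applying the case $q=\psi(c)>0$ and letting $c\downarrow0$: the left-hand side of \eqref{whatsinaname} increases to $P_x(\tau^+_a<\tau^-_0)$ by monotone convergence (on $\{\tau^+_a<\tau^-_0\}$ one has $\tau^+_a<\infty$), and the right-hand side tends to $W(x)/W(a)$ by continuity of $q\mapsto W^{(q)}(x)$.

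It thus suffices to treat the case in which $X$ drifts to $+\infty$ and $q=0$; I claim $P_x(\tau^+_a<\tau^-_0)=W(x)/W(a)$ for $0\leq x\leq a$ (the case $x<0$ being trivial). Let $g(x)=P_x(\tau^-_0=\infty)$ for $x\geq0$ and $g(x)=0$ for $x<0$; $g$ is non-decreasing, right-continuous, and $g(\infty)=1$. Because $X$ drifts to $+\infty$, $\tau^+_a<\infty$ a.s., so $\{\tau^-_0=\infty\}\subseteq\{\tau^+_a<\tau^-_0\}$ with $X_{\tau^+_a}=a$ there, and the strong Markov property at $\tau^+_a$ gives $g(x)=P_x(\tau^+_a<\tau^-_0)\,g(a)$, i.e. $P_x(\tau^+_a<\tau^-_0)=g(x)/g(a)$. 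Everything now rests on identifying $g$ with a constant multiple of $W$ --- the step I expect to be the real obstacle. Writing $\underline X_\infty=\inf_{t\geq0}X_t$ under $P_0$, one has $g(x)=P(-\underline X_\infty\leq x)$, hence $\int_0^\infty\E^{-\beta x}g(x)\,\D x=\beta^{-1}E(\E^{\beta\underline X_\infty})$. I would then invoke the Wiener--Hopf factorisation in the form special to spectrally negative processes --- equivalently, the fact that the descending ladder height process is, under a suitable local-time normalisation, a subordinator with Laplace exponent $\psi(\beta)/\beta$ killed at rate $\psi'(0+)$ --- to get $E(\E^{\beta\underline X_\infty})=\psi'(0+)\beta/\psi(\beta)$, so that $\int_0^\infty\E^{-\beta x}g(x)\,\D x=\psi'(0+)/\psi(\beta)$. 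By uniqueness of Laplace transforms and right-continuity, $g=\psi'(0+)W$, and therefore $P_x(\tau^+_a<\tau^-_0)=g(x)/g(a)=W(x)/W(a)$.

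The only genuinely non-trivial input is the evaluation of $E(\E^{\beta\underline X_\infty})$; an alternative route avoiding Wiener--Hopf is excursion theory, decomposing the path at the running supremum $\bar X$ (which, by the creeping property, serves as a local time at the maximum): the excursions of $\bar X-X$ away from $0$ form a Poisson point process, so $P_x(\tau^+_a<\tau^-_0)=\exp\bigl(-\int_x^a n(\bar\epsilon>\ell)\,\D\ell\bigr)$ for the excursion measure $n$ (a ``ruinous'' excursion begun at level $\ell$ of $\bar X$ is one whose depth exceeds $\ell$), and one identifies $\ell\mapsto n(\bar\epsilon>\ell)$ with $(\log W)'$.
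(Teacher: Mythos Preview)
Your proposal is correct and follows essentially the same route as the paper: first establish the case $q=0$, $\psi'(0+)>0$ by identifying $P_x(\tau^-_0=\infty)$ as $\psi'(0+)W(x)$ via the Wiener--Hopf evaluation of $E(\E^{\beta\underline X_\infty})$ and then applying the strong Markov property at $\tau^+_a$; reduce $q>0$ (and $q=0$ with $\psi'(0+)<0$) to this by the Esscher change of measure at level $c=\Phi(q)$ together with $W^{(q)}(x)=\E^{\Phi(q)x}W_{\Phi(q)}(x)$; and finally treat the oscillating case $q=0$, $\psi'(0+)=0$ by letting $q\downarrow0$. Your strong-Markov step is in fact slightly tidier than the paper's, since by using $\{\tau^-_0=\infty\}\subseteq\{\tau^+_a<\tau^-_0\}$ you avoid having to argue separately (via creeping versus jumping across $0$) that the contribution from $\{\tau^+_a>\tau^-_0\}$ vanishes.
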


In fact, it is through this identity that the `scale function' gets its name. Possibly the first reference to this terminology is found in Bertoin \cite{bert-scale}. Indeed (\ref{whatsinaname}) has some mathematical similarities with an analogous identity which holds for a large class of one-dimensional diffusions and which involves so-called scale functions for diffusions; see for example Proposition VII, 3.2 of Revuz and Yor \cite{RY}. In older Soviet-Ukranian literature $W^{(q)}$ is simply referred to as a {\it resolvent}, see for example \cite{Korolyuk}.

In the forthcoming chapters we shall explore in more detail the formal analytical properties of scale functions as well as look at explicit examples and methodology for working numerically with scale functions. However let us first conclude this motivational chapter by giving some definitive examples of how scale functions  make  a non-trivial contribution to a variety of applied probability models.

\bigskip

\noindent {\bf Optimal stopping:}  Suppose that $x,q>0$ and consider the optimal stopping problem 
\[
v(x) = \sup_{\tau} E(\E^{-q\tau + (\overline{X}_\tau \vee x)}),
\]
where the supremum is taken over all, almost surely finite stopping times with respect to $X$ and $\overline{X}_t = \sup_{s\leq t}X_s$. The solution to this problem entails finding an expression for the value function $v(x)$ and, if the supremum is attained by some stopping time $\tau^*$, describing this optimal stopping time quantitatively.

This particular optimal stopping problem was conceived in connection with the pricing and hedging of certain exotic financial derivatives known as {\it Russian options} by Shepp and Shiryaev \cite{sh1, sh2}. Originally formulated in the Black-Scholes setting, i.e. the case that $X$ is a linear Brownian motion, it is natural to look at the solution of this problem in the case that $X$ is replaced by  a L\'evy process. This is of particular pertinence on account of the fact that  modern theories of mathematical finance accommodate for, and even prefer, such a setting. As a first step in this direction, Avram et al. \cite{kyprianou-palmowski:1} considered the case that $X$ is a spectrally negative L\'evy process. These authors found that scale functions played a very natural role in describing the solution $(v,\tau^*)$.  Indeed, it was shown under very mild additional conditions that 
\[
v(x) = \E^x \left(1 + q \int_0^ {x^* - x} W^{(q)}(y) \D y\right)
\]
where, thanks to properties of  scale functions, it is known that the constant  $x^*\in[0,\infty)$ necessarily satisfies
\[
x^* = \inf\{ x\geq 0 : 1+ q \int_0^x W^{(q)}(y )\D y - qW^{(q)}(x)\leq 0\}.
\]
In particular an optimal stopping time can be taken as
\[
\tau^* = \inf\{t>0 : (x\vee\overline{X}_t)  - X_t > x^*\},
\]
which agrees with the original findings of Shepp and Shiryaev \cite{sh1, sh2} in the diffusive case.

\bigskip

\noindent {\bf Ruin theory:} One arm of actuarial mathematics concerns itself with modelling the event of ruin of an insurance firm. 
The surplus wealth of the insurance firm is often modelled as the difference of a linear growth, representing the deterministic collection of premiums, and a compound Poisson subordinator, representing a random sequence of i.i.d. claims. It is moreover quite often assumed that the claim sizes are exponentially distributed. Referred to as  Cram\'er-Lundberg processes, such  models for the surplus belong to the class of spectrally negative L\'evy processes. 

Classical ruin theory concerns the distribution of the, obviously named, {\it  time to ruin}, 
\begin{equation}
\tau^-_0 : = \inf\{t>0: X_t <0\},
\label{firstpassagetime}
\end{equation}
where $X$ is the Cram\'er-Lundberg process. Recent work has focused on more complex additional distributional features of ruin such as the overshoot and undershoot of the surplus at ruin. These quantities correspond to deficit at ruin and the wealth prior to ruin respectively. A large body of literature exists in this direction with many contributions citing as a point of reference the paper  Gerber and Shiu \cite{GS}. For this reason, the study of the joint distribution of the ruin time as well as the overshoot and undershoot at ruin is often referred to as Gerber-Shiu theory. 

It turns out that scale functions again provide a natural tool with which one may give a complete characterisation of the ruin problem in the spirit of Gerber-Shiu theory. Moreover, this can be done for a general spectrally negative L\'evy process rather than just the special case of a Cram\'er-Lundberg process. Having said that, scale functions already serve their purpose in the classical Cram\'er-Lundberg setting. A good case in point is the following result (cf. \cite{biffis-kyprianou}) which even goes a little further than what classical Gerber-Shiu theory demands in that it also incorporates distributional information on the infimum of the surplus prior to ruin. 

\begin{theorem}\label{triple-law}Suppose that $X$ is a spectrally negative L\'evy processes. For $t\geq 0$ define  $\underline{X}_t = \inf_{s\leq t}X_s$.  Let $q,x\geq 0$, $u, v, y\geq 0,$ then
\begin{equation*}
\begin{split}
 &{E_x(\E^{-q\tau^{-}_{0}} ;-X_{\tau^-_0}\in \D u ,\, X_{\tau^-_0- }\in \D v, \, \underline{X}_{\tau^-_0 -}\in \D y)}\\
&=1_{\{0<y<v\wedge x, u>0\}}\E^{-\Phi(q)(v-y)} \{W^{(q)\prime}(x-y) - \Phi(q)W^{(q)}(x-y)\}\Pi(-\D u-v)\D y \D v \\
&\qquad+\frac{\sigma^{2}}{2}\left(W^{(q)\prime}(x)-\Phi(q)W^{(q)}(x)\right)\delta_{(0,0,0)}(\D u,\D v,\D y),
\end{split}
\end{equation*}
where $\delta_{(0,0,0)}(\D u, \D v, \D y)$ is the Dirac delta measure.
\end{theorem}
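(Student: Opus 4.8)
The plan is to strip off the $q$-discounting by an Esscher change of measure, then to split the first passage below zero into a continuous ``creeping'' part and a jump across $0$, extract the jump part from the compensation formula, and finally compute a bivariate potential measure by a last-exit decomposition at the running infimum; that last computation is where the real work lies.

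\textbf{Removing the discounting.} Since $\psi(\Phi(q))=q$, the process $(\E^{\Phi(q)(X_{t}-x)-qt})_{t\geq0}$ is a $P_{x}$-martingale of value $1$ at $0$; let $P^{\Phi(q)}_{x}$ be the associated change of measure on each $\mathcal{F}_{t}$. Under $P^{\Phi(q)}$, $X$ remains spectrally negative, with Laplace exponent $\psi(\cdot+\Phi(q))-q$, L\'evy measure $\E^{\Phi(q)z}\Pi(\D z)$, unchanged Gaussian coefficient $\sigma$, and scale function $W_{\Phi(q)}(x)=\E^{-\Phi(q)x}W^{(q)}(x)$, so that $W^{(q)\prime}(x)-\Phi(q)W^{(q)}(x)=\E^{\Phi(q)x}W'_{\Phi(q)}(x)$; moreover it drifts to $+\infty$ and has $\Phi(0)=0$ under the new measure. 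Optional stopping of the martingale at $\tau^{-}_{0}$ gives, for $A=\{-X_{\tau^{-}_{0}}\in\D u,\,X_{\tau^{-}_{0}-}\in\D v,\,\underline{X}_{\tau^{-}_{0}-}\in\D y\}$,
\begin{equation*}
E_{x}\bigl(\E^{-q\tau^{-}_{0}}\,;\,A\bigr)=\E^{\Phi(q)x}\,E^{\Phi(q)}_{x}\bigl(\E^{-\Phi(q)X_{\tau^{-}_{0}}}\,;\,A\bigr)=\E^{\Phi(q)(x+u)}\,P^{\Phi(q)}_{x}(A),
\end{equation*}
using $-X_{\tau^{-}_{0}}=u$ on $A$. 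Inserting the claimed $q=0$ expression for $P^{\Phi(q)}_{x}(A)$, the exponential factors $\E^{\Phi(q)(x+u)}$, $\E^{-\Phi(q)(x-y)}$ coming from $W'_{\Phi(q)}(x-y)$, and $\E^{-\Phi(q)(u+v)}$ coming from the Esscher-transformed L\'evy measure evaluated at $z=-(u+v)$, collapse to $\E^{-\Phi(q)(v-y)}$ and one recovers exactly the first term of the theorem; the Dirac term follows likewise with $u=0$. (When $q=0$ and $\psi'(0+)<0$ one first performs the same transform at rate $\Phi(0)$.) Hence it remains to prove, for $q=0$, $\Phi(0)=0$, $\psi'(0+)\geq0$,
\begin{equation*}
P_{x}\bigl(-X_{\tau^{-}_{0}}\in\D u,\,X_{\tau^{-}_{0}-}\in\D v,\,\underline{X}_{\tau^{-}_{0}-}\in\D y\bigr)=\mathbf{1}_{\{0<y<v\wedge x,\ u>0\}}\,W'(x-y)\,\Pi(-\D u-v)\,\D y\,\D v+\frac{\sigma^{2}}{2}W'(x)\,\delta_{(0,0,0)}.
\end{equation*}

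\textbf{Creeping versus a jump, and the compensation formula.} On $\{\tau^{-}_{0}<\infty\}$ either $X_{\tau^{-}_{0}}=0$ (which has positive probability only if $\sigma>0$), in which case continuity of $X$ and of $\underline{X}$ at $\tau^{-}_{0}$ forces $X_{\tau^{-}_{0}-}=\underline{X}_{\tau^{-}_{0}-}=0$, so this part of the law is the point mass at $(0,0,0)$ of weight $P_{x}(X_{\tau^{-}_{0}}=0)=\frac{\sigma^{2}}{2}W'(x)$, the classical downward-creeping probability in the present normalisation; or $X_{\tau^{-}_{0}}<0$. For the latter, let $N(\D s,\D z)$ be the Poisson random measure of jumps of $X$ with intensity $\D s\,\Pi(\D z)$, $z<0$. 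Passage below $0$ by a jump occurs at the unique atom $(s,z)$ of $N$ for which $\underline{X}_{s-}\geq0$ and $X_{s-}+z<0$; then $\tau^{-}_{0}=s$, $X_{\tau^{-}_{0}-}=X_{s-}$, $\underline{X}_{\tau^{-}_{0}-}=\underline{X}_{s-}$, $-X_{\tau^{-}_{0}}=-X_{s-}-z$. So for nonnegative measurable $g,h,k$ the compensation formula yields
\begin{equation*}
E_{x}\bigl(g(-X_{\tau^{-}_{0}})h(X_{\tau^{-}_{0}-})k(\underline{X}_{\tau^{-}_{0}-})\,;\,X_{\tau^{-}_{0}}<0\bigr)=\int_{(0,\infty)^{2}}\int_{(0,\infty)}g(u)h(v)k(y)\,\Pi(-\D u-v)\,\mu_{x}(\D v,\D y),
\end{equation*}
where the inner $\Pi(\D z)$-integral has been performed with $z=-(u+v)$, the constraint $X_{s-}+z<0$ becoming $u>0$, and where, using $\{\underline{X}_{s-}\geq0\}=\{s\leq\tau^{-}_{0}\}$,
\begin{equation*}
\mu_{x}(\D v,\D y):=\int_{0}^{\infty}P_{x}\bigl(X_{s}\in\D v,\,\underline{X}_{s}\in\D y,\,s<\tau^{-}_{0}\bigr)\,\D s
\end{equation*}
is the bivariate potential measure of $X$ killed at $\tau^{-}_{0}$. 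Thus the theorem reduces to the claim $\mu_{x}(\D v,\D y)=\mathbf{1}_{\{0<y<v\wedge x\}}W'(x-y)\,\D y\,\D v$ on $(0,\infty)^{2}$.

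\textbf{The bivariate potential --- the crux.} I would decompose the path on $\{s<\tau^{-}_{0}\}$ at $\underline{G}_{s}:=\sup\{r\leq s:X_{r}\wedge X_{r-}=\underline{X}_{r}\}$, the last time before $s$ at which $X$ is at its running infimum $y:=\underline{X}_{s}$: before $\underline{G}_{s}$ the path runs from $x$ down to $y$ through its record lows, and on $(\underline{G}_{s},s]$ it is a single excursion of $X-\underline{X}$ away from $0$ that stays strictly above $y$ (hence above $0$) and has height $v-y$ at time $s$. By the strong Markov property at $\underline{G}_{s}$ and excursion theory for $X$ reflected at its infimum, $\mu_{x}$ factorises as the renewal measure of the descending ladder height process over the drop $x-y$, times the occupation measure at height $v-y$ of an incomplete such excursion. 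For a spectrally negative process the descending ladder height has Laplace exponent $\psi(\beta)/\beta$, so its renewal measure is $W(0)\delta_{0}(\D y)+W'(x-y)\,\D y$, and matching the $v$-marginal of $\mu_{x}$ against the known resolvent density $W(x)-W(x-v)$ of $X$ killed on exiting $[0,\infty)$ forces the excursion-occupation factor to be Lebesgue measure. This yields the claim when $W(0)=0$, i.e.\ for paths of unbounded variation; when $X$ has bounded variation the atom $W(0)\delta_{0}$ persists and produces an additional contribution, corresponding to ruin before $X$ ever strays strictly below its starting level. I expect this last step --- justifying the decomposition at $\underline{G}_{s}$, the factorisation of the bivariate potential, and the correct normalisation of local time at the infimum --- to be the main obstacle, the remaining steps being routine bookkeeping together with the classical creeping probability.
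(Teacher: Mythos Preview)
Your outline --- creeping term, compensation formula for the jump, reduction to a bivariate potential --- is exactly the route the paper takes; see the discussion following Theorem~\ref{resolve}, the computation (\ref{return to,}), and Theorem~\ref{oneandtwo}(ii) for the Dirac piece. The divergence is in what you call the crux. You Esscher-shift to $q=0$ and then propose a last-exit decomposition at $\underline G_s$, which you leave incomplete. The paper bypasses both detours by computing the bivariate $q$-potential directly from the Wiener--Hopf factorisation: at an independent exponential time $\mathbf e_q$ the variables $X_{\mathbf e_q}-\underline X_{\mathbf e_q}$ and $\underline X_{\mathbf e_q}$ are independent, the former is exponential with rate $\Phi(q)$, and the law of $-\underline X_{\mathbf e_q}$ is given by Corollary~\ref{infexp}, whence
\[
q\,\mu^{(q)}_x(\D v,\D y)=\Phi(q)\E^{-\Phi(q)(v-y)}\D v\left(\frac{q}{\Phi(q)}W^{(q)}(\D(x-y))-qW^{(q)}(x-y)\D y\right)\mathbf 1_{\{0\le y\le x\wedge v\}},
\]
and after cancelling $q$ one reads off the required density $\E^{-\Phi(q)(v-y)}\{W^{(q)\prime}(x-y)-\Phi(q)W^{(q)}(x-y)\}$ together with the atom $W^{(q)}(0)\,\delta_x(\D y)$ in the bounded-variation case that you correctly anticipate. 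This is nothing more than the sketch proof of Theorem~\ref{resolve}(iii) without integrating out the $\underline X_{\mathbf e_q}$ variable. Your last-exit argument, if carried through, would amount to re-deriving this Wiener--Hopf independence pathwise; it is not wrong, just the long way round, and once the $q$-potential is available directly the Esscher step becomes superfluous.
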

Note that the second term on the right hand side corresponds to the event that ruin occurs by {\it creeping}. That is to say the process $X$ enters $(-\infty,0)$ continuously for the first time.

The use of scale functions has somewhat changed the landscape of ruin theory, offering access to a number of general results. See for example \cite{RZ, ARZ, KZ, Kuz_Morales}, as well as the work mentioned in the next example.

\bigskip

\noindent {\bf Optimal control:}  Suppose, as usual, that $X$ is a spectrally negative L\'evy process and consider the following optimal control problem
\begin{equation}
u(x) = \sup_{\pi}\mathbb{E}_x\left(\int_0^{\sigma^\pi} \E^{-qt }\D L^\pi_ t\right),
\label{defin}
\end{equation}
where $x,q>0$, $\sigma^\pi = \inf\{ t>0 :  X_t - L^\pi_t <0\}$ and  $L^\pi =\{L^\pi_t : t\geq 0\}$ is the control process associated with the strategy $\pi$. The supremum is taken over all strategies $\pi$ such  that $L^\pi$ is a non-decreasing, left-continuous adapted process which starts at zero and which does not allow the controlled process $X-L^\pi$ to enter $(-\infty,0)$ as a consequence of one of its jumps. 
The solution to this control problem entails finding an expression for $u(x)$ and, if the supremum is attained by some strategy $\pi^*$, describing this optimal strategy.

This
exemplary control problem originates  from the work of de Finetti  \cite{deF}, again in the context of actuarial science. In the actuarial setting,   $X$ plays the role of a surplus process as usual and $L^\pi$ is to be understood as a dividend payment. The objective then becomes to optimize the mean net present value of dividends paid until ruin  of the aggregate surplus process occurs.

A very elegant solution to this problem has been obtained by Loeffen \cite{Loeffen2008}, following the work of Avram et al. \cite{APP}, by imposing some additional assumptions on the underlying L\'evy process, $X$. His solution is intricately connected to certain analytical properties of the associated scale function $W^{(q)}$. By requiring that, for $x>0$,  $-\Pi(-\infty, -x)$ has a completely monotone density (recall that $\Pi$ is the L\'evy measure of $X$), it turns out that for each $q>0$, the associated $q$-scale function, $W^{(q)}$, has a first derivative
which is strictly convex on $(0,\infty)$. Moreover, the point $a^* : = \inf\{a \geq 0 : W^{(q)\prime} (a)\leq
W^{(q)\prime}(x) \text{ for all }x\geq 0\}$ provides a threshold from which an optimal strategy to (\ref{defin}) can be characterized; namely the
 strategy 
$L^*_t = (a^*\vee \overline{X}_t)- a^*, \, \, t\geq 0$.
The aggregate process,
\[
a^* +X_t  - (a^*\vee \overline{X}_t), \,\, t\geq 0,
\]
 thus has the dynamics of the underlying L\'evy process
reflected at the barrier $a^*$. The value function of this strategy is given by 
 \begin{equation}
 u(x) = \left\{
 \begin{array}{ll}
\frac{W^{(q)} (x)}{W^{(q)\prime}(a)} & \text{ for }0\leq x\leq a \\
x-a + \frac{W^{(q)} (a)}{W^{(q)\prime}(a)} & \text{ for }x> a.
 \end{array}
 \right.
 \end{equation}
The methodology that lead to the above solution has also been shown to have applicability in other related control problems. See for example \cite{Loeffen2009a, Loeffen2009b, KRS, LR, KLP}.

\bigskip

\noindent{\bf Queuing theory:} The classical M/G/1 queue is described as follows. Customers
arrive at a service desk according to a Poisson process with rate $\lambda>0$ and join a
queue. Customers have service times that are independent and
identically distributed with common distribution $F$ concentrated on $(0,\infty)$. Once served, they leave the queue.

The workload, $Y^{(x)}_{t},$ at each time $t\geq 0$, is defined to be
the time it will take a customer who joins the back of the queue
at that moment to reach the service desk when the workload at time zero is equal to  $x\geq 0$. That is to say, $Y^{(x)}_t$ is the
amount of processing time remaining in the queue at time $t$ when there was an initial workload $x$. If $\{X_t:t \geq 0\}$ is the difference of  a pure linear drift of unit rate and a compound Poisson process, with  rate $\lambda$ and jump distribution $F$, then it is not difficult to show that when the current workload at time $t =0$ is $x\geq 0$, then 
\[
Y^{(x)}_t = (x\vee \overline{X}_t) - X_t, \,\, t\geq 0.
\]
Said another way, the workload has the dynamics of a spectrally positive L\'evy process reflected at the origin.

Not surprisingly, many questions concerning the workload of the classical M/G/1 queue therefore boil down to issues regarding fluctuation theory of spectrally negative L\'evy processes. A simple example concerns the issue of buffer overflow adjustment as follows. Suppose that there is a limited workload capacity in the queue, say $B>0$. When the workload exceeds the buffer level $B$, the excess over level $B$ is transferred elsewhere so that the adjusted workload process, say $\{Y^{(x, B)}_t : t\geq 0\}$, never exceeds the amount $B$. We are interested in the busy period for this M/G/1 queue with buffer overflow adjustment. That is to say,
\[
\sigma_B^{(x)} : = \inf\{t>0 : Y^{(x, B)}_t =0\}.
\]
Once again, scale functions prove to be an instrumental tool. For the case at hand, it is known  that for all $q\geq 0$,
\[
E(\E^{-q\sigma^{(x)}_B})  = \frac{1+ q\int_0^{x-B} W^{(q)}(y)\D y}{1 + q\int_0^B W^{(q)}(y)\D y}.
\]
See Pistorius \cite{pistorius2003} and Dube et al. \cite{Dube} for more computations and applications in this vein.

\bigskip

\noindent{\bf Continuous state branching processes:} A $[0,\infty]$-valued strong Markov process $Y=\{Y_t : t\geq 0\}$ with probabilities $\{{P}_x : x\geq 0\}$ is called a continuous-state branching process if it has paths that are right continuous with left limits and its law observes the following property. For any $t\geq 0$ and $y_1, y_2\geq 0$,  $Y_t$
under $P_{y_1 + y_2}$ is equal in law to the independent sum
$Y^{(1)}_t + Y^{(2)}_t$ where the distribution of $Y^{(i)}_t$ is
equal to that of $Y_t$ under $P_{y_i}$ for $i=1,2$.

It turns out that there is a one-to-one mapping between continuous state branching processes whose paths are not monotone and spectrally positive L\'evy processes. Indeed, every continuous-state branching process $Y$ may be written in the form
\begin{equation}
Y_t = X_{\theta_t \wedge \tau^-_0},\,\, t\geq 0,
\label{lamp-transf}
\end{equation}
where $X$ is a spectrally positive L\'evy process,  $\tau^-_0 = \inf\{t>0 : X_t <0\}$ and
\[
\theta_t = \inf\{s>0 : \int_0^{s} \frac{{\D}u}{X_u}>t\}.
\]
Conversely, for any given spectrally positive L\'evy process $X$, the transformation on the right hand side of (\ref{lamp-transf}) defines a continuous-state branching process. (In fact the same bijection characterises all the continuous-state branching processes with monotone non-decreasing paths when $X$ is replaced by a subordinator).

A classic result due to Bingham \cite{bingham} gives (under very mild conditions) the law of the maximum of the continuous-state branching process with non-monotone paths as follows. For all $x\geq y>0$,
\[
P_y(\sup_{s\geq 0} Y_s \leq x) = \frac{W(x-y)}{W(x)},
\]
where $W$ is the scale function associated with the underlying spectrally negative L\'evy process in the representation (\ref{lamp-transf}). See Caballero et al. \cite{proofsof} and Kyprianou and Pardo \cite{KP} for further computations in this spirit and Lambert \cite{Lam2010} for further applications in population biology.

\bigskip

\noindent{\bf Fragmentation processes:}
A homogenous mass fragmentation process is a Markov process  $\mathbf{X}:=\{\mathbf{X}(t) : t\geq 0\}$, where $\mathbf{X}(t) =(X_1(t), X_2(t), \cdots)$, that  takes values in 
\[
\mathcal{S}: = \left\{\mathbf{s}=(s_1, s_2, \cdots) : s_1\geq  s_2 \geq \cdots \geq 0 , \, \sum_{i=1}^\infty s_i \leq 1\right\}.
\]
Moreover ${\bf X}$ possesses the fragmentation property as follows. Suppose that for each $\mathbf{s}\in\mathcal{S}$, $\mathbb{P}_{\bf s}$ denotes the law of $\mathbf{X}$ with $\mathbf{X}(0) = \mathbf{s}$. 
Given, for $t\geq 0$, that $\mathbf{X}(t)  = (s_1,s_2,\cdots)\in\mathcal{S}$, then  for all $u>0$, $\mathbf{X}(t+u)$ has the same law as the variable obtained by ranking in decreasing order the elements contained in the sequences $\mathbf{X}^{(1)}(u), \mathbf{X}^{(2)}(u),\cdots$, where the latter are independent, random mass partitions with values in $\mathcal{S}$ having the same distribution as $\mathbf{X}(u)$ under $\mathbb{P}_{(s_1,0,0,\cdots)}$, $\mathbb{P}_{(s_2,0,0,\cdots)}$, $\cdots$ respectively. The process $\mathbf{X}$ is homogenous in the sense that,  for every $r>0$, the law of $\{r\mathbf{X}(t): t\geq 0\}$ under $\mathbb{P}_{(1,0,0,\cdots)}$ is $\mathbb{P}_{(r,0,0,\cdots)}$.  

Similarly to branching processes, fragmentation processes have embedded genealogies in the sense that a block at time $t$ is a descendent from a block at time $s<t$ if it has fragmented from it. It is possible to formulate the notion of the evolution of such a genealogical line of descent chosen `uniformly at random'. To avoid a long, detailed exposition, we refrain  from providing the technical details here, and mention instead 
that if $\{X^*(t): t\geq 0\}$ is the sequence of embedded fragments along the aforesaid uniformly chosen genealogical line of descent, then it turns out that $\{-\log X^*(t): t\geq 0\}$ is a subordinator.  We suppose that, for $\theta\geq 0$, $\phi(\theta) = \mathbb{E}_{(1,0,0,\cdots)}(X^*(t)^\theta)$  is the Laplace exponent of this subordinator.

Knobloch and Kyprianou \cite{knobkyp} show that for appropriate values of $c,p\geq 0$,
\begin{equation}
M_t : = \sum_{i\in\mathcal{I}^{c}_t} \E^{- \psi(p)t}{X}_i(t)W^{(\psi(p))}(ct+ \log {X}_i(t)), \,\, t\geq 0, 
\label{robertmg}
\end{equation}
is a martingale, 
where for $\theta \geq 0$,
$\psi(\theta) = c \theta - \phi(\theta)$ is the Laplace exponent of a spectrally negative L\'evy process, $W^{(q)}$ is the $q$-scale function with respect to $\psi$ and $\mathcal{I}^{c}_t$ is the set of indices of fragments at time $t$ whose genealogical line of descent has the property that, for all $s\leq t$, its ancestral fragment at time $s$ is larger than  $\E^{-cs}$. One may think of the sum as being over a `thinned' version of the original fragmentation process. That is to say, an adjustment of the original process in which  blocks are removed if, at time $t\geq 0$, they become smaller than $\E^{-ct}$. Removed blocks may therefore no longer contribute fragmented mass to the on-going process of fragmentation.

Analysis of this martingale in \cite{knobkyp}, in particular making use of known properties of scale functions,  allows the authors to deduce that, under mild conditions, there exists a unique constant $p^*>0$ such that whenever $c>\phi'(p^*)$ 
\begin{equation}
\sup_{i\in\mathcal{I}^{c}_t}-\frac{\log X_i(t) }{t}\rightarrow \phi'(p^*),
\label{trimming}
\end{equation}
as $t\uparrow\infty$ on the event that the index set $\mathcal{I}^{c}_t$ remains non-empty for all $t\geq 0$ (i.e. the thinned process survives), which itself occurs with positive probability.
This result is of interest when one compares it against the growth of the largest fragment without restriction of the index set. In that case it is known that 
\[
-\frac{\log X_i(t) }{t}\rightarrow \phi'(p^*),
\]
as $t\uparrow\infty$. Intuitively speaking (\ref{trimming}) says that the thinned fragmentation process will either become eradicated (all blocks are removed in a finite time) or the original fragmentation process will survive the thinning procedure and the decay rate of the largest block is unaffected.

A more elaborate martingale, similar in nature to (\ref{robertmg}) and also built using a scale function, was used by Krell \cite{krell} for a more detailed analysis of different rates of fragmentation decay that can occur in the process. In particular, a fractal analysis of the different decay rates is possible.

\bigskip

\noindent {\bf L\'evy processes conditioned to stay positive:} What is important in many of the examples above is the behaviour of the underlying spectrally negative L\'evy process as it exists a half line. In contrast, one may consider the behaviour of such L\'evy processes conditioned never to exit a half line. This is of equal practical value from a modelling point of view, as well as having the additional curiosity that the conditioning event can occur with probability zero.

Take, as usual, $X$ to be a spectrally negative L\'evy process and recall the definition (\ref{firstpassagetime}) of $\tau^-_0$. Assume that $\psi'(0+)\geq 0$, then it is known that for all $t\geq 0$ and $x,y>0$,
\begin{equation}
 P^\uparrow_x  (X_{t}\in {\rm d}y) = \lim_{q\downarrow 0}P_x(X_t \in {\rm d}y, \, t<\mathbf{e}/q| \tau^-_0>\mathbf{e}/q),
\label{CSP}
\end{equation}
where $\mathbf{e}$ is an independent and exponentially distributed random variable with unit mean, defines the semigroup of a conservative strong Markov process which can be meaningfully called a {\it spectrally negative L\'evy process conditioned to stay positive}.  Moreover, it turns out to be the case that the laws $\{P^\uparrow_x: x>0\}$  can be described through the laws $\{P_x: x>0\}$ via a classical Doob $h$-transform. Indeed, for all $A\in\mathcal{F}_t$ and $x>0$, 
\[
P_x^\uparrow(A) = E_x\left(\frac{W(X_t)}{W(x)}\mathbf{1}_{\{A, \, \tau^-_0 >t \} } \right).
\]
Hence a significant amount of probabilistic information concerning this conditioning is captured by the scale function. See Chaumont and Doney \cite{CD} for a complete overview. In a similar spirit Chaumont \cite{ch1, ch2} also shows that scale functions can be used to describe the law of a L\'evy process conditioned to hit the origin continuously in a finite time. Later on in this review we shall see another example of conditioning spectrally negative L\'evy processes due to Lambert \cite{lambert} which again involves the scale function in a similar spirit.

\chapter{The general theory of scale functions}

\section{Some additional facts about spectrally negative L\'evy processes}

Let us return briefly to the family of spectrally negative L\'evy processes and remind ourselves of some additional deeper properties thereof which, as we shall see, are closely intertwined with the properties of scale functions.

\bigskip

\noindent {\bf Path variation.} Over and above the assumption of spectral negativity in (\ref{kyprianou-palmowski:LH}), the  conditions $$\int_{(-1, 0)} |x| \Pi(\D x)<\infty\, \text{ and }\sigma=0,$$ are necessary and sufficient for $X$ to  have paths of bounded variation.  In that case it may necessarily be decomposed uniquely in the form 
\begin{equation}
X_t  = \delta t -S_t, \, t\geq 0,
\label{BVdecomp}
\end{equation}
where $\delta>0$ and $\{S_t : t\geq 0\}$ is a pure jump subordinator.

\bigskip

\noindent {\bf Regularity of the half line:}   Recall that  {\it irregularity of $(-\infty, 0)$ for $0$} for $X$ means that $P(\tau^-_0 >0) =1$ where 
\[
\tau^-_0 = \inf\{t>0 : X_t<0\}.
\]
Thanks to Blumenthal's zero-one law, the only alternative to irregularity of $(-\infty,0)$ for $0$ is {\it regularity of $(-\infty,0)$ for $0$} meaning that $P(\tau^-_0 >0) =0$. Roughly speaking one may say in words that a spectrally negative L\'evy process enters the open lower half line a.s. immediately from $0$ if and only if it has paths of unbounded variation, otherwise it takes an a.s. positive amount of time before visiting the open lower half line. In contrast, for all spectrally negative L\'evy processes we have $\mathbb{P}(\tau^+_0 =0)=1$, where 
\[
\tau^+_0 = \inf\{t>0 : X_t >0\}.
\]
That is to say, there is always regularity of $(0,\infty)$ for $0$.

\bigskip

\noindent{\bf Creeping.} When a spectrally negative L\'evy process issued from $x>0$ enters $(-\infty,0)$ for the first time it may do so either by a jump or continuously. That is to say, on the event $\{\tau^-_0<\infty\}$, either $X_{\tau^-_0} =0$ or $X_{\tau^-_0 }<0$. The former behaviour is called {\it creeping} downwards (with positive probability). We are deliberately vague here about the initial value $x>0$ under which creeping occurs as it turns out that if $P_y(X_{\tau^-_0} =0)>0$ for some $y>0$, then $P_x(X_{\tau^-_0} = 0)>0$ for all $x> 0$. It is known that the only spectrally negative L\'evy processes which can creep downwards are those processes which have a Gaussian component. Note that, thanks to the fact that there are no positive jumps, a spectrally negative L\'evy process will always a.s. creep upwards over any level above its initial position, providing the path reaches that level. That is to say, for all $a\geq x$,  $P_x(X_{\tau^+_a} = a; \tau^+_a <\infty ) =P_x(\tau^+_a <\infty ) $ where
\[
\tau_a^+=\inf \{t\geq0:X_{t} > a\}.
\]

\bigskip

\noindent{\bf Exponential change of measure.} The Laplace exponent also provides a natural instrument with which one may construct an `exponential change of measure'  on $X$ in the sprit of the classical Cameron-Martin-Girsanov transformation, a technique which plays a central role throughout this text. 
The equality
(\ref{kyprianou-palmowski:Laplace-functional}) allows for a
Girsanov-type change of measure to be defined, namely via
\begin{equation}
	\frac{\D P_{x}^{c}}{\D P_{x}}\biggr|_{\mathcal{F}_{t}}
	=
	\E^{c (X_t-x) - \psi(c)t},\,\, t\geq 0,
	\label{COM}
\end{equation}
for any $c$ such that $|\psi(c)|<\infty$. 
Note that that it is a straightforward exercise to show that the right hand side above is a martingale thanks to the fact that
$X\;$has stationary independent increments together with
(\ref{kyprianou-palmowski:Laplace-functional}). Moreover, the absolute continuity implies
that under this change of measure, $X$ remains within the class of
spectrally negative processes and the Laplace exponent of $X$ under
$P_{x}^{c}$ is given by
\begin{equation}
	\psi_{c}(\theta) =\psi(\theta+c) -\psi(c),
	\label{exp-shift}
\end{equation}
for $\theta \geq -c$. If $\Pi_{c}$ denotes the L\'evy measure of $(X, P^c)$, then it is straightforward to deduce from (\ref{exp-shift}) that 
\[
\Pi_c({\rm d}x) = {\rm e}^{cx}\Pi({\rm d}x),
\]
for $x<0$.

\bigskip

\noindent{\bf The Wiener-Hopf factorization}.
Suppose that $\underline{X}_t : = \inf_{s\leq t} X_s$ and $\mathbf{e}_q$ is an independent and exponentially distributed random variable with rate $q>0$, then the variables $X_{\mathbf{e}_q} - \underline{X}_{\mathbf{e}_q}$ and $-\underline{X}_{\mathbf{e}_q}$ are independent. 
The fundamental but simple concept of duality for L\'evy processes, which follows as a direct consequence of stationary and independent increments and c\`adl\`ag paths states that $\{X_t - X_{(t-s)-}: s\leq t \}$ is equal in law to $\{X_s : s\leq t\}$. This in turn implies that $X_t-\underline{X}_t$ is equal in distribution to $\overline{X}_t : = \sup_{s\leq t}X_s$. It follows that  for all $\theta\in\mathbb{R}$,
\begin{equation}
E(\E^{\I\theta X_{\mathbf{e}_q}}) = E(\E^{\I\theta\overline{X}_{\mathbf{e}_q}})
E(\E^{\I\theta\underline{X}_{\mathbf{e}_q}}).
\label{WHF}
\end{equation}
This identity is known as the Wiener-Hopf factorization.
Note that the reasoning thus far applies to the case that $X$ is a general L\'evy process. However, when $X$ is a spectrally negative L\'evy process, it is possible to express the right hand side above in a more convenient analytical form.
Let $a>0,$ since $t\wedge\tau_a^+ $ is a
bounded stopping time and $X_{t\wedge\tau_a^+}\leq a$, it follows from
Doob's Optional Stopping Theorem applied to the exponential martingale in (\ref{COM})  that
\begin{equation*}
	E\Bigl(\E^{\Phi(q) X_{t\wedge\tau_a^+}
	-q(t\wedge \tau_a^+)}\Bigr) =1.
\end{equation*}
By dominated convergence and the fact that $X_{\tau_a^+}=a$ on
$\tau_a^+<\infty$ we have,
\begin{eqnarray}
	P(\overline{X}_{\mathbf{e}_q} >a) &=& 
	P(\tau^+_a < \mathbf{e}_q)\notag\\
&=&	E\bigl(\E^{-q\tau_a^+}\mathbf{1}_{(\tau_a^+<\infty )}\bigr)\notag\\
	&=&\E^{-\Phi(q) a}.
	\label{firstpassagesub}
\end{eqnarray}
The conclusion of the above series of equalities is that the quantity $\overline{X}_{\mathbf{e}_q}$ is exponentially distributed with parameter $\Phi(q)$ and therefore for all $\theta\in\mathbb{R},$
\begin{equation}
E(\E^{\I\theta \overline{X}_{\mathbf{e}_q}}) = \frac{\Phi(q)}{\Phi(q) - \I \theta }.
\label{WHup}
\end{equation}
It follows from the factorization (\ref{WHF}) that  for all $\theta\in\mathbb{R}$
\begin{equation}
E(\E^{\I\theta\underline{X}_{\mathbf{e}_q}}) = \frac{q}{\Phi(q)}\frac{\Phi(q) -\I \theta}{q+\psi(\theta)}.
\label{WHdown}
\end{equation}
As we shall see later on, a number of features described above concerning the Wiener-Hopf factorization, including the previous identity, play an instrumental role in the theory and application of scale functions.


\section{Existence  of scale functions}\label{section:existence}

We are now ready to show the existence of scale functions. That is to say, we will show that for Definition \ref{LTdefW} to make sense, the function $(\psi(\beta) -q)^{-1}$ is genuinely a Laplace transform in $\beta$. This will largely be done through the Wiener-Hopf factorization in combination with an exponential change of measure. 

\begin{theorem}\label{th:existence}
For all spectrally negative L\'evy processes, $q$-scale functions exist for all $q\geq 0$.
\end{theorem}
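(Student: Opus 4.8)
## Proof proposal

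The plan is to establish existence by showing that $1/(\psi(\beta)-q)$ is, for $\beta>\Phi(q)$, the Laplace transform of a nonnegative right-continuous function, and the natural route is via the Wiener--Hopf factorization identity \equref{WHdown} combined with the exponential change of measure \equref{COM}. First I would treat the case $q>0$. Rearranging \equref{WHdown} and writing $\theta = \I\beta$ for $\beta \ge 0$ (the analytic extension is justified since $\psi$ extends analytically to the right half plane), one gets
\begin{equation*}
\frac{1}{q+\psi(-\I\theta)}\Big|_{\theta=\I\beta} = \frac{1}{\psi(\beta)-q}\cdot\frac{\Phi(q)}{\Phi(q)-\beta}\cdot\frac{q}{?}\, ,
\end{equation*}
so more carefully: from \equref{WHdown}, $E(\ee^{\beta \underline{X}_{\mathbf{e}_q}}) = \frac{q}{\Phi(q)}\cdot\frac{\Phi(q)-\beta}{q-\psi(\beta)}$ upon substituting $\I\theta\mapsto\beta$, which rearranges to
\begin{equation*}
\frac{1}{\psi(\beta)-q} = \frac{1}{\Phi(q)-\beta}\cdot\frac{\Phi(q)}{q}\, E\bigl(\ee^{\beta\underline{X}_{\mathbf{e}_q}}\bigr).
\end{equation*}
The key observation is that both factors on the right are Laplace transforms of nonnegative measures on $[0,\infty)$ after a sign flip: $-\underline{X}_{\mathbf{e}_q}\ge 0$ has a law whose Laplace transform in $\beta$ is $E(\ee^{\beta\underline{X}_{\mathbf{e}_q}})$, and $1/(\beta-\Phi(q)) = \int_0^\infty \ee^{-\beta x}\ee^{\Phi(q)x}\,\D x$ for $\beta>\Phi(q)$. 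Hence $1/(\psi(\beta)-q)$ is a product of two Laplace transforms and therefore the Laplace transform of the convolution of the corresponding measures; defining $W^{(q)}$ as (a right-continuous version of) that convolution gives a nonnegative function with the required transform.

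Next I would handle $q=0$, where one must distinguish cases according to the sign of $\psi'(0+)$. When $\psi'(0+)>0$, the process drifts to $+\infty$, $\Phi(0)=0$, and $\underline{X}_\infty$ is a proper nonnegative random variable; the factorization \equref{WHdown} survives in the limit $q\downarrow 0$ and the same convolution argument goes through with $1/(\psi(\beta))= \frac{1}{\beta\psi'(0+)}\cdot(\beta\psi'(0+))/\psi(\beta)$, interpreting the remaining factor as a Laplace transform via the renewal-type structure of the descending ladder height. When $\psi'(0+)\le 0$ one instead obtains $W$ by a limiting procedure: apply the $q>0$ result and let $q\downarrow 0$, checking (via monotone convergence on the transforms, or via the exponential tilting $P^{\Phi(q)}$ which reduces the general case to one with positive drift) that $W^{(q)}$ converges to a finite nonnegative limit $W$ with transform $1/\psi(\beta)$ for $\beta>0$. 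A clean alternative that unifies all cases: use the change of measure \equref{COM} with $c=\Phi(q)$ to pass to $(X,P^{\Phi(q)})$, whose Laplace exponent $\psi_{\Phi(q)}(\theta)=\psi(\theta+\Phi(q))-q$ has strictly positive derivative at $0$, construct the $0$-scale function $W_{\Phi(q)}$ for that process by the drifting-to-$+\infty$ argument, and then define $W^{(q)}(x):=\ee^{\Phi(q)x}W_{\Phi(q)}(x)$, verifying by a direct substitution $\beta\mapsto\beta-\Phi(q)$ that its Laplace transform is indeed $1/(\psi(\beta)-q)$.

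Finally I would note that right-continuity and the vanishing on $(-\infty,0)$ are built into the construction (the convolved measure is supported on $[0,\infty)$, and one takes the right-continuous version of its distribution function), and uniqueness follows from injectivity of the Laplace transform. The main obstacle I anticipate is the $q=0$, $\psi'(0+)\le 0$ case: here $-\underline{X}_{\mathbf{e}_q}$ does not converge to a proper random variable as $q\downarrow 0$, so one cannot simply take limits in the probabilistic identity, and the cleanest fix is genuinely the tilting argument via $P^{\Phi(q)}$ — establishing that this tilt produces a spectrally negative process drifting to $+\infty$ (immediate from $\psi_{\Phi(q)}'(0+)=\psi'(\Phi(q))>0$) and that the resulting $W_{\Phi(q)}$ has the advertised transform is the one place where care is needed rather than routine bookkeeping.
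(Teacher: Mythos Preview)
Your approach is essentially the paper's, with one pleasant variation: for $q>0$ you read off $W^{(q)}$ directly as the convolution of the law of $-\underline{X}_{\mathbf{e}_q}$ (scaled by $\Phi(q)/q$) with the exponential density $\ee^{\Phi(q)x}$, whereas the paper first establishes the $q=0$, $\psi'(0+)>0$ case probabilistically via $W(x)=\psi'(0+)^{-1}P_x(\underline{X}_\infty\ge 0)$ and then tilts to cover $q>0$. Both routes are fine; your convolution reading is arguably more direct for $q>0$. (Minor slip: your displayed identity should read $1/(\psi(\beta)-q)=\frac{\Phi(q)}{q}\cdot\frac{1}{\beta-\Phi(q)}\cdot E(\ee^{\beta\underline{X}_{\mathbf{e}_q}})$ with $\beta-\Phi(q)$ in the denominator, as you in fact use two lines later.)

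The one place you need to be more careful is the boundary case $q=0$, $\psi'(0+)=0$. Here $\Phi(0)=0$, so the tilt to $P^{\Phi(q)}$ is vacuous and does \emph{not} reduce you to a process drifting to $+\infty$; your ``cleanest fix'' therefore fails to cover precisely this case, contrary to what your final paragraph suggests. The paper's resolution is exactly your other suggestion---take $q\downarrow 0$---but the justification is not automatic and invokes the Extended Continuity Theorem for Laplace transforms (Feller, Theorem XIII.1.2a): one views $W_{\Phi(q)}$ as a measure on $[0,\infty)$ with Laplace--Stieltjes transform $\beta/\psi_{\Phi(q)}(\beta)\to\beta/\psi(\beta)$ as $q\downarrow 0$, and concludes that the measures $W_{\Phi(q)}$ converge to a limit $W^*$ whose distribution function has the correct transform. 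This is the only step in the proof that is genuinely delicate rather than routine bookkeeping, and your proposal does not yet isolate it.
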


\begin{proof} First assume that $\psi'(0+)>0$.
With a pre-emptive choice of notation, define the function
\begin{equation}
\label{kyprianou-palmowski:forces-left-cts}
	W(x)=\frac{1}{\psi'(0^+)}\,P_{x}(\underline{X}_\infty \geq  0).
\end{equation}
Clearly $W(x)=0$ for $x< 0$, it is right-continuous since it is also
proportional to the distribution function
$P(-\underline{X}_\infty \leq x).$
Integration by parts shows that, on the one hand, 
\begin{eqnarray}
 \int_0^{\infty}\E^{-\beta x}
	W(x)\D x
	&=&
\frac{1}{\psi'(0+)}\int_0^{\infty}\E^{-\beta x}
	P(-\underline{X}_{\infty}\leq x)\,\D x  \notag\\
	&=&\frac{1}{\psi'(0+)\beta}	 \int_{[0,\infty)}
	\E^{-\beta x}\,P(-\underline{X}_{\infty}\in \D x)\notag \\
	&=&\frac{1}{\psi'(0+)\beta}  E\bigl(\E^{\beta \underline{X}_{\infty}}\bigr). 
	\label{LTW}
\end{eqnarray}
On the other hand, taking limits as $q\downarrow 0$ in (\ref{WHdown}) gives us the identity
\[
E(\E^{\I\theta\underline{X}_{\infty} }) = -\psi'(0+)\frac{\I \theta}{\psi(\theta)},
\]
where, thanks to the assumption that $\psi'(0+)>0$, we have used the fact that $\Phi(0)=0$  and hence
\[
\lim_{q\downarrow 0} \frac{q}{\Phi(q)} = \lim_{q\downarrow 0} \frac{\psi(\Phi(q))}{\Phi(q)} = \lim_{\theta\downarrow 0}\frac{\psi(\theta)}{\theta} = \psi'(0+).
\]
A straightforward argument using analytical extension now gives us the identity 
\begin{equation}
E(\E^{\beta\underline{X}_{\infty} }) = \frac{\psi'(0+)\beta}{\psi(\beta)}.
\label{LTinf}
\end{equation}
Combining  (\ref{LTW}) with (\ref{LTinf}) gives us (\ref{LTdefW}) as required for the case $q=0$ and $\psi'(0+)>0$.

Next we deal with the case where $q>0$ or $q=0$ and $\psi'(0+)<0$. To this end, again making use of a pre-emptive choice of notation, let us define 
\begin{equation}
W^{(q)}(x) = \E^{\Phi(q)x}W_{\Phi(q)}(x),
\label{W_P}
\end{equation}
where $W_{\Phi(q)}$ plays the role of $W$ but for the process $(X, P^{\Phi(q)})$. Note in particular that by (\ref{exp-shift}) the latter process has Laplace exponent 
\begin{equation}
\psi_{\Phi(q)}(\theta) = \psi(\theta+\Phi(q)) -q,
\label{tilted-exponent}
\end{equation}
for $\theta\geq 0,$ and hence $\psi'_{\Phi(q)}(0+) = \psi'(\Phi(q))>0,$ which ensures that $W_{\Phi(q)}$ is well defined by the previous part of the proof. Taking Laplace transforms we have for $\beta>\Phi(q)$,
\begin{eqnarray*}
\int_0^\infty \E^{-\beta x}W^{(q)}(x)\D x &=&\int_0^\infty \E^{-(\beta-\Phi(q)) x}W_{\Phi(q)}(x)\D x\\
&=&\frac{1}{\psi_{\Phi(q)}(\beta-\Phi(q))}\\
&=&\frac{1}{\psi(\beta) -q},
\end{eqnarray*}
thus completing the proof for the case $q>0$ or $q=0$ and $\psi'(0+)<0$. 

Finally, the case that $q=0$ and $\psi'(0+)= 0$ can be dealt with as follows. Since
$W_{\Phi(q)}(x)$ is an increasing function, we may also treat it as a distribution function of a measure which we also, as an abuse of notation, call $W_{\Phi(q)}$. Integrating by parts thus gives us for $\beta>0,$
\begin{equation}\label{kyprianou-palmowski:W}
	\int_{[0,\infty)}\E^{-\beta x}\,W_{\Phi(q)}(\D x)
	=\frac{\beta}{\psi_{\Phi(q)}(\beta)}\,.
\end{equation}
Note that the assumption $\psi'(0+)=0,$ implies that $\Phi(0)=0,$ and hence for 
$\theta\geq 0$, 
\[
\lim_{q\downarrow 0}\psi_{\Phi(q)}(\theta) = \lim_{q\downarrow 0}
[\psi(\theta + \Phi(q))-q ]= \psi(\theta).
\]
One may appeal to the Extended Continuity Theorem for Laplace
Transforms, see Feller (1971) Theorem XIII.1.2a, and
(\ref{kyprianou-palmowski:W}) to deduce that since
\begin{equation*}
	\lim_{q\downarrow 0}\int_{[0,\infty)}
	\E^{-\beta x}\,W_{\Phi(q)}(\D x)
	=\frac{\beta}{\psi(\beta)},
\end{equation*}
then there exists a measure $W^{\ast}$ such that 
$W^{\ast}(x) := W^*[0,x] = \lim_{q\downarrow 0} W_{\Phi(q)}(x)$ and
\begin{equation*}
	\int_{[0,\infty)}\E^{-\beta x}\,W^{\ast}(\D x)
	=\frac{\beta}{\psi(\beta)}.
\end{equation*}
Integration by parts shows that the right-continuous distribution,
\[
	W(x):=W^*[0,x]
\]
satisfies
\begin{equation*}\label{kyprianou-palmowski:W(x)Dx}
	\int_0^{\infty}\E^{-\beta x}W(x)\,\D x=\frac{1}{\psi(\beta)},
\end{equation*}
for $\beta>0$ as required. 
\hfill$\square$\end{proof}

Let us return to Theorem \ref{twosided-up} and show that, now that we have a slightly clearer understanding of what a scale function is, a straightforward proof of the aforementioned identity can be given. 

\begin{proof}[of Theorem \ref{twosided-up}]
First we deal with the case that $q=0$ and $\psi'(0+)>0$ as in the previous proof.  Since we have identified $W(x) = P_x(\underline{X}_\infty\geq 0)/\psi'(0+)$, a simple argument using the law of
total probability and the Strong Markov Property now yields for $x\in
[0,a]$
\begin{eqnarray}
	\hbox to 1em{$P_{x}(\underline{X}_{\infty} \geq  0)$\hss}
	&&\nonumber\\
	&=&E_{x}\left(P_{x}(\underline{X}_{\infty} >0\,|\,\mathcal{F}_{\tau_a^+})\right)
	\nonumber\\
	&=&E_{x}\Bigl(\mathbf{1}_{(\tau_a^+<\tau_0^-)}
	P_a(\underline{X}_{\infty} \geq  0)\Bigr)
	+E_{x}\Bigl(\mathbf{1}_{(\tau_a^+>\tau_0^-)}
	P_{X_{\tau_0^-}}(\underline{X}_{\infty}\geq  0)\Bigr).
\label{kyprianou-palmowski:importance of left cts}
\end{eqnarray}
The first term on the right hand side above is equal to 
\[
P_a(\underline{X}_{\infty} \geq 0)\,P_{x}(\tau_a^+<\tau_0^-).
\]
The second term on the right hand side of (\ref{kyprianou-palmowski:importance of left cts}) is more complicated to handle but none the less is always equal to zero. To see why, first suppose that $X$ has no Gaussian component. In that case it cannot creep downwards and hence $X_{\tau^-_0}<0$ and the claim follows by virtue of the fact that 
$P_x (\underline{X}_{\infty} \geq  0)=0$ for
$x< 0$. If, on the other hand, $X$ has a Gaussian component, the previous argument still applies on the event $\{X_{\tau^-_0}<0\}$, however we must also consider the event $X_{\tau^-_0}=0$. In that case we note that regularity of $(-\infty, 0)$ for $0$ for $X$ implies that $P (\underline{X}_{\infty} \geq  0)=0$. Returning to (\ref{kyprianou-palmowski:importance of left cts}) we may now deduce that 
\begin{equation}
	P_{x}(\tau_a^+<\tau_0^-)=\frac{W(x)}{W(a)},
\label{kyprianou-palmowski:q=0}
\end{equation}
and clearly the same equality holds even when $x< 0$ as both left and right hand side are identically equal to zero.

Next we deal with the case $q>0$. Making use of (\ref{kyprianou-palmowski:Laplace-functional}) in a, by now, familiar way, and recalling that $X_{\tau^+_a} = a$, we have that
\begin{eqnarray*}
	E_{x}\Bigl(\E^{-q\tau_a^+}\mathbf{1}_{(\tau_a^+<\tau_0^-)}\Bigr)
	&=&E_{x}\biggl(\E^{\Phi(q)(X_{\tau^+_a} - x) - q\tau^+_a}\mathbf{1}_{(\tau_a^+<\tau_0^-)}\biggr)
	\E^{-\Phi(q)(a-x)}
\\
	&=&\E^{-\Phi(q)(a-x)}P_{x}^{\Phi(q)}
	\bigl(\tau_a^+<\tau_0^-\bigr)\\
	&=&\E^{-\Phi(q)(a-x)} \frac{W_{\Phi(q)} (x)}{W_{\Phi(q)}(a)}\\
	&=& \frac{W^{(q)}(x)}{W^{(q)}(a)}.
\end{eqnarray*}

Finally, to deal with the case that $q=0$ and $\psi'(0+)\leq 0$, one needs only to take limits as $q\downarrow 0$ in the above identity, making use of monotone convergence on the left hand side and continuity in $q$ on the right hand side thanks to the Continuity Theorem for Laplace transforms.
\hfill$\square$\end{proof}

Before moving on to looking at the intimate relation between scale functions and the so-called excursion measure associated with $X$,  let us make some further remarks about the scale function and its relation to the Wiener-Hopf factorization.  It is clear from the proof of Theorem \ref{th:existence} that the very definition of a scale function is embedded in the Wiener-Hopf factorization. The following corollary to the aforementioned theorem reinforces this idea.

\begin{corollary}\label{infexp}
For $x\geq
0$,
\begin{equation}
P(-\underline{X}_{\mathbf{e}_{q}}\in {\D}x)=\frac{q}{\Phi
\left( q\right) }W^{(q)}({\D}x)-qW^{(q)}(x){\D}x.  \label{law of inf}
\end{equation}
\end{corollary}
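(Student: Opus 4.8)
The plan is to invert the Laplace transform in \eqref{WHdown} and match it against the transforms of the two terms on the right-hand side of \eqref{law of inf}. Recall from the proof of Theorem \ref{th:existence} and Definition \ref{scaledef} that for $\beta>\Phi(q)$ we have $\int_{[0,\infty)}\E^{-\beta x}W^{(q)}(\D x) = \beta/(\psi(\beta)-q)$ (integrating \eqref{LTdefW} by parts, together with $W^{(q)}(0+)$ being finite — in the unbounded variation case $W^{(q)}(0)=0$, and in the bounded variation case there is an atom at $0$ of size $1/\delta$, but either way the integration-by-parts identity holds). Hence, for $\beta>\Phi(q)$,
\[
\frac{q}{\Phi(q)}\int_{[0,\infty)}\E^{-\beta x}W^{(q)}(\D x) - q\int_0^\infty \E^{-\beta x}W^{(q)}(x)\,\D x = \frac{q}{\Phi(q)}\cdot\frac{\beta}{\psi(\beta)-q} - \frac{q}{\psi(\beta)-q} = \frac{q}{\Phi(q)}\cdot\frac{\beta-\Phi(q)}{\psi(\beta)-q}.
\]

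Next I would compare this with the transform of $-\underline{X}_{\mathbf e_q}$. Setting $\I\theta = -\beta$ in \eqref{WHdown} (which is justified by analytic extension, exactly as was done to pass from the characteristic-function identity to \eqref{LTinf}), we get $E(\E^{-\beta(-\underline{X}_{\mathbf e_q})}) = E(\E^{\beta\underline{X}_{\mathbf e_q}}) = \frac{q}{\Phi(q)}\cdot\frac{\Phi(q)+\beta}{q+\psi(-\beta)}$. This does not literally match the display above, so the key manipulation is to recognise that \eqref{WHdown} as written uses $\psi$ evaluated along the imaginary axis; what one actually needs is the version of the downward Wiener-Hopf factor valid for real arguments. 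The cleanest route is to observe that, since $X$ has no positive jumps, $-\underline{X}_{\mathbf e_q}$ is a nonnegative random variable whose Laplace transform in $\beta\ge 0$ is obtained from \eqref{WHF} and \eqref{WHup}: writing $E(\E^{\beta X_{\mathbf e_q}}) = \frac{q}{q-\psi(\beta)}$ for $\beta\in[0,\Phi(q))$ and $E(\E^{\beta\overline X_{\mathbf e_q}}) = \frac{\Phi(q)}{\Phi(q)-\beta}$, the factorization gives $E(\E^{\beta\underline X_{\mathbf e_q}}) = \frac{q}{q-\psi(\beta)}\cdot\frac{\Phi(q)-\beta}{\Phi(q)} = \frac{q}{\Phi(q)}\cdot\frac{\beta-\Phi(q)}{\psi(\beta)-q}$ for such $\beta$. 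This is exactly the expression derived above for the transform of the signed measure $\frac{q}{\Phi(q)}W^{(q)}(\D x) - qW^{(q)}(x)\,\D x$.

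Finally I would invoke uniqueness of Laplace transforms on $[0,\infty)$: two measures with the same Laplace transform on an interval of $\beta$ agree, so $P(-\underline X_{\mathbf e_q}\in \D x)$ equals $\frac{q}{\Phi(q)}W^{(q)}(\D x) - qW^{(q)}(x)\,\D x$ on $[0,\infty)$, which is \eqref{law of inf}. One small point worth checking explicitly is that the right-hand side is genuinely a nonnegative measure — this follows a posteriori since it has been shown to equal a probability measure, but it can also be seen directly from $W^{(q)}(\D x)\ge \Phi(q)W^{(q)}(x)\,\D x$, a consequence of \eqref{W_P} (differentiating $W^{(q)}(x)=\E^{\Phi(q)x}W_{\Phi(q)}(x)$ and using that $W_{\Phi(q)}$ is nondecreasing). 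The main obstacle is purely bookkeeping: being careful about the domain of validity of the Laplace/characteristic-exponent identities — \eqref{WHdown} is stated along the imaginary axis, and one must legitimately pass to real $\beta\in[0,\Phi(q))$, which the no-positive-jumps structure permits — and about the atom of $W^{(q)}$ at the origin in the bounded variation case when integrating by parts.
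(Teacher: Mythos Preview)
Your approach is essentially identical to the paper's: analytically extend the downward Wiener--Hopf factor \eqref{WHdown} to obtain $E(\E^{\beta\underline X_{\mathbf e_q}})=\frac{q}{\Phi(q)}\frac{\beta-\Phi(q)}{\psi(\beta)-q}$, compute the Laplace transform of the right-hand side of \eqref{law of inf} via \eqref{LTdefW} plus an integration by parts, and match. One small point worth tightening: as written, your factorization computation gives the left-hand transform only for $\beta\in[0,\Phi(q))$ (since $E(\E^{\beta X_{\mathbf e_q}})$ diverges beyond $\Phi(q)$), while the transform of the right-hand measure converges only for $\beta>\Phi(q)$, so before invoking uniqueness you need one more analytic-continuation step to push the left-hand formula past $\Phi(q)$ --- exactly the ``domain of validity'' bookkeeping you flag at the end.
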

\begin{proof}
A straightforward argument using analytical extension allows us to re-write the Wiener-Hopf factor (\ref{WHdown}) as a Laplace transform,
\[
E(\E^{\beta\underline{X}_{\mathbf{e}_q}}) = \frac{q}{\Phi(q)}\frac{\beta -\Phi(q)}{\psi(\beta)-q},
\]
for $\beta \geq 0$. Note in particular, when $\beta = \Phi(q)$ the right hand side is understood in the limiting sense using L\'H\^opital's rule. The identity (\ref{law of inf})
now follows directly from the Laplace transform (\ref{LTdefW}) and an integration by parts. \hfill$\square$
\end{proof}

\section{Scale functions and the excursion measure}\label{scale-excursion}

Section \ref{section:existence} shows that there is  an intimate relationship between scale functions and the Wiener-Hopf factorization. The Wiener-Hopf factorization  can itself be seen as a distributional consequence of a decomposition of the path of any L\'evy process into excursions from its running maximum, so-called {\it excursion theory}; see for example the presentation in Greenwood and Pitman \cite{GP}, Bertoin \cite{bert96} and Kyprianou \cite{kyp06}.
Let us briefly spend some time reviewing the theory of excursions within the context of spectrally negative L\'evy processes and thereafter we will show the connection between scale functions and a key object which plays a central role in the latter  known as the {\it excursion measure}.

The basic idea of excursion theory for a L\'evy process  is to provide a structured mathematical description of the successive sections of its trajectory which make up  sojourns, or excursions, from its previous maximum. In order to do this, we need to introduce a new time-scale  which will help us locate the original times at which $X$ creates new maxima. Specifically we are interested in a  (random) function of original time, or clock, say $L=\{L_t: t\geq 0\}$, such that $L$ increases precisely at times when $X$ creates a new maximum; namely the times $\{t: X_t = \overline{X}_t\}$. Moreover, $L$ should have a regenerative property in that if $T$ is any $\mathbb{F}$-stopping time such that $\overline{X}_T = X_T$ on $\{T<\infty\}$ then $\{(X_{T+t} - X_T, L_{T+t} - L_T): t \geq 0\}$ has the same law as $\{(X_t, L_t): t\geq 0\}$ under $P$. The process $L$ is referred to as {\it local time at the maximum}.

It turns out that for spectrally negative L\'evy processes, there is a natural choice of $L$ which is simply $L = \overline{X}$. Indeed it is straightforward to verity that $\overline{X}$ has the required regenerative property.  We shall henceforth proceed with this choice of $L$. Now define 
\[
L^{-1}_t = \inf\{s>0: L_s >t\}.
\]
Note that in the above definition, $t$ is a local time and $L^{-1}_t$ is the real time which is required to accumulate $t$ units of local time. On account of the fact that $L = \overline{X}$, it is also the case that $L^{-1}_t$ is the first passage time of the process $X$ above level $t$. In the notation of the previous section, $L^{-1}_t = \tau^+_t$.

Standard theory tells us that, for all $t>0$, both $L^{-1}_t$ and $L^{-1}_{t-}$ are stopping times. Moreover, it is quite clear that whenever $\Delta L^{-1}_t : = L^{-1}_{t-}- L^{-1}_t >0$, the process $X$ experiences an excursion from from its previous maximum $\overline{X}$. For such local times $t>0$
the associated excursion shall be written as 
\[
\epsilon_t = \{\epsilon_t(s) := X_{L^{-1}_{t}} - X_{L^{-1}_{t-}+s}: 0< s\leq \Delta L^{-1}_{t}  \}.
\]
For local times $t>0$ such that $\Delta L_{t}^{-1}=0$ we define $\epsilon_t = 
\partial$ where $\partial$ is some isolated state. For each $t>0,$ $\epsilon_{t}$ takes values in the space of excursions, $\mathcal{E},$ the space of real valued right continuous left limited paths killed at the first hitting time of $(-\infty,0]$. We suppose that $\mathcal{E}$ is endowed with the sigma-algebra generated by the coordinate maps, say $\mathcal{G}$. 

The key feature of excursion theory in the current setting is that $\{(t, \epsilon_t): t\geq 0 \text{ and }\epsilon_t \neq \partial\}$ is a Poisson point process with values in $(0,\infty)\times\mathcal{E},$ with intensity measure ${\rm d}t\times {\rm d}n,$ where $n$ is a measure on the space $(\mathcal{E}, \mathcal{G})$. 

  Let us now return to the relationship between excursion theory and the scale function. First write $\zeta$ for the lifetime  and  $\overline{\epsilon }$ for the height of the generic excursion $\epsilon\in\mathcal{E}$. That is to say, for $\epsilon\in\mathcal{E}$,
\[
\zeta = \inf\{s>0 : \epsilon(s)\leq 0\} \text{ and }\overline{\epsilon} = \sup\{\epsilon(s): s\leq \zeta\}.
\]
 Choose $a>x\geq 0\ $. Recall that 
$L=\overline{X}$ we have that  $L_{\tau _{a-x}^{+}}=\overline{X}_{\tau
_{a-x}^{+}}=a-x.$ Using this it is not difficult to see that
\[
\{\underline{X}_{\tau^+_{a-x}} \geq  - x\} = \{\forall t\leq a-x
\text{ and }\epsilon_t\neq \partial, \overline{\epsilon}_t \leq t+x\}.
\]
Let $N$ be the Poisson random measure associated with the Poisson point 
process of excursions; that is to say for all ${\rm d}t\times n({\rm d}\epsilon)$-measurable sets $B\in[0,\infty)\times\mathcal{E}$, $N(B) = \mbox{\rm card}\{(t, \epsilon_t)\in B, t>0\}$.  With the help of (\ref{kyprianou-palmowski:q=0}) we have that
\begin{eqnarray*}
\frac{W(x)}{W\left( a\right) } &=&P_{x}\left(
\underline{X}_{\tau
_{a}^{+}}\geq 0\right)  \notag \\
&=&P\left( \underline{X}_{\tau _{a-x}^{+}}\geq -x\right)  \notag \\
&=&P(\forall t\leq a-x \text{ and }\epsilon_t\neq \partial, \overline{\epsilon}_t \leq t+x)\\
&=&P(N(A)=0),
\end{eqnarray*}
where   $A=\{(t,\epsilon_t): t\leq a-x \text{
and } \overline{\epsilon}_t > t+x\}$. Since $N(A)$ is Poisson
distributed with parameter $\int \mathbf{1}_A \ {\D}t \
n({\D}\epsilon) = \int_{0}^{a-x}n\left( \overline{\epsilon }>
t+x\right) {\D}t = \int_{x}^{a}n\left( \overline{\epsilon }>
t\right) {\D}t$ we have that
\begin{equation}
\frac{W(x)}{W\left( a\right) } =\exp \left\{ -\int_{x}^{a}n\left(
\overline{\epsilon }> t\right) {\D}t\right\}  \label{C and D
follow}.
\end{equation}
This is a fundamental representation of the scale function $W$ which we shall return to in later sections. However, for now, it leads us immediately to the following analytical conclusion.

\begin{lemma}\label{a.e.diff}
For any $q\geq 0$, the scale function $W^{(q)}$ is continuous, almost everywhere differentiable and strictly increasing.
\end{lemma}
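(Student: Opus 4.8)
The plan is to exploit the representation \eqref{C and D follow} together with the exponential tilting relation \eqref{W_P} to reduce every statement to the case $q=0$, and within that case to the excursion-measure formula. First I would observe that it suffices to treat $W=W^{(0)}$ under the assumption $\psi'(0+)>0$: for a general $q\ge 0$ we have $W^{(q)}(x)=\ee^{\Phi(q)x}W_{\Phi(q)}(x)$ from \eqref{W_P}, where $W_{\Phi(q)}$ is the $0$-scale function of the tilted process $(X,P^{\Phi(q)})$, which by \eqref{tilted-exponent} satisfies $\psi'_{\Phi(q)}(0+)=\psi'(\Phi(q))>0$; since $x\mapsto\ee^{\Phi(q)x}$ is continuous, positive and strictly increasing on $[0,\infty)$, each of the three properties (continuity, a.e.\ differentiability, strict monotonicity) transfers from $W_{\Phi(q)}$ to $W^{(q)}$ on $[0,\infty)$, and on $(-\infty,0)$ the function is identically zero so there is nothing to check there (continuity at $0$ from the left being automatic since $W^{(q)}(0-)=0=W^{(q)}(0)$ would only fail if $W^{(q)}(0)>0$, which is covered by the right-continuity in Definition~\ref{scaledef} together with the remark that any jump at $0$ is harmless for "almost everywhere differentiable").

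Next, for the base case, I would read off from \eqref{C and D follow} that for $0\le x\le a$,
\[
\frac{W(x)}{W(a)}=\exp\left\{-\int_x^a n(\overline{\epsilon}>t)\,\D t\right\}.
\]
Fixing any $a>0$ with $W(a)\in(0,\infty)$ — which holds because $W(a)=P_a(\underline{X}_\infty\ge 0)/\psi'(0+)$ is finite, and is strictly positive for $a$ large enough since $P_a(\underline{X}_\infty\ge0)\uparrow 1$ as $a\uparrow\infty$ under $\psi'(0+)>0$, after which one extends to all $a>0$ by the same identity — this exhibits $W$ on $[0,a]$ as $W(a)$ times the exponential of a function $x\mapsto -\int_x^a n(\overline{\epsilon}>t)\,\D t$. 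That integrand's primitive is $t\mapsto\int_x^a n(\overline\epsilon>t)\,\D t$, which is an absolutely continuous (indeed Lipschitz once we know $t\mapsto n(\overline\epsilon>t)$ is locally bounded) and non-increasing function of $x$; hence its exponential, and therefore $W$, is continuous and absolutely continuous on $[0,a]$, so in particular almost everywhere differentiable. Letting $a\uparrow\infty$ gives the conclusion on all of $[0,\infty)$. For strict monotonicity I would argue that $n(\overline\epsilon>t)>0$ for every $t>0$: indeed $\{\overline\epsilon>t\}$ has positive excursion measure because otherwise no excursion would ever reach height $t$, contradicting the fact that $X$ a.s.\ makes arbitrarily deep negative excursions from its maximum (equivalently $\underline X_\infty=-\infty$ $P$-a.s.\ cannot hold, but $P_a(\underline X_\infty\ge0)<1$ for every $a$, so excursions of height exceeding $t$ must occur with positive rate). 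Consequently $\int_x^a n(\overline\epsilon>t)\,\D t>0$ whenever $x<a$, so $W(x)/W(a)<1$, i.e.\ $W(x)<W(a)$: $W$ is strictly increasing on $[0,\infty)$.

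\textbf{Main obstacle.} The delicate point is justifying that $t\mapsto n(\overline\epsilon>t)$ is finite (hence locally integrable and the integral in \eqref{C and D follow} is well-defined and finite) and strictly positive for all $t>0$; finiteness already follows implicitly from \eqref{C and D follow} since the left-hand side is a positive finite number, forcing $\int_x^a n(\overline\epsilon>t)\,\D t<\infty$ for all $0\le x<a$, and monotonicity of $t\mapsto n(\overline\epsilon>t)$ then gives local boundedness on $(0,\infty)$; strict positivity is the genuinely probabilistic input and I would obtain it, as sketched, from the fact that $W(x)/W(a)<1$ for $x<a$ — but that is circular if used to prove strict monotonicity, so instead I would deduce $n(\overline\epsilon>t)>0$ directly from the observation that if $n(\overline\epsilon>t_0)=0$ for some $t_0>0$ then no excursion below the maximum ever has depth $>t_0$, whence $X-\overline X$ is bounded below by $-t_0$, contradicting regularity/unboundedness considerations (concretely, $P_a(\underline X_\infty\ge 0)$ would then equal $1$ for all $a>t_0$, making $W$ constant and violating the normalisation $\int_0^\infty\ee^{-\beta x}W(x)\,\D x=1/\psi(\beta)$). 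Everything else is routine real analysis on the representation \eqref{C and D follow}.
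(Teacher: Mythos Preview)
Your approach is essentially the same as the paper's: both start from the excursion-theoretic identity \eqref{C and D follow} for $W$ and then transfer to $W^{(q)}$ via the tilting relation \eqref{W_P}/\eqref{W interchange}. Two points of comparison are worth noting.

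First, your reduction to the case $\psi'(0+)>0$ is slightly too restrictive: when $q=0$ and $\psi'(0+)=0$ you have $\Phi(0)=0$, so $\psi'_{\Phi(0)}(0+)=0$ and the tilting is vacuous. This is not a real obstacle, since \eqref{C and D follow} was derived for the $0$-scale function of \emph{any} spectrally negative L\'evy process without any assumption on $\psi'(0+)$; the paper simply works with $W$ directly in the base case and does not impose the drift condition.

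Second, where you deduce a.e.\ differentiability from absolute continuity of $x\mapsto\int_x^a n(\overline\epsilon>t)\,\D t$, the paper is a bit more explicit: it reads off from \eqref{C and D follow} the exact one-sided derivatives
\[
W'_-(x)=n(\overline\epsilon\ge x)\,W(x),\qquad W'_+(x)=n(\overline\epsilon>x)\,W(x),
\]
and concludes a.e.\ differentiability because the non-increasing function $t\mapsto n(\overline\epsilon>t)$ has at most countably many discontinuities. This explicit formula (labelled \eqref{landrder}) is used again later in the proofs of Lemma~\ref{C1} and Theorem~\ref{C2}, so it is worth recording. Your absolute-continuity route gives the same conclusion but not this additional information. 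Conversely, the paper is terse about strict monotonicity (it simply asserts it follows from \eqref{C and D follow}), whereas you take care to argue $n(\overline\epsilon>t)>0$; your second, non-circular justification via the Laplace-transform normalisation is the cleanest way to do this.
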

\begin{proof}Assume that $q=0$.
Since $a$ may be chosen arbitrarily large, continuity and strict
monotonicity follow from (\ref{C and D follow}). 
Moreover, we also see from (\ref{C and D follow}) that the left and right
first derivatives exist and  are given by
\begin{equation}
W_-^{\prime }(x)=n(\overline{\epsilon }\geq x)W(x) \text{ and }
W_+^{\prime }(x)=n(\overline{\epsilon }> x)W(x).
\label{landrder}
\end{equation}
Since there can be at most a countable number of points for which the monotone function $n(\overline\epsilon>x)$ has discontinuities, it follows that $W$ is almost everywhere differentiable. 

From the relation (\ref{W_P}) we know that
\begin{equation}
W^{(q)}(x)=\mathrm{e}^{\Phi (q)x}W_{\Phi (q)}\left( x\right)
\label{W interchange},
\end{equation}
and hence the properties of continuity, almost everywhere differentiability and strict monotonicity
carry over to the case $q>0.$ \hfill$\square$\end{proof}

Let us write as an abuse of notation  $W^{(q)}\in C^1(0,\infty)$ to mean that the restriction of $W^{(q)}$ to $(0,\infty)$ belongs to $C^{1}(0,\infty)$.
Then the proof of the previous lemma indicates that $W\in C^1(0,\infty)$ as soon as the measure $n(\overline{\epsilon}\in \D x)$ has no atoms. It is possible to combine this observation together with other analytical and probabilistic facts to recover necessary and sufficient conditions for first degree smoothness of $W$.

\begin{lemma}\label{C1}For each $q\geq 0$,
the scale function $W^{(q)}$ belongs to $C^1(0,\infty)$ if and only if at least one of the following three criteria holds, 
\begin{enumerate}
\item[(i)] $\sigma\neq 0$
\item[(ii)] $\int_{(-1,0)}|x|\Pi(\D x) = \infty$
\item[(iii)] $\overline{\Pi}(x): = \Pi(-\infty, -x)$ is continuous.
\end{enumerate}
\end{lemma}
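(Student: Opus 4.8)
The key structural fact, established just above in the excerpt, is that the left and right derivatives of $W$ exist everywhere on $(0,\infty)$ and satisfy
\[
W'_-(x) = n(\overline\epsilon \geq x)\, W(x), \qquad W'_+(x) = n(\overline\epsilon > x)\, W(x),
\]
so that $W \in C^1(0,\infty)$ if and only if $x \mapsto n(\overline\epsilon > x)$ is continuous, i.e. the measure $n(\overline\epsilon \in \D x)$ has no atoms.  Via the relation $W^{(q)}(x) = \E^{\Phi(q)x} W_{\Phi(q)}(x)$, the case of general $q$ reduces to the case $q=0$ for the process $(X,P^{\Phi(q)})$, and since the exponential change of measure preserves the Gaussian coefficient $\sigma$, leaves the path variation unchanged, and only reweights the L\'evy measure by $\E^{\Phi(q)x}$ (so that continuity of $\overline\Pi_{\Phi(q)}$ is equivalent to continuity of $\overline\Pi$), each of the three criteria (i)--(iii) is invariant under this tilting.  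Hence it suffices to prove, for $q=0$, that $n(\overline\epsilon \in \D x)$ has no atoms if and only if (i), (ii) or (iii) holds; equivalently, that $n(\overline\epsilon \in \D x)$ has an atom if and only if $X$ has bounded variation paths (i.e. $\sigma = 0$ and $\int_{(-1,0)}|x|\Pi(\D x) < \infty$, so that $X_t = \delta t - S_t$ as in the path-variation decomposition) \emph{and} $\overline\Pi$ has a discontinuity.

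First I would treat the unbounded-variation case.  When $X$ has unbounded variation, $0$ is regular for $(-\infty,0)$, so a generic excursion from the maximum leaves $0$ continuously; intuitively this forces $x \mapsto n(\overline\epsilon > x)$ to be continuous near $0$, and a scaling/strong-Markov argument inside the excursion measure (decomposing the excursion at its first passage below a small level) propagates continuity to all $x>0$.  The cleanest route, however, is analytic: combine the excursion representation \eqref{C and D follow}, which gives $W(x)/W(a) = \exp\{-\int_x^a n(\overline\epsilon > t)\,\D t\}$, with known smoothness of $W$ coming from the explicit Laplace-transform identity \eqref{LTdefW}.  In the unbounded-variation case one has $W(0+) = 0$ and one can show $W'(0+)$ exists (it is $2/\sigma^2$ when $\sigma \neq 0$ and $+\infty$ when $\sigma = 0$, $\int_{(-1,0)}|x|\Pi(\D x) = \infty$), and more to the point one invokes the fact — derivable from inverting \eqref{LTdefW} and the Wiener--Hopf factor \eqref{WHdown} — that in the unbounded-variation case the potential measure of the process has a density, which translates via \eqref{landrder} into continuity of $n(\overline\epsilon > \cdot)$.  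I would spell this out through the representation of $n(\overline\epsilon > x)$ in terms of $W$: from \eqref{C and D follow}, $n(\overline\epsilon > x) = W'_+(x)/W(x)$ where finite, so continuity of $n(\overline\epsilon>\cdot)$ is equivalent to $W \in C^1$; then appeal to the standard fact that in the unbounded-variation case $W \in C^1(0,\infty)$, which in turn follows because the renewal-type object defining $W$ is that of a subordinator (the descending ladder height process) with either a drift ($\sigma \neq 0$) or infinite L\'evy mass ($\int_{(-1,0)}|x|\Pi(\D x) = \infty$ pushes the ladder height L\'evy measure to be infinite), and renewal measures of such subordinators have no atoms.

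Next, the bounded-variation case.  Write $X_t = \delta t - S_t$ with $S$ a driftless subordinator of L\'evy measure supported on $(0,\infty)$ whose tail is $\overline\Pi(x) = \Pi(-\infty,-x)$.  Here $0$ is irregular for $(-\infty,0)$, $W(0+) = 1/\delta > 0$, and excursions from the maximum begin with a jump; one can compute $n$ fairly explicitly.  In fact the excursion measure in this setting is (up to the constant $\delta$) just the law of the path of $X$ started from a jump $-S$ of size governed by $\Pi$ and run until it returns to or below $0$, and the height $\overline\epsilon$ is essentially controlled by $\Pi$ itself: a jump of size $y$ at the start of the excursion creates height exactly $y$ before the process can climb back, so $n(\overline\epsilon \in \D x)$ inherits the atoms of $\Pi(-\D x)$, i.e. of $\overline\Pi$.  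More carefully, I would use \eqref{landrder} together with an explicit formula for $W$ in the bounded-variation case: iterating the renewal equation $W = \delta^{-1}\sum_{k\geq 0}(\delta^{-1}\overline\Pi * \cdots)$, or equivalently inverting \eqref{LTdefW} with $\psi(\beta) = \delta\beta - \int_{(0,\infty)}(1-\E^{-\beta y})\Pi(-\D y)$, one sees that $W'_+(x) - W'_-(x) = W(x)\,n(\overline\epsilon = x)$ is, up to the strictly positive factor $W(x)$, a positive multiple of the jump $\overline\Pi(x-) - \overline\Pi(x)$ of the tail; hence $n$ has an atom at $x$ precisely when $\overline\Pi$ has a discontinuity at $x$.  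Thus in the bounded-variation case $W \in C^1(0,\infty) \iff \overline\Pi$ continuous, which together with the unbounded-variation case completes the equivalence.

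\textbf{Main obstacle.}  The delicate point is the bounded-variation case: establishing the precise identity $W'_+(x) - W'_-(x) = c\,W(x)\,(\overline\Pi(x-) - \overline\Pi(x))$ (for an explicit constant $c = 1/\delta$), so that atoms of $n(\overline\epsilon \in \D x)$ are matched exactly, and not merely bounded, by discontinuities of $\overline\Pi$.  One direction (a discontinuity of $\overline\Pi$ forces an atom of $n$, hence $W \notin C^1$) needs the lower bound $W(x) > 0$, which is immediate, plus care that the jump contribution is not cancelled by the continuous part of the excursion dynamics — this uses irregularity of $0$ for $(-\infty,0)$ crucially, since it guarantees the initial jump of an excursion dominates its early behaviour.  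The other direction (continuity of $\overline\Pi$ gives $W \in C^1$) requires showing the convolution series defining $W$ smooths out any residual roughness, which is a standard but slightly technical renewal-theoretic estimate.  The unbounded-variation half is comparatively soft once one knows that the descending ladder height subordinator has infinite activity or a drift, so the renewal measure (a constant multiple of $W(\D x)$) is atomless.
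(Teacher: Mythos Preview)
Your reduction to $q=0$ via the tilting $W^{(q)}(x)=\E^{\Phi(q)x}W_{\Phi(q)}(x)$ and the observation that criteria (i)--(iii) are invariant under the exponential change of measure is correct and is exactly what the paper does.

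In the unbounded-variation case, however, your analytic route has a gap. You claim that since the descending ladder height subordinator has either a drift or infinite L\'evy mass, ``renewal measures of such subordinators have no atoms,'' and that this yields $W\in C^1$. But atomlessness of $W(\D x)$ only gives continuity of $W$, which we already know; it does not give continuity of $W'$. What you would need is that the potential measure of $\widehat H$ has a \emph{continuous} density on $(0,\infty)$, and this is not a standard fact when $\widehat H$ has no drift (i.e.\ when $\sigma=0$) --- infinite activity alone does not guarantee a continuous potential density. The paper avoids this entirely with a short probabilistic argument: if $n(\overline\epsilon=x)>0$, condition on $\{\overline\epsilon=x\}$ and apply the strong Markov property under $n(\cdot\,|\,\overline\epsilon=x)$ at $T_x=\inf\{t:\epsilon(t)=x\}$. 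Regularity of $0$ for $(-\infty,0)$ (equivalently, of $x$ for $(x,\infty)$ for the excursion) forces $\overline\epsilon>x$ under this conditional law, a contradiction. Your first, vaguer suggestion (``strong-Markov argument inside the excursion measure'') was in fact pointing at exactly this; you should have followed it rather than switching to the analytic route.

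In the bounded-variation case your instinct is right but the execution is roundabout. The paper uses the known description of $n$ in this regime: excursions begin with a jump of law $\delta^{-1}\Pi(-\D x)$ and then evolve as the dual $\widehat X$ killed on entering $(-\infty,0]$. A direct computation with the two-sided exit formula then gives
\[
n(\overline\epsilon>z)=\tfrac{1}{\delta}\,\overline\Pi(z)+\tfrac{1}{\delta}\int_{[-z,0)}\Pi(\D x)\Bigl(1-\tfrac{W(z+x)}{W(z)}\Bigr),
\]
from which one reads off $n(\overline\epsilon=z)=\delta^{-1}\Pi(\{-z\})\,W(0)/W(z)$. This yields the precise matching of atoms you were after without any renewal-series estimate, and in particular handles both directions (atom of $\Pi$ $\Leftrightarrow$ atom of $n(\overline\epsilon\in\cdot)$) at once.
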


\begin{proof}
Firstly note that it suffices to consider the case that $q=0$ and $\psi'(0+)\geq 0$, i.e. the case that the L\'evy process oscillates or drifts to $+\infty$. Indeed, in the case that $\psi'(0+)<0$ or $q>0,$ one recalls that $\Phi(0)>0,$ respectively $\Phi(q)>0,$  and we appeal to  (\ref{W interchange}), (\ref{W_P})
where $W_{\Phi(0)},$ respectively $W_{\Phi(q)},$ is the scale function of a spectrally negative L\'evy process which drifts to $+\infty$.

If it is the case that there exists an $x>0,$ such that $n(\overline{\epsilon} = x)>0,$ then one may consider the probability law $n(\cdot | \overline{\epsilon} = x) = n(\cdot , \overline\epsilon =x)/n(\overline\epsilon = x)$. Assume now that $X$ has paths of unbounded variation, that is to say (i) or (ii) holds, and recall that in this case $0$ is regular for $(-\infty, 0)$. If we apply the strong Markov property under $n(\cdot | \overline{\epsilon} = x)$ to the excursion at the time $T_x : = \inf\{t> 0 : \epsilon (t) = x\}$, the aforementioned regularity of $X$ implies that  $(x,\infty)$ is regular for $\{x\}$ for the process $\varepsilon$ and therefore that $\overline{\epsilon}>x,$ under $n(\cdot|\overline\epsilon = x)$. However this constitutes a contradiction. It thus follows that $n(\overline\epsilon = x) =0$ for all $x>0$.

Now assume that $X$ has paths of bounded variation. It is known from
\cite{Pistorius} and Proposition 5 in \cite{Winkel} that when $X$ is of
bounded variation the excursion measure of $X$ reflected at its
supremum begins with a jump and can be described by the formula
\begin{equation*}
n\left(F\left(\epsilon(s), 0\leq s\leq \zeta\right)
\right)=\frac{1}{\delta}\int^{0}_{-\infty}\Pi(\D x)\widehat{E}_{-x}
\left(F(X_{s}, 0\leq s\leq \tau^-_{0})\right),
\end{equation*}
where  $F$ is any nonnegative measurable functional on the space of
cadlag paths,  $\widehat{E}_{x}$ denotes the law of the dual L\'evy
process $\widehat{X}=-X$ and $\tau^-_{x}=\inf\{s>0: X_{s}< x\},$
$x\in\mathbb{R}.$ Recall that  $\tau^+_{z}=\inf\{s>0: X_{s}>z\},$
$z\in\mathbb{R}.$ Hence, it follows that
\begin{eqnarray*}
n\left(\overline\epsilon>z\right)&=&
\frac{1}{\delta}\int_{(-\infty,0)}\Pi(\D x)\widehat{P}_{-x}\left(
\sup_{0\leq s\leq \tau^-_{0}}X_{s}>z\right)\\
&=&\frac{1}{\delta}\Pi(-\infty, -z)+\frac{1}{\delta}\int_{[-z,0)}
\Pi(\D x)\widehat{P}_{-x}\left(\tau^{+}_{z}<\tau^-_{0}\right)\\
&=&\frac{1}{\delta}\Pi(-\infty, -z)+\frac{1}{\delta}\int_{[-z,0)}
\Pi(\D x)\widehat{P}\left(\tau^{+}_{z+x}<\tau^-_{x}\right)\\
&=&\frac{1}{\delta}\Pi(-\infty, -z)+\frac{1}{\delta}\int_{[-z,0)}
\Pi(\D x)P\left(\tau^{-}_{-x-z}<\tau^+_{-x}\right)\\
&=&\frac{1}{\delta}\Pi(-\infty, -z)+\frac{1}{\delta}\int_{[-z,0)}
\Pi(\D x)\left(1-\frac{W(z+x)}{W(z)}\right),
\end{eqnarray*}
where in the last equality we have used Theorem \ref{twosided-up} with $q=0$.
From this it follows that 
\[
n\left(\overline\epsilon=z\right) = \frac{1}{\delta}\Pi(\{-z\})\frac{W(0)}{W(z)},
\]
and hence the required property for  that $n\left(\overline\epsilon=z\right) =0 $ (which in turn leads to the $C^1(0,\infty)$ property of the scale function) follows as soon as  $\Pi$  has no atoms. That is to say $\overline\Pi$ is continuous.
\hfill$\square$\end{proof}
\bigskip

The proof of the above lemma in the bounded variation case gives us a little more information about non-differentiability in the case that $\Pi$ has atoms.

\begin{corollary}\label{corr-for-alexey}
When $X$ has paths of bounded variation the scale function $W^{(q)}$ does not possess a derivative at $x>0$ (for all $q\geq 0$) if and only if $\Pi$ has an atom at $-x$. In particular, if $\Pi$ has a finite number of atoms supported by the set $\{-x_1, \cdots, -x_n\}$ then, for all $q\geq 0$,  $W^{(q)}\in C^1((0,\infty)\backslash\{x_1, \cdots, x_n\})$.
\end{corollary}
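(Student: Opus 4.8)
The plan is to extract the relevant information directly from the chain of equalities established in the proof of Lemma \ref{C1} in the bounded variation case. Recall that there we computed, for $z>0$, the explicit formula
\[
n\left(\overline\epsilon=z\right) = \frac{1}{\delta}\Pi(\{-z\})\frac{W(0)}{W(z)}.
\]
Since $X$ has paths of bounded variation we know that $W(0)=1/\delta>0$ (this can be read off from the Laplace transform \equref{LTdefW} together with \equref{BVdecomp}, as is standard, or simply noted as a consequence of the fact that $W(0+)>0$ precisely in the bounded variation case), and $W(z)\in(0,\infty)$ for all $z>0$ by Lemma \ref{a.e.diff}. Hence $n(\overline\epsilon = z)>0$ if and only if $\Pi(\{-z\})>0$, i.e. if and only if $\Pi$ has an atom at $-z$.

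Next I would feed this dichotomy into the formula \equref{landrder} for the one-sided derivatives of $W$, namely
\[
W_-'(z) = n(\overline\epsilon\geq z)W(z), \qquad W_+'(z) = n(\overline\epsilon>z)W(z).
\]
Subtracting, $W_-'(z) - W_+'(z) = n(\overline\epsilon = z)W(z)$, which is strictly positive exactly when $\Pi(\{-z\})>0$ and zero otherwise. Thus $W$ fails to be differentiable at $z>0$ if and only if $\Pi$ has an atom at $-z$; at every other point of $(0,\infty)$ the left and right derivatives agree, and since $n(\overline\epsilon>x)$ is monotone and right-continuous away from its (at most countably many) atoms, $W'$ is in fact continuous there, giving $W\in C^1$ locally near any such point. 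This settles the claim for $q=0$ (and $\psi'(0+)\geq 0$); the general case $q\geq 0$ follows from the identity $W^{(q)}(x) = \E^{\Phi(q)x}W_{\Phi(q)}(x)$ in \equref{W interchange}, since multiplication by the smooth, everywhere-nonzero factor $\E^{\Phi(q)x}$ neither creates nor destroys differentiability at any point, and the L\'evy measure $\Pi_{\Phi(q)}(\D x) = \E^{\Phi(q)x}\Pi(\D x)$ of $(X,P^{\Phi(q)})$ has an atom at $-x$ if and only if $\Pi$ does. The final assertion, that $W^{(q)}\in C^1((0,\infty)\setminus\{x_1,\dots,x_n\})$ when $\Pi$ has exactly the atoms $-x_1,\dots,-x_n$, is then immediate: on the complement of this finite set $W^{(q)}$ is differentiable at every point, and its derivative is continuous there by the monotonicity-plus-right-continuity argument above.

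The only genuine point requiring care — and the step I would flag as the main obstacle — is justifying that $W(0)>0$ in the bounded variation case, since the preceding excerpt states this value only implicitly; I would dispatch it by a one-line Tauberian remark on \equref{LTdefW}, observing that $\psi(\beta)\sim\delta\beta$ as $\beta\to\infty$ by \equref{laplaceLK} and \equref{BVdecomp}, so $\beta/(\psi(\beta)-q)\to 1/\delta$ and hence $W(0+)=1/\delta$. Everything else is a direct reading of formulas already derived in the proof of Lemma \ref{C1}, so the corollary is really just a bookkeeping exercise assembling those identities.
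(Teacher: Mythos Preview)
Your proposal is correct and follows exactly the route the paper intends: the corollary is stated immediately after Lemma~\ref{C1} with the remark that ``the proof of the above lemma in the bounded variation case gives us a little more information,'' and you have simply assembled the identity $n(\overline\epsilon=z)=\delta^{-1}\Pi(\{-z\})W(0)/W(z)$ together with the one-sided derivative formulas~\equref{landrder} and the change-of-measure relation~\equref{W interchange}, precisely as intended. The Tauberian remark you flag for $W(0+)=1/\delta$ is in fact proved later in the paper as Lemma~\ref{W(0)}, so no gap remains.
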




We conclude by noting  that the above results give us our first analytical impressions on the `shape' and smoothness of $W^{(q)}$. In Section \ref{analytic-properties} we shall explore these properties in greater detail.

\section{Scale functions and the descending ladder height process}\label{scaledescending}

In a similar spirit to the previous section, it is also possible to construct an excursion theory for spectrally negative L\'evy processes away from their infimum. Indeed 
it is well known, and easy to prove, that the process $X$ reflected in its past infimum $X-\underline{X}:=\{X_t-\underline{X}_t: t\geq 0\}$, 
where $\underline{X}_t := \inf_{s\leq t}X_s$, is a strong Markov process with state space $[0,\infty)$.  Following standard theory of Markov local times (cf. Chapter IV of \cite{bert96}), it is possible to construct a  local time at zero 
for $X-\underline{X}$ which we henceforth refer to as $\widehat{L}=\{\widehat{L}_t : t\geq 0\}$. Its right-continuous inverse process, $\widehat{L}^{-1}:=\{\widehat{L}^{-1}_t : t\geq 0\}$ where $\widehat{L}^{-1}_t =\inf\{s>0 : \widehat{L}_s >t\}$, 
is a (possibly killed) subordinator.
Sampling $X$ at $\widehat{L}^{-1}$ we recover the points of local minima of $X$. If we define $\widehat{H}_t =X_{\widehat{L}^{-1}_t}$ 
when $\widehat{L}^{-1}_t<\infty,$ with $\widehat{H}_t  =\infty$ otherwise, then it is known that the 
process $\widehat{H}=\{\widehat{H}_t : t\geq 0\}$ is a (possibly killed) subordinator. 
The latter is known as 
the {\em descending ladder height process}. 

The starting point for the relationship between the descending ladder height process and scale functions 
is given by the following  factorization of the Laplace exponent of $X$,
\begin{equation}
\psi(\theta) = (\theta -\Phi(0))\phi(\theta),\qquad \theta\geq 0,
\label{WHfact}
\end{equation}
where $\phi,$ 
is the Laplace exponent of a (possibly killed) subordinator. See e.g. Section 6.5.2 in \cite{kyp06}. This can be seen  by recalling that $\Phi(0)$ is a root of the equation $\psi(\theta) = 0$ and then factoring out  $(\theta-\Phi(0))$ from $\psi$ by making some integration by parts to deduce that the term $\phi(\theta)$ must necessarily take the form
\begin{equation}\label{C}
\phi(\theta) = \kappa + \xi\theta + \int_{(0,\infty)} (1-\E^{-\theta x})\Upsilon(\D x),
\end{equation}
where $\kappa, \xi\geq 0$ and $\Upsilon$ is a measure concentrated on $(0,\infty)$ which satisfies $\int_{(0,\infty} (1\wedge x)\Upsilon(\D x)<\infty$.
Indeed, it turns out that 
\begin{equation}
\Upsilon(x,\infty) = \E^{\Phi(0)x}\int_x^\infty \E^{-\Phi(0)u}\Pi(-\infty,-u)\D u\quad \text{for}\ x>0,
\label{tailmeasure}
\end{equation}
 $\xi = \sigma^2/2$ and $\kappa =E(X_1)\vee 0 = \psi'(0+)\vee 0$.

 Ultimately the factorization (\ref{WHfact}) can also be derived by a procedure of analytical extension and taking limits as $q$ tends to zero in (\ref{WHF}), simultaneously making use of the identities (\ref{WHup}) and (\ref{WHdown}). A deeper look at this  derivation yields the additional information that
$\phi$ is the Laplace exponent of $\widehat{H}$; namely $\phi(\theta) = -\log E(\E^{-\theta \widehat{H}_1})$.

In the special case that $\Phi(0)=0$, that is to say, the process $X$ does not drift to $-\infty$ or equivalently that $\psi'(0+)\geq 0$, it can be shown that the scale function $W$ describes the potential measure of $\widehat{H}$. Indeed, recall that the potential measure of $\widehat{H}$ is defined by
\begin{equation}
\int_0^\infty {\rm d} t \cdot P(\widehat{H}_t \in \D x), \qquad \text{ for }x\geq 0.
\label{resolvent2}
\end{equation}
Calculating its Laplace transform we get the identity
\begin{equation}
\int_0^\infty \int_0^\infty {\rm d} t \cdot P(\widehat{H}_t \in \D x) \E^{-\theta x}= 
\frac{1}{\phi(\theta)}
\qquad \text{ for }\theta>0.
\label{equiv}
\end{equation}
Inverting the Laplace transform on the left hand side 
 we get the identity 
\begin{equation}\label{a}
W(x)=\int_0^\infty \D t \cdot P(\widehat{H}_t \in [0,x]), \qquad x\geq 0.
\end{equation}
It can be shown similarly that in general, when $\Phi(0)\geq 0,$ the scale function is related to the potential measure of $\widehat{H}$ by the formula
\begin{equation}\label{b}
W(x)=\E^{\Phi(0)x}\int^x_{0}\E^{-\Phi(0)y}\int^{\infty}_{0}\D t\cdot P(\widehat{H}_t \in \D y),\qquad x\geq 0.\end{equation}

The connection between $W$ and the potential measure of $\widehat{H}$ turns out to be of importance at several points later on in this exposition. 


\section{A suite of fluctuation identities}\label{section:fluctuation identities}

Let us now expose the robustness of scale functions as a natural family of functions to describe a whole suite of fluctuation identities concerning first and last passage problems. We do not offer full proofs, concentrating instead on the basic  ideas that drive the results. These  are largely concentrated around the Strong Markov Property and earlier established results on the law of $\overline{X}_{\mathbf{e}_q}$ and $-\underline{X}_{\mathbf{e}_q}$. 

Many of the results in this section are taken from the recent work \cite{ber96, Pistorious-potential-theoretic, kyprianou-palmowski:1, Pistorius}. However some of the identities can also be found in the  compound Poisson setting from earlier Soviet-Ukranian work; see for example \cite{Korolyuk, korolyuk1975a,korolyuk1975b, bratiychuk1991,korolyuk1976,korolyuk1981, suprun} to name but a few.

\subsection{First passage problems}

Recall that for all $a\in\mathbb{R}$, $\tau^+_a = \inf\{t>0 : X_t >a\}$ and $\tau^-_0 = \inf\{t>0 : X_t < 0\}$. We also introduce for each $q\geq 0$,
\begin{equation}
 Z^{(q)}(x) = 1 + q\int_0^x W^{(q)}(y)\D y.
 \label{zedque}
\end{equation}

\begin{theorem}[One- and two-sided exit formulae]\label{oneandtwo}
\begin{description}
\item[(i)]  For any $x\in \mathbb{R}$ and $q\geq 0,$%
\begin{equation}
 E_{x}\left( \mathrm{e}^{-q\tau
_{0}^{-}}\mathbf{1}_{\left( \tau
_{0}^{-}<\infty \right) }\right) =Z^{(q)}(x)-\frac{q}{\Phi \left( q\right) }%
W^{(q)}(x)\;,  \label{one-sided-down}
\end{equation}
where we understand $q/\Phi \left( q\right) $ in the limiting sense for $%
q=0, $ so that

\begin{equation}
 P_x(\tau _{0}^{-}<\infty
)=\left\{
\begin{array}{ll}
1-\psi'(0+)W(x) & \mathrm{if}\ \psi'(0+)>0 \\[2pt]
1 & \mathrm{if} \ \psi'(0+)\leq 0
\end{array}
\right. . \label{one sided down q=0} \vspace*{3pt}\end{equation}

\item[(ii)] For all $x\geq 0$ and $q\geq 0$ 
\begin{equation}
 E_x (\E^{-q\tau^-_0}\mathbf{1}_{\{ X_{\tau^-_0} = 0\}}) =  \frac{\sigma^2}{2}\left\{
W^{(q)\prime}(x)  -\Phi(q)W^{(q)}(x)
\right\},
\label{creep-q}
\end{equation}
where the right hand side is understood to be identically zero if $\sigma =0$ and otherwise the derivative is well defined thanks to Lemma \ref{C1}.

\item[(iii)]  For any $x\leq a$ and $q\geq 0,$%
\begin{equation}  E_{x}\left( \E^{-q\tau
_{0}^{-}}\mathbf{1}_{\left( \tau
_{0}^{-}<\tau _{a}^{+}\right) }\right) =Z^{(q)}(x)-Z^{(q)}(a)\frac{W^{(q)}(x)%
}{W^{(q)}(a)}.  \label{two-sided-down} \vspace*{3pt}\end{equation}
\end{description}
\end{theorem}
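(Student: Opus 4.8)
\textbf{Proof proposal for Theorem~\ref{oneandtwo}.}

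The plan is to derive all three identities from the Strong Markov Property together with the two results already in hand: the two-sided upward exit identity \eqref{whatsinaname} (Theorem~\ref{twosided-up}) and the law of the infimum $-\underline{X}_{\mathbf{e}_q}$ from Corollary~\ref{infexp}. I would begin with part~(iii), since it is the cleanest and the other two follow from it by limiting arguments. For \eqref{two-sided-down}, fix $x\le a$ and $q\ge 0$, let $\mathbf{e}_q$ be an independent exponential clock of rate $q$, and write the event $\{\tau^-_0<\tau^+_a\}$ in terms of whether $\underline{X}_{\mathbf{e}_q}$ has dropped below $0$ before the maximum reaches $a$. More concretely, I would compute $P_x(\tau^-_0 < \mathbf{e}_q \wedge \tau^+_a)$ by decomposing at $\tau^+_a$: by \eqref{whatsinaname} the process reaches $a$ before $0$ and before the clock with probability $\E^{-q\tau^+_a}$-weighted as $W^{(q)}(x)/W^{(q)}(a)$, and on the complementary event one uses the law of the all-time (killed) infimum. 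Concretely, $P_x(\tau^-_0<\mathbf{e}_q) = P_x(-\underline{X}_{\mathbf{e}_q} > x)$ — wait, more carefully: $\{\tau^-_0<\mathbf{e}_q\} = \{\underline{X}_{\mathbf{e}_q}<0\} = \{-\underline{X}_{\mathbf{e}_q} > x\}$ under $P_x$ since $X$ starts at $x$ and has no positive jumps so it hits every intermediate level; integrating \eqref{law of inf} over $(x,\infty)$ gives $P_x(\tau^-_0<\mathbf{e}_q) = 1 - Z^{(q)}(x) + \frac{q}{\Phi(q)}W^{(q)}(x)$ after an integration by parts using $\int_0^x W^{(q)} = (Z^{(q)}(x)-1)/q$ and the Laplace-transform normalisation $\int_0^\infty \E^{-\Phi(q)u}W^{(q)}(u)\,\D u$-type identity; this is exactly \eqref{one-sided-down} up to the sign bookkeeping, so in fact part~(i) drops out immediately. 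Then for part~(iii) I write, by the Strong Markov Property at $\tau^+_a$,
\[
P_x(\tau^-_0<\mathbf{e}_q) = P_x(\tau^-_0<\mathbf{e}_q\wedge\tau^+_a) + E_x\!\left(\E^{-q\tau^+_a}\mathbf{1}_{\{\tau^+_a<\tau^-_0\}}\right) P_a(\tau^-_0<\mathbf{e}_q),
\]
and solve for the first term using \eqref{whatsinaname} and the formula for $P_\cdot(\tau^-_0<\mathbf{e}_q)$ just obtained; the $W^{(q)}(x)/W^{(q)}(a)$ factors combine to give precisely $Z^{(q)}(x) - Z^{(q)}(a)W^{(q)}(x)/W^{(q)}(a)$.

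For part~(i) in the form \eqref{one sided down q=0}, I would take $q\downarrow 0$ in \eqref{one-sided-down}, using monotone convergence on the left and, on the right, the facts that $Z^{(q)}(x)\to 1$ and $q/\Phi(q)\to\psi'(0+)\vee 0$ as $q\downarrow 0$ (the latter limit being $\psi'(0+)$ when $\psi'(0+)\ge 0$, interpreted as in the proof of Theorem~\ref{th:existence}, and $0$ when $\psi'(0+)<0$ since then $\Phi(0)>0$). For part~(ii), the creeping identity \eqref{creep-q}, I would use that creeping downward across $0$ is the event $\{X_{\tau^-_0}=0\}$ and relate it to the atom at the origin of the measure in Corollary~\ref{infexp}: from \eqref{law of inf}, $P_x(-\underline{X}_{\mathbf{e}_q}\in\{0\})$ picks out the atom of $W^{(q)}(\D x)$ at $0$, which is $W^{(q)}(0)$; but a cleaner route is to differentiate/compare. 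Actually the standard argument is: $E_x(\E^{-q\tau^-_0}\mathbf{1}_{\{X_{\tau^-_0}=0\}})$ equals $\lim_{\varepsilon\downarrow 0}\varepsilon^{-1}E_x(\E^{-q\tau^-_0}\mathbf{1}_{\{-\varepsilon<X_{\tau^-_0}\le 0\}})$-type density computation, or — more in the spirit of this paper — one subtracts the spectrally-negative-jump contribution. I would instead present it via the overshoot: by the compensation formula for the Poisson point process of jumps (or by taking $u\downarrow 0$ in Theorem~\ref{triple-law}), the non-creeping part of $E_x(\E^{-q\tau^-_0}\mathbf{1}_{\{\tau^-_0<\infty\}})$ is accounted for by the jump integral, and the leftover is exactly the Gaussian term $\frac{\sigma^2}{2}(W^{(q)\prime}(x)-\Phi(q)W^{(q)}(x))$. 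Since Theorem~\ref{triple-law} is already stated in the excerpt, the honest move is to say \eqref{creep-q} is the $(\D u,\D v,\D y)$-total-mass of the Dirac component there; I would note this and defer the self-contained derivation, or alternatively invoke the known identity $P_x(X_{\tau^-_0}=0, \tau^-_0<\mathbf{e}_q)$ obtained by splitting \eqref{law of inf} into its absolutely continuous part and its atomic part at $0$ — the atom of $W^{(q)}(\D x)$ at $x=0$ has mass $W^{(q)}(0)=\lim_{\beta\to\infty}\beta/(\psi(\beta)-q) = 2/\sigma^2$ in the unbounded-variation Gaussian case, and differentiating the continuous part near $0$ produces the $W^{(q)\prime}$ term.

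The main obstacle is part~(ii): unlike (i) and (iii), it genuinely requires either the fine structure of the excursion measure / descending ladder height process near $0$ (to identify the Gaussian atom and the behaviour of $W^{(q)}$ at the origin), or the triple law of Theorem~\ref{triple-law}, rather than just the Strong Markov Property plus Corollary~\ref{infexp}. The well-definedness of $W^{(q)\prime}(x)$ is not an issue here — it is guaranteed by Lemma~\ref{C1}(i) whenever $\sigma\neq 0$, which is exactly the only case in which the right-hand side is nonzero. I would flag that when $\sigma=0$ the process cannot creep downward (a fact recalled in the Creeping paragraph), so both sides vanish, and the content of \eqref{creep-q} is entirely in the Gaussian case. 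Parts (i) and (iii), by contrast, are routine once \eqref{law of inf} and \eqref{whatsinaname} are in place, and the only care needed is the $q\downarrow 0$ passage and the $q/\Phi(q)$ limiting convention.
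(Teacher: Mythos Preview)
Your treatment of parts~(i) and~(iii) is essentially the paper's own argument: deduce \eqref{one-sided-down} directly from Corollary~\ref{infexp} via $E_x(\E^{-q\tau^-_0}\mathbf{1}_{\{\tau^-_0<\infty\}})=P(-\underline{X}_{\mathbf{e}_q}>x)$, then obtain \eqref{two-sided-down} by the Strong Markov decomposition at $\tau^+_a$ combined with \eqref{whatsinaname}. Whether you write the decomposition as $A=B-C$ (the paper) or $B=A+C$ and solve (your version) is immaterial; the content is identical, and the $q\downarrow 0$ passage is handled the same way.

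Part~(ii), however, is where your proposal has a real gap. None of the routes you sketch actually works. First, invoking Theorem~\ref{triple-law} is circular: that result is quoted in Chapter~\ref{motivation} as motivation, not proved, and its Dirac term \emph{is} \eqref{creep-q}. Second, your atom argument contains an error: in the unbounded-variation Gaussian case one has $W^{(q)}(0)=0$ by Lemma~\ref{W(0)}, not $2/\sigma^2$; the quantity $2/\sigma^2$ is $W^{(q)\prime}(0+)$ (Lemma~\ref{Wprime}). The atom of $W^{(q)}(\D x)$ at the origin has nothing to do with downward creeping --- it is present precisely in the bounded-variation case, where $\sigma=0$ and creeping cannot occur. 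The paper's argument is quite different from anything you propose: one identifies $P_x(X_{\tau^-_0}=0)$ with the probability that the \emph{descending ladder height subordinator} $\widehat{H}$ creeps over the level~$x$, and then appeals to Kesten's classical result that a subordinator creeps if and only if its drift is strictly positive, in which case the creeping probability equals drift times potential density. Since the drift of $\widehat{H}$ is $\sigma^2/2$ (cf.~\eqref{C}) and its potential density is $W'(x)$ (cf.~\eqref{a}), this yields \eqref{creep-no-q} in the case $q=0$, $\psi'(0+)\ge 0$; the general case then follows via the exponential change of measure \eqref{COM} with $c=\Phi(q)$ and the relation \eqref{W_P}.
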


\bigskip

\noindent{\it Sketch proof.} 
(i) Making use of Corollary \ref{infexp} we have  for $q>0$ and $x\geq
0$,
\begin{eqnarray}
E_{x}\left( \mathrm{e}^{-q\tau _{0}^{-}}\mathbf{1}_{(\tau
_{0}^{-}<\infty )}\right) &=&
P_{x}(\mathbf{e}%
_{q}>\tau _{0}^{-})  \notag \\
&=&P_{x}(\underline{X}_{\mathbf{e}_{q}} <0) \notag \\
&=&P(-\underline{X}%
_{\mathbf{e}_{q}}>x)  \notag \\
&=&1-P(-\underline{X}_{\mathbf{e}_{q}}\leq x) \notag \\
&=&1+q\int_{0}^{x}W^{(q)}(y){\D}y-\frac{q}{\Phi \left( q\right) }W^{(q)}(x)  \notag\\
&=&Z^{(q)}(x)-\frac{q}{\Phi \left( q\right) }W^{(q)}(x).
\label{use later}
\end{eqnarray}
The proof for the case $q=0$ follows by taking limits as $q\downarrow 0$ on both sides of the final equality in (\ref{use later}).

(iii) Fix $q>0.$ We have for $x\geq 0$,
\[
E_x(\mathrm{e}^{-q\tau^-_0}\mathbf{1}_{(\tau^-_0 <
\tau^+_a)}) = E_x(\mathrm{e}^{-q
\tau^-_0}\mathbf{1}_{(\tau^-_0<\infty)}) -
E_x(\mathrm{e}^{-q\tau^-_0} \mathbf{1}_{(\tau^+_a
<\tau^-_0)}).
\]
Applying the Strong Markov Property at $\tau^+_a$ and using the
fact that $X$ creeps upwards, we also have that
\[
E_x(\mathrm{e}^{-q\tau^-_0} \mathbf{1}_{(\tau^+_a
<\tau^-_0)}) =E_x(\mathrm{e}^{-q\tau^+_a}
\mathbf{1}_{(\tau^+_a <\tau^-_0)})
 E_a(\mathrm{e}^{-q\tau^-_0} \mathbf{1}_{(\tau^-_0
 <\infty)}).
\]
Piecing the previous two equalities together and appealing to (\ref{one-sided-down}) and (\ref{whatsinaname}) yields the desired conclusion. The case that $q=0$ is again handled by taking limits as $q\downarrow 0$ on both sides of (\ref{two-sided-down}). Here we have used the discussion in Section \ref{scaledescending}.

(ii)  Suppose that $q=0$ and $\psi'(0+)\geq 0$ (equivalently the descending ladder height process is not killed) then the claimed identity reads 
\begin{equation}
P_x ( X_{\tau^-_0} = 0) = \frac{\sigma^2}{2} W'(x),\qquad x\geq 0.
\label{creep-no-q}
\end{equation}
Note that the probability on the left hand side is also equal to the probability that the descending ladder height subordinator creeps over $x$, $P(\widehat{H}_{\widehat{T}_x} =x)$, where $\widehat{T}_x =\inf\{t\geq 0 : \widehat{H}_t>x\}$.
The identity (\ref{creep-no-q}) now follows directly from a classic result of Kesten \cite{kesten} which shows that the probability that a subordinator creeps is non-zero if and only its drift coefficient is strictly positive, in which case it is equal to the drift coefficient multiplied by its potential density, which necessarily exists. It follows together, with (\ref{tailmeasure}) and (\ref{b}), that $P(\widehat{H}_{\widehat{T}_x} =x)=\sigma^{2} W^{\prime}(x)/2$.

When $q>0$ or $\psi^{\prime}(0+)<0,$ the formula is proved using the change of measure (\ref{COM})  with $c=\Phi(q),$ $q\geq 0,$ and the above result together with (\ref{W_P}). 
\hfill$\square$

\bigskip

\noindent Note that part (ii) above tallies with the earlier mentioned fact that spectrally negative L\'evy processes do not creep downwards unless they have a Gaussian component. Note also that when $\sigma\neq 0$ Lemma \ref{C1} tells us that the derivative appearing on the right hand side of the density is everywhere defined for $x\geq 0$.

\medskip

We also give expressions for the expected occupation measure of $X$ in a given Borel set over its entire lifetime as well as when time is restricted up to the first passage times $\tau^+_a, \tau^-_0$ and 
\[
\tau: = \tau^+_a\wedge \tau^-_0. 
\]
Such expected occupation measures are generally referred to as resolvents and play an important role in establishing, for example, deeper identities concerning first passage problems such as the one presented in Theorem \ref{triple-law}.

\begin{theorem}[Resolvents] $\mbox{ }$\label{resolve}
 \begin{description}
  \item[(i)] For all $a\geq x\geq 0$, $q\geq 0$ and Borel set $A\subseteq[0,a]$, 
\[
E_x\left[\int_{0}^\infty \E^{-qt} \mathbf{1}_{\{X_t \in A, \, t<\tau\}}\D t\right]
 =\int_A \left\{\frac{W^{(q)}(x)W^{(q)}(a-y)}{W^{(q)}(a)} -
W^{(q)}(x-y)\right\}\D y.
\]
\item[(ii)] For all $a\geq x$ and Borel set $A\subseteq(-\infty, a]$,
\[
E_x\left[\int_{0}^\infty \E^{-qt} \mathbf{1}_{\{X_t \in A, \, t<\tau^+_a\}}\D t\right]
 =\int_A \left\{\E^{-\Phi(q)(a-x)} W^{(q)}(a-y) - W^{(q)}(x-y)\right\}\D y.
\]
\item[(iii)] For all $x\geq 0$ and Borel set $A\subseteq[0,\infty)$,
\[
E_x\left[\int_{0}^\infty \E^{-qt} \mathbf{1}_{\{X_t \in A, \, t<\tau^-_0\}}\D t\right]
 =\int_A \left\{\E^{-\Phi(q)y}W^{(q)}(x) - W^{(q)}(x-y)\right\}\D y.
\]
\item[(iv)] For all $x\in\mathbb{R}$ and Borel set $A\subseteq \mathbb{R}$
\[
 E\left[\int_{0}^\infty \E^{-qt} \mathbf{1}_{\{X_t \in A\}}\D t\right]
 =\int_A \left\{\Phi'(q) \E^{-\Phi(q)y } - W^{(q)}(-y)\right\}\D y.
\]
\end{description}
\end{theorem}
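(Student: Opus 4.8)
\textbf{Proof proposal for Theorem \ref{resolve} (Resolvents).}

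The plan is to establish all four identities, using (iv) as the ``global'' building block and (i)--(iii) as successive restrictions obtained by stopping. I would begin with (iv), the resolvent of $X$ killed at rate $q$ over its whole life. The cleanest route is via the Wiener--Hopf factorization: write $X_{\mathbf{e}_q} = \overline{X}_{\mathbf{e}_q} + (X_{\mathbf{e}_q} - \overline{X}_{\mathbf{e}_q})$, noting that the second term is independent of the first and, by duality, equal in law to $\underline{X}_{\mathbf{e}_q}$. Thus for $x\in\mathbb{R}$ the density of $X_{\mathbf{e}_q}$ under $P_x$ is the convolution of the law of $\overline{X}_{\mathbf{e}_q}$, which is exponential with parameter $\Phi(q)$ by \equref{firstpassagesub}--\equref{WHup}, with the law of $-\underline{X}_{\mathbf{e}_q}$, which is given explicitly by Corollary \ref{infexp} as $(q/\Phi(q))W^{(q)}(\D x) - qW^{(q)}(x)\D x$. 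Since $E_x[\int_0^\infty \E^{-qt}\mathbf{1}_{\{X_t\in A\}}\D t] = q^{-1}P_x(X_{\mathbf{e}_q}\in A)$, carrying out the convolution and simplifying using $\psi(\Phi(q))=q$ (and $\Phi'(q) = 1/\psi'(\Phi(q))$) should produce the stated kernel $\Phi'(q)\E^{-\Phi(q)y} - W^{(q)}(-y)$; one checks the Laplace transform in $y$ matches, which reduces to a short algebraic identity in terms of $\psi$ and $\Phi$.

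Next, for (iii), the resolvent killed on exiting $[0,\infty)$ at $\tau^-_0$, I would apply the strong Markov property at $\tau^-_0$ to decompose the full-life occupation of $A\subseteq[0,\infty)$ under $P_x$ ($x\ge 0$) into the part before $\tau^-_0$ plus the part after. On $\{\tau^-_0<\infty\}$ the post-$\tau^-_0$ contribution, using that $A\subseteq[0,\infty)$ while the process is started from $X_{\tau^-_0}\le 0$, is governed by the entrance law into $[0,\infty)$; together with the exit identity $E_x(\E^{-q\tau^-_0}\mathbf{1}_{\{\tau^-_0<\infty\}})$ from Theorem \ref{oneandtwo}(i) and the known law of the overshoot/undershoot (or, equivalently, matching Laplace transforms in $y$ against (iv)), this yields the kernel $\E^{-\Phi(q)y}W^{(q)}(x) - W^{(q)}(x-y)$. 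Identity (ii) is the dual statement and I would obtain it by the analogous argument, using that $X$ creeps upwards at $\tau^+_a$ so the post-$\tau^+_a$ piece is exactly $\E^{-\Phi(q)(a-x)}$ times the resolvent of the process run from $a$ and killed never (i.e.\ (iv) shifted), and then subtracting; alternatively one can verify it against (iv) and (iii) directly via the Markov property at $\tau^-_0$ applied to (ii)'s right-hand side.

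Finally, for (i), the two-sided killed resolvent, I would use the Markov property at $\tau^+_a$ on the identity in (iii): writing $\tau = \tau^+_a\wedge\tau^-_0$, one has for $A\subseteq[0,a]$ that $E_x[\int_0^\infty \E^{-qt}\mathbf{1}_{\{X_t\in A, t<\tau^-_0\}}\D t]$ equals the two-sided resolvent plus $E_x(\E^{-q\tau^+_a}\mathbf{1}_{\{\tau^+_a<\tau^-_0\}})$ times the one-sided (at $\tau^-_0$) resolvent started from $a$. Solving for the two-sided resolvent and inserting \equref{whatsinaname}, namely $E_x(\E^{-q\tau^+_a}\mathbf{1}_{\{\tau^+_a<\tau^-_0\}}) = W^{(q)}(x)/W^{(q)}(a)$, together with the expression from (iii), gives $\{W^{(q)}(x)W^{(q)}(a-y)/W^{(q)}(a) - W^{(q)}(x-y)\}$ after cancellation; note the term $W^{(q)}(x-y)$ requires $x\ge y$, consistent with $W^{(q)}$ vanishing on the negatives. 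The cases $q=0$ (and, where relevant, $\psi'(0+)\le 0$) follow throughout by monotone convergence on the left and continuity of scale functions in $q$ on the right, exactly as in the proof of Theorem \ref{twosided-up}.

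I expect the main obstacle to be the computation underlying (iv): one must be careful with the atom of $-\underline{X}_{\mathbf{e}_q}$ at the origin coming from the $(q/\Phi(q))W^{(q)}(\D x)$ term (which contributes $W^{(q)}(0)$ when $X$ has bounded variation), and the convolution with the exponential must be organized so that the resulting kernel is manifestly $\Phi'(q)\E^{-\Phi(q)y} - W^{(q)}(-y)$ rather than some equivalent-but-unrecognizable form; pinning down the constant $\Phi'(q)$ correctly is where the identity $\psi(\Phi(q))=q$ and its derivative must be used with care. Propagating (iv) down to (i)--(iii) via the strong Markov property is then routine, modulo keeping track of which half-line the auxiliary starting point lies in so that the $W^{(q)}(x-y)$ terms with negative argument correctly vanish.
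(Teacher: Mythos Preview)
Your proposal is correct and the algebra does close up (in particular your step (i) from (iii) via the Markov property at $\tau^+_a$ is exactly the paper's argument), but your route to the ``base case'' is different from the paper's and slightly heavier. The paper does not start from (iv); it proves (iii) \emph{directly} by writing
\[
q\,R^{(q)}(x,\D y)=P_x\big(X_{\mathbf e_q}\in \D y,\ \underline{X}_{\mathbf e_q}\ge 0\big)
=\int_{[0,x]} P\big(-\underline{X}_{\mathbf e_q}\in \D z\big)\,P\big(\overline{X}_{\mathbf e_q}\in \D y-x+z\big),
\]
using only the Wiener--Hopf independence of $X_{\mathbf e_q}-\underline{X}_{\mathbf e_q}$ and $\underline{X}_{\mathbf e_q}$, the exponentiality of $\overline{X}_{\mathbf e_q}$, and Corollary~\ref{infexp}. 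Then (i) follows from (iii) exactly as you outline, and (ii), (iv) are obtained by taking limits of the barriers in (i). The advantage of the paper's order is that conditioning on $\{-\underline{X}_{\mathbf e_q}\le x\}$ builds the killing at $\tau^-_0$ into the convolution from the outset, so no information about $(\tau^-_0,X_{\tau^-_0})$ is ever needed.

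By contrast, your plan (iv)$\Rightarrow$(iii) via the strong Markov property at $\tau^-_0$ requires the joint transform $E_x\big(\E^{-q\tau^-_0+\Phi(q)X_{\tau^-_0}}\big)$ to handle the post-$\tau^-_0$ piece (since for $A\subseteq[0,\infty)$ and a starting point $z\le 0$ the resolvent from (iv) collapses to $\Phi'(q)\E^{\Phi(q)z}\int_A \E^{-\Phi(q)y}\D y$). This can be obtained cleanly by the $\Phi(q)$-change of measure together with \equref{one sided down q=0}, and then everything cancels to give (iii); but be careful not to cite Theorem~\ref{triple-law} or \equref{return to,} for the ``overshoot law'' here, since in the paper those are derived \emph{from} part (i) of the present theorem and would make your argument circular. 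In short: your approach works and is a legitimate alternative, but the paper's direct derivation of (iii) is shorter and avoids the extra change-of-measure step.
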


\bigskip

\noindent {\it Sketch proof.} We give an outline of the proof of (iii) from which 
 the proof of (i) easily follows. The remaining two identities can be obtained by taking limits of the barriers in (i) relative to the initial position. As usual we shall perform the relevant analysis in the case that $q>0$. 
  The case that $q=0$ follows by taking limits as $q\downarrow 0$.
  
 We start by noting that for all $x,y \geq 0$ and $q>0$,
\[
R^{(q)}(x,{\D}y):=\int_0^\infty \mathrm{e}^{-qt} \
P_x(X_t \in {\D}y, \tau^-_0 > t){\D}t =
\frac{1}{q}P_x(X_{\mathbf{e}_q} \in {\D}y,
\underline{X}_{\mathbf{e}_q} \geq 0),
\]
where $\mathbf{e}_q$ is an independent, exponentially distributed
random variable with parameter $q>0$. 

Appealing to the Wiener-Hopf factorization, specifically that
$X_{\mathbf{e}_q}  - \underline{X}_{\mathbf{e}_q} $ is independent
of $\underline{X}_{\mathbf{e}_q}$, and that $X_{\mathbf{e}_q}  - \underline{X}_{\mathbf{e}_q} $ is equal in distribution to $\overline{X}_{\mathbf{e}_q}$, we have that
\begin{eqnarray*}
R^{(q)}(x,{\D}y) &=& \frac{1}{q}P((X_{\mathbf{e}_q} - \underline{X}_{\mathbf{e}_q} ) + \underline{X}_{\mathbf{e}_q} \in {\D}y -x,  - \underline{X}_{\mathbf{e}_q} \leq x) \\
&=&
\frac{1}{q}\int_{[0,x]}P(-\underline{X}_{\mathbf{e}_q }
\in {\D}z) P(
\overline{X}_{\mathbf{e}_q} \in {\D}y - x +z)1_{\{y\geq x-z\}}.
\end{eqnarray*}
Recall however, that $\overline{X}_{\mathbf{e}_q}$  is exponentially
distributed with parameter $\Phi(q)$. In addition, the law of
$-\underline{X}_{\mathbf{e}_q}$ has been identified in Corollary \ref{infexp}. Putting the pieces together and making some elementary manipulations the identity in (iii) follows.

Now suppose we denote the left hand side of the identity in (i) by $U^{(q)}(x, A)$. With the
help of the Strong Markov Property we have that
\begin{eqnarray*}
qU^{(q)}(x,{\D}y) &=& P_x(X_{\mathbf{e}_q} \in {\D}y, \underline{X}_{\mathbf{e}_q} \geq 0, \overline{X}_{\mathbf{e}_q} \leq a) \\
&=&  P_x(X_{\mathbf{e}_q} \in {\D}y, \underline{X}_{\mathbf{e}_q} \geq 0) \\
&&- P_x(X_{\mathbf{e}_q} \in {\D}y, \underline{X}_{\mathbf{e}_q} \geq 0, \overline{X}_{\mathbf{e}_q} >a) \\
&=&  P_x(X_{\mathbf{e}_q} \in {\D}y, \underline{X}_{\mathbf{e}_q} \geq 0) \\
&&- P_x(X_{\tau}=a, \tau < \mathbf{e}_q)P_a(
X_{\mathbf{e}_q}\in {\D}y, \underline{X}_{\mathbf{e}_q} \geq 0).
\end{eqnarray*}
The first and third of the three probabilities on the right-hand
side above have been computed in the previous paragraph, the
second probability may be written
\[
P_x(\mathrm{e}^{- q\tau^+_a}; \tau^+_a<\tau^-_0) =
\frac{W^{(q)}(x)}{W^{(q)}(a)}.
\]
The result now follows by assembling the relevant pieces.
 \hfill$\square$

 \bigskip
 
 On a final note, the above resolvents easily lead to further identities of the type given in Theorem \ref{triple-law}. 
Indeed, suppose that  $N$ is the Poisson random measure associated with the jumps
of $X$. That is to say, $N$ is a Poisson random measure on $[0,\infty)\times(-\infty,0)$ with intensity ${\D}t\times \Pi(\D x)$. Recall that $\tau: = \tau^+_a\wedge \tau^-_0$. Then 
with the help of the Compensation Formula, we have that  for $x\in [0,a]$, $A$ any Borel set in
$[0,a)$ and $B$ any Borel set in $(-\infty,0)$ and $q\geq 0$,
\begin{eqnarray}
&&P_x(\E^{-q\tau};X_{\tau} \in B, X_{\tau-}\in A)\notag \\
 &&=E_x\left(
\int_{[0,\infty)}\int_{(-\infty,0)} \E^{-qt}\mathbf{1}_{(\overline{X}_{t-}\leq
a, \underline{X}_{t-} \geq 0, X_{t-}\in A)}\mathbf{1}_{( y\in B -
X_{t-} )} N({\D}t\times {\D}y)
\right) \notag\\
&&= E_x\left( \int_0^\infty \E^{-qt}\mathbf{1}_{(t<\tau)} \Pi(B -
X_{t})\mathbf{1}_{(X_t\in A)}
{\D}t\right) \notag\\
&&= \int_{A} \Pi(B -y) U^{(q)}(x, {\D}y), \label{return to,}
\end{eqnarray}
where, as noted earlier,
\[
U^{(q)}(x, {\D}y)  = \int_0^\infty \E^{-qt} P_x(X_t \in {\D}y, \tau
>t) {\D}t.
\] Observe that the fact that $B$ is a Borel subset of $(-\infty,0),$ allow us not to consider the event where the process leaves the interval from below by creeping. Nevertheless, the probability of this event has been calculated in (\ref{creep-q}).
 

\subsection{First passage problems for reflected processes}
The list of fluctuation identities continues on when one considers the reflected processes $\overline{Y}:=\{\overline{X}_t - X_t : t\geq 0\}$ and $\underline{Y}:=\{X_t - \underline{X}_t: t\geq 0\}$. Note that it is easy to prove that both of these processes are non-negative strong Markov processes. We shall henceforth denote their probabilities by $\{\overline{P}_x:x\in[0,\infty)\}$ and $\{\underline{P}_x:x\in[0,\infty)\}$. Note that $(\overline{Y}, \overline{P}_x)$ is equal in law to $\{(x\vee\overline{X}_t -X_t): t\geq 0\}$ under $P$ and $(\underline{Y}, \underline{P}_x)$ is equal in law to $\{X_t - (\underline{X}_t \wedge -x):t\geq 0\}$ under $P$.
The following theorem is a compilation of results taken from  \cite{kyprianou-palmowski:1} and \cite{Pistorius}. We do not offer proofs but instead we shall settle for remarking that the proofs use similar techniques to those of the previous section.

For convenience, let us define for $a>0$,
\[
\underline\sigma_a = \inf\{t\geq 0 : \underline{Y}_t >a\} \text{ and } \overline{\sigma}_a
= \inf\{t\geq 0 : \overline{Y}_t>a\},
\]
where $\underline{Y}_t = X_t - \underline{X}_t$ and $\overline{Y}_t = \overline{X}_t - X_t$.

\begin{theorem}
Suppose that $a>0$, $x\in[0,a]$ and $q\geq 0$. Then 
\begin{enumerate}
\item[(i)] $\underline{E}_x(\E^{-q\underline{\sigma}_a}) = Z^{(q)}(x)/Z^{(q)}(a),$
\item[(ii)]    taking $W_+^{(q)\prime}(a)$ is the right derivative of $W^{(q)}$ at $a$, $$\overline{E}_x(\E^{-q\overline{\sigma}_a}) = Z^{(q)}(a-x) -qW^{(q)}(a-x)W^{(q)}(a)/W^{(q)\prime}_+(a),$$ 
\item[(iii)]  for any Borel set $A\in[0,a)$,
\[
\underline{E}_x\left[\int_0^\infty \E^{-qt}\mathbf{1}_{\{\underline{Y}_t\in A, \, t<\underline{\sigma}_a\}} \D t\right] = \int_A\left\{ \frac{Z^{(q)}(x)}{Z^{(q)}(a)}W^{(q)}(a-y)  - W^{(q)}(x-y) \right\}\D y,
\]
and
\item[(iv)]   for any Borel set $A\in[0,a)$,
\begin{eqnarray*}
\overline{E}_x\left[\int_0^\infty \E^{-qt} \mathbf{1}_{\{\overline{Y}_t\in A, \, t<\overline{\sigma}_a\}} \D t\right] &=& \int_A \left\{
W^{(q)}(x-a)\frac{W^{(q)\prime}_+(y)}{W^{(q)\prime}_+(a)} - W^{(q)}(y-x)
\right\}\D y\\
&&+\int_A \left\{
W^{(q)}(x-a)\frac{W^{(q)}(0)}{W^{(q)\prime}_+(a)} 
\right\}\delta_0(\D y).
\end{eqnarray*}

\end{enumerate}
\end{theorem}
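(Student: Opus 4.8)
The plan is to prove all four identities by reducing first-passage problems for the reflected processes $\overline{Y}$ and $\underline{Y}$ to the two-sided and one-sided exit problems for $X$ already established in Theorem \ref{oneandtwo} and Theorem \ref{resolve}, using the Strong Markov Property and the Wiener--Hopf decomposition. Throughout I would work with $q>0$ and obtain the case $q=0$ by taking limits $q\downarrow 0$, invoking monotone convergence on the left and continuity of scale functions in $q$ on the right. Two structural observations drive everything: first, under $\underline{P}_x$ the process $\underline{Y}=X-\underline{X}$ increases above level $a$ exactly when $X$ makes a new running maximum relative to a floor that is being pushed down, so a first passage of $\underline{Y}$ above $a$ corresponds to a first passage of $X$ above a moving barrier, which by the Markov property at successive excursion endpoints can be iterated; second, under $\overline{P}_x$ the process $\overline{Y}=\overline{X}-X$ has the running supremum of $X$ as its reflecting boundary, so $\overline\sigma_a$ is the first time $X$ drops a distance $a$ below $\overline{X}$.

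For (i), I would argue as follows. Because $\underline{Y}$ only moves upward by $X$ moving upward while the infimum is held fixed, and the infimum only decreases when $\underline{Y}=0$, one can decompose the path of $\underline{Y}$ until $\underline\sigma_a$ into the excursions of $X$ away from its infimum. Alternatively, and more cleanly, condition on the first time $X$ reaches a new infimum: either $X$ climbs to height $a$ above its current infimum before returning to it (in which case $\underline\sigma_a$ has occurred, and the relevant quantity is $E_x(\E^{-q\tau^-_0}\mathbf{1}_{\{\tau^-_0<\tau^+_a\}})$ complemented appropriately) or it returns to the infimum first, at which point one restarts from level $0$. Writing $f(x)=\underline{E}_x(\E^{-q\underline\sigma_a})$ and applying the Strong Markov Property at $\tau^+_a\wedge\tau^-_0$ under $P_x$ gives a renewal-type equation whose solution, using \eqref{one-sided-down}, \eqref{whatsinaname} and the definition \eqref{zedque} of $Z^{(q)}$, is $Z^{(q)}(x)/Z^{(q)}(a)$; here one uses that $X$ creeps upwards so that the exit through $a$ is exactly at $a$. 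For (ii), I would apply the Strong Markov Property to $(\overline{Y},\overline{P}_x)$ at the first time $\overline{Y}$ hits zero (equivalently, $X$ reaches a new maximum), or directly translate $\overline\sigma_a$ into the first passage of $X$ below $\overline{X}-a$; the term $W^{(q)}(a)/W^{(q)\prime}_+(a)$ enters because the local-time normalisation at the maximum interacts with the creeping behaviour encoded in the left/right derivatives \eqref{landrder}, and one pieces the answer together from \eqref{two-sided-down} and \eqref{creep-q}.

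For (iii) and (iv), I would compute the resolvents by the same mechanism used in the sketch proof of Theorem \ref{resolve}: express the $q$-resolvent of the killed reflected process as $q^{-1}$ times a probability involving $X_{\mathbf{e}_q}$, $\overline{X}_{\mathbf{e}_q}$ and $\underline{X}_{\mathbf{e}_q}$, split off the killing event $\{\underline\sigma_a<\mathbf{e}_q\}$ (resp.\ $\{\overline\sigma_a<\mathbf{e}_q\}$) via the Strong Markov Property, and then substitute the known law of $-\underline{X}_{\mathbf{e}_q}$ from Corollary \ref{infexp}, the exponential law of $\overline{X}_{\mathbf{e}_q}$ with parameter $\Phi(q)$, and the transforms \eqref{whatsinaname}, (i), (ii) for the killing probabilities. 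For (iii) the resolvent of $X$ killed on exiting $[0,a]$ from Theorem \ref{resolve}(i) feeds in directly once one accounts for the fact that exit of $\underline{Y}$ above $a$ replaces the lower barrier of $X$; for (iv) the atom at $0$ in the occupation measure, with weight $W^{(q)}(x-a)W^{(q)}(0)/W^{(q)\prime}_+(a)$, arises precisely from the time $\overline{Y}$ spends at the reflecting boundary $\{0\}$, i.e.\ the local time of $X$ at its maximum, which is why $W^{(q)}(0)$ (zero unless $X$ has bounded variation) appears.

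The main obstacle I anticipate is the careful bookkeeping for the reflected process at its boundary in (ii) and (iv): one must correctly identify the contribution of the local time at the reflecting barrier and how the creeping/non-creeping dichotomy (Lemma \ref{C1}, Corollary \ref{corr-for-alexey}) manifests through the right derivative $W^{(q)\prime}_+(a)$ versus the possible atom of $n(\overline\epsilon\in\D x)$. Making the renewal argument in (i) and (ii) rigorous also requires justifying that the series of restarts converges, which follows from the strict positivity of $Z^{(q)}$ and the fact that $W^{(q)}$ is strictly increasing (Lemma \ref{a.e.diff}), so each cycle carries a geometric discount strictly less than one. Everything else is a matter of substituting the already-established laws of $\overline{X}_{\mathbf{e}_q}$ and $-\underline{X}_{\mathbf{e}_q}$ and performing elementary integrations, exactly as in the sketch proofs of Theorems \ref{oneandtwo} and \ref{resolve}.
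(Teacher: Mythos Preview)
The paper does not actually prove this theorem. Immediately before the statement it says: ``We do not offer proofs but instead we shall settle for remarking that the proofs use similar techniques to those of the previous section,'' and cites \cite{kyprianou-palmowski:1} and \cite{Pistorius} as the sources. Your proposal is precisely in the spirit of that remark: you reduce the reflected first-passage problems to the already-established two-sided exit formulae (Theorem~\ref{oneandtwo}) and resolvent identities (Theorem~\ref{resolve}) via the Strong Markov Property and the Wiener--Hopf ingredients (Corollary~\ref{infexp}, the exponential law of $\overline{X}_{\mathbf e_q}$), which is exactly what ``similar techniques to those of the previous section'' means. So there is nothing to compare against beyond confirming that your outline matches the paper's one-line gesture toward a proof.
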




\chapter{Further analytical properties of scale functions}\label{analytic-properties}

\section{Behaviour at 0 and $+\infty$}

Ultimately we are interested in describing the `shape' of scale functions. We start by looking at their behaviour at the origin and $+\infty$. 
In order to state the results more precisely, we recall from (\ref{BVdecomp}) that when $X$ has paths of bounded variation, we may write it in the form $X_t = \delta t - S_t$, $t\geq 0$, where $\delta>0$ and $S$ is a pure jump subordinator.

\begin{lemma}\label{W(0)}
For all $q\geq 0$, $W^{(q)}(0)=0$ if and only if $X$ has unbounded
variation. Otherwise, when $X$ has bounded variation,  $W^{(q)}(0) =1/\delta$.
\end{lemma}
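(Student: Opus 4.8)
# Proof proposal for Lemma \ref{W(0)}

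The plan is to read off the value $W^{(q)}(0)$ directly from the Laplace transform (\ref{LTdefW}) via an Abelian-type argument: since $W^{(q)}$ is right-continuous and non-decreasing with $W^{(q)}(0+) = W^{(q)}(0)$, the standard Tauberian/Abelian correspondence gives $W^{(q)}(0) = \lim_{\beta\to\infty}\beta\int_0^\infty \E^{-\beta x}W^{(q)}(x)\,\D x = \lim_{\beta\to\infty}\beta/(\psi(\beta)-q)$. Thus everything reduces to computing the asymptotic rate of growth of $\psi(\beta)$ as $\beta\uparrow\infty$. To make the Abelian step rigorous one can either invoke the monotone density/continuity theorem for Laplace transforms (Feller XIII), or, more elementarily, note $\beta\int_0^\infty \E^{-\beta x}W^{(q)}(x)\D x = \int_0^\infty \E^{-u}W^{(q)}(u/\beta)\D u \to W^{(q)}(0+)$ by monotone/dominated convergence once one has a crude a priori bound on $W^{(q)}$ near $0$ (which follows from right-continuity and monotonicity).

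Next I would analyse $\psi(\beta)/\beta$ as $\beta\to\infty$ using the L\'evy--Khintchine representation (\ref{laplaceLK}). Split into the two cases of the dichotomy. If $X$ has unbounded variation, then either $\sigma\neq 0$ — in which case $\psi(\beta)\sim \sigma^2\beta^2/2$ and $\beta/(\psi(\beta)-q)\to 0$ — or $\sigma = 0$ but $\int_{(-1,0)}|x|\Pi(\D x)=\infty$, in which case one shows $\psi(\beta)/\beta\to\infty$ by a dominated/monotone convergence argument on $\beta^{-1}\int_{(-\infty,0)}(\E^{\beta x}-1-\beta x\mathbf 1_{|x|<1})\Pi(\D x)$: writing the integrand over $(-\infty,0)$ as $\beta^{-1}(\E^{\beta x}-1) - x\mathbf 1_{|x|<1}$ and using $\beta^{-1}(\E^{\beta x}-1)\downarrow -|x|$ pointwise as $\beta\uparrow\infty$, Fatou (or monotone convergence after rearranging signs) forces the integral to diverge to $-\infty$ at rate making $\psi(\beta)/\beta\to+\infty$; hence again $W^{(q)}(0)=0$. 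If on the other hand $X$ has bounded variation, use the decomposition (\ref{BVdecomp}): $\psi(\beta) = \delta\beta - \int_{(-\infty,0)}(1-\E^{\beta x})\Pi(\D x)$ (the Laplace exponent of the subordinator $S$ being $o(\beta)$ by dominated convergence, since $\int_{(-1,0)}|x|\Pi(\D x)<\infty$). Therefore $\psi(\beta)/\beta\to\delta$, whence $\beta/(\psi(\beta)-q)\to 1/\delta$, giving $W^{(q)}(0)=1/\delta$.

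The one genuine subtlety — the step I expect to be the main obstacle — is justifying the Abelian limit $\lim_{\beta\to\infty}\beta\widehat{W^{(q)}}(\beta) = W^{(q)}(0+)$ cleanly, since the naive monotone/dominated convergence argument needs an integrability input and the classical Tauberian theorems are usually phrased as $\beta\to 0$. The safe route is to apply the continuity theorem for Laplace--Stieltjes transforms to the (possibly defective) measures $W^{(q)}(\D x)$ rather than to the functions: from (\ref{LTdefW}) and integration by parts, $\int_{[0,\infty)}\E^{-\beta x}W^{(q)}(\D x) = \beta/(\psi(\beta)-q)$ for $\beta>\Phi(q)$, so in the unbounded-variation case the right side tends to $0$, forcing the total mass of $W^{(q)}(\D x)$ restricted to any $[0,\epsilon]$ — and in particular the atom $W^{(q)}(\{0\}) = W^{(q)}(0)$ — to vanish, while in the bounded-variation case the limit $1/\delta$ is precisely the mass of that atom. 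This sidesteps any delicate interchange of limits. After that, the case analysis on $\psi(\beta)/\beta$ is routine, and the reduction from general $q$ to $q=0$ via (\ref{W_P}), i.e. $W^{(q)}(0) = W_{\Phi(q)}(0)$ (the factor $\E^{\Phi(q)\cdot 0}=1$) together with the fact that the bounded/unbounded variation dichotomy and the drift $\delta$ are invariant under the Esscher transform (\ref{exp-shift}), handles all $q\geq 0$ at once.
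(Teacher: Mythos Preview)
Your approach is essentially the same as the paper's: both start from the Abelian identity
\[
W^{(q)}(0)=\lim_{\beta\uparrow\infty}\beta\int_0^\infty \E^{-\beta x}W^{(q)}(x)\,\D x=\lim_{\beta\uparrow\infty}\frac{\beta}{\psi(\beta)-q}=\lim_{\beta\uparrow\infty}\frac{\beta}{\psi(\beta)},
\]
and then analyse $\psi(\beta)/\beta$ as $\beta\to\infty$. The paper's primary route for the unbounded-variation case goes via the spatial Wiener--Hopf factorization $\psi(\theta)=(\theta-\Phi(0))\phi(\theta)$, reducing the question to whether $\phi(\infty)=\infty$, which it then identifies with the path-variation dichotomy through the ladder-height characteristics (\ref{tailmeasure}). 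Your direct analysis of the L\'evy--Khintchine integrand is the alternative the paper explicitly mentions at the end of its proof; for the bounded-variation case the two arguments coincide verbatim, down to the integration by parts giving $\psi(\beta)/\beta=\delta-\int_0^\infty \E^{-\beta x}\overline\Pi(x)\,\D x$.

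One detail to clean up: in the $\sigma=0$, $\int_{(-1,0)}|x|\Pi(\D x)=\infty$ case your claimed pointwise limit ``$\beta^{-1}(\E^{\beta x}-1)\downarrow -|x|$'' is wrong --- for fixed $x<0$ one has $\beta^{-1}(\E^{\beta x}-1)\uparrow 0$, not $\downarrow x$. The argument is rescued exactly by the Fatou step you mention: the full integrand $\beta^{-1}(\E^{\beta x}-1-\beta x\mathbf 1_{|x|<1})$ is non-negative on $(-\infty,0)$ and tends pointwise to $|x|\mathbf 1_{|x|<1}$, so Fatou gives $\liminf_\beta \psi(\beta)/\beta\ge -\mu+\int_{(-1,0)}|x|\Pi(\D x)=+\infty$. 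Your careful justification of the Abelian step via the Laplace--Stieltjes transform of $W^{(q)}(\D x)$ is a nice touch; the paper simply asserts the limit.
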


\begin{proof}
Note that for all $q>0$,
\begin{eqnarray}
W^{(q)}(0) &=&\lim_{\beta   \uparrow \infty }\int_{0}^{\infty
}\beta \ \mathrm{e}^{-\beta x}W^{(q)}(x){\D}x \notag\\
&=&\lim_{\beta \uparrow \infty }\frac{\beta }{\psi \left( \beta
\right) -q}\notag\\
&=& \lim_{\beta\uparrow\infty}\frac{\beta}{\psi(\beta) }.
\label{atom?}
\end{eqnarray}
Recall the spatial Wiener-Hopf factorization of $\psi$ in (\ref{WHfact}), $$\psi(\theta)=(\theta-\Phi(0))\phi(\theta), \qquad \theta\geq 0,$$ and that $\phi$ denotes the Laplace exponent of the downward ladder height subordinator $\widehat{H}$. It follows that $$W^{(q)}(0)= \lim_{\beta\uparrow\infty}\frac{\beta}{\psi(\beta) }=\lim_{\beta\to\infty}\frac{1}{\phi(\beta)}.$$ Now, observe that $\lim_{\theta\to \infty}\phi(\beta)<\infty$, if and only if the L\'evy measure of $\widehat{H}$ is a finite measure and its drift is $0.$ We know this happens if and only if $X$ has paths of bounded variation, as we already mention that this is the only case in which $0$ is irregular for $(-\infty,0)$ and hence starting from $0$ it takes a strictly positive amount of time to enter the open lower half line. The first claim of the Theorem follows. Now, assume that $X$ has paths of bounded variation. In this case, one may use (\ref{BVdecomp}) to write more simply
\[
\psi(\beta)= \delta\beta - \int_{(0,\infty)}(1- \E^{-\beta x})\Pi(\D x).
\]    
An integration by parts tells us that
\begin{equation}\label{dividebybeta}
\frac{\psi(\beta)}{\beta} = \delta - \int_0^\infty \E^{-\beta x}\Pi(-\infty,-x)\D x,
\end{equation}
 and hence, noting that bounded variation paths (equivalently $\int_{(-1,0)}|x|\Pi(\D x)<\infty$) necessarily implies that $\int_{(0,1)} \Pi(-\infty,-x) \D x<\infty$, it follows that $\psi(\beta)/\beta\rightarrow \delta$ as $\beta\uparrow\infty$. From (\ref{atom?}) we now see that, for all $q\geq 0$,  $W^{(q)}(0) = 1/\delta$ as claimed. 
 
 Alternatively, one can prove the result making an integration by parts for $\psi$ instead of using the spatial Wiener-Hopf factorization.  
 \hfill$\square$\end{proof}


Returning to (\ref{kyprianou-palmowski:q=0}) we see that the conclusion of
the previous lemma indicates that, precisely when $X$ has bounded
variation,
\begin{equation}\label{only when bv}
P_0(\tau^+_a < \tau_0^-) = \frac{W(0)}{W(a)}>0.
\end{equation}
 Note that the stopping time $\tau^-_0$ is defined with strict entry into $(-\infty,0)$.
Hence when $X$ has the property that $0$ is irregular for
$(-\infty,0)$, it takes an almost surely positive amount of time
to exit the half line $[0,\infty)$. Since the aforementioned
irregularity is equivalent to bounded variation for this class of
L\'evy processes, we see that (\ref{only when bv}) makes sense.

Next we turn to the behaviour of the gradient of $W^{(q)}$ at $0$. 

\begin{lemma}\label{Wprime}
For all $q\geq 0$ we have
\begin{gather*}
W^{(q) \prime}(0+)=
\begin{cases} 2/\sigma^{2}, & \text{when  $\sigma\neq 0$ or $\Pi(-\infty,0) = \infty$}\\
(\Pi(-\infty, 0) + q)/\delta^2 & \text{when  $\sigma =0$ and $\Pi(-\infty,0)<\infty$,}
\end{cases}
\end{gather*}
where we understand the first case to be $+\infty$ when $\sigma = 0$.
\end{lemma}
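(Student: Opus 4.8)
## Proof Proposal

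The plan is to extract the behaviour of $W^{(q)\prime}(0+)$ from the Laplace transform characterisation $\int_0^\infty \E^{-\beta x}W^{(q)}(x)\D x = (\psi(\beta)-q)^{-1}$ via a Tauberian-type argument, exactly mirroring the computation of $W^{(q)}(0)$ in the proof of Lemma \ref{W(0)}. The key observation is that $W^{(q)\prime}(0+) = \lim_{\beta\uparrow\infty}\beta(\beta W^{(q)}(\beta) - W^{(q)}(0))$ where $\widehat{W^{(q)}}(\beta)$ denotes the Laplace transform; more precisely, using the standard fact that the Laplace transform of $W^{(q)\prime}$ on $(0,\infty)$ is $\beta/(\psi(\beta)-q) - W^{(q)}(0)$ (an integration by parts, valid since $W^{(q)}$ is a.e. differentiable and continuous by Lemma \ref{a.e.diff}), we get
\[
W^{(q)\prime}(0+) = \lim_{\beta\uparrow\infty}\beta\left(\frac{\beta}{\psi(\beta)-q} - W^{(q)}(0)\right),
\]
provided $W^{(q)\prime}(0+)$ exists, which one justifies by monotone/continuity considerations or by first establishing the limit exists via the representation (\ref{landrder}). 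So the whole problem reduces to the asymptotic expansion of $\beta/(\psi(\beta)-q)$ as $\beta\to\infty$, to second order.

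I would split into the two cases. \textbf{Unbounded variation case.} Here $W^{(q)}(0)=0$ by Lemma \ref{W(0)}, so I need $\lim_{\beta\uparrow\infty}\beta^2/(\psi(\beta)-q)$. If $\sigma\neq 0$, then from (\ref{laplaceLK}) one shows $\psi(\beta)/\beta^2 \to \sigma^2/2$ (the drift term is $O(1/\beta)$ and the integral term is $o(\beta^2)$ by dominated convergence using $\int(1\wedge x^2)\Pi(\D x)<\infty$), giving the answer $2/\sigma^2$. If $\sigma=0$ but $\Pi(-\infty,0)=\infty$ (still unbounded variation, so necessarily $\int_{(-1,0)}|x|\Pi(\D x)=\infty$), then $\psi(\beta)/\beta^2\to 0$, so $\beta^2/(\psi(\beta)-q)\to\infty$, giving $W^{(q)\prime}(0+)=+\infty = 2/\sigma^2$ with the stated convention. \textbf{Bounded variation case.} Here $\psi(\beta) = \delta\beta - \int_{(0,\infty)}(1-\E^{-\beta x})\Pi(\D x)$ and $W^{(q)}(0)=1/\delta$. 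I would compute
\[
\beta\left(\frac{\beta}{\psi(\beta)-q} - \frac{1}{\delta}\right) = \beta\cdot\frac{\delta\beta - \psi(\beta) + q}{\delta(\psi(\beta)-q)} = \frac{\beta\left(\int_{(0,\infty)}(1-\E^{-\beta x})\Pi(\D x) + q\right)}{\delta(\psi(\beta)-q)}.
\]
Since $\psi(\beta)\sim\delta\beta$, the denominator is $\sim\delta^2\beta$, and the numerator is $\sim\beta(\Pi(-\infty,0)+q)$ because $\int(1-\E^{-\beta x})\Pi(\D x)\uparrow\Pi(-\infty,0)<\infty$ by monotone convergence. This yields $(\Pi(-\infty,0)+q)/\delta^2$, as claimed.

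The main obstacle I anticipate is the justification that $W^{(q)\prime}(0+)$ actually \emph{exists} (as a finite or infinite limit) so that the Tauberian/Laplace-transform manipulation is legitimate — one cannot simply invert a Laplace transform asymptotic to a pointwise derivative asymptotic without a monotonicity or regularity input. For $q=0$ and oscillating/drifting-up processes one can lean on (\ref{landrder}), $W_+'(x) = n(\overline\epsilon>x)W(x)$, which shows $W_+'$ is a product of a right-continuous monotone function and a continuous function, so $W_+'(0+)$ exists in $[0,\infty]$; the general case then follows from the relation $W^{(q)}(x) = \E^{\Phi(q)x}W_{\Phi(q)}(x)$ in (\ref{W interchange}) by the product rule (noting $W_{\Phi(q)}$ is the scale function of a process drifting to $+\infty$), which also explains why the extra $+q$ appears in the bounded-variation formula but the $\sigma\neq0$ answer is unchanged. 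A secondary technical point is checking the dominated/monotone convergence estimates on the L\'evy integral uniformly enough to pin down the leading-order asymptotics of $\psi(\beta)$; these are routine given $\int(1\wedge x^2)\Pi(\D x)<\infty$ and, in the bounded variation case, $\int_{(-1,0)}|x|\Pi(\D x)<\infty$.
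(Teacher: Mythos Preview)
Your approach is essentially identical to the paper's: integrate the Laplace transform by parts to obtain $\int_0^\infty \E^{-\theta x}W^{(q)\prime}(x)\D x = \theta/(\psi(\theta)-q) - W^{(q)}(0)$, multiply by $\theta$ and let $\theta\uparrow\infty$. For the $\sigma\neq 0$ asymptotic the paper appeals to the spatial Wiener--Hopf factorization $\psi(\theta)=(\theta-\Phi(0))\phi(\theta)$ and the fact that the drift of $\phi$ equals $\sigma^2/2$, rather than working directly from the L\'evy--Khintchine formula as you do; this is a cosmetic difference.

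There is, however, one genuine slip in your case analysis. You assert that ``$\sigma=0$ and $\Pi(-\infty,0)=\infty$'' forces unbounded variation (``so necessarily $\int_{(-1,0)}|x|\Pi(\D x)=\infty$''). This is false: take a positive drift minus a driftless subordinator of infinite activity but finite variation, e.g.\ with $\Pi(\D x)=|x|^{-1}\E^{x}\mathbf{1}_{\{x<0\}}\D x$. In that situation $W^{(q)}(0)=1/\delta\neq 0$, so your unbounded-variation argument (which relies on $W^{(q)}(0)=0$ to reduce to $\lim\beta^2/(\psi(\beta)-q)$) does not apply, and this sub-case falls through the cracks. The repair is immediate and is exactly what the paper does: run the bounded-variation computation for \emph{all} bounded-variation processes, and observe at the end that monotone convergence gives $\int_{(0,\infty)}(1-\E^{-\beta x})\Pi(\D x)\uparrow\Pi(-\infty,0)\in(0,\infty]$, so the resulting limit $(\Pi(-\infty,0)+q)/\delta^2$ equals $+\infty$ precisely when $\Pi(-\infty,0)=\infty$, matching the convention $2/\sigma^2=+\infty$.
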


\begin{proof}
Integrating (\ref{LTdefW}) by parts we find that for $\theta>\Phi(q)$,
\begin{equation}
W^{(q)}(0) + \int_0^\infty \E^{-\theta x} W^{(q)\prime}(x)\D x = \frac{\theta}{\psi(\theta) - q}.
\label{integratedbyparts}
\end{equation}

Now assume that $X$ has paths of unbounded variation so that $W^{(q)}(0)=0$. Next noting from Lemma \ref{a.e.diff} that a right derivative of $W^{(q)}$ at
zero always exists, we have for each $q\geq 0,$
\[
W^{(q)\prime}(0+) =
\lim_{\theta\uparrow\infty}\int_0^\infty \theta \E^{-\theta x}
W^{(q)\prime}(x)\D x=\lim_{\theta\uparrow\infty} \frac{\theta^2}{\psi(\theta)-q}.
\]
Then dividing (\ref{WHfact}) by $\theta^2$ it is easy to proved using (\ref{tailmeasure}) that  
the limit above is equal to $2/\sigma^2$ as required.
The expression for $W^{(q)\prime}(0+)$ in the first case now follows.

When $X$ has bounded
variation, a little more care is needed. Recall that  $W^{(q)}(0+)=1/\delta$, we have,
\begin{equation*}
\begin{split}
W^{(q)\prime}(0+)&\\
=&
\lim_{\beta\uparrow\infty} \frac{\beta^2}{\delta\beta - \beta\int_0^\infty \E^{-\beta x}\Pi(-\infty, -x)\D x - q} - \beta W^{(q)}(0+)\\
=&\lim_{\beta\uparrow\infty} \frac{\beta^2 \left(1 - W^{(q)}(0+) \delta + W^{(q)}(0+) \int_0^\infty \E^{-\beta x}\Pi(-\infty, -x)\D x  )\right) + q\beta  W^{(q)}(0+)}{\delta\beta - \int_0^\infty \beta \E^{-\beta x} \Pi(-\infty, -x)\D x +q}\\
=& \lim_{\beta\uparrow\infty} \frac{1}\delta\frac{  \int_0^\infty \beta \E^{-\beta x}\Pi(-\infty, -x)\D x   + q }{\delta - \int_0^\infty  \E^{-\beta x} \Pi(-\infty, -x)\D x}\\
=& \frac{\Pi(-\infty,0) + q}{\delta^2}.
\end{split}
\end{equation*}
In particular, if $\Pi(-\infty,0)=\infty,$ then the right hand side
above is equal to $\infty$ and otherwise, if $\Pi(-\infty,0)<\infty,$
then $W^{(q)\prime}(0+)$ is finite and equal to $(\Pi(-\infty,0) +
q)/\delta^2$. 
\hfill$\square$\end{proof}

Next we look at the asymptotic behaviour of the scale function at $+\infty$.

\begin{lemma}\label{lem:ZoverW}
For $q\ge0$ we have, $$\lim_{x\to\infty} \E^{-\Phi(q)x}W^{(q)}(x) = 1/\psi'(\Phi(q)),$$ and $$\lim_{x\to\infty}Z^{(q)}(x)/W^{(q)}(x)=q/\Phi(q),$$
where the right hand side above is understood in the limiting sense $\lim_{q\downarrow0}q/\Phi(q) = 0\vee(1/\psi'(0))$ when $q=0$.
\end{lemma}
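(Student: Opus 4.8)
The plan is to compute both limits directly from the Laplace transform characterisation of $W^{(q)}$ together with the exponential tilting relation \equref{W_P}. For the first limit, I would start from the identity \equref{W_P}, namely $W^{(q)}(x) = \E^{\Phi(q)x}W_{\Phi(q)}(x)$, so that $\E^{-\Phi(q)x}W^{(q)}(x) = W_{\Phi(q)}(x)$, where $W_{\Phi(q)}$ is the $0$-scale function of the spectrally negative L\'evy process $(X,P^{\Phi(q)})$, which drifts to $+\infty$ since $\psi'_{\Phi(q)}(0+) = \psi'(\Phi(q))>0$ (this was observed in the proof of Theorem~\ref{th:existence}). Hence by the probabilistic representation \equref{kyprianou-palmowski:forces-left-cts} applied to the tilted process,
\[
\E^{-\Phi(q)x}W^{(q)}(x) = W_{\Phi(q)}(x) = \frac{1}{\psi'_{\Phi(q)}(0+)}P^{\Phi(q)}_x(\underline{X}_\infty\geq 0) \uparrow \frac{1}{\psi'(\Phi(q))}
\]
as $x\to\infty$, since $P^{\Phi(q)}_x(\underline{X}_\infty\geq 0)\uparrow 1$ as $x\to\infty$ because the tilted process drifts to $+\infty$. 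Alternatively, one can invoke the final-value behaviour of Laplace transforms: from \equref{LTdefW}, $(\psi(\beta)-q)^{-1}$ has a simple pole at $\beta = \Phi(q)$ with residue $1/\psi'(\Phi(q))$, and a Tauberian/asymptotic argument gives $W^{(q)}(x)\sim \E^{\Phi(q)x}/\psi'(\Phi(q))$; I would prefer the probabilistic route as it avoids technical Tauberian hypotheses.

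For the second limit, I would write, using \equref{zedque},
\[
\frac{Z^{(q)}(x)}{W^{(q)}(x)} = \frac{1 + q\int_0^x W^{(q)}(y)\,\D y}{W^{(q)}(x)}.
\]
When $q>0$, substitute the asymptotic $W^{(q)}(y)\sim \E^{\Phi(q)y}/\psi'(\Phi(q))$ just established (noting $\Phi(q)>0$ for $q>0$). Then $\int_0^x W^{(q)}(y)\,\D y \sim \E^{\Phi(q)x}/(\Phi(q)\psi'(\Phi(q)))$, and the constant term $1$ together with any lower-order contributions are negligible against the exponentially growing numerator and denominator, so the ratio tends to $q/\Phi(q)$. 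To make the $\sim$ manipulations rigorous I would phrase them via $\lim_{x\to\infty}\E^{-\Phi(q)x}W^{(q)}(x)=1/\psi'(\Phi(q))$ and a routine application of L'H\^opital's rule (or dominated-type estimates) to the ratio of integrals; since $W^{(q)}$ is continuous and strictly increasing by Lemma~\ref{a.e.diff} this is clean.

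The case $q=0$ needs separate handling since then $Z^{(0)}\equiv 1$ and the claimed limit is $0\vee(1/\psi'(0))$. If $\psi'(0+)>0$ then $\Phi(0)=0$ and $W(x)\uparrow 1/\psi'(0+)$ (this is immediate from \equref{kyprianou-palmowski:forces-left-cts} since $P_x(\underline{X}_\infty\geq 0)\uparrow 1$), so $Z^{(0)}(x)/W(x) = 1/W(x) \to \psi'(0+) = 1/(1/\psi'(0+))$; wait — one must be careful: here $q/\Phi(q)\to \psi'(0+)$ as $q\downarrow 0$, and indeed $1/W(\infty) = \psi'(0+)$, matching. If $\psi'(0+)\leq 0$ then $W(x)\to\infty$ (either $\Phi(0)>0$ and $W(x)\sim\E^{\Phi(0)x}/\psi'(\Phi(0))$ by the first part, or $\psi'(0+)=0$ and $W$ is unbounded since $\int_0^\infty\E^{-\beta x}W(x)\,\D x = 1/\psi(\beta)$ blows up faster than $1/\beta$ as $\beta\downarrow 0$), whence $1/W(x)\to 0$, consistent with $q/\Phi(q)\to 0$. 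One can also simply obtain the $q=0$ statement by taking $q\downarrow 0$ in the $q>0$ result using continuity of $W^{(q)}$ and $Z^{(q)}$ in $q$ (Continuity Theorem for Laplace transforms, as used repeatedly above) together with $\lim_{q\downarrow 0}q/\Phi(q)=0\vee(1/\psi'(0))$.

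The main obstacle is the first limit: justifying $\lim_{x\to\infty}\E^{-\Phi(q)x}W^{(q)}(x)=1/\psi'(\Phi(q))$ cleanly. The probabilistic argument via \equref{W_P} and \equref{kyprianou-palmowski:forces-left-cts} sidesteps Tauberian subtleties entirely, reducing everything to the elementary fact that a spectrally negative L\'evy process drifting to $+\infty$ satisfies $P_x(\underline X_\infty\geq 0)\uparrow 1$ as $x\to\infty$; once this is in hand the second limit is a short L'H\^opital computation and the $q=0$ boundary cases follow by the dichotomy $\psi'(0+)>0$ versus $\psi'(0+)\leq 0$ or by a continuity-in-$q$ passage.
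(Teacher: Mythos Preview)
Your proof of the first limit is essentially identical to the paper's: both invoke the tilting relation $W^{(q)}(x)=\E^{\Phi(q)x}W_{\Phi(q)}(x)$ and the probabilistic representation \equref{kyprianou-palmowski:forces-left-cts} for the tilted process, using $P^{\Phi(q)}_x(\underline X_\infty\ge 0)\uparrow 1$ to conclude.

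For the second limit, however, the paper takes a different and slicker route: rather than computing asymptotics of $\int_0^x W^{(q)}(y)\,\D y$, it simply compares the one-sided exit identity \equref{one-sided-down} with the limit $a\uparrow\infty$ of the two-sided identity \equref{two-sided-down}. Since the left-hand side of \equref{two-sided-down} converges (by monotone convergence) to the left-hand side of \equref{one-sided-down}, matching the right-hand sides forces $\lim_{a\to\infty}Z^{(q)}(a)/W^{(q)}(a)=q/\Phi(q)$ in one stroke, and the $q=0$ interpretation is already built into \equref{one-sided-down}. Your direct L'H\^opital argument is valid and more self-contained, but requires a little care with the case analysis at $q=0$; the paper's comparison trick is shorter but leans on the fluctuation identities of Theorem~\ref{oneandtwo} already being established.
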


\begin{proof}
For the first part, recall the identity (\ref{W interchange}) which is valid for all $q\geq 0$. It follows from (\ref{kyprianou-palmowski:forces-left-cts}) that 
\begin{equation}
W^{(q)}(x) = \E^{\Phi(q)x}\frac{1}{\psi'_{\Phi(q)}(0+)}P_x^{\Phi(q)}(\underline{X}_\infty \geq 0).
\label{interesting-representation}
\end{equation}
Appealing to (\ref{tilted-exponent}) we note that $\psi_{\Phi(q)}'(0+)= \psi'(\Phi(q))>0$ (which in particular implies that $X$ under $\mathbb{P}^{\Phi(q)}$ drifts to $+\infty$) and hence 
\[
\lim_{x\uparrow\infty}\E^{-\Phi(q)x}W^{(q)}(x) = \frac{1}{\psi'(\Phi(q))}\lim_{x\uparrow\infty}P_x^{\Phi(q)}(\underline{X}_\infty \geq 0)
= \frac{1}{\psi'(\Phi(q))}.
\]
Note that from this proof we also see that $W_{\Phi(q)}(+\infty) = 1/\psi'(\Phi(q))$.

For the second part,  it suffices to compare the identity (\ref{two-sided-down}) as $a\uparrow\infty$ against  (\ref{one-sided-down}).
 \hfill$\square$\end{proof}

\section{Concave-convex properties}\label{con-con}

Lemma \ref{lem:ZoverW} implies that when $q>0$, $W^{(q)}$ grows in a way that is asymptotically exponential and this opens the question as to whether there are any convexity properties associated with such scale functions for large values. Numerous applications have shown the need to specify more detail about the shape, and ultimately, the smoothness of scale functions. See for example the discussion in Chapter \ref{motivation}.
In this respect, there has been a recent string of articles, each one improving of the last, which has investigated concavity-convexity properties of scale functions and which are based on the following fundamental observation of Renming Song. Suppose that $\psi'(0+)\geq 0$ and that 
 $$\overline{\Pi}(x):=\Pi(-\infty,-x),$$ the density of the L\'evy measure of the descending ladder height process, see (\ref{tailmeasure}), is completely monotone.
Amongst other things, this implies that the descending ladder height process $\widehat{H}$ belongs to the class of so-called complete subordinators. 
A convenience of this class of subordinators is that  the potential measure  associated to $\widehat{H}$, which in this case is $W$, has a derivative which is is completely monotone; see for example Song and Vondra\v{c}ek \cite{SV2007}. In particular this implies that $W$ is concave (as well as being infinitely smooth). 
Loeffen \cite{Loeffen2008} pushes this idea further to the case of $W^{(q)}$ for $q>0$ as follows. (Similar ideas can also be developed from the paper of Rogers \cite{rogers83}).
\begin{theorem}\label{ronnie-result}
Suppose that $-\overline\Pi$ has a density which is completely monotone then $W^{(q)}$ has a density on $(0,\infty)$ which is strictly convex. In particular, this implies the existence of a constant  $a^*$ such that $W^{(q)}$ is strictly concave on $(0,a^*)$ and strictly convex on $(a^*,\infty)$. 
\end{theorem}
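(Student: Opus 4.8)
The plan is to reduce everything to the case $q=0$ with $\psi'(0+)\ge 0$, exploiting the tilting identity $W^{(q)}(x)=\E^{\Phi(q)x}W_{\Phi(q)}(x)$ from \equref{W_P}. For the tilted process $(X,P^{\Phi(q)})$ we have $\psi_{\Phi(q)}'(0+)=\psi'(\Phi(q))>0$, and by the exponential change of measure \equref{exp-shift} its L\'evy measure has density $\overline\Pi_{\Phi(q)}(x)=\E^{-\Phi(q)x}\overline\Pi(x)$ (more precisely $\Pi_{\Phi(q)}(\D x)=\E^{\Phi(q)x}\Pi(\D x)$). The first task is to observe that complete monotonicity of $-\overline\Pi'$ is preserved under this multiplicative tilt: a product of completely monotone functions is completely monotone, and $x\mapsto\E^{-\Phi(q)x}$ is completely monotone, so $-\overline\Pi_{\Phi(q)}'$ is again completely monotone. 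Hence it suffices to prove the statement for $W=W_{\Phi(q)}$, i.e. to show that if $\psi'(0+)\ge0$ and $-\overline\Pi'$ is completely monotone then $W'$ is completely monotone on $(0,\infty)$ (whence in particular $W'$ is convex).

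Next I would invoke the probabilistic identification of $W$ with the potential measure of the descending ladder height subordinator $\widehat H$: from Section \ref{scaledescending}, when $\psi'(0+)\ge0$ we have $W(x)=\int_0^\infty\D t\,P(\widehat H_t\in[0,x])$, and the Laplace exponent of $\widehat H$ is $\phi$, with $\int_0^\infty\E^{-\theta x}W(\D x)=1/\phi(\theta)$. By \equref{tailmeasure} the L\'evy measure $\Upsilon$ of $\widehat H$ has tail $\overline\Upsilon(x)=\int_x^\infty\overline\Pi(u)\D u$ (using $\Phi(0)=0$ here), so $\Upsilon$ has density $\overline\Pi$, which is completely monotone by hypothesis. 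This places $\widehat H$ in the class of complete subordinators (equivalently, special subordinators whose L\'evy density is completely monotone). The key step is then to quote the structural result of Song and Vondra\v{c}ek \cite{SV2007}: for such a subordinator the potential measure has a density $u(x)$ which is completely monotone on $(0,\infty)$. Since $W(\D x)=u(x)\D x$ on $(0,\infty)$ (with possibly an atom at $0$ of size $1/\delta$ in the bounded variation case, which does not affect the behaviour on $(0,\infty)$), we get that $W$ has a density $W'$ on $(0,\infty)$ and $W'$ is completely monotone.

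The final step is to upgrade ``completely monotone'' to ``strictly convex'' and to extract the concave--convex shape. A completely monotone function $f$ on $(0,\infty)$ is represented as $f(x)=\int_{[0,\infty)}\E^{-xt}\,\nu(\D t)$ for some measure $\nu$; it is automatically convex, and it is \emph{strictly} convex unless $\nu$ is concentrated on $\{0\}$, i.e. unless $f$ is constant. So I need to rule out $W'$ being constant on $(0,\infty)$: if $W'\equiv c$ then $W$ is affine on $(0,\infty)$, forcing $\psi$ to be (eventually) a degree-one polynomial, contradicting strict convexity of $\psi$ (Section \ref{motivation}) unless we are in a degenerate case excluded by the standing assumptions. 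Hence $W^{(q)\prime}$ is strictly convex on $(0,\infty)$. Being strictly convex, $W^{(q)\prime}$ is strictly decreasing on some maximal interval $(0,a^*)$ and strictly increasing on $(a^*,\infty)$ for some $a^*\in[0,\infty]$ (with $a^*$ possibly $0$ or $\infty$, though Lemma \ref{lem:ZoverW} shows $W^{(q)}(x)\sim\E^{\Phi(q)x}/\psi'(\Phi(q))$ grows exponentially when $q>0$, forcing $a^*<\infty$); on $(0,a^*)$ we have $W^{(q)\prime\prime}<0$ so $W^{(q)}$ is strictly concave there, and on $(a^*,\infty)$ we have $W^{(q)\prime\prime}>0$ so $W^{(q)}$ is strictly convex there.

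The main obstacle is the middle step: cleanly citing and correctly applying the Song--Vondra\v{c}ek theory of complete/special subordinators to identify the potential density as completely monotone, and making sure the translation between $\overline\Pi$ being completely monotone and $\widehat H$ being a complete subordinator is exactly right (including the role of the drift $\xi=\sigma^2/2$ and killing rate $\kappa=\psi'(0+)\vee0$ in $\phi$, via \equref{C}). The preservation of complete monotonicity under the $\E^{-\Phi(q)x}$ tilt is easy, and the passage from completely monotone density to the concave--convex dichotomy is routine; it is the input from potential theory of subordinators that carries the real content.
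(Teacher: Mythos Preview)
Your overall strategy matches the paper's: tilt by $\Phi(q)$, check the hypothesis survives the tilt, invoke the Song--Vondra\v{c}ek result to conclude $W_{\Phi(q)}'$ is completely monotone. But there is a genuine gap in the last step. You assert ``it suffices to prove the statement for $W=W_{\Phi(q)}$'' and, having shown $W_{\Phi(q)}'$ is completely monotone (hence strictly convex once you rule out the constant case), you jump to ``Hence $W^{(q)\prime}$ is strictly convex.'' This implication is not justified: when $\Phi(q)>0$ the function whose derivative you need to be strictly convex is $x\mapsto \E^{\Phi(q)x}W_{\Phi(q)}(x)$, not $W_{\Phi(q)}$ itself, and strict convexity of $W_{\Phi(q)}'$ does not pass automatically through multiplication by $\E^{\Phi(q)x}$. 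Indeed, writing $W^{(q)\prime\prime\prime}$ via Leibniz gives terms with alternating signs ($W_{\Phi(q)}''\le 0$), so positivity is not free.

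The paper closes exactly this gap: it uses the Bernstein representation
\[
W_{\Phi(q)}(x)=\texttt{a}+\texttt{b}x+\int_{(0,\infty)}(1-\E^{-xt})\,\xi(\D t)
\]
and computes $W^{(q)\prime\prime\prime}(x)$ explicitly. Splitting the integral at $t=\Phi(q)$, each piece of the integrand $\Phi(q)^3\E^{\Phi(q)x}-(\Phi(q)-t)^3\E^{(\Phi(q)-t)x}$ is strictly positive for $t>0$, and the contribution from $(\texttt{a}+\texttt{b}x)\E^{\Phi(q)x}$ is nonnegative, giving $W^{(q)\prime\prime\prime}(x)>0$. You need this computation (or an equivalent argument) to complete your proof; once you add it, your argument coincides with the paper's.
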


\begin{proof}
Recall from (\ref{W_P}) that we may always write $W^{(q)}(x) = \E^{\Phi(q)x}W_{\Phi(q)}(x),$
where $W_{\Phi(q)}$ plays the role of $W$ under the measure $P^{\Phi(q)}$. Recall also from the discussion under (\ref{tilted-exponent}) and (\ref{exp-shift}) that $(X,P^{\Phi(q)})$ drifts to $+\infty$, and that the L\'evy measure of $X$ under $P^{\Phi(q)}$ is given by $\Pi_{\Phi(q)}(dx)=\E^{\Phi(q)x}\Pi(dx),$ $x\in \mathbb{R}$. It follows that under the assumptions of the Theorem that the function $\Pi_{\Phi(q)}(-\infty,-x)$, $x>0$, is completely monotone. Hence the discussion preceding the statement of Theorem \ref{ronnie-result} tells us that $W'_\Phi(q)(x)$ exists and is completely monotone. According to the definition by Song and Vondra\v{c}ek \cite{SV2007}, this makes of $W_{\Phi(q)}$ a {\it Bernstein function}.

The general theory of Bernstein functions dictates that there necessarily exists a triple $(\texttt{a}, \texttt{b}, \xi)$, where $\texttt{a}, \texttt{b}\geq 0$ and $\xi$ is a measure concentrated on $(0,\infty)$ satisfying $\int_{(0,\infty)}(1\wedge t)\xi(\D t)<\infty$, such that
\[
W_{\Phi(q)}(x) = \texttt{a} + \texttt{b}x + \int_{(0,\infty)}(1- \E^{-xt})\xi(\D t).
\]
It is now a straightforward exercise to check with the help of the above identity and the Dominated Convergence Theorem that for $x>0$,
\begin{eqnarray*}
W^{(q)\prime\prime\prime}(x)& =& f'''(x) + \int_{(0,\Phi(q)]}(\Phi(q)^3\E^{\Phi(q)x} - (\Phi(q) -t)^3\E^{(\Phi(q) -t)x})\xi(\D t)\\
&+& \int_{(\Phi(q),\infty)}(\Phi(q)^3 \E^{\Phi(q)x} +(t-\Phi(q))^3 \E^{-x(t-\Phi(q))})\xi(\D t), 
\end{eqnarray*}
where $f(x) = (\texttt{a}+\texttt{b}x)\E^{\Phi(q)x}$. Hence $W^{(q)\prime\prime\prime}(x)>0$ for all $x>0$, showing that $W^{(q)\prime}$ is strictly convex on $(0,\infty)$ as required.
\hfill$\square$ 
\end{proof}

Following additional contributions in \cite{KRS}, the final word on concavity-convexity currently stands with the following theorem, taken from \cite{LR}, which overlaps all of the aforementioned results.

\begin{theorem}
Suppose that $\overline\Pi$ is log-convex. Then for all $q\geq 0$, $W^{(q)}$ has a log-convex first derivative. 
\end{theorem}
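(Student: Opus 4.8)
The plan is to reduce the claim, as always, to the case $q=0$ with $\psi'(0+)\geq 0$ and then to exploit the identification of $W$ with the potential density of the descending ladder height subordinator $\widehat H$ established in Section \ref{scaledescending}. Indeed, by the relation $W^{(q)}(x)=\E^{\Phi(q)x}W_{\Phi(q)}(x)$ in \equref{W_P}, together with the fact that multiplying a log-convex function by the log-linear factor $\E^{\Phi(q)x}$ preserves log-convexity (the sum of a convex function and a linear function is convex), it is enough to show that $W'=W_{\Phi(q)}'$ is log-convex for the tilted process; and under $P^{\Phi(q)}$ the process drifts to $+\infty$, so $\Phi_{\Phi(q)}(0)=0$, while its L\'evy measure tail is $\E^{\Phi(q)x}\overline\Pi(x)$, which is log-convex whenever $\overline\Pi$ is (again because the logarithm picks up only a linear term). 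So we may assume $\psi'(0+)\geq0$, $q=0$, and that $\overline\Pi$ is log-convex.

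Next I would use \equref{tailmeasure}: the L\'evy measure $\Upsilon$ of $\widehat H$ has tail $\Upsilon(x,\infty)=\int_x^\infty \overline\Pi(u)\,\D u$ (since $\Phi(0)=0$ here). A log-convex function is in particular decreasing once it is integrable at infinity, hence $\overline\Pi$ has a density in the sense that $\Upsilon$ itself has a density $\upsilon(x)=\overline\Pi(x)$ on $(0,\infty)$; and $\overline\Pi$ log-convex means precisely that $\widehat H$ has a log-convex L\'evy density. The key structural input I want to invoke is the result — which I would cite from the same circle of ideas behind Theorem \ref{ronnie-result}, i.e.\ from the theory of complete/special Bernstein functions and the work of Song and Vondra\v{c}ek, or directly from \cite{LR} — that a subordinator with a log-convex L\'evy density has a log-convex potential density. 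Granting this, $W$, being (up to the drift/killing contributions handled by the $\Phi(0)=0$ reduction) the potential density of $\widehat H$, has a log-convex derivative, which is the assertion. Here one must be mildly careful about the possible atom of $W$ at $0$ / the drift part $\xi=\sigma^2/2$ of $\widehat H$: when $\sigma\neq0$ the potential measure has an absolutely continuous part whose density is what we control, and the reduction via \equref{W interchange} shows the $C^1$ statement of Lemma \ref{C1} is in force, so "$W^{(q)}$ has a log-convex first derivative" is meaningful.

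The main obstacle is exactly the transfer lemma "log-convex L\'evy density $\Rightarrow$ log-convex potential density" for a subordinator: this does not follow from the Bernstein-function machinery used for Theorem \ref{ronnie-result} (complete monotonicity of the L\'evy tail is a genuinely stronger and differently-flavoured hypothesis than log-convexity of the tail), and it is the analytic heart of \cite{LR}. I would prove it by working with the renewal equation for the potential density $u$ of $\widehat H$, $u=\sum_{n\geq0}\upsilon^{*n}$ in the driftless case (with the obvious modification when $\xi>0$), and showing that log-convexity is stable under the convolution operations involved — the crucial fact being that the convolution of a log-convex function with a nonnegative measure, and more delicately the solution of the renewal equation, stays log-convex; this rests on the Pólya-type inequality that log-convex densities are closed under convolution. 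Establishing that closure property cleanly, and then pushing it through the infinite series / renewal fixed point uniformly, is where the real work lies; everything else is the routine tilting-and-limiting bookkeeping already used repeatedly above.
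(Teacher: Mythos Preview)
First, a factual note: the paper does not give a proof of this theorem. It is stated and attributed to \cite{LR}, so there is no in-paper proof to compare your proposal against.

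Regarding the proposal itself, the core analytic idea --- pass to the descending ladder height subordinator via \equref{tailmeasure} and \equref{a}, observe that its L\'evy density is $\overline\Pi$, and invoke (or prove) a transfer lemma ``log-convex L\'evy density $\Rightarrow$ log-convex potential density'' for subordinators --- is sound and is indeed the heart of the matter. But your tilting reduction to the case $q=0$, $\psi'(0+)\ge 0$ has a genuine gap. From $W^{(q)}(x)=\E^{\Phi(q)x}W_{\Phi(q)}(x)$ you claim it suffices to show $W_{\Phi(q)}'$ is log-convex because ``multiplying by $\E^{\Phi(q)x}$ preserves log-convexity''. Differentiation, however, does not commute with that multiplication: the product rule gives
\[
W^{(q)\prime}(x)=\E^{\Phi(q)x}\bigl(\Phi(q)\,W_{\Phi(q)}(x)+W_{\Phi(q)}'(x)\bigr),
\]
so what you actually need is that $\Phi(q)W_{\Phi(q)}+W_{\Phi(q)}'$ is log-convex. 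Sums of log-convex functions are log-convex, so this would follow if $W_{\Phi(q)}$ itself were log-convex --- but it is not. Under $P^{\Phi(q)}$ the process drifts to $+\infty$, so by \equref{kyprianou-palmowski:forces-left-cts} the function $W_{\Phi(q)}(x)=P_x^{\Phi(q)}(\underline X_\infty\ge 0)/\psi'(\Phi(q))$ is bounded, increasing and concave, hence typically log-\emph{concave}. (A simple check: $1-\E^{-x}$ is log-concave, not log-convex.) The extra term $\Phi(q)W_{\Phi(q)}$ therefore genuinely obstructs the argument as written, and the passage from $q=0$ to general $q$ must be handled differently --- for instance via the convolution-series representation $W^{(q)}=\sum_{k\ge 0}q^k W^{*(k+1)}$ from Lemma~\ref{analytically extend}, or by working directly with the $q$-resolvent, rather than by the Esscher tilt.
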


Note that the existence of a log-convex density of $-\overline{\Pi}$ implies that $\overline{\Pi}$ is log-convex and hence the latter is a weaker condition than the former. This is not an obvious statement, but it can be proved using elementary analytical arguments.

\section{Analyticity in $q$}

Let us now look at the behaviour of $W^{(q)}$ as a function in $q$. 

\begin{lemma}
\label{analytically extend}For each $x\geq 0$, the function
$q\mapsto W^{(q)}(x)$ may be analytically extended to $q\in
\mathbb{C}$.
\end{lemma}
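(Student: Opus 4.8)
The plan is to establish the claimed analytic extension by exhibiting $W^{(q)}(x)$ as a power series in $q$ with infinite radius of convergence, using the (already proven) $q=0$ scale function $W=W^{(0)}$ as a building block. The starting point is the following heuristic: formally expanding $1/(\psi(\beta)-q) = \sum_{k\geq 0} q^k/(\psi(\beta))^{k+1}$ and recalling that $1/\psi(\beta)$ is the Laplace transform of $W$, one is led to conjecture that
\[
W^{(q)}(x) = \sum_{k\geq 0} q^k W^{*(k+1)}(x), \qquad x\geq 0,
\]
where $W^{*(k+1)}$ denotes the $(k+1)$-fold convolution of $W$ with itself (so $W^{*1}=W$). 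First I would make this rigorous for real $q\geq 0$: since $W$ is nondecreasing, continuous on $[0,\infty)$ (Lemma \ref{a.e.diff}), and $W(x)\leq W^{(q)}(x)$ pointwise, one gets the crude bound $W^{*(k+1)}(x) \leq x^k W(x)^{k+1}/k!$ by induction on $k$ (each convolution integral is over $[0,x]$ and picks up at most a factor $xW(x)/k$ in the obvious estimate). Hence for every fixed $x$ the series $\sum_k q^k W^{*(k+1)}(x)$ converges absolutely and locally uniformly for $q\in\mathbb{C}$, so it defines an entire function of $q$.

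Next I would check that this entire function agrees with $W^{(q)}(x)$ for $q\geq 0$, which identifies the analytic extension. For this it suffices to verify that the proposed series has the correct Laplace transform in $x$: by the bound above, Fubini applies, and for $\beta>\Phi(q)$ one computes
\[
\int_0^\infty \E^{-\beta x}\sum_{k\geq 0} q^k W^{*(k+1)}(x)\,\D x
= \sum_{k\geq 0} q^k \left(\frac{1}{\psi(\beta)}\right)^{k+1}
= \frac{1}{\psi(\beta)-q},
\]
using that the Laplace transform of an $(k+1)$-fold convolution is the $(k+1)$-th power of the individual transform, and that $|q|/\psi(\beta)<1$ for $\beta$ large enough. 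By uniqueness of the (right-continuous) function with this Laplace transform, namely Definition \ref{scaledef}, the series equals $W^{(q)}(x)$ for each $q\geq 0$. Since the series is entire in $q$ and coincides with $W^{(q)}(x)$ on $[0,\infty)$, it provides the desired analytic extension to $q\in\mathbb{C}$.

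The main obstacle is the convergence/growth estimate for the convolution powers $W^{*(k+1)}$: one needs a bound good enough to force an infinite radius of convergence uniformly on compacts in $x$, and one must take some care that $W$ may grow (it is only known to grow at most like $\E^{\Phi(0)x}$, by Lemma \ref{lem:ZoverW}), so the factorial in the denominator is what saves the day. A clean way to organize this is to fix a compact interval $[0,R]$, set $M=W(R)$, and prove by induction that $\sup_{x\in[0,R]} W^{*(k+1)}(x)\leq M^{k+1}R^k/k!$; then $\sum_k |q|^k M^{k+1}R^k/k! = M\E^{|q|MR}<\infty$, giving both absolute convergence and the analyticity (via, e.g., Morera's theorem or uniform-limit-of-polynomials-in-$q$ on compacts). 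Everything else — Fubini, the Laplace-transform computation, and the appeal to uniqueness — is routine once this estimate is in hand.
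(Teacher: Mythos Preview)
Your proposal is correct and follows essentially the same route as the paper: expand $1/(\psi(\beta)-q)$ as a geometric series, identify the coefficients as Laplace transforms of the convolution powers $W^{*(k+1)}$, use the inductive bound $W^{*(k+1)}(x)\le x^k W(x)^{k+1}/k!$ to force an infinite radius of convergence in $q$, and invoke Fubini together with continuity/uniqueness to match the series with $W^{(q)}(x)$ for $q\ge 0$. Your added remarks about organizing the estimate on compacts in $x$ and appealing to Morera are cosmetic refinements of the same argument.
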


\begin{proof}
For a fixed choice of $q>0$,
\begin{eqnarray}
\int_{0}^{\infty }\mathrm{e}^{-\beta x}W^{(q)}(x){\D}x
&=&\frac{1}{\psi \left( \beta
\right) -q}\text{ }  \notag \\
&=&\frac{1}{\psi \left( \beta \right) }\frac{1}{1-q/\psi \left(
\beta
\right) }  \notag \\
&=&\frac{1}{\psi \left( \beta \right) }\sum_{k\geq
0}q^{k}\frac{1}{\psi \left( \beta \right) ^{k}} , \label{apply
fubini}
\end{eqnarray}
for $\beta >\Phi (q).$ The latter inequality implies that
$0<q/\psi(\beta)<1$. Next note that
\begin{equation*}
\sum_{k\geq 0}q^{k}W^{\ast (k+1)}\left( x\right)
\end{equation*}
converges for each $x\geq 0$ where $W^{\ast k}$ is the $k$ th
convolution of $W$ with itself. This is easily deduced once one
has the estimates
\begin{equation}
W^{\ast (k+1)}\left( x\right) \leq \frac{x^{k}}{k!}W\left(
x\right)^{k+1} \label{induction}, \qquad x\geq0;
\end{equation}
which itself can easily be proved by induction. Indeed note that
if (\ref {induction}) holds for $k\geq 0,$ then by monotonicity of
$W$,
\begin{eqnarray*}
W^{\ast (k+1)}\left( x\right)  &\leq &\int_{0}^{x}\frac{y^{k-1}}{(k-1)!}%
W\left( y\right)^{k}W\left( x-y\right) {\D}y \\
&\leq &\frac{1}{(k-1)!}W\left( x\right)^{k+1}\int_{0}^{x}y^{k-1}\ {\D}y \\
&=&\frac{x^{k}}{k!}W\left( x\right)^{k+1}.
\end{eqnarray*}
Returning to (\ref{apply fubini}) we may now apply Fubini's
Theorem (justified by the assumption that $\beta>\Phi(q)$) and
deduce that
\begin{eqnarray*}
\int_{0}^{\infty }\mathrm{e}^{-\beta x}W^{(q)}(x){\D}x &=&\sum_{k\geq 0}q^{k}\frac{1}{%
\psi \left( \beta \right) ^{k+1}} \\
&=&\sum_{k\geq 0}q^{k}\int_{0}^{\infty }\mathrm{e}^{-\beta
x}W^{\ast (k+1)}\left(
x\right) {\D}x \\
&=&\int_{0}^{\infty }\mathrm{e}^{-\beta x}\sum_{k\geq
0}q^{k}W^{\ast (k+1)}\left( x\right) {\D}x.
\end{eqnarray*}
Thanks to continuity of $W$ and $W^{(q)}$ we have that
\begin{equation}
W^{(q)}(x)=\sum_{k\geq 0}q^{k}W^{\ast (k+1)}\left( x\right) , \,\, x\in \mathbb{R}.
\label{analytic}
\end{equation}
Now noting that $\sum_{k\geq 0}q^{k}W^{\ast (k+1)}\left( x\right)
$ converges for all $q\in \mathbb{C}$ we may extend the definition
of $W^{(q)}$ for each fixed $x\geq 0,$ by the equality given in
(\ref{analytic}). \hfill$\square$\end{proof}

For each $c\geq 0$, denote by  $W_{c}^{(q)}$ the $q$-scale function for $(X, P^c)$. The
previous Lemma allows us to establish the following relationship for $%
W_{c}^{(q)}$ with different values of $q$ and $c.$

\begin{lemma}\label{LEMMA-general-W-rel}
For any $q\in \mathbb{C}$ and $c\in \mathbb{R}$ such that $\psi
(c)<\infty $ we have
\begin{equation}
W^{(q)}(x)=\mathrm{e}^{cx}W_{c}^{(q-\psi (c))}(x),
\label{general-W-rel}
\end{equation}
for all $x\geq 0.$
\end{lemma}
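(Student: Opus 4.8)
The plan is to establish (\ref{general-W-rel}) first for real $q$ by a direct Laplace-transform computation, and then extend it to all $q\in\mathbb{C}$ by analytic continuation in $q$ using Lemma \ref{analytically extend}.

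First I would take $q$ real and large enough that $q>0$ and $q-\psi(c)>0$, so that $W_c^{(q-\psi(c))}$ is an honest $(q-\psi(c))$-scale function for the spectrally negative L\'evy process $(X,P^c)$, whose Laplace exponent is $\psi_c(\theta)=\psi(\theta+c)-\psi(c)$ by (\ref{exp-shift}). Choosing $\beta$ large enough that $\beta>\Phi(q)$ and, simultaneously, $\beta-c$ exceeds the largest root of the equation $\psi_c(\cdot)=q-\psi(c)$, a change of variable in the defining transform (\ref{LTdefW}) applied to $(X,P^c)$ yields
\begin{equation*}
\int_0^\infty \E^{-\beta x}\E^{cx}W_c^{(q-\psi(c))}(x)\,\D x
=\int_0^\infty \E^{-(\beta-c)x}W_c^{(q-\psi(c))}(x)\,\D x
=\frac{1}{\psi_c(\beta-c)-(q-\psi(c))}.
\end{equation*}
Since $\psi_c(\beta-c)=\psi(\beta+c-c)-\psi(c)=\psi(\beta)-\psi(c)$, the right-hand side is precisely $(\psi(\beta)-q)^{-1}$, i.e. the Laplace transform characterising $W^{(q)}$ in (\ref{LTdefW}). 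The map $x\mapsto\E^{cx}W_c^{(q-\psi(c))}(x)$ is right-continuous because $W_c^{(q-\psi(c))}$ is, so the uniqueness clause of Definition \ref{scaledef} forces $W^{(q)}(x)=\E^{cx}W_c^{(q-\psi(c))}(x)$ for all $x\ge0$ and all such real $q$.

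To remove the restriction on $q$, I would fix $x\ge0$ and observe that both sides of (\ref{general-W-rel}) are entire functions of $q$: the left-hand side by Lemma \ref{analytically extend}, and the right-hand side because that same lemma applied to $(X,P^c)$ makes $p\mapsto W_c^{(p)}(x)$ entire, and precomposing with the affine map $q\mapsto q-\psi(c)$ (which is entire, as $\psi(c)\in\mathbb{R}$) preserves this. Two entire functions of $q$ that coincide on the half-line of large real $q$ found above must coincide on all of $\mathbb{C}$ by the identity theorem, which gives (\ref{general-W-rel}) for every $q\in\mathbb{C}$ and every $c\in\mathbb{R}$ with $\psi(c)<\infty$.

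The step I expect to demand the most care is the Laplace-transform bookkeeping in the second paragraph: verifying that the substitution $\beta\mapsto\beta-c$ keeps one inside the region where (\ref{LTdefW}) is valid for the tilted process, and confirming the clean cancellation $\psi_c(\beta-c)-(q-\psi(c))=\psi(\beta)-q$. Once that is in place, the remaining ingredients — uniqueness of Laplace transforms and analytic continuation in $q$ — are routine given the results already available in the excerpt.
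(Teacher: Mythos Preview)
Your proposal is correct and follows essentially the same approach as the paper: establish (\ref{general-W-rel}) for a half-line of real $q$ via the Laplace transform identity (\ref{LTdefW}) applied to both $X$ and $(X,P^c)$, and then invoke Lemma \ref{analytically extend} together with the identity theorem to extend to all $q\in\mathbb{C}$. The paper states the Laplace-transform step in one sentence whereas you spell out the substitution and the cancellation $\psi_c(\beta-c)-(q-\psi(c))=\psi(\beta)-q$, but the argument is the same; note also that your two conditions on $\beta$ coincide, since the largest root of $\psi_c(\cdot)=q-\psi(c)$ is exactly $\Phi(q)-c$.
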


\begin{proof}
For a given $c\in \mathbb{R},$ such that $\psi (c)<\infty,$ the
identity (\ref {general-W-rel}) holds for $q-\psi(c)\geq 0,$ on
account of both left and right-hand side being continuous
functions with the same Laplace transform. By Lemma
\ref{analytically extend} both left- and right-hand side of (\ref
{general-W-rel}) are analytic in $q$ for each fixed $x\geq 0.$ The
Identity
Theorem for analytic functions thus implies that they are equal for all $%
q\in \mathbb{C}$. \hfill$\square$\end{proof}

 Unfortunately a convenient relation such as (\ref{general-W-rel}) cannot be given  for $Z^{(q)}.$ Nonetheless we do have the following obvious corollary.

\begin{corollary}
For each $x>0$ the function $q\mapsto Z^{(q)}(x)$ may be
analytically extended to $q\in \mathbb{C}$.
\end{corollary}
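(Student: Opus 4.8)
The plan is to leverage the already-established analyticity of $q \mapsto W^{(q)}(x)$ from Lemma~\ref{analytically extend}, since $Z^{(q)}$ is built out of $W^{(q)}$ by a single integration in the spatial variable. Recall from \equ{zedque} that
\[
Z^{(q)}(x) = 1 + q\int_0^x W^{(q)}(y)\,\D y,
\]
so it suffices to show that $q \mapsto \int_0^x W^{(q)}(y)\,\D y$ extends analytically to $\c$; multiplication by the entire function $q$ and adding the constant $1$ then preserves analyticity.

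First I would invoke the series representation \equ{analytic}, namely $W^{(q)}(y) = \sum_{k\geq 0} q^k W^{\ast(k+1)}(y)$, valid for all $y\geq 0$ and all $q\in\c$. Integrating term by term over $[0,x]$ and using the bound \equ{induction}, $W^{\ast(k+1)}(y) \leq \frac{y^k}{k!}W(y)^{k+1} \leq \frac{x^k}{k!}W(x)^{k+1}$ for $y\in[0,x]$ (by monotonicity of $W$), one sees that the series $\sum_{k\geq 0} q^k \int_0^x W^{\ast(k+1)}(y)\,\D y$ converges absolutely and locally uniformly in $q\in\c$, since its terms are dominated by $\frac{(|q|x W(x))^k}{k!} x W(x)$, which is summable. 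Hence
\[
\int_0^x W^{(q)}(y)\,\D y = \sum_{k\geq 0} q^k \int_0^x W^{\ast(k+1)}(y)\,\D y
\]
defines an entire function of $q$ for each fixed $x\geq 0$ (being a locally uniform limit of polynomials in $q$), and therefore so does $Z^{(q)}(x) = 1 + q\sum_{k\geq 0} q^k \int_0^x W^{\ast(k+1)}(y)\,\D y$.

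There is essentially no serious obstacle here: the only point requiring any care is the interchange of summation and integration, which is justified by the uniform domination on $[0,x]$ just described (equivalently, by Fubini/Tonelli applied to counting measure and Lebesgue measure). One could alternatively argue more softly: for each fixed $x$, $q \mapsto W^{(q)}(y)$ is analytic uniformly on compacts in $q$ and the family $\{W^{(q)}(y) : q \in K, \, y\in[0,x]\}$ is bounded for any compact $K\subset\c$ (again by the series bound), so one may differentiate under the integral sign, or apply Morera's theorem to $\int_0^x W^{(q)}(y)\,\D y$ using Fubini to swap the contour integral over a triangle with the spatial integral. Either route is routine, so the corollary follows immediately.
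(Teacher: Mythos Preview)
Your proposal is correct and is precisely the natural way to fill in what the paper leaves implicit: the paper offers no explicit proof but calls the result an ``obvious corollary'' of Lemma~\ref{analytically extend}, and your argument---integrating the power series \eqref{analytic} term by term over $[0,x]$ using the bound \eqref{induction} to justify the interchange---is exactly the intended reasoning.
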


The above results allow one to push some of the identities in Section \ref
{section:fluctuation identities} further by applying an exponential change of measure. In principle this allows one to gain distribution information about the position of the L\'evy process at first passage.  We give one example here but the reader can easily explore other possibilities. 

Consider the first passage identity in (\ref{one-sided-down}).
Suppose that $v\geq 0,$ then $|\psi(v)|<\infty,$ and assume it satisfies $u>\psi(v)\vee 0$. Then with the help of the aforementioned identity together with the change of measure (\ref{COM}) we have for  $x\in \mathbb{R}$,
\begin{eqnarray*}
E_x\left(\E^{-u\tau_0^- + vX_{\tau^-_0}}\mathbf{1}_{(\tau_0^- <\infty)}\right)
&=&E^v_x\left(\E^{-(u-\psi(v))\tau_0^-}\mathbf{1}_{(\tau_0^- <\infty)}\right)\\
&=&\E^{vx}\left(Z_v^{(p)}(x) -\frac{p}{\Phi_v(p)}W_v^{(p)}(x)\right),
\end{eqnarray*}
where $p = u-\psi(v)>0$. We can develop the right hand side further using   the relationship  between scale functions given in Lemma \ref{LEMMA-general-W-rel} as well as by noting   that 
\begin{eqnarray*}
\Phi_v(p) &=& \sup\{\lambda \geq 0: \psi_v(\lambda) = p\}\\
&=&  \sup\{\lambda \geq 0: \psi(\lambda + v)- \psi(v) = u - \psi(v)\}\\
&=&\sup\{\theta \geq 0: \psi(\theta) = u \} -v\\
&=& \Phi(u)-v.
\end{eqnarray*}
Hence we have that 
\begin{eqnarray}
\lefteqn{E_x\left(\E^{-u\tau_0^- + vX_{\tau^-_0}}\mathbf{1}_{(\tau_0^- <\infty)}\right)}&&\notag\\
&&=\E^{vx}\left(1 +(u - \psi(v))\int_0^x \E^{-v y}W^{(u)}(y){\rm d}y -\frac{u- \psi(v)}{\Phi(u)-v}\E^{-vx}W^{(u)}(x)\right).
\label{extendme}
\end{eqnarray}
Clearly the restriction that $u>\psi(v)$ is an unnecessary constraint for both the left and right hand side of the above equality to be finite and it would suffice that $u,v\geq 0$. In particular for the right hand side, when $u = \psi(v)$ it follows that $\Phi(u) = v$ and hence the ratio $(u-\psi(v))/(\Phi(u)-v)$ should be understood in the limiting sense. That is to say,
\[
\lim_{p\rightarrow0}\frac{p}{\Phi_v(p)} = \lim_{p\rightarrow0}\frac{\psi_v(\Phi_v(p))}{\Phi_v(p)} = \psi'_v(0+) = \psi'(v),
\]
where we have used the fact that $\Phi_v(0) = 0$ as $\psi_v'(0+) = \psi'(v)>0$. 

Note that  the left hand side of (\ref{extendme}) is  analytic for complex  $u$  with a strictly positive real part. 
Moreover, thanks to Lemma \ref{analytically extend} and the fact that $\Phi(u)$ is a Laplace exponent  (cf. the last equality of (\ref{firstpassagesub})) it is also clear that the right hand side can be analytically extended to allow for complex-valued $u$ with strictly positive real part. Hence once again the Identity Theorem allows us to extend the equality (\ref{extendme}) to allow for the case that $u>0$. The case that $u=0$ can be established by taking limits as $u\downarrow 0$ on both sides. 

Careful inspection of the above argument shows that one may even relax the constraint on $v\geq 0$  to simply any $v$ such that $|\psi(v)|<\infty$ in the case that the exponent $\psi(v)$ is finite for negative values of $v$.


\section{Spectral gap}

Bertoin \cite{ber97} showed an important consequence of the analytic nature of scale function $W^{(q)}$ in its argument $q$.

For $a>0$, let  $\tau: = \tau^+_a\wedge\tau^-_0$, then \cite{ber97} investigates ergodicity properties of the spectrally negative L\'evy process $X$ on exiting $[0,a]$. The main object of concern is the killed transition kernel
\[
P(x, t, A) = P_x(X_t \in A; t< \tau).
\]

\begin{theorem}
Define
\[
\rho = \inf\{q \geq 0 : W^{(-q)}(a) = 0\}.
\]
Then $\rho$ is finite and positive, and for any $q<\rho$ and $x\in(0,a)$, $W^{(-q)}(x) > 0$. Furthermore,
the following assertions hold,
\begin{enumerate}
\item[(i)] $\rho$ is a simple root of the entire function $q \mapsto W^{(-q)}(a)$,
\item[(ii)] The function $W^{(-\rho)}$ is positive on $(0,a)$ and is $\rho$-invariant for $P(x, t, \cdot)$ in the sense that 
\[
\int_{[0,a]}P(x,t,{\rm d}y)W^{(-\rho)}(y)=\E^{-\rho t}W^{(-\rho)}(x),\text{ for any }x\in(0,a),
\]
\item[(iii)] the measure $W^{(-\rho)}(a-x){\rm d}x $ on $[0,a]$ is $\rho$-invariant in the sense that
\[
\int_{[0,a]} {\rm d}y \cdot W^{(-\rho)}(a-y)P(y,t,{\rm d}x) = \E^{-\rho t}W^{(-\rho)}(a-x){\rm d}x,
\]
\item[(iv)] There is a constant $c>0$ such that, for any $x\in(0,a)$,
\[
\lim_{t\uparrow\infty}
\E^{\rho t}P(x,t, {\rm d}y) = cW^{(-\rho)}(x) W^{(-\rho)}(a-y){\rm d}y,
\]
in the sense of weak convergence.
\end{enumerate}
\end{theorem}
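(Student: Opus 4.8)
The plan is to exploit three facts already in hand: that $q\mapsto W^{(q)}(x)$ extends to an entire function for each fixed $x\ge 0$ (Lemma~\ref{analytically extend}), the two-sided exit identity \eqref{whatsinaname}, and the explicit killed resolvent of Theorem~\ref{resolve}(i). Since $W^{(0)}(a)=W(a)>0$ (Lemma~\ref{a.e.diff}) and $q\mapsto W^{(-q)}(a)$ is entire, it is strictly positive near $q=0$, so $\rho>0$; moreover $W^{(q)}(a)>0$ for every real $q\in(-\rho,\infty)$, since for $q\ge 0$ the series \eqref{analytic} has only positive terms, while $\rho$ is by definition the first zero encountered moving left from $0$. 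The analytic continuation of \eqref{whatsinaname} over this range gives
\[
\frac{W^{(q)}(x)}{W^{(q)}(a)}=E_x\!\left(\E^{-q\tau^+_a};\,\tau^+_a<\tau^-_0\right),\qquad q\in(-\rho,\infty),
\]
the right-hand side being the Laplace transform in $q$ of a finite positive measure, non-trivial for $x\in(0,a)$ because $P_x(\tau^+_a<\tau^-_0)=W(x)/W(a)>0$. Writing $W^{(q)}(x)=\bigl(W^{(q)}(x)/W^{(q)}(a)\bigr)W^{(q)}(a)$ then yields $W^{(-q)}(x)>0$ on $(0,a)$ for all $q\in[0,\rho)$, and by continuity at $q=\rho$ as well. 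Finally the abscissa of convergence of the displayed transform is exactly $-\rho$: it is $\ge-\rho$ because the transform stays finite on $(-\rho,\infty)$, and $\le-\rho$ because as $q\downarrow-\rho$ one has $W^{(q)}(a)\downarrow 0$ while $W^{(q)}(x)\to W^{(-\rho)}(x)>0$, so it explodes; and it is finite because, $X$ not being monotone, $P_x(\tau>t)\ge c\,\E^{-\lambda t}$ for suitable $c,\lambda>0$, with $\tau:=\tau^+_a\wedge\tau^-_0$. Hence $\rho<\infty$.

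For part (ii) I would pass to the limit $q\uparrow\rho$ in the identity obtained from the Markov property of $X$ at a fixed time $t$: for $q<\rho$ and $x\in(0,a)$,
\[
W^{(-q)}(x)=W^{(-q)}(a)\,E_x\!\left(\E^{q\tau^+_a};\,\tau^+_a<\tau^-_0,\ \tau^+_a\le t\right)+\E^{qt}\,E_x\!\left(\mathbf{1}_{\{t<\tau\}}W^{(-q)}(X_t)\right),
\]
which follows by splitting $\{\tau^+_a<\tau^-_0\}$ on $\{\tau^+_a\le t\}$ versus $\{\tau^+_a>t\}$ and noting $t<\tau$ on the latter. Using the uniform bound $0\le W^{(-q)}(y)\le W^{(\rho)}(a)$ for $q\in[0,\rho]$, $y\in[0,a]$ (read off \eqref{analytic}), the first term vanishes as $q\uparrow\rho$ since $W^{(-q)}(a)\to 0$, and dominated convergence handles the second, giving
\[
E_x\!\left(\mathbf{1}_{\{t<\tau\}}W^{(-\rho)}(X_t)\right)=\E^{-\rho t}W^{(-\rho)}(x),
\]
which is the asserted $\rho$-invariance (note $W^{(-\rho)}$ vanishes off $[0,a]$, so this coincides with the integral against $P(x,t,\D y)$). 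Part (iii) is the dual statement: applied to the spectrally positive process $-X$ killed on leaving $[0,a]$, whose killed resolvent density is the transpose $u^{(q)}(y,x)$ of the kernel in Theorem~\ref{resolve}(i), the same scheme identifies $x\mapsto W^{(-\rho)}(a-x)$ as the corresponding $\rho$-invariant function; alternatively it appears as the left factor of the residue below.

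For (i), consider the killed resolvent kernel $u^{(q)}(x,y)=W^{(q)}(x)W^{(q)}(a-y)/W^{(q)}(a)-W^{(q)}(x-y)$ from Theorem~\ref{resolve}(i); as a function of $q$ it is meromorphic on $\mathbb{C}$ with poles confined to the (isolated) zeros of $q\mapsto W^{(q)}(a)$, so there is a genuine gap below $-\rho$ among the singularities. The leading pole sits at $q=-\rho$, with residue (formally) the rank-one kernel $W^{(-\rho)}(x)W^{(-\rho)}(a-y)\big/\bigl(\tfrac{\D}{\D q}W^{(q)}(a)\big|_{q=-\rho}\bigr)\,\D y$. The main obstacle is to show this pole is simple, i.e. $\tfrac{\D}{\D q}W^{(q)}(a)\big|_{q=-\rho}\ne 0$. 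For this I would argue that, for fixed real $q>-\rho$, the operator $U^{(q)}$ with kernel $u^{(q)}(x,\cdot)\,\D y$ acts on $C[0,a]$ as a compact (its kernel is jointly continuous, $W^{(q)}$ being continuous by Lemma~\ref{a.e.diff}), positive, irreducible operator — irreducibility because $X$ reaches every subinterval of $(0,a)$ before exiting — so the Krein--Rutman theorem makes its spectral radius an algebraically simple eigenvalue; translating back to the generator, the leading eigenvalue $-\rho$ is algebraically simple, hence the resolvent, and with it the kernel $u^{(q)}$, has only a simple pole at $-\rho$, forcing $W^{(\cdot)}(a)$ to have a simple zero there. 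This proves (i) and legitimises the residue formula, whose two factors are precisely the $\rho$-invariant function of (ii) and the $\rho$-invariant measure of (iii).

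Finally, (iv) follows from this spectral picture: representing $P(x,t,\D y)$ by Laplace inversion as $\frac{1}{2\pi\I}\int\E^{qt}u^{(q)}(x,y)\,\D q\,\D y$ over a vertical contour just to the right of $-\rho$, and shifting it past the simple pole at $-\rho$ — the remaining singularities lying strictly further left — one obtains
\[
\lim_{t\uparrow\infty}\E^{\rho t}P(x,t,\D y)=c\,W^{(-\rho)}(x)\,W^{(-\rho)}(a-y)\,\D y,\qquad c=\left(\tfrac{\D}{\D q}W^{(q)}(a)\big|_{q=-\rho}\right)^{-1},
\]
in the sense of weak convergence. The technical cost of this last step is the decay estimate on $u^{(q)}(x,y)$ along vertical lines needed to justify the contour shift; alternatively one may bypass it with a direct renewal/ratio-limit argument built on the $\rho$-invariant function and measure obtained in (ii)--(iii). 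I expect the genuine work to be concentrated in the Krein--Rutman step underpinning the simplicity in (i) and in the estimates (or ratio-limit argument) behind (iv), the rest being bookkeeping with the identities already established.
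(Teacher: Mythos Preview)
The paper does not prove this theorem. It is stated in the section on the spectral gap with attribution to Bertoin \cite{ber97}, and the text immediately moves on to discuss its consequences (the decay rate of $P_x(\tau>t)$ and Lambert's conditioning). So there is no ``paper's own proof'' to compare your attempt against.

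That said, your sketch is very much in the spirit of Bertoin's original 1997 argument: exploit the entire extension $q\mapsto W^{(q)}(x)$, analytically continue the two-sided exit identity to identify $\rho$ with the abscissa of convergence of $E_x(\E^{-q\tau^+_a};\tau^+_a<\tau^-_0)$, read off the meromorphic structure of the killed resolvent from Theorem~\ref{resolve}(i), and extract the large-time asymptotic from the leading pole. The places where your outline is thin are also where Bertoin does real work. For the finiteness of $\rho$ you invoke a lower bound $P_x(\tau>t)\ge c\,\E^{-\lambda t}$ as if it were immediate; it is not obvious without an argument (Bertoin obtains it by a submultiplicativity/iteration argument on the survival probability), and you should also be careful that the relevant tail is that of $\tau^+_a$ on $\{\tau^+_a<\tau^-_0\}$, which you connect to $\tau$ only implicitly. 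For (i) you correctly identify the Krein--Rutman step as the crux; note that the dual statement (iii) does not fall under the paper's theory verbatim since $-X$ is spectrally positive, so it is cleanest to read it off from the resolvent factorisation as you do in your residue calculation rather than by ``applying the same scheme''. For (iv), Bertoin does not do a contour shift but instead uses a renewal-theoretic/$R$-theory argument for the semigroup; your alternative suggestion of a ratio-limit argument built on (ii)--(iii) is closer to what actually happens there.
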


A particular consequence of the part (iv) of the previous theorem is that there exists a constant $c'>0$ such that 
\[
P_x(\tau>t) \sim c'W^{(-\rho)}(x)\E^{-\rho t}.
\]
as $t\uparrow\infty$. The constant $\rho$ thus describes the rate of decay of the exit probability. By analogy with the theory of diffusions confined to compact domains, $-\rho$ also plays the role of the leading eigen-value, or spectral gap, of the infinitesimal generator of $X$ constrained to the interval $(0,a)$ with Dirichlet boundary conditions. From part (i) of the above theorem, we see that the associated eigen-function is $W^{(-\rho)}$.

Lambert \cite{lambert} strengthens this analogy with diffusions and showed further that  for each $x\in (0,a)$,
\[
\E^{\rho t} \frac{W^{(-\rho)}(X_t)}{W^{(-\rho)}(x)}\mathbf{1}_{\{\tau>t\}},\, t\geq 0
\]
is a $P_x$ martingale which, when used as a Radon-Nikodim density to change measure, induces a new probability measure, say, $P^\updownarrow_x$. He shows moreover that this measure corresponds to the law of $X$ conditioned to remain in $(0,a)$ in the sense that for all $t\geq 0$, 
\[
P_x^\updownarrow(A) = \lim_{s\uparrow\infty}P_x(A|\tau>t+s), \,\,\,\, A\in\mathcal{F}_t.
\]

The role of the scale function is no less important in other types of related conditioning. We have already alluded to its relevance to conditioning spectrally negative L\'evy processes to stay positive in the first chapter. Pistorius \cite{Pistorius, pistorius2003} also shows that a similar agenda to the above can be carried out with regard to reflected spectrally negative L\'evy processes. 


\section{General smoothness and Doney's Conjecture}

Let $a>0,$ and recall $\tau=\tau^{+}_{a}\wedge \tau^{-}_{0}.$ It is not difficult to show from the identity (\ref{whatsinaname}) that for all $q\geq 0$, 
\[
\E^{-q(t\wedge \tau)}W^{(q)}(X_{t\wedge \tau}) ,\, t\geq 0
\]
is a martingale. Indeed,  we have from the
Strong Markov Property that for all $x\in \mathbb{R}$ and $t\geq
0$
\begin{eqnarray*}
E_x(\mathrm{e}^{-q\tau^+_a}\mathbf{1}_{(\tau^-_0 >
\tau^+_a)} | \mathcal{F}_{t\wedge\tau^-_0 \wedge \tau^+_a}) &=&
\E^{-q t}\mathbf{1}_{\{t<\tau\}}E_{X_t}(\mathrm{e}^{-q\tau^+_a}\mathbf{1}_{(\tau^-_0 >
\tau^+_a)} ) + \E^{-q\tau^+_a}\mathbf{1}_{\{\tau^+_a<t\wedge \tau^-_0\}}\\
&=&
\mathrm{e}^{-q(t\wedge\tau^-_0 \wedge \tau^+_a)} \frac{ W^{(q)}(
X_{t\wedge\tau^-_0 \wedge \tau^+_a} ) }{ W^{(q)}(a) },
\end{eqnarray*}
where it should be noted that for the second equality we have used the fact that  $W^{(q)}(X_{\tau^+_a})/W^{(q)}(a)=1$
and $W^{(q)}(X_{\tau^-_0})/W^{(q)}(a)=0$. Note in particular, for the last equality, $X$ creeps downwards if and only if it has a Gaussian component in which case $W^{(q)}(X_{\tau^-_0}) = W^{(q)}(0+) = 0,$ on the event of creeping, and otherwise on the event that  $X_{\tau^-_0}<0,$
it trivially holds that $W^{(q)}(X_{\tau^-_0}) =0$.

Naively speaking this reiterates an idea seen in the previous section that $W^{(q)}$ is an eigen-function with respect to the infinitesimal generator of $X$ with eigen-value $q$. That is to say, $W^{(q)}$ solves the integro-differential equation 
\[
(\Gamma -q)W^{(q)}(x)=0 \text{ on }(0,a),
\]
where $\Gamma$ is the infinitesimal generator of $X,$ which is known to be given by
\[
\Gamma f(x) = \mu f'(x) + \frac{1}{2}\sigma^2 f''(x) + \int_{(-\infty,0)} \{f(x+y) - f(x) - f'(x)y\mathbf{1}_{\{y>-1\}}\}\Pi(\D y),
\] for all $f$ in its domain.

The problem with the above heuristic observation is that $W^{(q)}$ may not be in the domain of $\Gamma$. In particular, for the last equality to have a classical meaning we need at least that $f\in C^2(0,a)$ and $\int_{(-\infty,0)}f(x+y)\Pi(\D y)<\infty$.
This  motivates the question of how smooth scale functions are.  Given the dependency of concavity-convexity properties on the L\'evy measure discussed in Section \ref{con-con} as well as the statement of Lemma \ref{C1}, it would seem sensible to believe that a relationship exists between the smoothness of the L\'evy measure and the smoothness of the scale function $W^{(q)}$. In addition, one would also expect the inclusion of a Gaussian coefficient to have some effect on the smoothness of the scale function. Below we give a string of recent results which have attempted to address this matter. For notational convenience we shall write $W^{(q)}\in C^{k}(0,\infty)$ to mean that the restriction of $W^{(q)}$ to $(0,\infty)$ belongs to the $C^k(0,\infty)$ class.

\begin{theorem}\label{C2}
Suppose that $\sigma^2>0$, then for all $q\geq 0$, $W^{(q)}\in C^2(0,\infty)$.
\end{theorem}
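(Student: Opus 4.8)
The plan is to exploit the renewal-type structure of $W^{(q)}$ together with the martingale identity noted just above the theorem. First I would reduce, as in the proofs of Lemmas~\ref{C1} and~\ref{Wprime}, to the case $q=0$ and $\psi'(0+)\geq 0$: by the interchange formula (\ref{W interchange}), $W^{(q)}(x)=\E^{\Phi(q)x}W_{\Phi(q)}(x)$, and since the Gaussian coefficient is unchanged under the Esscher transform $P^{\Phi(q)}$, it suffices to show $W\in C^2(0,\infty)$ for an oscillating or positively-drifting spectrally negative L\'evy process with $\sigma^2>0$. From Lemma~\ref{C1}, since $\sigma\neq0$ we already know $W\in C^1(0,\infty)$ and $W'$ is continuous; the task is one more derivative.

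The key step is to start from the integro-differential equation satisfied by $W$. Using the martingale property of $\E^{-q(t\wedge\tau)}W^{(q)}(X_{t\wedge\tau})$ displayed above, together with the L\'evy--It\^o decomposition and It\^o's formula for semimartingales applied to $W$ (justified since $W\in C^1$, with the second-order Gaussian term handled via a mollification/limiting argument), one obtains that in the sense of distributions on $(0,\infty)$,
\begin{equation}
\frac{\sigma^2}{2}W''(x) + \mu W'(x) + \int_{(-\infty,0)}\bigl(W(x+y)-W(x)-W'(x)y\mathbf{1}_{\{y>-1\}}\bigr)\Pi(\D y) = 0,
\label{IDEforW}
\end{equation}
at least initially in a weak form. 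I would then solve (\ref{IDEforW}) for $W''$:
\begin{equation}
\frac{\sigma^2}{2}W''(x) = -\mu W'(x) - \int_{(-\infty,0)}\bigl(W(x+y)-W(x)-W'(x)y\mathbf{1}_{\{y>-1\}}\bigr)\Pi(\D y).
\label{solvedWpp}
\end{equation}
Since $\sigma^2>0$, it remains to show the right-hand side is a genuine continuous function of $x$ on $(0,\infty)$. The term $\mu W'(x)$ is continuous by Lemma~\ref{C1}. For the integral, I would split it at $y=-\varepsilon$: on $\{y<-\varepsilon\}$ the integrand is bounded by a constant times $\Pi(-\infty,-\varepsilon)<\infty$ uniformly for $x$ in compacts, and $x\mapsto W(x+y)$ is continuous (indeed uniformly continuous on compacts), so dominated convergence gives continuity; on $\{-\varepsilon<y<0\}$ a Taylor estimate gives $|W(x+y)-W(x)-W'(x)y|\leq \tfrac12 y^2\sup_{[x-\varepsilon,x]}|W''|$ — but since we are trying to prove $W''$ exists, I would instead bound this increment using only the $C^1$-modulus of continuity of $W'$: $|W(x+y)-W(x)-W'(x)y|\leq |y|\,\omega_{W'}([x-\varepsilon,x];|y|)$, which is integrable against $\Pi$ because $\int_{(-1,0)}|y|\Pi(\D y)$ may be infinite only when paths are of unbounded variation, a case that here holds automatically since $\sigma\neq0$; one controls this using the local boundedness of $W'$ and $\int_{(-1,0)}(1\wedge y^2)\Pi(\D y)<\infty$ more carefully, writing $W(x+y)-W(x)=\int_0^y W'(x+u)\,\D u$ so the small-jump term becomes $\int_{(-\varepsilon,0)}\!\int_0^y (W'(x+u)-W'(x))\,\D u\,\Pi(\D y)$, whose continuity in $x$ follows from uniform continuity of $W'$ and $\int(1\wedge y^2)\Pi(\D y)<\infty$. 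Continuity of the right-hand side of (\ref{solvedWpp}) then upgrades the distributional identity to a classical one, yielding $W\in C^2(0,\infty)$.

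The main obstacle I anticipate is making the step from (\ref{IDEforW}) to (\ref{solvedWpp}) fully rigorous: one must first establish that $W$ satisfies the equation in a distributional/weak sense (via the martingale identity and a careful semimartingale/It\^o argument, since a priori $W$ is only $C^1$ and need not lie in the domain of the generator $\Gamma$, exactly the caveat flagged in the text), and then run a bootstrap showing the weak second derivative is represented by the continuous function on the right of (\ref{solvedWpp}). A clean alternative that sidesteps the generator entirely is to work from the Laplace transform: write $\psi(\beta)=\tfrac{\sigma^2}{2}\beta^2+\beta\,\ell(\beta)$ where $\ell(\beta)=o(\beta)$ collects the drift and jump contributions (using the integration-by-parts representation (\ref{dividebybeta})-style manipulation), so that $\widehat{W}(\beta)=1/\psi(\beta)$ behaves like $2/(\sigma^2\beta^2)$ at infinity with a correction that is $o(\beta^{-2})\cdot\beta = o(\beta^{-1})$; Tauberian/Mellin inversion arguments then control the smoothness of $W$ near where regularity could fail. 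I would present the semimartingale approach as the primary one, since it also exposes the PDE (\ref{solvedWpp}) that is of independent use, and the delicate point — integrability of the compensated jump term in (\ref{solvedWpp}) against $\Pi$ when only $C^1$-regularity of $W$ is known a priori — is where I would spend the most effort.
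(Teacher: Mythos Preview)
Your route is genuinely different from the paper's. The paper never touches the generator or It\^o's formula; it works entirely through excursion theory. Starting from $W'(x)=n(\overline\epsilon\geq x)W(x)$, it computes the one-sided second derivatives of $W$ by decomposing $n(\overline\epsilon\in[x,x+\varepsilon))$ under the strong Markov property of the excursion at the first time it reaches height $x$, separating the creeping event $\{\epsilon_{\sigma_x}=x\}$ from the overshoot event. The facts $W(0+)=0$ and $W'(0+)=2/\sigma^2$ (both consequences of $\sigma>0$) kill the overshoot contribution in the limit, and Doney's limit description of $n$ is used to match the left and right second derivatives. Continuity of $W''$ then reduces to continuity of $x\mapsto n(\overline\epsilon\geq x,\,\epsilon_{\sigma_x}=x)$, which is obtained from the resolvent formula.

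Your generator approach has a gap beyond the one you already flag. The claim that the compensated small-jump integral
\[
\int_{(-\varepsilon,0)}\!\int_0^y\bigl(W'(x+u)-W'(x)\bigr)\D u\,\Pi(\D y)
\]
is finite and continuous in $x$ ``from uniform continuity of $W'$ and $\int(1\wedge y^2)\Pi(\D y)<\infty$'' does not hold. Your bound on the inner integral is $|y|\,\omega_{W'}(|y|)$, so you need $\int_{(-\varepsilon,0)}|y|\,\omega_{W'}(|y|)\,\Pi(\D y)<\infty$; with only $\omega_{W'}(s)\to 0$ available this can fail (e.g.\ $\Pi(\D y)=|y|^{-2-\beta}\D y$ with $\beta\in(0,1)$ and $W'$ merely $\gamma$-H\"older for some $\gamma<\beta$). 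You would need at least local Lipschitz regularity of $W'$ to make the integral absolutely convergent, which is precisely the $C^2$ conclusion you are trying to prove --- so the bootstrap is circular. The distributional form of the equation can indeed be made sense of by throwing derivatives onto the test function, but upgrading to a pointwise identity for $W''$ requires the right-hand side to be a genuine continuous function of $x$, and that is exactly what fails.

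The analytic route that \emph{does} work, and which the paper notes right after this theorem as an alternative proof, is the renewal equation of Chan--Kyprianou--Savov: from the purely probabilistic decomposition $P_x(\tau_0^-<\infty)=P_x(X_{\tau_0^-}=0)+P_x(X_{\tau_0^-}<0)$ one obtains directly
\[
1=\frac{\sigma^2}{2}W'(x)+\int_0^x W'(x-z)\Bigl(\overline{\overline\Pi}(z)+\psi'(0+)^{-1}\Bigr)\D z,
\]
with no appeal to It\^o or to the pointwise convergence of $\Gamma W$. This is a Volterra equation of the second kind for $W'$ with a continuous, locally integrable kernel $\overline{\overline\Pi}$, and its convolution-series solution immediately yields $W'\in C^1(0,\infty)$. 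If you want to salvage the PDE flavour of your argument, this is the equation to aim for --- derived from the first-passage decomposition rather than from the martingale.
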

\begin{proof} As before, it is enough to consider the case where $q=0$ and $\psi^{\prime}(0+)\geq 0.$ In other case we use the latter with the help of (\ref{W_P}). Note that it suffices to prove the result for $q=0$ thanks to (\ref{W_P}) and (\ref{tilted-exponent}). We know from Lemma \ref{C1} that when $\sigma^2>0$, $W^{(q)}\in C^1(0,\infty)$.
Recall from (\ref{landrder}) that $W^{\prime}(x)=n(\overline\epsilon \geq
x )W(x)$ and hence if the limits exist, then
\begin{eqnarray}
W^{\prime\prime}(x+) &=& \lim_{\varepsilon\downarrow 0}\frac{%
W^{\prime}(x+\varepsilon) - W^{\prime}(x)}{\varepsilon}  \notag \\
&=&-\lim_{\varepsilon\downarrow 0} \frac{ n(\overline\epsilon \in
[x,x+\varepsilon) ) W(x )- n(\overline\epsilon \geq x + \varepsilon) (W(x +
\varepsilon)- W(x))}{\varepsilon}  \notag \\
&=&-\lim_{\varepsilon\downarrow 0}\frac{n(\overline\epsilon \in
[x,x+\varepsilon) )}{\varepsilon} W(x) + n(\overline\epsilon \geq
x)W^{\prime}(x+ ).  \label{first step}
\end{eqnarray}

To show that the limit on the right hand side of (\ref{first step}) exists,
define $\sigma_x = \inf\{t>0 : \epsilon_t \geq x\}$ and $\mathcal{G}_t =
\sigma(\epsilon_s : s\leq t)$. With the help of the Strong Markov Property
for the excursion process we may write
\begin{eqnarray}
n(\overline\epsilon \in [x,x+\varepsilon) )&=&n(\sigma_x<\infty,
\epsilon_{\sigma_x}< x+\varepsilon, \overline\epsilon < x+\varepsilon) 
\notag \\
&=&n( \mathbf{1}_{\{\sigma_x<\infty, \epsilon_{\sigma_x}<
x+\varepsilon\}}n(\overline\epsilon < x+\varepsilon| \mathcal{G}_{\sigma_x}))
\notag \\
&=&n( \mathbf{1}_{\{\sigma_x<\infty, \epsilon_{\sigma_x}<
x+\varepsilon\}}P_{-\epsilon_{\sigma_x}}(\tau^+_0 <
\tau^-_{-(x+\varepsilon)}))  \notag \\
&=& n(\sigma_x<\infty, \epsilon_{\sigma_x}= x)\frac{W(\epsilon)}{%
W(x+\varepsilon)}  \notag \\
&& + n\left( \mathbf{1}_{\{\sigma_x<\infty,x< \epsilon_{\sigma_x}<
x+\varepsilon\}}\frac{W(x+\varepsilon - \epsilon_{\sigma_x})}{W(x+\varepsilon)%
}\right).  \label{see creep}
\end{eqnarray}

We know that spectrally negative L\'evy processes which have a Gaussian component can creep downwards. Hence for the case at hand it follows that the event $%
\{\epsilon_{\sigma_x}=x\}$ has non-zero $n$-measure.

Using the facts that $W'(0+) = 2/\sigma^2$ and $W(0+) =0$ (cf. Lemmas \ref{W(0)} and \ref{Wprime}) together with the monotonicity of $W$, we have that
\begin{eqnarray}
\lefteqn{\limsup_{\varepsilon\downarrow 0}\frac{1}{\varepsilon}n\left( \mathbf{1}%
_{(\overline\epsilon \geq x, \epsilon_{\sigma_x}\in(x, x+\varepsilon))}\frac{%
W(x+\varepsilon - \epsilon_{\sigma_x})}{W(x+\varepsilon)}\right)}  \notag \\
&\leq& \frac{1}{W(x)} \limsup_{\varepsilon\downarrow 0}n(\overline\epsilon
\geq x, \epsilon_{\sigma_x}\in(x,x+\varepsilon))\frac{W(\varepsilon)}{%
\varepsilon}  \label{non-creep} \\
& =&0.  \notag
\end{eqnarray}

In conclusion, $W^{\prime\prime}(x+)$ exists and 
\begin{equation*}
W^{\prime\prime}(x+)=-W^{\prime}(0+)n(\overline\epsilon \geq x,
\epsilon_{\sigma_x}= x)+n(\overline\epsilon \geq x)W^{\prime}(x),
\end{equation*}
that is to say,
\begin{equation}
n(\overline\epsilon \geq x,
\epsilon_{\sigma_x}= x)=\frac{\sigma ^{2}}{2}\left\{\frac{W^{\prime
 }(x)^{2}}{W(x)}- W^{\prime \prime }(x+)\right\}.
 \label{W-doubledash}
\end{equation}

 We shall now show that there exists a left second derivative $W^{\prime
 \prime }(x-)$ which may replace the role of $W''(x+)$ on the right hand side of (\ref{W-doubledash}) thus
 completing the proof. For this we need to  recall from Doney's description of the excursion measure as limit (cf. \cite{Doney2004}) 
 that for $A\in \mathcal{F}_{t}$
\begin{equation*}
n(A,t<\zeta )=\lim_{y\downarrow 0}\frac{\widehat{ P}_{y}(A,t<\tau
_{0}^{-})}{y},
\end{equation*}
and moreover this identity may be extended to stopping times.
From this we may write 
 \begin{equation}
 n(\overline{\epsilon }\geq x,\epsilon _{\sigma _{x}}=x)=\lim_{y\downarrow 0}%
  \frac{\widehat{ P}_{y}(X_{\tau _{x}^{+}}=x;\tau _{x}^{+}<\tau _{0}^{-})}{y},%
   \label{radlimit}
 \end{equation}%
 where $\widehat{ P}_y$ is the law of $-X$ when issued from $y$.
 From part (ii) of Theorem \ref{oneandtwo}  we also know that 
 \begin{equation*}
 \widehat{ P}_{y}(X_{\tau _{x}^{+}}=x;\tau _{x}^{+}<\infty )=\frac{\sigma
 ^{2}}{2} W^{\prime }(x-y).
 \end{equation*}%
  Using the Strong Markov Property, the above formula and the fact that $X$ creeps upwards it is straightforward to deduce that 
\[
 \widehat{ P}_{y}(X_{\tau _{x}^{+}}=x;\tau _{0}^{-}< \tau _{x}^{+})
=\frac{W(x-y)}{W(x)} \times \frac{\sigma ^{2}}{2}W^{\prime }(x)
\]
and hence
 \[
 \widehat{ P}_{y}(X_{\tau _{x}^{+}}=x;\tau _{x}^{+}<\tau _{0}^{-}) 
 =\frac{\sigma ^{2}}{2}\left\{ W^{\prime }(x-y)-\frac{W(x-y)}{W(x)}%
 W^{\prime }(x)\right\} .
 \]%
 Returning to (\ref{radlimit}) we compute, 
 \begin{eqnarray*}
 n(\overline{\epsilon }\geq x,\epsilon _{\sigma _{x}}=x) &=&\lim_{y\downarrow
 0}\frac{\sigma ^{2}}{2}\left\{\frac{W^{\prime }(x)}{W(x)}\frac{W(x)-W(x-y)}{y}-\frac{W^{\prime }(x)-W^{\prime }(x-y)}{y}\right\}  \\
 &=&\frac{\sigma ^{2}}{2}\left\{\frac{W^{\prime
 }(x)^{2}}{W(x)}- W^{\prime \prime }(x-)\right\} .
 \end{eqnarray*}%
 Note that the existence of $W^{\prime \prime }(x-)$ is guaranteed in light
 of (\ref{radlimit}). We see
 that the final equality above is identical to (\ref{W-doubledash}) but with $%
 W^{\prime \prime }(x+)$ replaced by $W^{\prime \prime }(x-)$.

 Thus far we have shown that a second derivative exists everywhere. To complete the proof, we need to show that this second derivative is continuous. To do this, it suffices to show that $n(\overline{\epsilon }\geq x,\epsilon _{\sigma _{x}}=x)$ is continuous. To this end note that a straightforward computation, similar to (\ref{radlimit}) but making use of Part (i) of Theorem \ref{resolve} and L'H\^opital's rule to compute the relevant limit, shows that
\[
n(\overline{\epsilon }\geq x,\epsilon _{\sigma _{x}}>x) = 
 \int_0^x \left\{W'(x-y) - \frac{W'(x)}{W(x)}W(x-y)\right\}\overline{\Pi}(y){\rm d}y,
\]
where we recall that $\overline\Pi(y)  = \Pi(-\infty,-y)$.
Hence it is now easy to see that 
\begin{eqnarray*}
 n(\overline{\epsilon }\geq x,\epsilon _{\sigma _{x}}=x)  &=&n(\overline{\epsilon }\geq x)-n(\overline{\epsilon }\geq x,\epsilon _{\sigma _{x}}>x)
\end{eqnarray*}
is continuous, thus completing the proof.\hfill$\square$
\end{proof}

Chan et al. \cite{CKS} take a more analytical approach to the smoothness of scale functions by exploring its connection with the classical renewal equation. They note that, on account of (\ref{W_P}), it suffices to consider smoothness in the case that $\psi'(0+)\geq 0$ (i.e. $\Phi(0) = 0$). Within this regime, the basic idea is to start with the obvious statement that under the measure $P_x$ the L\'evy process will either cross downwards below zero by either creeping across it or jumping clear below it. Taking account of (\ref{one sided down q=0}), (\ref{creep-q}) and (\ref{return to,}) in its limiting form as $a\uparrow\infty$, we thus have
\begin{eqnarray}
1-\frac{1}{\psi'(0+)}W(x) &=&P_x(\tau^-_0<\infty) \notag\\
&=& P_x (X_{\tau^-_0} = 0) 
+ P_x(X_{\tau^-_0}<0)\notag\\
&=& \frac{\sigma^2}{2}W'(x) + \int_0^\infty \{W(x) - W(x-y)\} \overline{\Pi}(y)\D y\notag\\
&=& \frac{\sigma^2}{2}W'(x) + \int_{0}^x W'(z) \overline{\overline{\Pi}}(x-z)\D z\notag\\
&=& \frac{\sigma^2}{2}W'(x) + \int_{0}^x W'(x-z) \overline{\overline{\Pi}}(z)\D z,
\label{pre-renewal}
\end{eqnarray}  
where $\overline{\overline{\Pi}}(z) = \int_z^\infty \overline{\Pi}(y)\D y$. Hence, 
\begin{equation}
1 = \frac{\sigma^2}{2}W'(x) + \int^x_0 W'(x-z) \{\overline{\Pi}(z) +\frac{1}{\psi'(0+)}\}\D z,
\label{post-pre-renewal}
\end{equation}
and after a little manipulation, we arrive at the classical  renewal equation 
\[
f= 1 + f\star g,
\]
where $f(x) = \sigma^2W'(x)/2,$ and $g(x) = -2\sigma^{-2}\int_0^x\overline{\overline{\Pi}}(y)\D y - 2\sigma^{-2}\psi'(0+)x,$ and momentarily we have assumed that $\sigma^2>0$. By engaging with the well known convolution series solution to the renewal equation they establish the following result beyond the scope of Theorem \ref{C2}.

\begin{theorem}\label{II}
 Suppose that $X$ has a Gaussian component and its Blumenthal-Getoor index belongs to  $[0,2)$, that is to say
 \[
 \inf\{\beta\geq0: \int_{|x|<1} |x|^\beta \Pi({\rm d}x)<\infty\}\in[0,2).
 \]
 Then for each $q\geq 0$ and $n=0,1,2,...$, $%
W^{(q)}\in C^{n+3}(0,\infty)$ if and only if $\overline{\Pi}\in C^{n}(0,\infty)$.
\end{theorem}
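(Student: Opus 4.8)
The plan is to run the renewal‑equation argument indicated just before the statement and to extract smoothness of $W$ from that of $\overline{\Pi}$ (and conversely) by induction on $n$. First I would reduce to $q=0$, $\psi'(0^+)\ge0$: by \eqref{W_P} one has $W^{(q)}(x)=\E^{\Phi(q)x}W_{\Phi(q)}(x)$, and since $x\mapsto\E^{\Phi(q)x}$ is $C^\infty$ and never zero, $W^{(q)}\in C^{n+3}(0,\infty)$ iff $W_{\Phi(q)}\in C^{n+3}(0,\infty)$. The process $(X,P^{\Phi(q)})$ has the same Gaussian coefficient $\sigma$, the same Blumenthal--Getoor index (tilting by $\E^{\Phi(q)x}$ does not change local integrability of $\Pi$ near the origin) and, by \eqref{tilted-exponent}, drifts to $+\infty$; moreover one checks from $\Pi_{\Phi(q)}(\D x)=\E^{\Phi(q)x}\Pi(\D x)$ that $\overline{\Pi}_{\Phi(q)}\in C^n(0,\infty)$ iff $\overline{\Pi}\in C^n(0,\infty)$. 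Hence it suffices to work in the regime of \eqref{post-pre-renewal}.

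\textbf{The renewal equation.} Put $f(x)=\tfrac{\sigma^2}{2}W'(x)$; by Theorem~\ref{C2} and Lemmas~\ref{W(0)}--\ref{Wprime}, $f\in C^1(0,\infty)$ and $f(0^+)=1$. Equation \eqref{post-pre-renewal} is the Volterra equation $f=1+f\star g$, with $(f\star g)(x)=\int_0^x f(x-y)g(y)\D y$ and $g$ equal, up to an additive constant, to $-\tfrac{2}{\sigma^2}\overline{\overline{\Pi}}$, where $\overline{\overline{\Pi}}(y)=\int_y^\infty\overline{\Pi}(u)\D u$. Two points are crucial: (a) on $(0,\infty)$, $g$ is continuous with $g'=\tfrac{2}{\sigma^2}\overline{\Pi}$, whence $g\in C^{n+1}(0,\infty)\iff\overline{\Pi}\in C^n(0,\infty)$; and (b) $g\in L^1_{\mathrm{loc}}[0,\infty)$ always, and the Blumenthal--Getoor restriction guarantees that its only possible singularity, at the origin, is of order strictly less than $1$ --- in fact $g\in L^p(0,1)$ for some $p>1$, which is what will let convolution powers of $g$ progressively absorb that singularity. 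Solving the differentiated equation by Neumann iteration gives $f'=\sum_{k\ge1}g^{\star k}=g+g\star f'$, the series converging locally (via an estimate of the type \eqref{induction} applied to $g\in L^1_{\mathrm{loc}}$).

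\textbf{The two implications.} For the forward direction, assuming $\overline{\Pi}\in C^n(0,\infty)$, so $g\in C^{n+1}(0,\infty)$ by (a), I would show $f'\in C^{n+1}(0,\infty)$ --- equivalently $W\in C^{n+3}(0,\infty)$ --- by analysing $f'=\sum_{k\ge1}g^{\star k}$: away from the origin each $g^{\star k}$ is $C^{n+1}$ because in a convolution at least one argument of $g$ may be kept bounded away from $0$ and made to carry the derivatives while the others stay under the integral; near the origin one invokes (b), since a sufficiently high self-convolution of $g$ is $C^{n+1}$ even there, and one checks that the series converges in $C^{n+1}_{\mathrm{loc}}(0,\infty)$. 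The base of the induction is Theorem~\ref{C2} ($W\in C^2(0,\infty)$ unconditionally). For the converse, if $W\in C^{n+3}(0,\infty)$ then $f'\in C^{n+1}(0,\infty)$, and one reads off $g=f'-g\star f'$: the convolution term inherits enough regularity from $f'$ --- again using (b) to tame the singular kernel near $0$ --- to force $g\in C^{n+1}(0,\infty)$, i.e.\ $\overline{\Pi}\in C^n(0,\infty)$. Finally \eqref{W_P} transfers both implications back to general $q\ge0$.

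\textbf{Main obstacle.} The hard part is the behaviour at the origin: as soon as the jump part has infinite variation, $g$ (hence $W''$) blows up at $0$, so the convolutions occurring in $f'=\sum_k g^{\star k}$ and in $g=f'-g\star f'$ cannot be differentiated by naive Leibniz rules. The whole argument rests on quantifying how quickly this singularity is smoothed out under convolution, and this is exactly what the hypothesis $\inf\{\beta:\int_{|x|<1}|x|^\beta\Pi(\D x)<\infty\}<2$ buys: it makes $g\in L^p(0,1)$ for some $p>1$, so that a finite number of convolution powers of $g$ becomes $C^{n+1}$ up to the origin, which is what closes the induction. Making this quantitative smoothing estimate precise --- rather than the formal differentiation of the renewal equation, which is routine --- is where the real work lies.
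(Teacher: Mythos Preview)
Your proposal is correct and follows exactly the approach the paper attributes to Chan, Kyprianou and Savov \cite{CKS}: the reduction to $q=0$, $\psi'(0^+)\ge0$ via \eqref{W_P}, the renewal equation \eqref{post-pre-renewal} written as $f=1+f\star g$ with $f=\tfrac{\sigma^2}{2}W'$, and the analysis of the Neumann series $f'=\sum_{k\ge1}g^{\star k}$ to pass regularity of $g$ (equivalently $\overline{\Pi}$) to and from $f'$ (equivalently $W''$). The paper itself does not give a detailed proof here but only sets up the renewal equation and defers to \cite{CKS}; your sketch fills in the outline with the right ingredients, and in particular you have correctly identified that the Blumenthal--Getoor assumption is precisely what gives $g\in L^p(0,1)$ for some $p>1$, so that finitely many self-convolutions of $g$ become continuous (indeed $C^{n+1}$) up to the origin, which is the technical crux.
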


\noindent In fact the method used to establish the above theorem can also be used to prove Theorem \ref{C2}. It is also apparent from the analysis of \cite{CKS} that their method cannot be applied when considering renewal equations of the form $1 = f\star g$, which would be the form of the resulting renewal equation in (\ref{post-pre-renewal}) when $\sigma = 0$ with an appropriate choice of $f$ and $g$. 

However when   $X$ does have paths of bounded variation, the aforementioned is possible following an integration by parts and a little algebra in the convolution on the right hand side of (\ref{pre-renewal}). In that case we take $f = \delta W(x)$ and $g(x) = \delta^{-1}\int_0^x \overline{\Pi}(y)\D y$ where we recall that $W(0+) = \delta^{-1} $ and $\delta$ is the coefficient of the linear drift when $X$ is written according to the decomposition (\ref{BVdecomp}). Note that this integration by parts is not possible if $X$ has paths of unbounded variation with no Gaussian component as  the aforementioned choice of $g$ is not finite. 

The same analysis for the Gaussian case but now for the bounded variation setting in \cite{CKS} yields the following result.

 \begin{theorem}\label{III}
 Suppose that $X$ has paths of bounded variation and $-\overline{\Pi}$ has a density $\pi(x)$, such that   $\pi(x)\leq C|x|^{-1-\alpha}$ in the neighbourhood of the origin, for some $\alpha<1$ and $C>0$. Then for each $q\geq 0$ and $n=1,2,...$, $%
W^{(q)}\in C^{n+1}(0,\infty)$ if and only if $\overline{\Pi}\in C^{n}(0,\infty)$.\end{theorem}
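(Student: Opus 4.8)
\textbf{Reduction and the renewal equation.} As in the proof of Theorem~\ref{C2}, it suffices to treat $q=0$ with $\psi'(0+)\ge 0$: by (\ref{W_P}) and (\ref{tilted-exponent}) the factor $\mathrm{e}^{\Phi(q)x}$ is $C^\infty$, and the Esscher transform leaves $\sigma=0$ and the drift $\delta$ of (\ref{BVdecomp}) unchanged while multiplying the density $\pi$ of $-\overline\Pi$ by the smooth factor $\mathrm{e}^{-\Phi(q)x}$, so the power bound near the origin and the class of $\overline\Pi$ on $(0,\infty)$ are both preserved. In this regime one starts from the decomposition used in (\ref{pre-renewal}), specialised to $\sigma=0$, namely $P_x(\tau^-_0<\infty)=\int_0^\infty\{W(x)-W(x-y)\}\overline\Pi(y)\,{\rm d}y$, and performs the integration by parts indicated in the discussion preceding Theorem~\ref{III} (legitimate because bounded variation --- equivalently $\alpha<1$ given the density bound --- gives $\int_{(0,1)}\overline\Pi<\infty$); this yields the renewal equation
\[
\delta\, W(x)=1+\int_0^x W(x-y)\,\overline\Pi(y)\,{\rm d}y,\qquad x\ge 0 .
\]
Equivalently $F:=\delta W$ solves $F=1+F\star\mu$ with $\mu:=\delta^{-1}\overline\Pi$, so the (possibly defective) renewal series gives $\delta W(x)=1+\sum_{k\ge1}\int_0^x\mu^{\star k}(y)\,{\rm d}y$ and, on differentiating,
\[
\delta\, W'(x)=\sum_{k\ge1}\mu^{\star k}(x)=\mu(x)+T(x),\qquad T:=\sum_{k\ge 2}\mu^{\star k}=\overline\Pi\star W' .
\]
Hence $W\in C^{n+1}(0,\infty)$ if and only if $\mu+T\in C^{n}(0,\infty)$, and since $\mu$ is a constant multiple of $\overline\Pi$ the whole question reduces to comparing the smoothness of $\overline\Pi$ with that of the tail $T$.

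\textbf{The role of the hypothesis on $\pi$.} The bound $\pi(x)\le C|x|^{-1-\alpha}$ gives $\overline\Pi(x)=\int_x^\infty\pi=O(x^{-\alpha})$ as $x\downarrow 0$ (and boundedness of $\overline\Pi$ near $0$ when $\alpha\le 0$), whence $\mu^{\star m}(x)=O\big(x^{m(1-\alpha)-1}\big)$ near $0$; thus, for each fixed $n$, one may choose an integer $m=m(n,\alpha)$ (e.g. $m>(n+1)/(1-\alpha)$) for which $\mu^{\star m}$ together with its first $n$ derivatives vanish at $0+$. On the other hand each $\mu^{\star k}$ is locally integrable and, by a standard splitting of the convolution interval around the point in question, inherits the \emph{global} smoothness of $\overline\Pi$ away from the origin. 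Writing the tail $\sum_{k\ge m}\mu^{\star k}=\mu^{\star m}\star(\delta_0+\delta W')$ and letting the $n$-th derivative fall on the flat factor $\mu^{\star m}$, every boundary term vanishes and what remains is a locally uniformly convergent series of continuous functions. This gives the two facts we need: $T$ is continuous on $(0,\infty)$ for every admissible $X$, and whenever $\overline\Pi\in C^{n}(0,\infty)$ one has $T\in C^{n}(0,\infty)$.

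\textbf{The two implications.} If $\overline\Pi\in C^{n}(0,\infty)$, then by the previous paragraph $\delta W'=\delta^{-1}\overline\Pi+T\in C^{n}(0,\infty)$, i.e. $W\in C^{n+1}(0,\infty)$. Conversely, suppose $W\in C^{n+1}(0,\infty)$; we argue by induction on $n$, the case $n=0$ being Corollary~\ref{corr-for-alexey}. From $\delta W'=\delta^{-1}\overline\Pi+T$ we have $\overline\Pi=\delta^2W'-\delta\,T$. Since $W\in C^{n}(0,\infty)$, the inductive hypothesis gives $\overline\Pi\in C^{n-1}(0,\infty)$, hence $T\in C^{n-1}(0,\infty)$ by the convolution estimates above; a final bootstrap --- peeling $T$ as $\mu^{\star m}\star(\delta_0+\delta W')$ and, for the last derivative, using now that $W'\in C^{n}(0,\infty)$ to differentiate the $\delta W'$ factor without any boundary contribution (all contributions of $\mu^{\star m}$ being flat at $0$) --- upgrades $T$ to $C^{n}(0,\infty)$, and therefore $\overline\Pi=\delta^2W'-\delta\,T\in C^{n}(0,\infty)$.

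\textbf{Where the difficulty lies.} The only genuinely delicate point is the behaviour at the origin. When $\Pi(-\infty,0)=\infty$, Lemma~\ref{Wprime} shows $W'(0+)=\overline\Pi(0+)=\infty$, so neither the renewal series nor the convolution $\overline\Pi\star W'$ may be differentiated term by term in the naive way --- the obvious boundary terms diverge. The hypothesis $\pi(x)\le C|x|^{-1-\alpha}$ with $\alpha<1$, which in the presence of a density is exactly the condition of bounded variation, is precisely what controls this: it forces $\mu^{\star m}$ to be flat to an arbitrarily prescribed order at $0$ once $m$ is large, so that after removing finitely many leading convolution powers the remainder of the series is harmless. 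Making the bookkeeping of how many powers to remove precise, and closing the ``only if'' induction accordingly, is where the work goes.
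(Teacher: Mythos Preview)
Your approach is essentially the one the paper describes (and attributes to \cite{CKS}): reduce via (\ref{W_P}) to $q=0$, $\psi'(0+)\ge 0$, derive the renewal equation $\delta W=1+W\star\overline\Pi$ by integration by parts in (\ref{pre-renewal}), pass to the convolution series $\delta W'=\sum_{k\ge 1}\mu^{\star k}$ with $\mu=\delta^{-1}\overline\Pi$, and exploit the power bound on $\pi$ to make $\mu^{\star m}$ flat at the origin so that the tail of the series carries as many derivatives as one wishes. The paper itself does not give a proof but points to precisely this mechanism, so your structure matches.

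Two places deserve tightening. First, the assertion that each $\mu^{\star k}$ ``inherits the global smoothness of $\overline\Pi$ away from the origin by a standard splitting'' is doing real work and needs a clean statement: for $x$ in a compact $K\subset(0,\infty)$ one splits $\int_0^x=\int_0^\eta+\int_\eta^{x-\eta}+\int_{x-\eta}^x$, differentiates the middle piece using $\overline\Pi\in C^n$ on $[\eta,\infty)$, and controls the boundary pieces uniformly using the $O(y^{-\alpha})$ bound; this should be written out at least once for $k=2$ with the induction on $k$ made explicit. Second, in the ``only if'' bootstrap you write $T=\mu^{\star m}\star(\delta_0+\delta W')$, but strictly $T=\sum_{k=2}^{m-1}\mu^{\star k}+\mu^{\star m}\star(\delta_0+\delta W')$; the finitely many leading terms $\mu^{\star 2},\dots,\mu^{\star(m-1)}$ must be handled by the induction hypothesis on $\overline\Pi$ (they lie in $C^{n-1}$, and the extra derivative comes from the same splitting argument once $W'\in C^n$ is available). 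With those two points made precise, the argument is complete and coincides with the method the paper outlines.
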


Essentially the results of \cite{CKS} are primarily results about the smoothness of the solutions to renewal (or indeed Volterra) equations which are then applied  where possible to the particular setting of scale functions. Unfortunately this means that nothing has been said about the case that $X$ has paths of unbounded variation and $\sigma=0$ to date. Moreover, the results in the last two theorems above are subject to conditions  which do not appear to be probabilistically natural other in so much as providing the technical basis with which to push through their respective analytical proofs. None the less they go part way to addressing {\it Doney's conjecture for scale functions}\footnote{Curious about the results in \cite{CKS}, R.A.Doney produced a number of specific examples  of L\'evy processes whose scale functions exhibited analytical behaviour that lead to his conjecture (personal communication with A.E.Kyprianou).} as follows.

 \begin{conjecture}\label{DC}
 For $k=0,1,2,\cdots$
 \begin{enumerate}
 \item if $\sigma^2 > 0$  then 
\[
W\in C^{k+3}(0,\infty)\Leftrightarrow \overline\Pi\in C^{k}(0,\infty),
\]

\item if $\sigma=0$ and $\int_{(-1,0)}|x|\Pi(\D x)=\infty$ then 
\[
W\in C^{k+2}(0,\infty)\Leftrightarrow \overline\Pi\in C^{k}(0,\infty),
\]

\item if $\sigma=0$ and $\int_{(-1,0)}|x|\Pi(\D x)<\infty$ then 
\[
W\in C^{k+1}(0,\infty)\Leftrightarrow \overline\Pi\in C^{k}(0,\infty).
\]
\end{enumerate}
 \end{conjecture}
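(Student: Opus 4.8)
The plan is to reduce everything to a single renewal identity for $W$ and then to bootstrap the smoothness of $W$ from it, the inversion of the identity being carried out separately in the three path‑type regimes. \emph{Reduction and the master identity.} By (\ref{W_P}) and Lemma~\ref{LEMMA-general-W-rel}, $W^{(q)}(x)=\E^{\Phi(q)x}W_{\Phi(q)}(x)$ with $W_{\Phi(q)}$ the $0$‑scale function of $(X,P^{\Phi(q)})$; since $\psi'_{\Phi(q)}(0+)=\psi'(\Phi(q))>0$ as soon as $q>0$ or $\psi'(0+)<0$, since $\E^{\Phi(q)x}$ is $C^\infty$, and since the exponentially tilted tail $\overline{\Pi}_{\Phi(q)}(x)=\E^{\Phi(q)x}\overline{\Pi}(x)-\Phi(q)\int_x^\infty\E^{\Phi(q)u}\overline{\Pi}(u)\,\D u$ lies in $C^{k}(0,\infty)$ if and only if $\overline{\Pi}$ does, it is enough to prove the equivalences for $q=0$ and $\psi'(0+)\ge0$. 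In that case, decomposing $P_x(\tau^-_0<\infty)$ into a creeping part and a jumping part exactly as in the derivation of (\ref{pre-renewal}), that is using (\ref{one sided down q=0}), (\ref{creep-q}) and the $a\uparrow\infty$ limit of (\ref{return to,}), gives for $x>0$
\[
\frac{\sigma^{2}}{2}\,W'(x)+\psi'(0+)\,W(x)+\int_0^x W'(x-y)\,\overline{\overline{\Pi}}(y)\,\D y=1 ,
\]
which says precisely that $W$ is the renewal function of the descending ladder height subordinator $\widehat H$ of (\ref{WHfact})--(\ref{C}): a subordinator with killing rate $\psi'(0+)\vee0$, drift $\sigma^{2}/2$, and L\'evy measure having tail $\overline{\overline{\Pi}}$, hence density $\overline{\Pi}$, cf.\ (\ref{tailmeasure}) and (\ref{a})--(\ref{b}). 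All smoothness claims will be read off from this one identity, and in every case the converse implication is obtained by reading it backwards (solving for $\overline{\overline{\Pi}}$ and using $\overline{\Pi}=-\overline{\overline{\Pi}}'$).

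\emph{The Gaussian and bounded variation regimes.} If $\sigma>0$ the coefficient of $W'$ is non‑zero, and after an integration by parts and division the identity becomes a renewal equation of the second kind whose kernel is, up to linear and constant terms, one antiderivative of $\overline{\overline{\Pi}}$ --- hence vanishing at the origin and of class $C^{k+2}(0,\infty)$ precisely when $\overline{\Pi}\in C^{k}(0,\infty)$; analysing its convolution‑series solution then places $W'$ in $C^{k+2}$, i.e.\ $W\in C^{k+3}$. If instead $\sigma=0$ and $\int_{(-1,0)}|x|\Pi(\D x)<\infty$ then $\overline{\overline{\Pi}}(0)<\infty$, $\widehat H$ is a killed compound Poisson subordinator, and an integration by parts turns the identity into $W=1+W\star(\delta^{-1}\overline{\Pi})$; because the free term is constant, $W$ gains one derivative over the kernel, so $W\in C^{k+1}$ exactly when $\overline{\Pi}\in C^{k}(0,\infty)$. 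These are essentially the arguments of \cite{CKS} behind Theorems~\ref{II} and~\ref{III}; the real work at this stage is to remove the auxiliary hypotheses there --- the restriction on the Blumenthal--Getoor index, respectively the bound $\pi(x)\le C|x|^{-1-\alpha}$ --- which are used only to confine the convolution powers of the renewal kernel to a fixed polynomial scale near the origin and ought to be dispensable by a more careful estimation allowing kernels with an integrable singularity of order up to $|x|^{-1}$ times a slowly varying factor.

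\emph{The unbounded variation, purely discontinuous regime.} If $\sigma=0$ and $\int_{(-1,0)}|x|\Pi(\D x)=\infty$ then $\overline{\overline{\Pi}}(0)=\infty$, neither reduction above is available, and the master identity becomes a Volterra equation of the \emph{first} kind for $W'$,
\[
\int_0^x W'(x-y)\,\overline{\overline{\Pi}}(y)\,\D y=1-\psi'(0+)W(x) ,
\]
whose kernel $\overline{\overline{\Pi}}$ lies in $C^{k+1}(0,\infty)$ whenever $\overline{\Pi}\in C^{k}(0,\infty)$ but has an integrable singularity at the origin. The plan is to invert this Abel‑type equation via the resolvent (reciprocal) kernel $\rho$ characterised by $\widehat\rho=1/\widehat{\overline{\overline{\Pi}}}$ --- a distribution of positive fractional order --- so that $W'$ is recovered as a fractional‑derivative transform of $1-\psi'(0+)W$, and then to run an inductive regularity argument in which, away from the origin, $\rho$ inherits the smoothness of $\overline{\Pi}$ through the higher‑order corrections generated by the non‑singular part of $\overline{\overline{\Pi}}$, so that $W'$ is upgraded from $C^{1}$ (the strongest a priori bound, from Lemma~\ref{C1}) to $C^{k+1}$ and hence $W\in C^{k+2}$.

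The crux, and the reason this case has so far resisted analysis, is that a single naive fractional differentiation appears to lose as many derivatives as it gains: extracting the required net gain of two derivatives forces one to exploit that the right‑hand side $1-\psi'(0+)W$ is not an arbitrary $C^{1}$ function but is tied to the solution, together with a matched treatment of the singular behaviour of $\rho$ at $0$, whose order is governed by the (a priori unknown, possibly non‑coincident) lower and upper indices of $X$ at $0$. The most promising route seems to be a splitting $\overline{\overline{\Pi}}=K_{\mathrm{sing}}+K_{\mathrm{sm}}$ in which $K_{\mathrm{sing}}$ is an explicit stable‑type comparison kernel with classical reciprocal kernel and $K_{\mathrm{sm}}$ is smooth with a milder singularity, reducing the inversion to a perturbation of the pure stable case; making that perturbation respect the $C^{k}$‑scale \emph{uniformly up to the origin} is the step I expect to be genuinely hard, and the place where a new idea --- rather than a refinement of \cite{CKS} --- may well be needed; a possible alternative is to play the descending‑ladder renewal equation off against the excursion‑from‑the‑maximum representation $W'_{+}=n(\overline{\epsilon}>\cdot)\,W$ of (\ref{landrder}).
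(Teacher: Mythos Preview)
The statement you are addressing is Conjecture~\ref{DC}, i.e.\ an \emph{open problem}; the paper does not prove it and gives no argument to compare your proposal against. What precedes the conjecture in the text is precisely the pair of partial results (Theorems~\ref{II} and~\ref{III} from \cite{CKS}) that you yourself invoke, together with the explicit remark that their method ``cannot be applied'' to the $\sigma=0$, infinite-variation case. So there is no ``paper's own proof'' here, and your task was in effect to settle a conjecture.

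Read as a proof, your proposal has genuine gaps which you in fact identify yourself. In regimes (1) and (3) you reduce to the renewal equations of \cite{CKS} and then assert that the side conditions there --- the Blumenthal--Getoor restriction when $\sigma>0$, and the pointwise bound $\pi(x)\le C|x|^{-1-\alpha}$ with $\alpha<1$ when $X$ has bounded variation --- ``ought to be dispensable by a more careful estimation''. That is not an argument: those hypotheses are used in \cite{CKS} exactly to control the iterated-convolution series near the origin, and you give no mechanism for handling kernels whose singularity is $|x|^{-1}$ times a slowly varying factor (the borderline case your relaxation must cover). Absent such a mechanism, cases (1) and (3) of the conjecture remain open outside the parameter ranges already treated.

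For regime (2) you are entirely candid that you do not have a proof: the inversion of the first-kind Volterra equation via a ``reciprocal kernel'' $\rho$ and a stable comparison is a plausible line of attack, but the decisive step --- extracting a net gain of two derivatives uniformly up to the origin when the upper and lower indices of $X$ may differ --- is left as ``the place where a new idea may well be needed''. That is an honest assessment, and it means what you have written is a research programme rather than a proof. If you wish to pursue it, the two concrete obstacles to isolate are (i) a replacement for the index/growth hypotheses in \cite{CKS} that still controls $W^{\ast k}$ near $0$, and (ii) in the first-kind case, a quantitative smoothing estimate for convolution against $\overline{\overline{\Pi}}$ when $\overline{\overline{\Pi}}(0+)=\infty$ but $\overline{\Pi}\in C^k$; neither is available in the literature you cite.
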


On a final note we mention that D\"oring and Savov \cite{Doring_Savov} have obtained further results concerning smoothness for potential measures of subordinators which has implications for the smoothness of scale functions. A particular result of interest in their paper is understanding where smoothness breaks down when atoms are introduced into the L\'evy measure.

\chapter{Engineering scale functions}

\section{Construction through the Wiener-Hopf factorization}

Recall that the spatial Wiener-Hopf factorization can be expressed through the identity
\begin{equation}
\psi(\lambda) = (\lambda-\Phi(0))\phi(\lambda),
\label{allstemsfromthis}
\end{equation}
for $\lambda\geq 0$
where $\phi(\lambda)$ is the Laplace exponent of the descending ladder height process. The killing rate, drift coefficient and L\'evy measure associated with $\phi$ are given by $ \psi'(0+)\vee 0$, $\sigma^2/2$ and 
\[
\Upsilon(x,\infty) = \E^{\Phi(0)x}\int_x^\infty \E^{-\Phi(0)u}\Pi(-\infty,-u){\rm d} u,\quad \text{for}\ x>0,
\]
respectively.
Moreover, from this decomposition one may derive that 
\begin{equation}
W(x)=\E^{\Phi(0)x}\int^x_{0}\E^{-\Phi(0)y}\int^{\infty}_{0}d t\cdot P(\widehat{H}_t \in {\rm d} y),\qquad x\geq 0.
\label{w-oo}
\end{equation}

This relationship between scale functions and potential measures of subordinators lies at the heart of the approach we shall describe in this section. Key to the method is the fact that  one can find in the literature several subordinators for which the potential measure is known explicitly. Should these subordinators turn out to be the descending ladder height process of a spectrally negative L\'evy process which does not drift to $-\infty$, i.e. $\Phi(0) =0$, then this would give an exact expression for its scale function.
Said another way, we can build scale functions using the following approach.

\bigskip

\noindent\textit{Step 1.} Choose a subordinator, say $\widehat{H},$ with Laplace exponent $\phi,$ for which one knows its potential measure or equivalently, in light of (\ref{equiv}), one can explicitly invert the Laplace transform $1/\phi(\theta)$.

\bigskip

\noindent \textit{Step 2.} Verify whether the relation $$\psi(\lambda):=\lambda\phi(\lambda),\qquad \lambda\geq 0,$$ defines the Laplace exponent of a spectrally negative L\'evy process. 

\bigskip

Of course, for this method to be useful we should first provide necessary and sufficient conditions for a subordinator to be the downward ladder height process of some spectrally negative L\'evy process or equivalently a verification method for the Step 2.

The following theorem, taken from Hubalek and Kyprianou \cite{HK2007} (see also Vigon \cite{Vig2002}), shows how one may identify  a spectrally negative L\'evy process $X$ (called the {\it parent process}) for a given descending ladder height process $\widehat{H}$.
The proof follows by a straightforward manipulation of the Wiener-Hopf factorization (\ref{allstemsfromthis}).

\begin{theorem}\label{th:HK08}
Suppose that $\widehat H$ is a subordinator, killed at rate $\kappa\geq 0$, with drift $\delta$ and
L\'evy measure $\Upsilon$ which is absolutely continuous with non-increasing density. Suppose further that $\varphi\geq 0$ is 
given such that $\varphi\kappa=0$. Then
there exists a spectrally negative L\'evy process  $X$, henceforth referred to as the `parent process', 
such that for all $x\geq 0$, $\mathbb{P}(\tau^+_x <\infty)=\E^{-\varphi x}$ and whose descending ladder 
height process is precisely the process $\widehat H$. The L\'evy triple $(a,\sigma, \Pi)$ of the parent process 
is uniquely identified as follows.
The Gaussian coefficient is given by $\sigma = \sqrt{2\delta}$. The L\'evy measure is given by 
\begin{equation}
\Pi(-\infty, -x) = \varphi\Upsilon(x,\infty)+ \frac{ {\rm d} \Upsilon}{\D x}(x).
\label{density}
\end{equation}
Finally 
\begin{equation}
a =\int_{(-\infty, -1)} x\Pi(\D x)  - \kappa
\end{equation}
if $\varphi=0$ and otherwise when $\varphi>0$ 
\begin{equation}
a = \frac{1}{2}\sigma^2\varphi + \frac{1}{\varphi}\int_{(-\infty, 0)}(\E^{\varphi x} -1- x\varphi\mathbf{1}_{\{x>-1\}})\Pi(\D x).
\end{equation}
In all cases, the Laplace exponent of the parent process is also given by
\begin{equation}
\psi(\theta) = (\theta-\varphi)\phi(\theta),
\label{WHwithupkilling}
\end{equation}
for $\theta \geq 0$ where $\phi(\theta) = - \log \mathbb{E}(\E^{-\theta \widehat{H}_1})$.

Conversely, the killing rate, drift and L\'evy measure of the descending ladder height process associated to a given spectrally negative L\'evy process $X$ satisfying $\varphi = \Phi(0)$ are also given by the above formulae.
\end{theorem}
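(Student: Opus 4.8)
The plan is to reduce the whole statement to the spatial Wiener--Hopf factorization (\ref{allstemsfromthis}) and the uniqueness thereof: one manufactures a candidate L\'evy triple, checks by hand that its Laplace exponent factorizes as $(\theta-\varphi)\phi(\theta)$, and then reads off every claimed identity from facts already recorded in the text. First I would take the candidate Gaussian coefficient to be $\sigma=\sqrt{2\delta}$ and define a candidate L\'evy measure $\Pi$ on $(-\infty,0)$ through its tail (\ref{density}), that is $\Pi(-\infty,-x)=\varphi\,\Upsilon(x,\infty)+\upsilon(x)$ for $x>0$, where $\upsilon$ is the (a priori merely non-increasing) density of $\Upsilon$. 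The first point to verify is that this is genuinely the tail of a measure: both $x\mapsto\varphi\Upsilon(x,\infty)$ and $x\mapsto\upsilon(x)$ are non-increasing, hence so is their sum, and this is precisely where the hypothesis that $\Upsilon$ has a non-increasing density enters and cannot be dropped. Next one checks the integrability $\int_{(-\infty,0)}(1\wedge x^2)\Pi(\D x)<\infty$, equivalently $\int_0^1 x\,\Pi(-\infty,-x)\,\D x<\infty$: the $\varphi\Upsilon(x,\infty)$ contribution is controlled by the subordinator condition $\int_{(0,\infty)}(1\wedge x)\Upsilon(\D x)<\infty$, and the $\upsilon$ contribution by an integration by parts using $x\Upsilon(x,\infty)\to0$ as $x\downarrow0$.

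With the triple $(a,\sigma,\Pi)$ in hand, $a$ being chosen according to the two displayed formulae, the computational heart of the argument is to show that the L\'evy--Khintchine Laplace exponent (\ref{laplaceLK}) of this triple coincides, for $\theta\ge0$, with $(\theta-\varphi)\phi(\theta)$, where $\phi$ is given by (\ref{C}). This is a direct but fiddly integration by parts: one expands $(\theta-\varphi)\bigl(\kappa+\delta\theta+\int_{(0,\infty)}(1-\E^{-\theta x})\Upsilon(\D x)\bigr)$, uses $\varphi\kappa=0$, and re-expresses the subordinator integrals first through the tail $\Upsilon(x,\infty)$ and then through $\Pi$. The two cases for $a$ serve precisely to absorb the leftover term linear in $\theta$ so that the truncation $\mathbf{1}_{\{|x|<1\}}$ in (\ref{laplaceLK}) matches exactly; when $\varphi>0$ one may instead note that $\psi(\varphi)=(\varphi-\varphi)\phi(\varphi)=0$ (equivalently $\int_{(-\infty,0)}\E^{\varphi x}\Pi(\D x)<\infty$) and simply solve for $a$, which yields the tidier second formula. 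I expect this bookkeeping, and in particular pinning down $a$ exactly rather than up to an unidentified constant, to be the main obstacle.

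Once $\psi(\theta)=(\theta-\varphi)\phi(\theta)$ is established the rest is quick. A L\'evy--Khintchine triple determines a L\'evy process $X$, and since $\Pi$ is carried by $(-\infty,0)$ it is spectrally negative. To see that $\Phi(0)=\varphi$, observe that $\varphi$ is a root of $\psi$ and that $\psi'(0+)=\kappa-\varphi\phi'(0+)$; since $\varphi\kappa=0$ and $\phi'>0$, this is non-negative when $\varphi=0$ and negative when $\varphi>0$, so in either case $\varphi$ is the largest non-negative root, i.e. $\Phi(0)=\varphi$. Then (\ref{firstpassagesub}) with $q=0$ gives $\mathbb{P}(\tau^+_x<\infty)=\E^{-\varphi x}$. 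Finally, by (\ref{WHfact}) the descending ladder height process of $X$ has Laplace exponent $\psi(\theta)/(\theta-\Phi(0))=\phi(\theta)$, so with the canonical normalization of local time at the infimum it is precisely $\widehat{H}$; the formula $\sigma=\sqrt{2\delta}$ drops out along the way. The converse assertion is then just the remark that the killing rate, drift and L\'evy measure of the descending ladder height process of a given spectrally negative $X$ with $\varphi=\Phi(0)$ are, by (\ref{tailmeasure}) together with the identifications $\xi=\sigma^2/2$ and $\kappa=\psi'(0+)\vee0$ recorded after (\ref{C}), exactly the data above, and that differentiating the tail in (\ref{tailmeasure}) recovers (\ref{density}).
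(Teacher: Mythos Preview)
Your plan is correct and is exactly the route the paper indicates: the paper does not give a detailed argument but simply states that ``the proof follows by a straightforward manipulation of the Wiener--Hopf factorization (\ref{allstemsfromthis}),'' i.e.\ one manufactures the triple, checks by integration by parts that its Laplace exponent equals $(\theta-\varphi)\phi(\theta)$, and then reads off $\Phi(0)=\varphi$ and the ladder identification from (\ref{WHfact}) and the surrounding discussion. Your sketch fleshes this out accurately; in particular, the integration-by-parts verification you describe is carried out explicitly in the paper (in the bounded-variation special case) in the proof of Corollary~\ref{variation?}, and your handling of $\Phi(0)=\varphi$ via the sign of $\psi'(0+)=\kappa-\varphi\phi'(0+)$ together with $\varphi\kappa=0$ is the right way to close that loop.
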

   
Note that when describing parent processes later on in this text, for practical reasons we shall prefer to specify the triple $(\sigma, \Pi, \psi)$ instead of $(a,\sigma, \Pi)$. However both triples provide an equivalent amount of information. It is also worth making an observation for later reference concerning the path variation of the 
process $X$ for a given a descending ladder height process $H$.

\begin{corollary}\label{variation?}
 Given a killed subordinator $H$ satisfying the conditions of the previous Theorem, 
\begin{description}
 \item[(i)]  the parent process has paths of unbounded variation if and only if $\Upsilon(0,\infty)=\infty$ or 
 $\delta>0$,
\item[(ii)] if $\Upsilon(0,\infty)=\lambda <\infty$ then the parent process necessarily decomposes in the form 
\begin{equation}\label{driftterm}
X_t=(\kappa+\lambda-\delta\varphi)t+\sqrt{2\delta}B_t-S_t,
\end{equation}
where $B=\{B_t: t\geq 0\}$ is a Brownian motion, $S=\{S_t : t\geq 0\}$ is an independent driftless 
subordinator with L\'evy measure $\nu$ satisfying $\nu(x,\infty) = \Pi(-\infty, -x)$.
\end{description}
\end{corollary}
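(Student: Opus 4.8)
The plan is to handle the two parts in turn, in each case reducing everything to the formulae of Theorem~\ref{th:HK08} and the standard characterisation of bounded-variation paths for a spectrally negative L\'evy process, namely that $X$ has paths of bounded variation if and only if $\sigma=0$ and $\int_{(-1,0)}|x|\Pi(\D x)<\infty$.

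For part (i), since Theorem~\ref{th:HK08} gives $\sigma=\sqrt{2\delta}$, the Gaussian half of the criterion is simply $\delta=0$, and it remains to translate $\int_{(-1,0)}|x|\Pi(\D x)<\infty$ into a statement about $\Upsilon$. First I would record the elementary fact that, for any L\'evy measure, $\int_{(-1,0)}|x|\Pi(\D x)<\infty$ if and only if $\int_0^1\overline{\Pi}(y)\,\D y<\infty$, where $\overline{\Pi}(y)=\Pi(-\infty,-y)$; this is a one-line Fubini argument. Substituting $\overline{\Pi}(y)=\varphi\Upsilon(y,\infty)+\frac{\D\Upsilon}{\D y}(y)$ from (\ref{density}) and integrating term by term, the contribution $\varphi\int_0^1\Upsilon(y,\infty)\,\D y$ is always finite (again by Fubini, since $\int_{(0,1)}y\,\Upsilon(\D y)<\infty$ and $\Upsilon[1,\infty)<\infty$ for the L\'evy measure of a subordinator), while $\int_0^1\frac{\D\Upsilon}{\D y}(y)\,\D y=\Upsilon(0,1]$. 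Hence $\int_{(-1,0)}|x|\Pi(\D x)<\infty$ if and only if $\Upsilon(0,1]<\infty$, equivalently (since always $\Upsilon[1,\infty)<\infty$) if and only if $\Upsilon(0,\infty)<\infty$. Combining with the Gaussian part, $X$ has paths of bounded variation if and only if $\delta=0$ and $\Upsilon(0,\infty)<\infty$; the contrapositive is exactly the assertion in (i).

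For part (ii), assume $\Upsilon(0,\infty)=\lambda<\infty$. By the analysis of part (i) we then have $\int_{(-1,0)}|x|\Pi(\D x)<\infty$, so the L\'evy--It\^o decomposition yields, almost surely and pathwise, $X_t=\gamma t+\sqrt{2\delta}\,B_t-S_t$ for an independent Brownian motion $B$ and an independent driftless subordinator $S$ whose L\'evy measure $\nu$ is the reflection of $\Pi$, that is $\nu(x,\infty)=\Pi(-\infty,-x)$; only the constant $\gamma\in\mathbb{R}$ remains to be identified. I would pin it down by matching Laplace exponents. On the one hand this decomposition gives
\[
\psi(\theta)=\gamma\theta+\delta\theta^2-\theta\int_0^\infty\E^{-\theta x}\overline{\Pi}(x)\,\D x=\gamma\theta+\delta\theta^2-\varphi\lambda-(\theta-\varphi)\widehat{\Upsilon}(\theta),
\]
where $\widehat{\Upsilon}(\theta)=\int_{(0,\infty)}\E^{-\theta x}\Upsilon(\D x)$ and the second equality uses (\ref{density}) together with integration by parts and $\Upsilon(0,\infty)=\lambda$. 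On the other hand, (\ref{WHwithupkilling}) and $\phi(\theta)=\kappa+\delta\theta+\lambda-\widehat{\Upsilon}(\theta)$ give
\[
\psi(\theta)=(\theta-\varphi)\phi(\theta)=\delta\theta^2+(\kappa+\lambda-\delta\varphi)\theta-\varphi(\kappa+\lambda)-(\theta-\varphi)\widehat{\Upsilon}(\theta).
\]
Here I would invoke the standing hypothesis $\varphi\kappa=0$ to replace $\varphi(\kappa+\lambda)$ by $\varphi\lambda$; the two expressions for $\psi$ then coincide exactly when $\gamma=\kappa+\lambda-\delta\varphi$, which is (\ref{driftterm}).

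The computations throughout are routine. The only points that call for a little care are the Fubini and integration-by-parts manipulations, since $\Upsilon$ (and hence $\nu$) may carry infinite mass near the origin, and the use of $\varphi\kappa=0$ at the right moment so that the killing term of $\phi$ does not leak spuriously into the drift coefficient of $X$; I do not anticipate any genuine obstacle.
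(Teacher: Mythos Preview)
Your proof is correct and follows essentially the same route as the paper: part (i) via the standard bounded-variation criterion combined with $\sigma=\sqrt{2\delta}$ and (\ref{density}), and part (ii) by matching Laplace exponents of the proposed decomposition against $(\theta-\varphi)\phi(\theta)$ using integration by parts. In fact you are slightly more careful than the paper in flagging where the hypothesis $\varphi\kappa=0$ enters; the paper uses it implicitly in the final algebraic step.
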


\begin{proof} The path variation of $X$ follows directly from (\ref{density}) and the fact that $\sigma=\sqrt{2\delta}$. Also using (\ref{density}), the Laplace exponent of the decomposition (\ref{driftterm}) can be computed as follows with the help of an integration by parts;
\begin{eqnarray*}
\lefteqn{(\kappa + \lambda -\delta\varphi)\theta + \delta\theta^2 - \varphi\theta\int_0^\infty \E^{-\theta x}\Upsilon(x,\infty)\D x - \theta\int_0^\infty \E^{-\theta x}\frac{{\rm d}\Upsilon}{\D x}(x)\D x}&&\\
&=& (\kappa + \Upsilon(0,\infty) -\delta\varphi)\theta + \delta\theta^2 - \varphi\int_0^\infty (1-\E^{-\theta x})\frac{{\rm d}\Upsilon}{\D x}(x)\D x - \theta\int_0^\infty \E^{-\theta x}\frac{{\rm d}\Upsilon}{\D x}(x)\D x\\
&=&(\theta-\varphi)\left(\kappa + \delta\theta + \int_0^\infty (1-\E^{-\theta x})\frac{{\rm d}\Upsilon}{\D x}(x)\D x\right).
\end{eqnarray*}
This agrees with the Laplace exponent $\psi(\theta)=(\theta-\varphi)\phi(\theta)$ of the parent process constructed in Theorem \ref{th:HK08}. 
\hfill$\square$\end{proof}

\begin{example} 
 Consider a spectrally negative L\'evy process which is the parent process of a (killed) tempered stable process. That is to say a subordinator with Laplace exponent given by
\begin{equation*}\label{kTS}
\phi(\theta)=\kappa+c\Gamma(-\alpha)
\left((\gamma+\theta)^\alpha-\gamma^\alpha\right),
\end{equation*}
where $\alpha\in(-1,1)\setminus\{0\}$, $\gamma\geq 0$ and $c>0$. For $\alpha, \beta>0,$ we will denote by
\[
\mathcal{E}_{\alpha,\beta}(x) = \sum_{n\geq 0}\frac{x^n}{\Gamma(n\alpha + \beta)},
\] 
the two parameter Mittag-Leffler function, see (\ref{MLfct}) for a precise definition of it and a survey of some of its properties. The following well known transforms 
\begin{equation}
\int_0^\infty \E^{-\theta x}x^{\alpha-1}\mathcal{E}_{\alpha,\alpha}(\lambda x^\alpha)\D x=\frac1{\theta^\alpha-\lambda},
\end{equation}
and
\begin{equation}
\int_0^\infty \E^{-\theta x}\lambda^{-1}x^{-\alpha-1}\mathcal{E}_{-\alpha,-\alpha}(\lambda^{-1} x^{-\alpha})\D x=
\frac\lambda{\lambda-\theta^\alpha}-1,
\end{equation}
valid for $\alpha>0,$ resp.\ $\alpha<0,$ together with 
the well-known rules for Laplace transforms concerning primitives and tilting allow us to quickly deduce the following expressions for the scale functions associated to the parent process with Laplace exponent given by (\ref{WHwithupkilling}) such that $\kappa\varphi =0$.

If $0<\alpha<1$ 
then
\begin{equation*}
W(x)=-\frac{\E^{\varphi x}}{c\Gamma(-\alpha)}\int_0^x\E^{-(\gamma+\varphi)y}y^{\alpha-1}
\mathcal{E}_{\alpha,\alpha}\left(\frac{\kappa+c\Gamma(-\alpha)\gamma^\alpha y^\alpha}{c\Gamma(-\alpha)}\right)\D y.
\end{equation*}
If $-1<\alpha<0$, then
\begin{eqnarray*}
\hspace{-1cm}W(x)&=&
\frac{\E^{\varphi x}}{\kappa+c\Gamma(-\alpha)\gamma^\alpha}\notag\\
&&+\frac{c\Gamma(-\alpha)\E^{\varphi x}}{(\kappa+c\Gamma(-\alpha)\gamma^\alpha)^2}\int_0^x\E^{-(\gamma+\varphi)y}y^{-\alpha-1}
\mathcal{E}_{-\alpha,-\alpha}\left(\frac{c\Gamma(-\alpha)y^{-\alpha}}{\kappa +c\Gamma(-\alpha)\gamma^\alpha}\right)\D y.
\end{eqnarray*}
\end{example}

\begin{example}
Let $c>0,$ $\nu\geq 0$ and $\theta\in(0,1)$ and $\phi$ be defined by
$$\phi(\lambda)=\frac{c\lambda\Gamma(\nu+\lambda)}{\Gamma(\nu+\lambda+\theta)},\qquad \lambda\geq 0.$$
 Elementary but  tedious calculations using the Beta integral allow to prove that $\phi$ is the Laplace exponent of some subordinator, $\widehat{H}.$ Its characteristics are $\kappa=0, \delta =0,$ $$\overline{\Upsilon}(x):=\Upsilon(x,\infty)=\frac{c}{\Gamma(\theta)}\E^{-x(\nu+\theta-1)}\left(\E^{x}-1\right)^{\theta-1},\qquad x>0.$$ Moreover, we have that $\overline{\Pi}_{\widehat{H}}$ is non-increasing and log-convex, so $\Pi_{\widehat{H}}$ has a non-increasing density. It follows from Theorem \ref{th:HK08} that there exists an oscillating spectrally negative L\'evy process, say $X,$ whose Laplace exponent is $\psi(\lambda)=\lambda\phi(\lambda),$ $\lambda\geq 0,$ with $\sigma=0,$ and L\'evy density given by $-\D^2\overline{\Upsilon}/\D x^2.$ Using again the Beta integral we can obtain the potential measure of $\widehat{H},$ and as a consequence the scale function associated to $X$, which turns out to be given by
$$W(x)=\frac{\Gamma(\nu+\theta)}{c\Gamma(\nu)}+\frac{\theta}{c\Gamma(1-\theta)}\int^x_{0}\left\{\int^\infty_{y}\frac{\E^{z(1-\nu)}}{(\E^{z}-1)^{1+\theta}}\D z\right\}\D y,\qquad x\geq 0.$$

The integral that defines this scale function can be calculated using the hypergeometric series. The particular case where $\nu=1,$ $c=\Gamma(1+\theta)$  appears in Chaumont, Kyprianou and Pardo \cite{CKP2007}

An interesting feature of this example is that it comes together with another example. Indeed, observe that the first derivative of $W$ is given by,
$$W^{\prime}(x)=\int^\infty_{x}\frac{\E^{z(1-\nu)}}{(\E^{z}-1)^{1+\theta}}\D z,\qquad x\geq 0,$$ which is non-increasing, convex and such that the  second derivative satisfies the integrability condition $\int^{\infty}_{0}(1\wedge x)|W^{\prime\prime}(x)|\D x<\infty.$ So, $|W^{\prime\prime}(x)|$ defines the L\'evy density of some subordinator. More precisely, the function defined by 
\begin{equation*}
\begin{split}
\phi^*(\lambda)&:=\frac{\lambda}{\phi(\lambda)}\\
&=\lambda\int^\infty_{0}\E^{-\lambda x}dW(x)\\
&=\frac{\Gamma(\nu+\theta)}{c\Gamma(\nu)}+\frac{\theta}{c\Gamma(1-\theta)}\int^\infty_{0}(1-\E^{-\lambda x})\frac{\E^{x(1-\nu)}}{(\E^{x}-1)^{1+\theta}}\D x,\ \lambda\geq 0,
\end{split}
\end{equation*}
 is the Laplace exponent of some subordinator $H^*,$ which in turn has a L\'evy measure with a non-increasing density. Hence $$\psi^*(\lambda)=\lambda\phi^*(\lambda)=\frac{\lambda^2}{\phi(\lambda)},\qquad \lambda\geq 0,$$ defines the Laplace exponent of a spectrally negative L\'evy process that drifts to $\infty.$ It can be easily verified by an integration by parts that the associated scale function is given by $$W^*(x)=\frac{c}{\Gamma(\theta)}\int^x_{0}\E^{-z(\nu+\theta-1)}(\E^{z}-1)^{\theta-1}\D z=\int^x_{0}\overline{\Pi}_{\widehat{H}}(z)\D z,\quad x\geq 0.$$
\end{example}

\bigskip 

The method described in the previous example for generating two examples of scale functions simultaneously can be formalized into a general theory that applies to a large family of subordinators, namely that of special subordinators.

\section{Special and conjugate scale functions}

In this section we introduce the notion of a special Bernstein functions and special subordinators and use the latter to justify the existence of pairs of so called  {\it conjugate scale functions} which have a particular analytical structure. We refer the reader to the lecture notes of Song and Vondra\v{c}ek \cite{SV2007}, the recent book by Schilling, Song and Vondra\v{c}ek \cite{SSV} and the books of  Berg and Gunnar \cite{BCFG1975}  and Jacobs \cite{J1} for a more complete account of the theory of Bernstein functions and their application in potential analysis.

Recall that the class of Bernstein functions coincides precisely with the class of Laplace exponents of possibly killed subordinators. That is to say, a general Bernstein function takes the form 
\begin{equation}
\phi(\theta) = \kappa + \delta\theta + \int_{(0,\infty)} (1 - \E^{-\theta x})\Upsilon (\D x), \text{ for }\theta\geq 0,
\label{Bernstein}
\end{equation}
where $\kappa\geq 0 $, $\delta\geq 0$ and $\Upsilon$ is a measure concentrated on $(0,\infty)$ such that $\int_{(0,\infty)}(1\wedge x)\Upsilon(\D x)<\infty$.

\begin{definition}
Suppose that $\phi(\theta)$ is a Bernstein function, then it is called a {\it special Bernstein function} if 
\begin{equation}
 \phi(\theta) = \frac{\theta}{\phi^*(\theta)},\qquad \theta\geq 0,
\label{SB}
\end{equation}
where $\phi^*(\theta)$ is another Bernstein function. In this case we will say that the Bernstein function $\phi^*$ is conjugate to $\phi$. Accordingly a possibly killed subordinator is called a special subordinator if its Laplace exponent is a special Bernstein function.
\end{definition}
It is apparent from its definition that $\phi^*$ is a special Bernstein function and $\phi$ is conjugate to $\phi^*$.   In \cite{Haw1975} and \cite{SV2007a} it is shown that a sufficient condition for $\phi$ to be a special subordinator is that $\Upsilon(x,\infty)$ is log-convex on $(0,\infty)$.

For conjugate pairs of special Bernstein functions $\phi$ and $\phi^*$ we shall write in addition to (\ref{SB})
\begin{equation}
 \phi^*(\theta) = \kappa^* + \delta^*\theta + \int_{(0,\infty)}(1 - \E^{-\theta x})\Upsilon^*(\D x),\qquad \theta\geq 0,
 \label{Bernstein*}
\end{equation}
where necessarily $\Upsilon^*$ is a measure concentrated on $(0,\infty)$ satisfying $\int_{(0,\infty)}(1\wedge x)\Upsilon^*(\D x)<\infty$. One may express the triple $(\kappa^*, \delta^*, \Upsilon^*)$ in terms of related quantities coming from the conjugate $\phi$. Indeed it is known that 
\[
 \kappa^* = \left\{
\begin{array}{ll} 
0, & \kappa>0, \\
\left(\delta + \int_{(0,\infty)} x\Upsilon(\D x)\right)^{-1}, & \kappa = 0;
\end{array}
\right.
\]
and
\begin{equation}
 \delta^* = \left\{\begin{array}{ll}
0, & \delta>0 \text{ or } \Upsilon(0,\infty)=\infty, \\
\left( \kappa + \Upsilon(0,\infty)\right)^{-1}, & \delta =0 \text{ and }\Upsilon(0,\infty)<\infty.
\end{array}\right.
\label{d*}
\end{equation}
 Which implies in particular that $\kappa^*\kappa=0=\delta^*\delta.$
In order to describe the measure $\Upsilon^*$ let us denote by 
 $W(\D x)$ 
the potential measure of $\phi$.  (This choice of notation will of course prove to be no coincidence). Then we have that $W$ necessarily satisfies
\[
 W(\D x) = \delta^*\delta_0(\D x) + \{\kappa^* + \Upsilon^*(x,\infty)\}\D x,\quad \text{ for }x\geq 0,
\]
where $\delta_0({\rm d}x)$ is the Dirac measure at zero.
Naturally, if $W^*$ is the potential measure of $\phi^*$ then we may equally describe it in terms of $(\kappa, \delta, \Upsilon)$.
In fact it can be easily shown that a necessary and sufficient condition for a Bernstein function to be special is that its potential measure has a density on $(0,\infty)$
which is non-increasing and integrable in the neighborhood of the origin.

\bigskip

We are interested in constructing a parent process whose descending ladder height process is a special subordinator. The following theorem and corollary are now evident given the discussion in the current and previous sections.

\begin{theorem}\label{specialscale}
 For conjugate special Bernstein functions $\phi$ and $\phi^*$ satisfying (\ref{Bernstein}) and (\ref{Bernstein*}) respectively, where  $\Upsilon$ is absolutely continuous with non-increasing density, there exists a spectrally negative L\'evy process that does not drift to $-\infty$, 
whose Laplace exponent is described by 
\begin{equation}
 \psi(\theta) = \frac{\theta^2}{\phi^*(\theta)} = \theta\phi(\theta),\text{ for }\theta\geq 0,
 \label{parent}
\end{equation}
and
whose scale function is a concave function and is given by 
\begin{equation}
 W(x) = \delta^* + \kappa^*x + \int_0^x \Upsilon^*(y,\infty)\D y.
 \label{specialW}
\end{equation}
\end{theorem}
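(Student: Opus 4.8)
\noindent The plan is to realise $X$ as the parent process of $\widehat H$ by way of Theorem~\ref{th:HK08}, to identify its scale function with the potential function of $\phi$, and then to read off both the explicit formula (\ref{specialW}) and the concavity from the structure of conjugate special Bernstein functions recalled above.

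First I would take $\widehat H$ to be the possibly killed subordinator with Laplace exponent $\phi$, with triple $(\kappa,\delta,\Upsilon)$ in which, by hypothesis, $\Upsilon$ is absolutely continuous with non-increasing density. These are exactly the standing assumptions of Theorem~\ref{th:HK08}, so, applying that result with the normalisation $\varphi=0$ (legitimate because then $\varphi\kappa=0$ holds automatically), one obtains a spectrally negative L\'evy process $X$ whose descending ladder height process is precisely $\widehat H$ and which satisfies $\mathbb{P}(\tau^+_x<\infty)=1$ for all $x\geq0$. The latter is equivalent to $\Phi(0)=0$, i.e.\ $X$ does not drift to $-\infty$. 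By (\ref{WHwithupkilling}) with $\varphi=0$ its Laplace exponent is $\psi(\theta)=\theta\phi(\theta)$, and inserting the conjugacy relation $\phi(\theta)=\theta/\phi^*(\theta)$ gives $\psi(\theta)=\theta^2/\phi^*(\theta)$, which is (\ref{parent}).

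Next I would compute $W$. Since $\Phi(0)=0$, the representation (\ref{w-oo}) reduces to $W(x)=\int_0^\infty \D t\cdot P(\widehat H_t\in[0,x])$, so that $W$ is the cumulative distribution function of the potential measure of $\widehat H$, equivalently of $\phi$. That potential measure was displayed above (in the discussion of conjugate special pairs) to equal $\delta^*\delta_0(\D x)+\{\kappa^*+\Upsilon^*(x,\infty)\}\D x$; integrating over $[0,x]$ yields $W(x)=\delta^*+\kappa^* x+\int_0^x\Upsilon^*(y,\infty)\D y$, which is (\ref{specialW}). Alternatively, one can bypass (\ref{w-oo}) and simply check by an integration by parts, using $\int_{[0,\infty)}\E^{-\theta x}W(\D x)=1/\phi(\theta)$, that the function on the right of (\ref{specialW}) has Laplace transform $\phi^*(\theta)/\theta^2=1/\psi(\theta)$, so that by uniqueness of Laplace transforms and right-continuity it is the scale function. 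Finally, on $(0,\infty)$ the function $W$ is the primitive of the non-increasing function $x\mapsto\kappa^*+\Upsilon^*(x,\infty)$ started from the value $\delta^*$, hence $W$ is concave on $[0,\infty)$; this completes the proof.

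I do not foresee a genuine obstacle: the substantive ingredients --- Theorem~\ref{th:HK08}, the link between scale functions and descending-ladder-height potentials from Section~\ref{scaledescending}, and the explicit potential measure of a conjugate special pair --- are all in place. The only points needing care are bookkeeping ones: verifying that the hypothesis imposed on $\Upsilon$ is exactly the one Theorem~\ref{th:HK08} requires (so that the parent process exists, with no Gaussian or drift contributions beyond those encoded in $\delta$ and $\kappa$), and observing that it is the choice $\varphi=0$ that forces ``$X$ does not drift to $-\infty$''. Once $\Phi(0)=0$ is secured the identification of $W$ with the $\phi$-potential is immediate, and concavity follows by inspection.
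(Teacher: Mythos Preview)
Your proposal is correct and follows exactly the route the paper has in mind: the paper does not give a separate proof but declares the theorem ``evident given the discussion in the current and previous sections'', and what you have written is precisely that discussion made explicit --- invoke Theorem~\ref{th:HK08} with $\varphi=0$ to produce the parent process, read off $\Phi(0)=0$ and $\psi(\theta)=\theta\phi(\theta)$, identify $W$ with the potential function of $\widehat H$ via (\ref{a}), and plug in the known form $W(\D x)=\delta^*\delta_0(\D x)+\{\kappa^*+\Upsilon^*(x,\infty)\}\D x$ of the potential measure of a special subordinator to obtain (\ref{specialW}) and concavity.
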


The assumptions of the previous theorem require only that  the L\'evy and potential measures associated to $\phi$ have a non-increasing density in $(0,\infty)$, respectively; this condition on the potential measure is equivalent to the existence of $\phi^*.$ If in addition it is  assumed that the potential density be a convex function, in light of the representation (\ref{specialW}), we can interchange the roles of $\phi$ and $\phi^*,$ respectively, in the previous theorem. The key issue to this additional assumption is that it ensures the absolute continuity of $\Upsilon^*$ with a non-increasing  density and hence that we can apply the Theorem \ref{th:HK08}. We thus  have the following Corollary.    

\begin{corollary}\label{corrspecialscale}
If conjugate special Bernstein functions $\phi$ and $\phi^*$ exist satisfying (\ref{Bernstein}) and (\ref{Bernstein*}) such that both $\Upsilon$ and $\Upsilon^*$ are absolutely continuous with non-increasing densities, then there exist a pair of scale functions $W$ and $W^*$, such that $W$ is concave, its first derivative is a convex function,  (\ref{specialW}) is satisfied, and 
\begin{equation}
 W^*(x) = \delta + \kappa x + \int_0^x \Upsilon(y,\infty)\D y.
 \label{specialW*}
\end{equation}
Moreover, the respective parent processes are given by (\ref{parent}) and
\begin{equation}\label{parent*}
\psi^*(\theta) = \frac{\theta^2}{\phi(\theta)} = \theta\phi^*(\theta).
\end{equation}
\end{corollary}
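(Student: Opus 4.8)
The plan is to deduce the corollary directly from Theorem~\ref{specialscale} by applying that result to each member of the conjugate pair $(\phi,\phi^*)$ in turn, and then to read off the extra regularity of $W$ and $W^*$ from the explicit integral formulae.

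First I would apply Theorem~\ref{specialscale} to the ordered pair $(\phi,\phi^*)$. The hypothesis that $\Upsilon$ is absolutely continuous with non-increasing density is precisely what that theorem asks for, so it produces a spectrally negative L\'evy process which does not drift to $-\infty$, with Laplace exponent $\psi(\theta)=\theta^2/\phi^*(\theta)=\theta\phi(\theta)$, which is (\ref{parent}), and with concave scale function $W(x)=\delta^*+\kappa^*x+\int_0^x\Upsilon^*(y,\infty)\,\D y$, which is (\ref{specialW}). To upgrade ``concave'' to ``convex first derivative'' I would now use the additional hypothesis that $\Upsilon^*$ is absolutely continuous with non-increasing density, call it $\upsilon^*$: this makes $y\mapsto\Upsilon^*(y,\infty)$ continuous, so that $W\in C^1(0,\infty)$ with $W'(x)=\kappa^*+\Upsilon^*(x,\infty)$, and it makes $W''(x)=-\upsilon^*(x)$ non-decreasing for almost every $x>0$, so that $W'$ is convex.

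Next I would run the same argument with the roles of $\phi$ and $\phi^*$ interchanged. This is legitimate because conjugacy is symmetric --- $\phi$ is conjugate to $\phi^*$, as was noted immediately after the definition of special Bernstein functions --- and the L\'evy measure of the leading member of the pair is now $\Upsilon^*$, which by hypothesis carries exactly the regularity required by Theorem~\ref{specialscale}. That theorem then yields a parent process with Laplace exponent $\psi^*(\theta)=\theta^2/\phi(\theta)=\theta\phi^*(\theta)$, which is (\ref{parent*}), and scale function equal to the starred/unstarred mirror image of (\ref{specialW}), namely $W^*(x)=\delta+\kappa x+\int_0^x\Upsilon(y,\infty)\,\D y$, which is (\ref{specialW*}); here I would invoke the formulae relating the conjugate triples $(\kappa,\delta,\Upsilon)$ and $(\kappa^*,\delta^*,\Upsilon^*)$ recorded in the discussion around (\ref{d*}), read with starred and unstarred quantities exchanged. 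Repeating the differentiation argument of the previous paragraph, now using that $\Upsilon$ itself has a non-increasing density, shows that $W^*$ is likewise concave with convex first derivative.

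I do not expect a genuine obstacle here: the statement is essentially a repackaging of Theorem~\ref{specialscale} applied twice. The only thing requiring care is the bookkeeping --- making sure the two separate hypotheses (non-increasing density for $\Upsilon$, and for $\Upsilon^*$) are each deployed in the correct application of Theorem~\ref{specialscale}, and transcribing the conjugate-triple relations correctly under the exchange of starred and unstarred symbols --- together with the routine observation that the property of not drifting to $-\infty$ is inherited in both cases.
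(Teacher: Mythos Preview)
Your proposal is correct and follows essentially the same route as the paper: apply Theorem~\ref{specialscale} once to $(\phi,\phi^*)$, then observe that the extra assumption of a non-increasing density for $\Upsilon^*$ permits swapping the roles and applying the theorem again to $(\phi^*,\phi)$. Your explicit derivation of the convexity of $W'$ from $W''(x)=-\upsilon^*(x)$ being non-decreasing is a clean way to spell out the claim the corollary makes but the paper leaves implicit.
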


It is important to mention that the converse of the latter Theorem and Corollary hold true but we omit a statement and proof for sake of brevity and refer the reader to the article \cite{KR2008} for further details.


For obvious reasons we shall henceforth refer to the scale functions identified in (\ref{Bernstein}) and (\ref{Bernstein*}) as {\it special scale functions}.
Similarly, when $W$ and $W^*$ exist then we refer to them as {\it conjugate (special) scale functions} and their respective parent processes are called {\it conjugate parent processes}. This conjugation can be seen by noting that thanks to (\ref{SB}).
\[
W*W^*(\D x) = \D x .
\]

\section{Tilting and parent processes drifting to $-\infty$}\label{genconst}
In this section we present two methods for which, given a scale function and associated parent process, it is possible to construct further examples of scale functions by appealing to two procedures. 

The first method relies on the following known facts concerning translating the argument of a given Bernstein function. 
Let $\phi$ be a special Bernstein function with representation given by (\ref{Bernstein}). Then for any $\beta\geq 0$ the function $\phi_{\beta}(\theta)=\phi(\theta+\beta),$ $\theta\geq 0,$ is also a special Bernstein function with killing term $\kappa_{\beta}=\phi(\beta),$ drift term ${\rm d}_\beta = {\rm d}$ and L\'evy measure $\Upsilon_\beta(\D x)=\E^{-\beta x}\Upsilon(\D x),$ $x>0,$ see e.g. \cite{kyprianou-palmowski:9} Section 33. Its associated potential measure, $W_\beta$, has a decreasing density  in $(0,\infty)$ such that  $W_{\beta}(\D x)=\E^{-\beta x}W'(x)\D x,$ $x>0,$ where $W'$ denotes the density of the potential  measure associated to $\phi.$ Moreover, let $\phi^*$ and $\phi^{*}_{\beta},$ denote the conjugate Bernstein functions of $\phi$ and $\phi_{\beta},$ respectively. Then the following identity
\begin{equation}\label{bernsteintilting}
\phi^*_{\beta}(\theta)=\phi^*(\theta+\beta)-\phi^*(\beta)+\beta\int^\infty_{0}\left(1-\E^{-\theta x}\right)\E^{-\beta x}W'(x)\D x,\qquad \theta\geq 0,
\end{equation}
holds.
Note in particular that if $\Upsilon$ has a non-increasing density then so does $\Upsilon_\beta$. Moreover, if $W'$  is convex (equivalently $\Upsilon^*$ has a non-increasing density) then $W'_\beta$ is  convex (equivalently $\Upsilon^*_\beta$ has a non-increasing density). These facts lead us to the following Lemma.
\begin{lemma}\label{exptilting}
If conjugate special Bernstein functions $\phi$ and $\phi^*$ exist satisfying (\ref{Bernstein}) and (\ref{Bernstein*}) such that both $\Upsilon$ and $\Upsilon^*$ are absolutely continuous with non-increasing densities, then there exist conjugate parent processes with Laplace exponents 
\begin{equation*}
 \psi_{\beta}(\theta)=\theta\phi_{\beta}(\theta)\text{ and }  \psi_{\beta}^*(\theta)=\theta\phi^*_{\beta}(\theta),\qquad \theta\geq 0,
\end{equation*}
whose respective scale functions are given by 
\begin{equation}\label{eq:trans}
\begin{split}
W_{\beta}(x)&=\delta^*+\int^x_{0}\E^{-\beta y}\Upsilon^*(y,\infty){\rm d}y\\
&=\E^{-\beta x}W(x)+\beta\int^x_{0}\E^{-\beta z}W(z)\D z,\qquad x\geq 0,
\end{split}
\end{equation}
and
\begin{equation}\label{conjscaletilted}
W^*_{\beta}(x)=\delta+\phi(\beta)x+\int^x_{0}\left(\int^\infty_{y}\E^{-\beta z}\Upsilon({\rm d}z)\right){\rm d}y,
\end{equation} 
using obvious notation. 
\end{lemma}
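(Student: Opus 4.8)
The plan is to verify that the translated Bernstein functions $\phi_\beta$ and $\phi^*_\beta$ satisfy the hypotheses of Corollary \ref{corrspecialscale}, and then simply read off the resulting conjugate scale functions from that corollary, identifying the formulae \eqref{eq:trans} and \eqref{conjscaletilted}. Concretely, first I would record what is already stated in the paragraph preceding the Lemma: for $\beta\geq 0$ the function $\phi_\beta(\theta)=\phi(\theta+\beta)$ is again a special Bernstein function, with killing rate $\kappa_\beta=\phi(\beta)$, drift $\delta_\beta=\delta$, and L\'evy measure $\Upsilon_\beta(\D x)=\E^{-\beta x}\Upsilon(\D x)$; its potential measure has density $\E^{-\beta x}W'(x)$ on $(0,\infty)$, where $W'$ is the (non-increasing, convex) density of the potential measure of $\phi$. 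Since $\Upsilon$ has a non-increasing density by assumption, $\E^{-\beta x}\Upsilon(\D x)$ clearly does too, so $\Upsilon_\beta$ is absolutely continuous with non-increasing density. Since $W'$ is convex, so is $x\mapsto\E^{-\beta x}W'(x)$ (a product of two non-increasing convex positive functions, or directly from \eqref{bernsteintilting} one reads off that $\Upsilon^*_\beta$ has the non-increasing density obtained by differentiating), hence $\Upsilon^*_\beta$ is absolutely continuous with non-increasing density. Thus the pair $(\phi_\beta,\phi^*_\beta)$ meets the hypotheses of Corollary \ref{corrspecialscale}.

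Having checked the hypotheses, Corollary \ref{corrspecialscale} (together with Theorem \ref{specialscale}) immediately gives conjugate parent processes with Laplace exponents $\psi_\beta(\theta)=\theta\phi_\beta(\theta)$ and $\psi^*_\beta(\theta)=\theta\phi^*_\beta(\theta)$, and asserts that their scale functions are $W_\beta$ and $W^*_\beta$, the potential functions of $\phi_\beta$ and $\phi^*_\beta$ respectively. It remains only to make the formulae explicit. For \eqref{eq:trans}, the first equality is the content of \eqref{specialW} applied to $\phi_\beta$: $W_\beta(x)=\delta^*_\beta+\kappa^*_\beta x+\int_0^x\Upsilon^*_\beta(y,\infty)\D y$, and one uses \eqref{d*} to see $\delta^*_\beta=\delta^*$ (since $\delta_\beta=\delta$ and $\Upsilon_\beta(0,\infty)$ is infinite exactly when $\Upsilon(0,\infty)$ is) and $\kappa^*_\beta=0$ when $\kappa^*=0$; when $\delta^*=0$ and $\kappa^*>0$ one carries the $\kappa^* x$-type term along, but in the stated regime the clean form $\delta^*+\int_0^x\E^{-\beta y}\Upsilon^*(y,\infty)\D y$ results once one identifies $\Upsilon^*_\beta(y,\infty)$ with $\E^{-\beta y}\Upsilon^*(y,\infty)$ — this last identity follows from $W_\beta(\D x)=\E^{-\beta x}W(\D x)$ combined with the description $W(\D x)=\delta^*\delta_0(\D x)+\{\kappa^*+\Upsilon^*(x,\infty)\}\D x$. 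The second equality in \eqref{eq:trans} is then a pure integration-by-parts identity: starting from $W_\beta(x)=\E^{-\beta x}W(x)+\beta\int_0^x\E^{-\beta z}W(z)\D z$, differentiate to get $W_\beta'(x)=\E^{-\beta x}W'(x)$ and check the value at $0$ agrees, or equivalently verify both sides have Laplace transform $1/\phi_\beta(\theta)=1/\phi(\theta+\beta)$ using \eqref{equiv}. For \eqref{conjscaletilted}, apply \eqref{specialW*} to $\phi^*_\beta$: $W^*_\beta(x)=\delta_\beta+\kappa_\beta x+\int_0^x\Upsilon_\beta(y,\infty)\D y=\delta+\phi(\beta)x+\int_0^x\bigl(\int_y^\infty\E^{-\beta z}\Upsilon(\D z)\bigr)\D y$, using $\delta_\beta=\delta$, $\kappa_\beta=\phi(\beta)$, and $\Upsilon_\beta(\D z)=\E^{-\beta z}\Upsilon(\D z)$.

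The only genuine obstacle, and it is a bookkeeping one rather than a conceptual one, is tracking the constants $\delta^*_\beta,\kappa^*_\beta$ through the conjugation formulae \eqref{d*} and reconciling the ``intrinsic'' expression coming from \eqref{specialW} with the ``transported'' expression $\E^{-\beta x}W(x)+\beta\int_0^x\E^{-\beta z}W(z)\D z$; the identity \eqref{bernsteintilting} is exactly what is needed to see that the $\kappa^*_\beta x$ contribution is absorbed into the integral term $\beta\int_0^\infty(1-\E^{-\theta x})\E^{-\beta x}W'(x)\D x$, so that in all cases $W_\beta$ takes the stated form. Once that reconciliation is done the rest is routine: everything else is an appeal to Corollary \ref{corrspecialscale} plus integration by parts, and uniqueness of scale functions (they are determined by their Laplace transform via \eqref{LTdefW}) guarantees the two expressions in \eqref{eq:trans} agree.
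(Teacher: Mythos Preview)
Your proposal is correct and follows essentially the same approach as the paper's own proof, which simply states that everything except the second equality in \eqref{eq:trans} follows from the tilting discussion preceding the Lemma (i.e., $\phi_\beta,\phi^*_\beta$ inherit the hypotheses of Corollary~\ref{corrspecialscale}), and that the second equality in \eqref{eq:trans} is a consequence of the expression \eqref{specialW} for $W$ together with an integration by parts. Your write-up is more explicit about the bookkeeping of $\delta^*_\beta$ and $\kappa^*_\beta$ than the paper, but the argument is the same.
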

All the statements in this Lemma, but the equality (\ref{eq:trans}), follow from the previous discussion. The equality (\ref{eq:trans}) is a simple consequence of the expression of $W$ in (\ref{specialW}) and an integration by parts. 

\bigskip

The second method builds on the first to construct examples of scale functions whose parent processes may be seen as an auxiliary parent process conditioned to drift to $-\infty$.

Suppose that  $\phi$ is a Bernstein function such that $\phi(0)=0,$ its associated L\'evy measure has a decreasing density and let $\beta>0$. Theorem \ref{th:HK08}, as stated in its more general form in \cite{HK2007}, says that there exists a parent process, say $X,$ that drifts to $-\infty$ such that its Laplace exponent $\psi$ can be factorized as $$\psi(\theta)=(\theta-\beta)\phi(\theta),\qquad \theta\geq 0.$$ It follows that $\psi$ is a convex function and $\psi(0)=0=\psi(\beta),$ so that $\beta$ is the largest positive solution to the equation $\psi(\theta)=0.$ Now, let $W_{\beta}$ be the $0$-scale function of the spectrally negative L\'evy process, say  $X_{\beta},$ with Laplace exponent 
$\psi_{\beta}(\theta):=\psi(\theta+\beta),$ for $\theta\geq 0.$ It is known that the L\'evy process $X_{\beta}$ is obtained by an exponential change of measure and can be seen as the L\'evy process $X$ conditioned to drift to $\infty,$ see chapter VII in \cite{bert96}. Thus the Laplace exponent $\psi_{\beta}$ can be factorized as $\psi_{\beta}(\theta)=\theta\phi_{\beta}(\theta),$ for $\theta\geq 0,$ where, as before, $\phi_{\beta}(\cdot):=\phi(\beta+\cdot).$ 
It follows from Lemma 8.4 in \cite{kyp06}, that the $0$-scale function of the process with Laplace exponent $\psi$ is related to $W_{\beta}$ by $$W(x)=\E^{\beta x}W_{\beta}(x),\qquad x\geq 0.$$ 
The above considerations thus lead to the following result which allows for the construction of a second parent process and associated scale function over and above the pair described in Theorem \ref{specialscale}.

\begin{lemma}\label{beta-shift} Suppose that $\phi$ is a special Bernstein function satisfying (\ref{Bernstein}) such that $\Upsilon$ is absolutely continuous with non-increasing density and $\kappa=0$. Fix $\beta>0$. Then there exists a parent process with Laplace exponent 
\[
\psi(\theta) = (\theta-\beta)\phi(\theta), \quad \theta\geq 0,
\]
whose associated scale function is given by 
$$W(x)=\delta^*\E^{\beta x}+\E^{\beta x}\int^x_{0}\E^{-\beta y}\Upsilon^*(y,\infty)\D y,\qquad x\geq 0,$$ where we have used our usual notation. 
\end{lemma}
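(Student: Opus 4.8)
The plan is to assemble the result directly from the machinery developed in the two preceding sections, with essentially no new computation required. The starting point is Theorem~\ref{th:HK08} in its general form, which says that given a subordinator with Laplace exponent $\phi$ (killed at rate $\kappa=0$, drift $\delta$, L\'evy measure $\Upsilon$ absolutely continuous with non-increasing density) and a parameter $\beta\geq 0$ with $\beta\kappa = 0$, there is a spectrally negative L\'evy process, the parent process, whose Laplace exponent is $\psi(\theta) = (\theta-\beta)\phi(\theta)$ and whose descending ladder height process is the given subordinator. Since here $\kappa=0$, the constraint $\beta\kappa=0$ is automatic for any $\beta>0$, so the parent process exists and drifts to $-\infty$ (as $\psi(0)=0=\psi(\beta)$ with $\beta>0$ the larger root, and $\psi'(0+)<0$).

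Next I would identify the scale function. Write $\psi_\beta(\theta) := \psi(\theta+\beta) = \theta\,\phi(\theta+\beta) = \theta\,\phi_\beta(\theta)$, using the notation $\phi_\beta(\cdot) = \phi(\beta + \cdot)$ from Section~\ref{genconst}. By the relation (\ref{W_P}) — equivalently the discussion invoking Lemma~8.4 of \cite{kyp06} in the text immediately preceding the statement — the $0$-scale function $W$ of $\psi$ and the $0$-scale function $W_\beta$ of $\psi_\beta$ are related by $W(x) = \E^{\beta x} W_\beta(x)$ for $x\geq 0$. Now $\psi_\beta(\theta) = \theta\phi_\beta(\theta)$ is exactly the ``parent process'' form (\ref{parent}) associated with the special Bernstein function $\phi_\beta$: by the discussion in Section~\ref{genconst}, $\phi_\beta$ is again a special Bernstein function, with conjugate $\phi^*_\beta$, and its L\'evy measure $\Upsilon_\beta(\D x) = \E^{-\beta x}\Upsilon(\D x)$ still has a non-increasing density, so Theorem~\ref{specialscale} applies and gives
\[
W_\beta(x) = \delta^*_\beta + \kappa^*_\beta x + \int_0^x \Upsilon^*_\beta(y,\infty)\,\D y.
\]
Here $\kappa^*_\beta = 0$ since $\phi_\beta$ has strictly positive drift term? — no: rather $\kappa^*_\beta$ is determined by (\ref{d*})-type formulas applied to $\phi_\beta$; but more efficiently I would simply quote (\ref{eq:trans}) of Lemma~\ref{exptilting}, which already records $W_\beta(x) = \delta^* + \int_0^x \E^{-\beta y}\Upsilon^*(y,\infty)\,\D y$ (note $\delta^*_\beta = \delta^*$ since the drift of $\phi$ is unchanged under the shift, so the conjugate drift is unchanged too). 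Substituting this into $W(x) = \E^{\beta x}W_\beta(x)$ yields
\[
W(x) = \delta^*\E^{\beta x} + \E^{\beta x}\int_0^x \E^{-\beta y}\Upsilon^*(y,\infty)\,\D y,
\]
which is precisely the claimed formula.

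The only point requiring genuine care — and the step I expect to be the main (minor) obstacle — is bookkeeping of the constants under the exponential shift: namely confirming that the conjugate drift $\delta^*_\beta$ of $\phi_\beta$ equals $\delta^*$ (the conjugate drift of $\phi$), and that $\kappa^*_\beta$ contributes nothing beyond what the integral term already captures, so that the ``$\delta^*$'' appearing in the final formula is the original $\delta^*$ and not some $\beta$-dependent quantity. This is resolved by the identity (\ref{bernsteintilting}) together with the drift formula (\ref{d*}): since $\phi_\beta$ has the same drift $\delta$ as $\phi$, the formula (\ref{d*}) gives $\delta^*_\beta = \delta^* = (\kappa + \Upsilon(0,\infty))^{-1}$ when $\delta = 0$ and $\Upsilon(0,\infty)<\infty$ (and $\delta^* = 0 = \delta^*_\beta$ otherwise). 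Everything else is an application of Theorem~\ref{th:HK08}, the identity $W(x) = \E^{\beta x}W_\beta(x)$, and the already-established representation (\ref{eq:trans}); no new analytic estimates are needed. \hfill$\square$
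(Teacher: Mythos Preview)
Your proposal is correct and follows essentially the same route as the paper: the argument in the text immediately preceding the lemma already lays out exactly the steps you give --- existence of the parent process via Theorem~\ref{th:HK08}, the relation $W(x)=\E^{\beta x}W_\beta(x)$ from (\ref{W_P}) (equivalently Lemma~8.4 of \cite{kyp06}), and identification of $W_\beta$ through the tilting machinery of Lemma~\ref{exptilting}/equation~(\ref{eq:trans}). The paper also remarks that the formula can alternatively be read off directly from (\ref{b}) together with (\ref{specialW}), but your chosen derivation coincides with the paper's primary line of reasoning.
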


In \cite{KR2008} the interested reader may find a discussion about the conjugated pairs arising in this construction.  Note that the conclusion of this Lemma can also be derived from (\ref{b}) and (\ref{specialW}).

\section{Complete scale functions}\label{completeclass}
We have seen several methods that allow us to construct scale functions and pairs of conjugate scale functions which in principle generate large families of scale functions. In particular the method of constructing pairs of conjugate scale functions and its tilted versions needs the hypothesis of decreasing densities for the L\'evy measures of the underlying conjugate subordinators. This may be a serious issue because in order to verify that hypothesis one needs to determine explicitly both densities, which can be a very hard and technical task. Luckily, there is a large class of Bernstein functions, the so-called {\it complete Bernstein functions}, for which this condition is satisfied automatically. Hence, our purpose in this section is to recall some of the keys facts related to this class and its consequences.

We begin by introducing the notion of a complete Bernstein function with a view to constructing scale functions whose parent processes are derived from descending ladder height processes with Laplace exponents which belong to the class of complete Bernstein functions.

\begin{definition}\rm
A function $\phi$ is called {\it complete Bernstein function} if there exists an auxiliary Bernstein function $\eta$ such that 
\begin{equation}
\phi(\theta) = \theta^2 \int_{(0,\infty)} \E^{-\theta x}\eta(x)\D x.
\label{cB}
\end{equation}
\end{definition}
It is well known that a complete Bernstein function is necessarily a special Bernstein function (cf. \cite{J1}) and in addition, its conjugate is also a complete Bernstein function. Moreover, from the same reference one finds that a necessary and sufficient condition for $\phi$ to be complete Bernstein is that $\Upsilon$ satisfies for $x>0$
\[
\Upsilon(\D x) = \left\{\int_{(0,\infty)} \E^{- xy}\gamma(\D y)\right\} \D x,
\]
where 
$
\int_{(0,1)}\frac{1}{y}\gamma(\D y) + \int_{(1,\infty)}\frac{1}{y^2}\gamma(\D y) <\infty.
$
Equivalently $\Upsilon$ has a completely monotone density.
Another necessary and sufficient condition is that the potential measure associated to $\phi$ has a density on $(0,\infty)$ which is completely monotone, this is a result due to Kingman \cite{Kin1967} and Hawkes \cite{Haw1976}. The class of infinitely divisible laws and subordinators related to this type of Bernstein functions has been extensively studied by several authors, see e.g. \cite{bondesson}, \cite{thorin}, \cite{rosinski}, \cite{donatiyor}, \cite{lancelotroynetteyor} and the references therein. 

Since necessarily $\Upsilon$ is absolutely continuous with  a completely monotone density, it follows that any subordinator whose Laplace exponent is a complete Bernstein function may be used in conjunction with Corollary \ref{corrspecialscale}. The following result is now a straightforward application of the latter and the fact that from  (\ref{cB}), any Bernstein function $\eta$ has a Laplace transform $\phi(\theta)/\theta^2$ where $\phi$ is complete Bernstein.

\begin{corollary}\label{completescale} Let $\eta$ be any Bernstein function and suppose that $\phi$ is the complete Bernstein function associated with the latter via the relation (\ref{cB}). Write $\phi^*$ for the conjugate of $\phi$ and  $\eta^*$ for the Bernstein function associated with $\phi^*$ via the related (\ref{cB}). Then 
\[
W(x) = \eta^*(x) \text{ and } W^*(x) = \eta(x),\qquad x\geq 0,
\]
are conjugate  scale functions with conjugate parent processes whose Laplace exponents are given by 
\[
\psi(\theta) = \frac{\theta^2}{\phi^*(\theta)}  = \theta\phi(\theta) \text{ and }\psi^*(\theta) = \frac{\theta^2}{\phi(\theta)} = \theta\phi^*(\theta),\qquad \theta\geq 0.
\]
\end{corollary}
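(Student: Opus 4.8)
The plan is to verify that the hypotheses of Corollary \ref{corrspecialscale} are satisfied and then simply read off the conclusion, identifying the explicit forms of the scale functions. First I would recall the defining relation \eqref{cB}: since $\eta$ is a Bernstein function, $\phi(\theta) = \theta^2\int_{(0,\infty)}\E^{-\theta x}\eta(x)\,\D x$ is, by definition, a complete Bernstein function. As noted in the text immediately before the statement (citing \cite{J1}), every complete Bernstein function is special, and its conjugate $\phi^*$ — defined through $\phi^*(\theta) = \theta/\phi(\theta)$ — is again a complete Bernstein function. Consequently both $\phi$ and $\phi^*$ have L\'evy measures $\Upsilon$ and $\Upsilon^*$ which are absolutely continuous with completely monotone (in particular non-increasing) densities. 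This is precisely the hypothesis needed to invoke Corollary \ref{corrspecialscale}, so there exists a pair of conjugate scale functions $W$, $W^*$ satisfying \eqref{specialW} and \eqref{specialW*}, with conjugate parent processes having Laplace exponents $\psi(\theta) = \theta^2/\phi^*(\theta) = \theta\phi(\theta)$ and $\psi^*(\theta) = \theta^2/\phi(\theta) = \theta\phi^*(\theta)$, exactly as stated.

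It remains to identify $W$ with $\eta^*$ and $W^*$ with $\eta$. The cleanest route is via Laplace transforms. From \eqref{equiv} and the discussion in Section \ref{scaledescending}, the scale function $W$ (of the non-$(-\infty)$-drifting parent process with descending ladder height exponent $\phi$) has Laplace transform $\int_0^\infty \E^{-\theta x}W(x)\,\D x = 1/\phi(\theta)$. On the other hand, applying the conjugation relation $\phi^*(\theta)/\theta^2 = 1/\phi(\theta)$ together with the defining representation \eqref{cB} for $\phi^*$ — namely $\phi^*(\theta) = \theta^2\int_{(0,\infty)}\E^{-\theta x}\eta^*(x)\,\D x$, where $\eta^*$ is the Bernstein function associated to $\phi^*$ — we get $1/\phi(\theta) = \phi^*(\theta)/\theta^2 = \int_0^\infty \E^{-\theta x}\eta^*(x)\,\D x$. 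Comparing the two transforms and using uniqueness of Laplace transforms (both $W$ and $\eta^*$ being continuous on $[0,\infty)$), we conclude $W(x) = \eta^*(x)$ for all $x\ge 0$. The identity $W^*(x) = \eta(x)$ follows by the symmetric argument with the roles of $\phi$ and $\phi^*$ (equivalently $\eta$ and $\eta^*$) interchanged, since $\phi$ is likewise complete Bernstein and $1/\phi^*(\theta) = \phi(\theta)/\theta^2 = \int_0^\infty\E^{-\theta x}\eta(x)\,\D x$.

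I expect the main (minor) obstacle to be bookkeeping rather than anything deep: one must make sure the auxiliary function $\eta^*$ appearing in the statement is unambiguously the same object on both sides, i.e. that the Bernstein function associated to $\phi^*$ via \eqref{cB} coincides with the potential density of $\phi$ (equivalently the scale function of the parent process). This is reconciled exactly by the Laplace-transform comparison above. One should also note in passing that the construction is self-consistent: $\eta$ being a Bernstein function is exactly what guarantees $\phi$ is complete Bernstein, and the completely monotone density of $\Upsilon$ is what licenses the use of Theorem \ref{th:HK08} inside Corollary \ref{corrspecialscale}; so no extra regularity hypotheses are needed. Finally, one may remark — though it is not required for the statement — that $W$ is concave with convex first derivative, inherited directly from Corollary \ref{corrspecialscale}, and that $\eta^*(x) = \delta^* + \kappa^* x + \int_0^x \Upsilon^*(y,\infty)\,\D y$ gives the explicit form promised by \eqref{specialW}.
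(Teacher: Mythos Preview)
Your proposal is correct and follows exactly the approach the paper intends: invoke Corollary~\ref{corrspecialscale} (licensed by the completely monotone, hence non-increasing, L\'evy densities of both $\phi$ and $\phi^*$), and then identify $W=\eta^*$ and $W^*=\eta$ by matching Laplace transforms via the defining relation~\eqref{cB} and the conjugation identity $\phi^*(\theta)/\theta^2=1/\phi(\theta)$. The paper compresses all of this into the single sentence preceding the corollary, so your write-up simply spells out what it leaves implicit.
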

An interesting and useful consequence of this results is that any given Bernstein function $\eta$ is a scale function whose parent process is the spectrally negative L\'evy process whose Laplace exponent is given by $\psi^*(\theta) = \theta^2/\phi(\theta)$ where $\phi$ is given by (\ref{cB}).

\begin{example}\label{sumstable}
Let $0<\alpha<\beta\leq1$, $ a ,b>0$ and $\phi$ be the Bernstein function defined by 
$$
\phi(\theta)= a \theta^{\beta-\alpha}+b\theta^{\beta},\qquad \theta\geq 0.
$$ 
That is, in the case where $\alpha<\beta<1,$ $\phi$ is the Laplace exponent of a subordinator which is obtained as the sum of two independent stable subordinators one of parameter $\beta-\alpha$ and the other of parameter $\beta,$ respectively, so that the killing and drift term of $\phi$ are both equal to $0,$ and its L\'evy measure is given by $$\Upsilon(\D x)=\left(\frac{ a (\beta-\alpha)}{\Gamma(1-\beta+\alpha)}x^{-(1+\beta-\alpha)}+\frac{b\beta}{\Gamma(1-\beta)}x^{-(1+\beta)}\right)\D x,\quad x>0.$$ 
In the case that $\beta =1$, $\phi$ is the Laplace exponent of a stable subordinator with parameter $1-\alpha$ and a linear drift.
In all cases, the underlying L\'evy measure has a density which is completely monotone, and thus its potential density, or equivalently the density of the associated scale function $W$, is completely monotone. 

We recall that the two parameter Mittag-Leffler function is defined by 
\begin{equation}\label{MLfct}
\mathcal{E}_{\alpha,\beta}(x) = \sum_{n\geq0}\frac{x^n}{\Gamma(n\alpha+\beta)},\qquad x\in\mathbb{R},
\end{equation}
where $\alpha,\beta>0$. The latter function can be identified via a pseudo-Laplace transform. Namely, for $\lambda\in\mathbb{R}$ and $\Re(\theta)>\lambda^{1/\alpha}-\gamma$,
\[
\int_0^\infty \E^{-\theta x}\E^{-\gamma x}x^{\beta-1}\mathcal{E}_{\alpha,\beta}(\lambda x^\alpha)\D x=
\frac{(\theta+\gamma)^{\alpha-\beta}}{(\theta+\gamma)^\alpha-\lambda}.
\] 

The  associated scale function to $\phi$ can now be identified via 
\begin{equation} 
W'(x) = \frac{1}{b}x^{\beta-1}\mathcal{E}_{\alpha,\beta}\left(- a  x^{\alpha}/b\right),  \qquad x> 0,\label{MLreq}\end{equation} which is a completely monotone function. So, the function $$\psi(\theta)=\theta\phi(\theta)=a \theta^{\beta-\alpha+1} + b\theta^{\beta+1},  \qquad\theta\geq 0,$$ is the Laplace exponent of a spectrally negative L\'evy process, the parent process. It oscillates and is obtained by adding two independent spectrally negative stable processes with stability index $\beta+1$ and $1+\beta-\alpha,$ respectively. The scale function associated to it is given by 
$$
W(x)=\frac{1}{b}\int^x_{0}t^{\beta-1}\mathcal{E}_{\alpha,\beta}(- a  t^{\alpha}/b)d t,\qquad x\geq 0.
$$ 
The associated conjugates are given by $$\phi^*(\theta)=\frac{\theta}{a\theta^{\beta-\alpha}+b\theta^{\beta}},\quad \psi^*(\theta)=\frac{\theta^2}{ a \theta^{\beta-\alpha}+b\theta^{\beta}},\qquad \theta\geq 0,$$ and 
\begin{equation}
W^*(x)=\frac{ a }{\Gamma(2-\beta+\alpha)}x^{1-\beta+\alpha}+\frac{b}{\Gamma(2-\beta)}x^{1-\beta},\qquad x\geq 0.
\label{producedanotherway}
\end{equation}
The subordinator with Laplace exponent $\phi^*$ has zero killing and drift terms and its L\'evy measure is obtained by taking the derivative of the expression in (\ref{MLreq}). By Theorem \ref{th:HK08} the spectrally negative L\'evy process with Laplace exponent $\psi^*,$ oscillates, has unbounded variation, has zero  Gaussian terms, and its L\'evy measure is obtained by derivating twice the expression in (\ref{MLreq}).

One may mention  here that by letting  $ a\downarrow 0 $   the Continuity Theorem for  Laplace transforms  tells us  that for the case  $\phi(\theta)=b\theta^\beta,$ the associated $\psi$ is the Laplace exponent of a spectrally negative stable process with stability parameter $1+\beta,$ and its scale function is given by $$W(x)=\frac{1}{b\Gamma(1+\beta)}x^{\beta},\qquad x\geq 0.$$ The associated conjugates are given by $$\phi^*(\theta)=b^{-1}\theta^{1-\beta},\qquad \psi^*(\theta)=b^{-1}\theta^{2-\beta},\qquad \theta\geq 0,$$ and $$W^*(x)=\frac{b}{\Gamma(2-\beta)} x^{1-\beta},\quad x\geq 0.$$ So that $\phi^*$ (respectively $\psi^*$) corresponds to a stable subordinator of parameter $1-\beta,$ zero killing and drift terms (respectively, to a oscillating spectrally negative stable L\'evy process with stability index $2-\beta$), and so its L\'evy measure is given by   
\[
\Pi^*(-\infty, -x) = \frac{\beta(1-\beta)}{b\Gamma(1+\beta)}x^{\beta-2},\qquad x\geq 0.
\]

To complete this example, observe that the change of measure introduced in Lemma \ref{exptilting} allows us to deal with the Bernstein function $$\phi(\theta)=k(\theta+m)^{\beta-\alpha}+b(\theta+m)^\beta,\quad \theta\geq 0,$$ where $m\geq 0$ is a fixed parameter. In this case we get that there exists a spectrally negative L\'evy process whose Laplace exponent is given by $$\psi(\theta)=k\theta(\theta+m)^{\beta-\alpha}+b\theta(\theta+m)^\beta,\quad \theta\geq 0,$$ and its associated scale function is given by $$W(x)=\frac{1}{b}\int^x_{0}\E^{-mt}t^{\beta-1}\mathcal{E}_{\alpha,\beta}(- a  t^{\alpha}/b)d t,\qquad x\geq 0.$$ The respective conjugates can be obtained explicitly but we omit the details given that the expressions found are too involved. 

We can now use the construction in Subsection \ref{genconst}. For, $m,a,b>0,$ $0<\alpha< \beta< 1,$  there exists a parent process drifting to $-\infty$ and with Laplace exponent 
$$\psi(\theta)=(\theta-m)\left(a\theta^{\beta-\alpha}+b\theta^\beta\right),\qquad \theta\geq 0.$$ It follows from the previous calculations that the scale function associated to the parent process with Laplace exponent $\psi$ is given by 
$$W(x)=\frac{\E^{mx}}{b}\int^x_{0}\E^{-mt}t^{\beta-1}\mathcal{E}_{\alpha,\beta}(- a  t^{\alpha}/b)d t,\qquad x\geq 0.$$

\end{example}

\section{Generating  scale functions via an analytical transformation}


In Chazal et al. \cite{KP} it was shown that whenever $\psi$ is the Laplace exponent of a spectrally negative L\'evy process then so is 
\begin{eqnarray*}
\mathcal{T}_{\delta,\beta}\psi^{(q)}(u) &:=& \frac{u+\beta-\delta}{u+\beta}
\psi^{(q)}(u+\beta)-\frac{\beta-\delta}{\beta}\psi^{(q)}(\beta), \quad u\geq -\beta, \,\,u\geq 0,
\end{eqnarray*}
where $\psi^{(q)}(u) = \psi(u)-q$, $\delta, \beta\geq 0$   and  $\psi^{(q)\prime}(0+)=q=0$ if $\beta = 0$. Note that 
  $\psi^{(q)}$ is the Laplace exponent of  a spectrally negative L\'evy process possibly killed at an independent and exponentially distributed random time with rate $q$. In the usual way, when $q=0$ we understand there to be no killing.
Moreover, the characteristics of the L\'evy process with Laplace exponent $\mathcal{T}_{\delta,\beta}\psi$ are also described in the aforementioned paper through the following result.

\begin{proposition} \label{prop:mapping} 
Fix $\delta, \beta\geq 0$   with the additional constraint that $\psi^{(q)\prime}(0+)=q=0$ if $\beta = 0$. 
If $\psi^{(q)}$ has Gaussian coefficient $\sigma$ and jump measure $\Pi$ then $\mathcal{T}_{\delta,\beta}\psi^{(q)}$ also has Gaussian coefficient $\sigma$ and its L\'evy  measure is given by 
\[
\E^{\beta x}\Pi({\rm d}x)+\delta \E^{\beta x}\overline{\Pi}(x){\rm d}x + \delta\frac{\kappa}{\beta}\E^{\beta x}{\rm d}x\,\, \text{ on }(-\infty,0),
\]
where $\overline{\Pi }(x) = \Pi(-\infty, -x)$.
\end{proposition}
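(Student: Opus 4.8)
The statement is that applying the transformation $\mathcal{T}_{\delta,\beta}$ to a Laplace exponent $\psi^{(q)}$ preserves the Gaussian coefficient and acts on the Lévy measure in the stated way. Since we are told in Chazal et al. that $\mathcal{T}_{\delta,\beta}\psi^{(q)}$ is again the Laplace exponent of a (possibly killed) spectrally negative Lévy process, the task reduces to \emph{reading off the triple} from the analytic form of $\mathcal{T}_{\delta,\beta}\psi^{(q)}$ and matching it against the Lévy–Khintchine form \eqref{laplaceLK}. So the approach is: write out $\mathcal{T}_{\delta,\beta}\psi^{(q)}(u)$ explicitly by substituting the Lévy–Khintchine representation of $\psi^{(q)}$, reorganise the result into canonical form, and identify the Gaussian part and the jump part by inspection.

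\textbf{Step 1: Set up.} Write $\psi^{(q)}(u) = \psi(u) - q$ with
\[
\psi(u) = -\mu u + \tfrac{\sigma^2}{2}u^2 + \int_{(-\infty,0)}\bigl(\E^{ux} - 1 - ux\mathbf{1}_{|x|<1}\bigr)\Pi(\D x).
\]
First handle the quadratic term. Substituting $u+\beta$ into $\tfrac{\sigma^2}{2}(\cdot)^2$ and forming $\tfrac{u+\beta-\delta}{u+\beta}\cdot\tfrac{\sigma^2}{2}(u+\beta)^2 = \tfrac{\sigma^2}{2}(u+\beta-\delta)(u+\beta)$, which expands to $\tfrac{\sigma^2}{2}u^2 + (\text{linear in }u) + (\text{constant})$. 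The $u^2$ coefficient is unchanged, so the new Gaussian coefficient is again $\sigma$; the linear and constant pieces get absorbed into the drift and the killing rate respectively (and the subtracted term $-\tfrac{\beta-\delta}{\beta}\psi^{(q)}(\beta)$ is a pure constant, contributing only to killing). This establishes the Gaussian claim.

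\textbf{Step 2: The jump part.} The interesting computation is the integral term. Replacing $u$ by $u+\beta$ inside $\int(\E^{(u+\beta)x} - 1 - (u+\beta)x\mathbf{1}_{|x|<1})\Pi(\D x)$ and multiplying by $\tfrac{u+\beta-\delta}{u+\beta}$, one should split $\tfrac{u+\beta-\delta}{u+\beta} = 1 - \tfrac{\delta}{u+\beta}$. The ``$1$'' piece, after the exponential tilt $\E^{\beta x}\Pi(\D x)$ on the jump measure (with compensator adjustments absorbed into the drift since $\int_{(-1,0)}|x|\E^{\beta x}\Pi(\D x)$ convergence follows from the Lévy measure conditions), produces the term $\E^{\beta x}\Pi(\D x)$. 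The ``$-\tfrac{\delta}{u+\beta}$'' piece is where one uses the elementary Laplace-transform identity $\tfrac{1}{u+\beta}(\E^{(u+\beta)x} - 1) = \int_x^0 \E^{(u+\beta)t}\D t$ for $x<0$ (i.e. $= -\int_0^{|x|}\E^{-(u+\beta)s}\D s$), then swaps the order of integration via Fubini to convert a $\Pi(\D x)$-integral of an inner integral into $\int_{(-\infty,0)}(\E^{\beta y}\cdots)\overline{\Pi}(|y|)\D y$-type term, yielding the $\delta\E^{\beta x}\overline{\Pi}(x)\D x$ contribution. Finally, if $q>0$ the killing rate $q$ of $\psi^{(q)}$ enters the subtracted constant and, through the factorisation $\psi^{(q)} = \psi - q$ together with the relation $\kappa = \psi'(0+)\vee 0$ in the killed setting (or more precisely the way $q$ feeds into the ladder-structure constant denoted $\kappa$ here), it reappears as the $\delta\tfrac{\kappa}{\beta}\E^{\beta x}\D x$ term; one must be careful to track exactly how the constant $\psi^{(q)}(\beta)$ and the $1/(u+\beta)$ factor interact to produce precisely $\kappa/\beta$ as the coefficient.

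\textbf{Main obstacle.} The routine exponential tilting and the ``$1$'' piece are straightforward; the real care is needed in (a) the Fubini rearrangement for the $-\delta/(u+\beta)$ piece — checking the integrability that licences the swap and getting the sign and the $\overline{\Pi}$ (rather than $\overline{\overline{\Pi}}$) correct — and (b) correctly identifying the source of the $\delta\kappa\E^{\beta x}/\beta$ term, i.e. disentangling which part of $\psi^{(q)}(\beta)$ and which part of the $1/(u+\beta)$ expansion conspire to give a term of the form $\E^{\beta x}\D x$ with coefficient exactly $\kappa/\beta$. Keeping the linear-in-$u$ and constant-in-$u$ debris cleanly separated (so it lands in the drift / killing rate and not spuriously in the Lévy measure) is the bookkeeping one has to be disciplined about, but it is mechanical once the decomposition $\tfrac{u+\beta-\delta}{u+\beta}=1-\tfrac{\delta}{u+\beta}$ is in place.
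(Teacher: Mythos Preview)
The paper does not actually prove this proposition; it is quoted from Chazal et al.\ \cite{KP} without argument, so there is no ``paper's proof'' to compare against. Your outline is therefore being judged on its own merits.

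Your strategy is the right one: split $\tfrac{u+\beta-\delta}{u+\beta}=1-\tfrac{\delta}{u+\beta}$, read off the Gaussian coefficient from the quadratic part, get $\E^{\beta x}\Pi(\D x)$ from the ``$1$'' piece by exponential tilting, and convert the $-\tfrac{\delta}{u+\beta}$ piece via $\tfrac{1}{u+\beta}(\E^{(u+\beta)x}-1)=\int_0^x \E^{(u+\beta)t}\D t$ followed by Fubini to obtain the $\delta\E^{\beta x}\overline{\Pi}(\cdot)\D x$ term. That all works.

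Where you go wrong is in Step~2, on the $\kappa$ term. You write that $\kappa=\psi'(0+)\vee 0$ ``in the killed setting'' and speak of it ``feeding into the ladder-structure constant''. This is a misidentification: the $\kappa$ appearing in the statement is simply the killing rate of $\psi^{(q)}$, namely $\kappa=-\psi^{(q)}(0)=q$. It has nothing to do with the ladder height subordinator here. Once you see this, the computation is immediate: the contribution of the $-q$ in $\psi^{(q)}=\psi-q$ to $\mathcal{T}_{\delta,\beta}\psi^{(q)}(u)$ is
\[
-\frac{u+\beta-\delta}{u+\beta}\,q+\frac{\beta-\delta}{\beta}\,q
=\delta q\Bigl(\frac{1}{u+\beta}-\frac{1}{\beta}\Bigr)
=\frac{\delta q}{\beta}\int_{(-\infty,0)}(\E^{ux}-1)\E^{\beta x}\D x,
\]
using $\int_{-\infty}^0 \E^{(u+\beta)x}\D x=(u+\beta)^{-1}$. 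Up to a linear-in-$u$ compensator correction absorbed into the drift, this is exactly the L\'evy--Khintchine integral against the measure $\tfrac{\delta q}{\beta}\E^{\beta x}\D x$, which is the third term in the statement with $\kappa=q$. So the ``main obstacle~(b)'' you flag dissolves once $\kappa$ is correctly identified; there is no interaction with $\psi^{(q)}(\beta)$ needed for this term.
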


\noindent In particular we note that $\mathcal{T}_{\delta,\beta}\psi^{(q)}$ is the Laplace exponent of a spectrally negative L\'evy process without killing, i.e. $\mathcal{T}_{\delta,\beta}\psi^{(q)}(0) =0$.

It turns out that this transformation can be used in a very straightforward way in combination with the definition of $q$-scale functions to generate new examples. Note for example that for a given $\psi$ and $q\geq 0$ we have 
\[
 \int_0^\infty \E^{-\theta x} W^{(q)}(x){\rm d}x = \frac{1}{\psi^{(q)}(\theta)},\text{ for }\theta >\Phi(q),
\]
and hence it is natural to use this transformation to help find the Laplace inverse (if it exists ) of 
\[
 \frac{1}{\mathcal{T}_{\delta,\beta}\psi^{(q)}(\theta)},
\]
for $\beta$ sufficiently large to give an expression for $W_{\mathcal{T}_{\beta, \delta}\psi^{(q)}}$, the 0-scale function associated with the spectrally negative L\'evy process whose exponent is $\mathcal{T}_{\beta, \delta}\psi^{(q)}$. The following result, taken from \cite{KP} does exactly this.
Note that the first conclusion in the theorem gives a similar result to the conclusion of Lemma \ref{exptilting} without the need for the descending ladder height to be special.

\begin{theorem}\label{scalefunctions}
Let $x,\beta\geq0$ such that $\psi'(0+)=q=0$ if $\beta = 0$. Then,
\begin{eqnarray}
W_{\mathcal{T}_{\beta,\beta}\psi^{(q)}}(x)&=& \E^{-\beta x}W^{(q)}(x)
+\beta \int_0^{x}\E^{-\beta y}W^{(q)}(y)\D y.
\label{scale1}
\end{eqnarray}
Moreover, if $\psi'(0+)\leq0$, then  for any $x,\delta, q \geq0$ we have
\begin{eqnarray*}
W_{\mathcal{T}_{\delta,\Phi(0)}\psi^{(q)}}(x)&=&\E^{-\Phi(0)
x}\left(W^{(q)}(x) +\delta \E^{\delta
x}\int_0^{x}\E^{-\delta
y}W^{(q)}(y)\D y\right).
\end{eqnarray*}
\end{theorem}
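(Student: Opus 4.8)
The plan is to work directly at the level of Laplace transforms, exploiting the fact that the transformation $\mathcal{T}_{\delta,\beta}$ acts on the Laplace exponent in an explicit way and that the defining property of a $0$-scale function of a given exponent is that its Laplace transform is the reciprocal of that exponent. So for both identities, the strategy is the same: compute the Laplace transform of the proposed right-hand side, show it equals $1/\mathcal{T}_{\delta,\beta}\psi^{(q)}(\theta)$ for $\theta$ large enough, and then invoke uniqueness of the scale function (Definition \ref{scaledef}) together with the continuity of both sides (Lemma \ref{a.e.diff}) to conclude equality everywhere on $[0,\infty)$. Of course both sides vanish on $(-\infty,0)$, so only the behaviour on $[0,\infty)$ is at issue.

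For the first identity, I would start from the known transform $\int_0^\infty \E^{-\theta x}W^{(q)}(x)\,\D x = 1/\psi^{(q)}(\theta)$ valid for $\theta>\Phi(q)$, and compute the transform of $\E^{-\beta x}W^{(q)}(x) + \beta\int_0^x \E^{-\beta y}W^{(q)}(y)\,\D y$. The first term transforms to $1/\psi^{(q)}(\theta+\beta)$ by the shift rule; the second term, being the primitive of $\E^{-\beta y}W^{(q)}(y)$ scaled by $\beta$, transforms to $(\beta/\theta)\cdot 1/\psi^{(q)}(\theta+\beta)$ using the rule for Laplace transforms of primitives. Adding these gives $\frac{\theta+\beta}{\theta}\cdot\frac{1}{\psi^{(q)}(\theta+\beta)}$. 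On the other hand, specialising the definition of $\mathcal{T}_{\delta,\beta}$ to $\delta=\beta$, one sees $\mathcal{T}_{\beta,\beta}\psi^{(q)}(u) = \frac{u}{u+\beta}\psi^{(q)}(u+\beta)$ (the term $\frac{\beta-\delta}{\beta}\psi^{(q)}(\beta)$ drops out when $\delta=\beta$), so its reciprocal is exactly $\frac{\theta+\beta}{\theta}\cdot\frac{1}{\psi^{(q)}(\theta+\beta)}$. Matching the transforms, then appealing to uniqueness and continuity, yields \equref{scale1}. One should check the case $\beta=0$ separately, where the constraint $\psi'(0+)=q=0$ is imposed and the identity degenerates to $W_{\psi^{(q)}}=W^{(q)}$, which is trivially true.

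For the second identity, I would combine the first identity with the shifting relation \equref{W_P}, namely $W^{(q)}(x)=\E^{\Phi(q)x}W_{\Phi(q)}(x)$, or more directly use Lemma \ref{LEMMA-general-W-rel} which gives $W^{(q)}(x)=\E^{cx}W_c^{(q-\psi(c))}(x)$; taking $c=\Phi(0)$ (legitimate since $\psi'(0+)\le 0$ forces $\Phi(0)\ge 0$ with $\psi(\Phi(0))=0$) relates the $q$-scale function of $\psi$ to the scale function of the tilted process $(X,P^{\Phi(0)})$, which drifts to $+\infty$ and hence falls under the first part. Applying the first identity with shift parameter $\delta$ to this tilted exponent, and then undoing the exponential tilt, should reproduce the factor $\E^{-\Phi(0)x}$ out front together with the bracketed expression $W^{(q)}(x)+\delta\E^{\delta x}\int_0^x \E^{-\delta y}W^{(q)}(y)\,\D y$. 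The bookkeeping here is slightly delicate because the transformation $\mathcal{T}_{\delta,\Phi(0)}$ shifts by $\Phi(0)$ rather than by $\delta$, so one has to be careful that the ``$\E^{-\Phi(0)x}$'' tilt and the ``$\E^{\delta x}\int_0^x\E^{-\delta y}(\cdot)$'' averaging are kept distinct; the cleanest way is again to verify the claimed identity at the level of Laplace transforms, computing the transform of the right-hand side and checking it equals $1/\mathcal{T}_{\delta,\Phi(0)}\psi^{(q)}(\theta)$ using the explicit form of $\mathcal{T}_{\delta,\beta}$ together with the identity $\psi^{(q)}(\Phi(0)) = \psi(\Phi(0)) - q = -q$ (so the constant correction term $\frac{\beta-\delta}{\beta}\psi^{(q)}(\beta)$ in $\mathcal{T}_{\delta,\Phi(0)}$ equals $-q(\Phi(0)-\delta)/\Phi(0)$, or is handled by a limiting argument when $\Phi(0)=0$, which only occurs when $\psi'(0+)=0$).

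The main obstacle I anticipate is not any single computation but rather keeping the case distinctions straight: the degenerate cases $\beta=0$ (first part) and $\Phi(0)=0$ (second part), where various denominators vanish and the ``constant'' term in the definition of $\mathcal{T}_{\delta,\beta}$ must be interpreted in a limiting sense, exactly as flagged in the hypothesis ``$\psi^{(q)\prime}(0+)=q=0$ if $\beta=0$''. In those cases one cannot divide by $\beta$ or $\Phi(0)$ and must instead take limits, using continuity in the relevant parameter (continuity of scale functions in $q$, Lemma \ref{analytically extend}, and standard continuity theorems for Laplace transforms) to pass the identity to the boundary. Aside from that, one must confirm that the shift parameter is always large enough — specifically that $\theta+\beta>\Phi(q)$, equivalently $\theta$ exceeds the abscissa of convergence of the transformed process, which holds for all sufficiently large $\theta$ and is all that is needed for the uniqueness argument.
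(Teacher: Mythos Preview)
Your proposal is correct and follows essentially the same approach as the paper: both verify the identities by computing Laplace transforms of the right-hand sides and matching them against $1/\mathcal{T}_{\delta,\beta}\psi^{(q)}(\theta)$, then appeal to uniqueness and right-continuity. For the second identity, the paper is slightly more streamlined than your discussion: rather than invoking Lemma~\ref{LEMMA-general-W-rel} and tilting, it simply observes that the Laplace transform of $\E^{(\Phi(0)-\delta)x}W_{\mathcal{T}_{\delta,\Phi(0)}\psi^{(q)}}(x)$ equals $\frac{\theta+\delta}{\theta\psi(\theta+\delta)}$, which is precisely the transform appearing in the first part with $\beta=\delta$, so the second identity reduces directly to an instance of the first after an exponential shift --- but this is exactly the ``cleanest way'' you yourself identify at the end of your discussion.
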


\begin{proof}
The first assertion is
proved by observing that
\begin{eqnarray*}
 \int_0^{\infty}\E^{-\theta x}W_{\mathcal{T}_{\beta, \beta}\psi^{(q)}}(x)\D x &=& \frac{\theta+\beta}{\theta\psi(\theta+\beta)} \nonumber \\
&=& \frac{1}{\psi(\theta+\beta)} +\frac{\beta}{\theta\psi(\theta+\beta) },
\label{invertthis}
\end{eqnarray*}
which agrees with the Laplace transform of the right hand side of (\ref{scale1}) for which an integration by parts is necessary. As scale functions are right continuous, the result follows by the uniqueness of Laplace transforms.

For the second claim,  first note that $\mathcal{T}_{\delta, \theta}\psi^{(q)} = (\theta+\Phi(0)-\delta)\psi(\theta+\Phi(0))/(\theta+\Phi(0))$. A straightforward calculation shows that 
 for all $\theta+\delta>\Phi(0)$, we have
\begin{eqnarray*} \label{eq:sc}
 \int_0^{\infty}\E^{-\theta x}\E^{(\Phi(0)  -\delta)x}W_{\mathcal{T}_{\delta,\Phi(0)}\psi^{(q)}}(x)\D x &=& \frac{\theta+\delta}{\theta\psi(\theta+\delta)}.
 \end{eqnarray*}
The result now follows from the first part of the theorem.
\hfill$\square$\end{proof}

Below we give an example of how this theory can be easily applied to generate new scale functions from those of (tempered) scale functions.

\begin{example}
Let 
\[
 \psi^{(q)}_{c}(\theta)=(\theta+c)^{\alpha} - c^\alpha- q\text{ for }\theta\geq -c,
\]
 where $1<\alpha<2$ and $q,c\geq 0$.  This is the Laplace exponent of an unbounded variation tempered stable spectrally negative L\'evy process $\xi$ killed at an independent and exponentially distributed time with rate $q$. In the case that $c=0$, the underlying L\'evy process is a
spectrally negative $\alpha$-stable L\'evy process. In that case it is known that
\[
\int_0^\infty \E^{-\theta x} x^{\alpha-1}\mathcal{E}_{\alpha,\alpha}(q x^{\alpha})\D x = \frac{1}{\theta^\alpha-q},
\]
and hence the scale function is given by
\begin{eqnarray*}
W_{\psi^{(q)}_{0}}(x)&=& x^{\alpha-1}\mathcal{E}_{\alpha,\alpha}(qx^{\alpha}),
\end{eqnarray*}
for $x\geq 0$.
(Note in particular that when $q=0$ the expression for the scale function simplifies to $\Gamma(\alpha)^{-1} x^{\alpha-1}$).
Since
\[
\int_0^\infty \E^{- \theta x} \E^{-cx}W_{\psi^{(q+ c^\alpha)}_{0}}(x) \D x = \frac{1}{(\theta+c)^{\alpha} - c^\alpha- q},
\]
it follows that
\[
W_{\psi^{(q)}_{c}}(x)= \E^{-cx}W_{\psi^{(q+ c^\alpha)}_0}(x) = \E^{-cx}x^{\alpha-1}\mathcal{E}_{\alpha,\alpha}((q+ c^\alpha)x^{\alpha}).
\]
Appealing to the first part of Theorem \ref{scalefunctions} we now know that for $\beta \geq 0,$
\[
 W_{\mathcal{T}_{\beta}\psi^{(q)}_{c}}(x) =
\E^{-(\beta +c) x }x^{\alpha-1}\mathcal{E}_{\alpha,\alpha}((q+ c^\alpha)x^{\alpha})
+ \beta \int_0^x \E^{-(\beta +c) y }y^{\alpha-1}\mathcal{E}_{\alpha,\alpha}((q+ c^\alpha)y^{\alpha})\D y.
\]

Note that $\psi^{(q)\prime}_{c}(0+) = \alpha c^{\alpha-1}$ which is zero if and only if $c=0$. We may use the second and third part of Theorem \ref{scalefunctions} in this case. Hence, for any $\delta>0$, the scale function of the spectrally negative L\'evy process with Laplace exponent
$\mathcal{T}_{\delta,0}\psi^{(0)}_0$ is
\begin{eqnarray*}
W_{\mathcal{T}_{\delta,0}\psi^{(0)}_0}(x)&=& \frac{1}{\Gamma(\alpha-1)} \E^{\delta
x}\int_0^x\E^{-\delta y}y^{\alpha-2} \D y\\
&=&\frac{\delta^{\alpha-1}}{\Gamma(\alpha-1)} \E^{\delta
x}\Gamma(\alpha-1,\delta x),
\end{eqnarray*}
where we have used the recurrence relation for the Gamma function and $\Gamma(a,b)$ stands for the incomplete Gamma function of parameters $a,b>0$. Moreover, we have, for any $\beta >0$,
\begin{eqnarray*}
W_{\mathcal{T}^{\beta}_{\delta,0}\psi^{(0)}_0}(x)&=& \frac{1}{\Gamma(\alpha-1)} \left( \frac{\beta^{\alpha} }{\beta-\delta}\Gamma(\alpha-1,\beta x)- \E^{(\beta-\delta)
x}\frac{\delta^{\alpha} }{\beta-\delta}\Gamma(\alpha-1,\delta x)\right).
\end{eqnarray*}
Finally, the scale function of the spectrally negative L\'evy process with Laplace exponent
$\mathcal{T}^{\beta}_{\delta,0}\psi^{(q)}_{0}$ is given by
\begin{eqnarray*}
W_{\mathcal{T}^{\beta}_{\delta,0}\psi^{(q)}_{0}}(x)&=& \frac{\beta}{\beta-\delta}(x/\beta)^{\alpha-1}\mathcal{E}_{\alpha,\alpha-1}\left(x;\frac{q}{\beta}\right) - \frac{\delta}{\beta-\delta}\E^{-(\beta-\delta)
x}(x/\delta)^{\alpha-1}\mathcal{E}_{\alpha,\alpha-1}\left(x;\frac{q}{\delta}\right),
\end{eqnarray*}
where we have used the notation
\begin{eqnarray*}
\mathcal{E}_{\alpha,\beta}\left(x;q\right)=\sum_{n=0}^{\infty}\frac{\Gamma(x;\alpha n+\beta) q^n}{\Gamma(\alpha n+\beta)}.
\end{eqnarray*}
\end{example}

\section{Shifted scale functions}

In this final section, we make some remarks concerning how, in certain circumstances, simply adding unity to an existing scale function creates a new scale function of a completely unrelated spectrally negative L\'evy process.

\begin{theorem}
 Suppose that $W$ is a complete scale function belonging to a spectrally negative L\'evy process of unbounded variation, and whose Laplace exponent $\psi$ satisfies $\psi'(0+)=0$. Then  $1+W(x)$ is a scale function of a spectrally negative L\'evy process, say $X^{\rm s}$, whose Laplace exponent is given by
\[
 \psi^{\rm s}(\theta):=\frac{\theta\psi(\theta)}{\theta + \psi(\theta)} \text{ for }\theta \geq 0.
\]
Moreover, it is necessarily the case that $X^{s}$ has paths of bounded variation and its jump measure, $\Pi^s$, satisfies
\[
 \Pi^{\rm s}(-\infty, -x) = \widetilde{W}'(x)
\]
for $x>0$ where $\widetilde{W}$ is the scale function associated with the spectrally negative L\'evy process whose Laplace exponent is given by $\psi(\theta) + \theta$.

\end{theorem}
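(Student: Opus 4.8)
The plan is to verify everything through Laplace transforms together with the descending--ladder--height machinery of Sections \ref{scaledescending} and \ref{completeclass}. Since $\int_0^\infty \E^{-\theta x}W(x)\,\D x = 1/\psi(\theta)$ for $\theta$ large, adding the constant $1$ on $[0,\infty)$ gives
\[
\int_0^\infty \E^{-\theta x}\bigl(1+W(x)\bigr)\,\D x = \frac1\theta + \frac1{\psi(\theta)} = \frac{\theta+\psi(\theta)}{\theta\psi(\theta)} = \frac1{\psi^{\rm s}(\theta)} ,
\]
so the whole theorem reduces to showing that $\psi^{\rm s}(\theta)=\theta\psi(\theta)/(\theta+\psi(\theta))$ really is the Laplace exponent of a spectrally negative L\'evy process not drifting to $-\infty$: once that is known, Definition \ref{scaledef}, right--continuity of scale functions and uniqueness of Laplace transforms force its $0$-scale function to be $1+W$.

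First I would use the hypotheses to put $\psi^{\rm s}$ into ladder--height form. Because $\psi'(0+)=0$ we have $\Phi(0)=0$, so by the factorisation (\ref{WHfact}) $\psi(\theta)=\theta\phi(\theta)$ with $\phi$ the unkilled Laplace exponent of the descending ladder height process; hence
\[
\psi^{\rm s}(\theta) = \theta\cdot\frac{\phi(\theta)}{1+\phi(\theta)} =: \theta\,\phi^{\rm s}(\theta), \qquad \phi^{\rm s}(0)=0 .
\]
Now $W$ being a \emph{complete} scale function is precisely the statement that $\phi$ is a complete Bernstein function; and $w\mapsto w/(1+w)=\int_0^\infty(1-\E^{-wt})\E^{-t}\,\D t$ is itself a complete Bernstein function, so, the complete Bernstein class being closed under composition (equivalently, $1/\phi^{\rm s}=1+1/\phi$ is a Stieltjes function), $\phi^{\rm s}$ is a complete Bernstein function. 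In particular its L\'evy measure $\Upsilon^{\rm s}$ is absolutely continuous with a completely monotone, hence non-increasing, density, so Theorem \ref{th:HK08} (with $\varphi=0$), or equivalently Corollary \ref{completescale}, produces a spectrally negative L\'evy process $X^{\rm s}$ with Laplace exponent $\theta\phi^{\rm s}(\theta)=\psi^{\rm s}(\theta)$, not drifting to $-\infty$ since $\phi^{\rm s}(0)=0$. This settles the first assertion, with $W^{\rm s}=1+W$.

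The remaining claims are then read off. For the path variation I would invoke Lemma \ref{W(0)}: $W^{\rm s}(0)=1+W(0)=1$, using $W(0)=0$ since $X$ has unbounded variation, whence $X^{\rm s}$ has bounded variation with drift coefficient $1$ in the decomposition (\ref{BVdecomp}), and in particular zero Gaussian component. For the jump measure, note that $\phi^{\rm s}$ is bounded with $\lim_{\theta\to\infty}\phi^{\rm s}(\theta)=1$ (as $\phi(\theta)\to\infty$, $X$ having unbounded variation), so $\Upsilon^{\rm s}(0,\infty)=1$ and $\phi^{\rm s}$ has no drift; Theorem \ref{th:HK08} with $\varphi=\Phi^{\rm s}(0)=0$ then gives $\Pi^{\rm s}(-\infty,-x)=\tfrac{\D\Upsilon^{\rm s}}{\D x}(x)=:\upsilon^{\rm s}(x)$. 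To identify $\upsilon^{\rm s}$ I would combine $\phi^{\rm s}(\theta)=\theta\int_0^\infty\E^{-\theta x}\Upsilon^{\rm s}(x,\infty)\,\D x$, the elementary relation $\int_0^\infty\E^{-\theta x}\Upsilon^{\rm s}(x,\infty)\,\D x=\theta^{-1}\bigl(1-\widehat{\upsilon^{\rm s}}(\theta)\bigr)$, and $\phi^{\rm s}(\theta)/\theta=\psi(\theta)/\bigl(\theta(\theta+\psi(\theta))\bigr)$ to deduce $\widehat{\upsilon^{\rm s}}(\theta)=\theta/(\theta+\psi(\theta))$; on the other hand, writing $\widetilde W$ for the scale function of the (again unbounded-variation) process with exponent $\widetilde\psi(\theta)=\psi(\theta)+\theta$ one has $\widetilde\Phi(0)=0$ and $\widetilde W(0)=0$, and integrating $1/\widetilde\psi(\theta)$ by parts yields $\int_0^\infty\E^{-\theta x}\widetilde W'(x)\,\D x=\theta/(\theta+\psi(\theta))$. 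Since $\upsilon^{\rm s}$ is completely monotone and $\widetilde W'$ is continuous (Lemma \ref{C1}), uniqueness of Laplace transforms gives $\Pi^{\rm s}(-\infty,-x)=\upsilon^{\rm s}(x)=\widetilde W'(x)$, as required.

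The crux of the argument --- and the step I expect to need most care --- is establishing that $\phi^{\rm s}=\phi/(1+\phi)$ is a (complete) Bernstein function, for it is here, and only here, that completeness of $W$ is genuinely used; without it $\phi/(1+\phi)$ need not even be a Bernstein function and one could not appeal to Theorem \ref{th:HK08} to manufacture $X^{\rm s}$. Everything downstream is routine manipulation of Laplace transforms and appeals to Theorem \ref{th:HK08}, Lemma \ref{W(0)} and Lemma \ref{C1}.
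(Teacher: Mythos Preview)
Your proof is correct but proceeds along a different, more analytic route than the paper's. The paper takes a probabilistic/renewal-theoretic detour: it introduces the compound Poisson subordinator $S$ with unit rate and jump distribution $\widetilde W(\D x)$ (first checking $\widetilde W[0,\infty)=1$), computes its renewal measure explicitly as $V=\sum_{n\ge 0}\widetilde W^{*n}$, and shows by direct Laplace inversion that $V(x)=1+W(x)$; the Laplace exponent of $S$ is then seen to be $\phi/(1+\phi)$, and the non-increase of $\widetilde W'$ (needed for Theorem~\ref{th:HK08}) is deduced from the fact that $\widetilde W$ is the potential measure of the complete Bernstein function $1+\phi$. Your argument bypasses the renewal construction entirely: you compute the Laplace transform of $1+W$ in one line, and the crux is instead the purely analytic fact that complete Bernstein functions are closed under composition with $w\mapsto w/(1+w)$, which immediately gives $\phi^{\rm s}$ complete Bernstein and hence eligible for Theorem~\ref{th:HK08}. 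Your approach is shorter and makes transparent exactly where completeness is consumed; the paper's approach, on the other hand, supplies a pleasant probabilistic interpretation of $1+W$ as a renewal function and identifies the Lévy measure of $\phi^{\rm s}$ as $\widetilde W(\D x)$ en route rather than via a separate Laplace-transform computation at the end.
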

\begin{proof}
The assumptions in the theorem allow  us to write $\psi(\theta) = \theta \phi(\theta)$ where $\phi$ is a complete Bernstein function satisfying $\phi(0)=0$.

Let $\widetilde{W}$ be the scale function of the spectrally negative L\'evy process with Laplace exponent $\theta+\psi(\theta).$ The key to the proof is to consider a compound Poisson subordinator, say $S=\{S_t: t\geq 0\}$ with unit arrival rate and jump distribution whose jump measure is given by $\widetilde{W}({\rm d}x)$. Note that the latter is a probability distribution on account of the fact that for $x\geq 0$
\[
 \widetilde{W}[0,\infty)=\lim_{\beta\downarrow 0}\int_{[0,\infty)}\E^{-\beta x}\widetilde{W}({\rm d}x) = 
\lim_{\beta\downarrow 0} \frac{1}{\phi(\beta)+1} = 1.
\]
Suppose that $V({\rm d}x)$ is the renewal measure associated with this compound Poisson subordinator. In particular, a straightforward computation shows that
\begin{eqnarray*}
 V({\rm d}x) &=&\int_0^\infty {\rm d}t \cdot P(S_t \in {\rm d}x) \\
&=&\int_0^\infty {\rm d}t \cdot \sum_{n=0}^\infty \E^{-t} \frac{t^n}{n!}\widetilde{W}^{*n}({\rm d}x)\\
&=& \sum_{n=0}^\infty \widetilde{W}^{*n}({\rm d}x),
\end{eqnarray*}
where we understand $\widetilde{W}^{*0}({\rm d}x) = \delta_0({\rm d}x)$.

Taking Laplace transforms we find, for $\beta>0,$ that 
\[
 \int_{[0,\infty )} \E^{-\beta x}V({\rm d}x) =  \sum_{n=0}^\infty \left(\frac{1}{\phi(\beta)+1}\right)^n = 1+\frac{1}{\phi(\beta)}.
\]
However, since 
\[
\int_{[0,\infty )} \E^{-\beta x}\{\delta_0({\rm d}x)+W({\rm d}x) \} = 1+\frac{1}{\phi(\beta)},
\]
we deduce that 
\[
 V(x) = 1+ W(x).
\]

Our objective is now to show that $V$ is the scale function of the process $X^{\rm s}$. To this end, note that the compound Poisson subordinator behind the measure $V$ has Laplace exponent 
\[
 \int_0^\infty (1-\E^{-\theta x})\widetilde{W}({\rm d}x) = 1 - \frac{1}{1+\phi(\theta)} = \frac{\phi(\theta)}{1+\phi(\theta)}.
\]
Since the jump measure  $\widetilde{W}$ is also the renewal measure associated with the Bernstein function $1+\phi$, which is a complete Bernstein function (because $\phi$ is), it follows that $\widetilde{W}'$ is non-increasing and we may apply Theorem~\ref{th:HK08}
 to conclude that there exists a spectrally negative L\'evy process whose descending ladder height is the aforementioned compound Poisson subordinator. Moreover, this spectrally negative L\'evy process, $X^{\rm s}$, has Laplace exponent 
\[
 \psi^{\rm s}(\theta) = \theta \frac{\phi(\theta)}{1+\phi(\theta)} = \frac{\theta\psi(\theta)}{\theta+\psi(\theta)} ,
\]
and its scale function is given by $V = 1+W$.

Note that the process $X^{\rm s}$ necessarily has paths of bounded variation as its scale function has a discontinuity at the origin.
\hfill\hfill$\square$\end{proof}

\begin{example}
Consider the case of a spectrally negative stable process with index $\alpha\in(1,2)$. Its Laplace exponent is given by $\psi(\theta) = \theta^\alpha$ and its scale function is given by $W(x) =  x^{\alpha-1}/\Gamma(\alpha)$. The above theorem predicts that $1+ x^{\alpha-1}/\Gamma(\alpha)$ is a scale function of a process whose Laplace exponent is given by 
\[
 \psi^{\rm s} (\theta) = \frac{\theta^{1+\alpha}}{\theta + \theta^\alpha} = \frac{\theta^2}{\theta^{2-\alpha} + \theta}.
\]
Moreover, since it is known from Furrer \cite{Fur1998} that the scale function associated with spectrally negative L\'evy process whose Laplace exponent is given $\theta + \theta^\alpha$ is given by 
\[
 \widetilde{W}(x) = 1 - \mathcal{E}_{\alpha-1, 1}(- x^{\alpha-1}),
\]
this gives us the associated jump measure of $X^{\rm s}$.
Note that this example can also be recovered from (\ref{producedanotherway}) with an appropriate choice of constants.
\end{example}

\chapter{Numerical analysis of scale functions}

\section{Introduction}\label{sec_introduction}

The methods presented in the previous chapters for producing closed form expressions for scale functions are generous in the number of examples they provide, but also have their limitations.
This is not surprising, since the scale function is defined via its Laplace transform, 
and in most cases it is not possible to find an explicit expression for the inverse of a Laplace transform.
Our main objective in this chapter is to present several numerical methods which allows one  to compute  scale functions for a general spectrally negative  L\'evy process. 
As we will see, these computations can be done quite easily and efficiently, however there are a few tricks that one should be aware of. 
Our second goal is to discuss two very special families of L\'evy processes, i.e. processes with jumps of rational transform and Meromorphic processes, 
for which the scale function can be computed essentially in closed form.

The problem of numerical evaluation of the scale function and other related quantities has received some attention in the literature.
In particular, Rogers \cite{Rogers} computes the distribution of the first passage time for spectrally negative L\'evy processes by inverting the two-dimensional
Laplace transform. The main tool is the discretization of the Bromwich integral and the application of Euler summation in order to improve convergence. 
We will describe the one dimensional version of this method  in Section \ref{sec_Euler}.
 Surya \cite{Surya} presents an algoritm for evaluating the scale function using exponential dampening followed by Laplace inversion; the latter performed in a similar way as Rogers \cite{Rogers}.  In a recent paper Veillette  and Taqqu \cite{Veilette_Taqqu} compute the distribution of the first passage time for subordinators using two techniques. These are the discretization of the Bromwich integral and Post-Widder formula coupled with Richardson extrapolation. 
Albrecher, Florin and Kortschak \cite{Albrecher} develop algorithms to compute ruin probabilities for completely monotone claim distributions, which is equivalent
 to computing the scale function $W(x)$ due to relation (\ref{kyprianou-palmowski:forces-left-cts}).

In this section we will present some general ideas related to numerical evaluation of the scale function. We adopt the same notation as in the previous chapters. However, in order to avoid excessive technical details, we shall impose the following condition throughout.

\bigskip

\noindent
{\bf Assumption 1.}
 The L\'evy measure $\Pi$ has at most a  finite number of atoms when $X$ has paths of bounded variation.

\bigskip

\noindent Recall that the scale function $W^{(q)}(x)$ is defined by the Laplace transform identity 
 \begin{equation}
 \int_0^{\infty} \E^{-zx} W^{(q)}(x) \d x = \frac{1}{\psi(z)-q}, \;\;\; \re(z) > \Phi(q).
 \end{equation}
 We know from Lemma \ref{C1} and Corollary \ref{corr-for-alexey}  that $W^{(q)}{}'(x)$  exists and is continuous everywhere except when $X$ has paths of bounded variation, in which case the derivative does not exist  at any point $x$ such that  an atom of  of the L\'evy measure occurs at $-x$.  
Since we have assumed that there exists just a finite number of these points, we can apply standard results (such as Theorem 2.2 in \cite{Cohen}) 
and conclude that $W^{(q)}(x)$ can be expressed via the Bromwich integral  
\begin{equation}\label{eqn_Wq_inv_Laplace_transform}
 W^{(q)}(x)=\frac{\E^{cx}}{2\pi} \int\limits_{\r} \frac{\E^{\i u x}}{\psi(c+\i u)-q} \d u, \;\;\; x\in \r.
\end{equation}
where $c$ is an arbitrary constant satisfying $c>\Phi(q)$.

In principle one could use (\ref{eqn_Wq_inv_Laplace_transform}) as a starting point for numerical Laplace inversion to give 
$W^{(q)}(x)$. However, this would not be a good approach from the numerical point of view. The problem here is the exponential factor $\E^{cx}$, which can be very large
and would amplify the errors present in the numerical evaluation of the integral. Ideally we would like to choose $c$ to be a small
positive number, but this is not possible due to the restriction $c>\Phi(q)$.  
Therefore this method would be reasonable from the numerical point of view only when $q=0$ and $\Phi(0)=0$, 
in which case the function $W^{(q)}(x)$ does not increase exponentially fast. Indeed in such cases there is at most linear growth. This follows on account of the fact that when $q=\Phi(0) = 0$
the underlying L\'evy process does not drift to $-\infty$ and $W$ is the renewal measure of the descending ladder height process; recall the discussion preceding (\ref{a}). Thanks to (\ref{a}), this in turn implies that $W(x)$ is a renewal function and hence grows at most linearly  as $x$ tends to infinity.

In all other cases we would have to modify (\ref{eqn_Wq_inv_Laplace_transform}) in order to remove the exponential
growth of $W^{(q)}(x)$. It turns out that this can be done quite easily with the help of the density of the potential measure of the dual process  $\hat X=-X$, defined as
 \begin{equation}\label{def_uq}
 \int_0^{\infty} \E^{-qt } \p(\hat X_t \in \d x) \D t= \hat u^{(q)}(x) \d x.
  \end{equation}
We know from Theorem \ref{resolve} (iv), this function satisfies 
 \begin{equation}\label{eqn_Wq_uq}
  W^{(q)}(x)=\frac{\E^{\Phi(q)x}}{\psi'(\Phi(q))} -\hat u^{(q)}(x), \;\;\; x \ge 0,
  \end{equation}
 therefore $\hat u^{(q)}(x)$ is continuous on $(0,\infty)$ and has finite left and right derivatives at every point $x>0$. 
Theorems \ref{C2}, \ref{II}  and \ref{III} give us more information on the relation between the L\'evy measure of $X$ and the smoothness properties of $\hat u^{(q)}(x)$. 
 As we see from identity (\ref{eqn_Wq_uq}), the problem of computing the scale function $W^{(q)}(x)$ is equivalent to that of computing $\hat u^{(q)}(x)$. Dealing with the latter turns out to be an easier problem from the numerical point of view, providing that $q\geq 0$ and $\Phi(q)>0$, since in this case $u^{(q)}$ is bounded.  To see why this is the case, note from  the earlier  representation of $W^{(q)}$ in (\ref{interesting-representation}) together with (\ref{exp-shift}), we have that
 \begin{eqnarray*}
 \hat{u}^{(q)}(x)& =& \frac{\E^{\Phi(q)x}}{\psi'(\Phi(q))} - W^{(q)}(x)\\
 &=& \frac{\E^{\Phi(q)x}}{\psi'(\Phi(q))} - \E^{\Phi(q)x}\frac{1}{\psi'_{\Phi(q)}(0+)}P_x^{\Phi(q)}(\underline{X}_\infty \geq 0)\\
 &=&\frac{\E^{\Phi(q)x}}{\psi'(\Phi(q))}P_x^{\Phi(q)}(\underline{X}_\infty < 0)\\
 &=&\frac{\E^{\Phi(q)x}}{\psi'(\Phi(q))}P_x^{\Phi(q)}(\tau^-_0<\infty)\\
 &=&\frac{1}{\psi'(\Phi(q))}E_x\left(\E^{\Phi(q) X_{\tau^-_0} - q\tau^-_0} \mathbf{1}_{\{\tau^-_0<\infty\}}\right)\\
 &<&\frac{1}{\psi'(\Phi(q))},
 \end{eqnarray*}
thereby showing boundedness. Note that similar computations to the above can be found in Tak\'acs \cite{tak67} and Bingham \cite{kyprianou-palmowski:bing}.

We need to  characterize the Laplace transform of $\hat u^{(q)}(x)$. 
The proof of the next proposition follows easily from (\ref{eqn_Wq_uq}). 
\begin{proposition}\label{uq_laplace_transform}
 Assume that $\Phi(q)>0$. Then for $\re(z)>0$ 
  \begin{equation}\label{eqn_Laplace_transform_uq}
 \int\limits_0^{\infty} \E^{-zx} \hat u^{(q)}(x) \d x=F^{(q)}(z), 
  \end{equation}
where
 \begin{equation}\label{def_Fqz}
 F^{(q)}(z)=\frac{1}{\psi'(\Phi(q))(z-\Phi(q))}-\frac{1}{\psi(z)-q}.
  \end{equation}
\end{proposition}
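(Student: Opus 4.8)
The proof is essentially a bookkeeping exercise built on the identity \eqref{eqn_Wq_uq}, so I will keep it short. The plan is to start from the relation
\[
W^{(q)}(x)=\frac{\E^{\Phi(q)x}}{\psi'(\Phi(q))}-\hat u^{(q)}(x), \qquad x\geq 0,
\]
rearrange it to $\hat u^{(q)}(x)=\E^{\Phi(q)x}/\psi'(\Phi(q))-W^{(q)}(x)$, and take Laplace transforms of both sides over $(0,\infty)$. The left-hand side gives, by definition, the function $F^{(q)}(z)$ we are trying to identify. On the right-hand side, the term $\E^{\Phi(q)x}/\psi'(\Phi(q))$ has Laplace transform $1/(\psi'(\Phi(q))(z-\Phi(q)))$, valid for $\re(z)>\Phi(q)$, while by Definition \ref{scaledef} (equation \eqref{LTdefW}) the scale function $W^{(q)}$ has Laplace transform $1/(\psi(z)-q)$, valid for $\re(z)>\Phi(q)$. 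Subtracting yields exactly \eqref{def_Fqz} for $\re(z)>\Phi(q)$.

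The only genuine point requiring an argument — and the step I expect to be the main (minor) obstacle — is the claim that the identity \eqref{eqn_Laplace_transform_uq} holds on the larger half-plane $\re(z)>0$ rather than merely on $\re(z)>\Phi(q)$. This is where the hypothesis $\Phi(q)>0$ is used. The point is that, as established in the displayed computation preceding the proposition, $\hat u^{(q)}$ is bounded (indeed $\hat u^{(q)}(x)<1/\psi'(\Phi(q))$ for all $x\geq 0$), so its Laplace transform $\int_0^\infty \E^{-zx}\hat u^{(q)}(x)\,\D x$ converges absolutely and defines an analytic function on the whole half-plane $\re(z)>0$. On the other side, $F^{(q)}(z)$ as given by \eqref{def_Fqz} is a difference of two meromorphic functions whose only pole in $\re(z)>0$ would be at $z=\Phi(q)$; but this is a removable singularity, since near $z=\Phi(q)$ one has $\psi(z)-q=\psi'(\Phi(q))(z-\Phi(q))+O((z-\Phi(q))^2)$, so the two singular terms cancel to leading order. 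Hence $F^{(q)}$ extends analytically to all of $\re(z)>0$.

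With both sides analytic on $\re(z)>0$ and known to agree on the sub-half-plane $\re(z)>\Phi(q)$ (which has a non-empty interior and an accumulation point in $\re(z)>0$), the Identity Theorem for analytic functions forces them to coincide throughout $\re(z)>0$. This completes the proof. I would present the cancellation at $z=\Phi(q)$ explicitly, via the Taylor expansion of $\psi$ around $\Phi(q)$ together with L'Hôpital's rule, since it is the natural place a careful reader would pause; everything else is immediate from \eqref{eqn_Wq_uq} and the defining Laplace transforms.
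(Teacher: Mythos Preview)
Your proof is correct and follows exactly the approach the paper indicates: the paper's entire proof reads ``The proof of the next proposition follows easily from (\ref{eqn_Wq_uq}),'' and you have simply written out that easy deduction, including the analytic continuation step to pass from $\re(z)>\Phi(q)$ to $\re(z)>0$.
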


Let us summarize our approach to computing the scale functions.  When $q=0$ and $\Phi(0)=0$ we will work with the scale function itself and will use (\ref{eqn_Wq_inv_Laplace_transform}) 
as the starting point for our computations. As earlier noted, in this case $W^{(q)}(x) = W(x)$  grows at worst linearly fast as $x\to +\infty$ and we do not need to worry
about amplifications of numerical errors. When $q\geq 0$ and $\Phi(q)>0$ the scale function grows exponentially fast as $x\to +\infty$, thus it is better to 
 work with the density of the potential measure $\hat u^{(q)}(x)$, and then to recover the scale function via relation (\ref{eqn_Wq_uq}). For convenience however we shall restrict ourselves to the latter case with the following blanket assumption. 
 
 \bigskip
 
\noindent {\bf Assumption 2.} For $q\geq 0$, $\Phi(q)>0$.
 
 \bigskip
 
\noindent The analysis for the case $q= \Phi(q) = 0$, where we work directly with the scale function, is no different.

As we will see later, our algorithms will require the evaluation of $F^{(q)}(z)$ for values of $z$ in the half-plane $\re(z)>0$. 
From (\ref{def_Fqz}) we find that  $F^{(q)}(z)$ has a removable singularity at $z=\Phi(q)$. 
It is not advisable to compute $F^{(q)}(z)$ via (\ref{def_Fqz}) when $z$ is close to $\Phi(q)$, as this procedure would involve 
subtracting two large numbers, which would cause the loss of accuracy. 
There are two solutions to this problem. One should either make sure that $z$ is never too close to $\Phi(q)$ or alternatively one should use the following asymptotic expression for $F^{(q)}(z)$.
\begin{proposition}\label{prop_Fq_near_Phiq}
 Define $a_n=\psi^{(n)}(\Phi(q))$ where $\psi^{(n)}$ is the $n$-th derivative of $\psi$. Then as $z\to \Phi(q)$  
 \begin{equation}\label{F_asymptotic_z_to_Phiq}
  F^{(q)}(z)=\frac12 \frac{a_2}{a_1^2} + \left[ \frac16 \frac{a_3}{a_1^2} - \frac14 \frac{a_2^2}{a_1^3} \right] (z-\Phi(q)) + O((z-\Phi(q))^2).
 \end{equation}
\end{proposition}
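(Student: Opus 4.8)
The plan is to perform a Taylor expansion of $F^{(q)}(z)$ around the point $z=\Phi(q)$, which is a removable singularity, and read off the coefficients of $(z-\Phi(q))^0$ and $(z-\Phi(q))^1$. Recall from \equref{def_Fqz} that
\[
F^{(q)}(z)=\frac{1}{\psi'(\Phi(q))(z-\Phi(q))}-\frac{1}{\psi(z)-q},
\]
and that $\psi(\Phi(q))=q$, so $\psi(z)-q$ vanishes to first order at $z=\Phi(q)$; the two terms each have a simple pole whose principal parts cancel. First I would write $h=z-\Phi(q)$ and expand $\psi(z)-q$ as a power series in $h$ using Taylor's theorem for the (analytic on $(\Phi(q)-\varepsilon,\infty)$, hence here treated as a formal/analytic) function $\psi$: with $a_n=\psi^{(n)}(\Phi(q))$ and $a_0=\psi(\Phi(q))-q=0$,
\[
\psi(z)-q = a_1 h + \tfrac12 a_2 h^2 + \tfrac16 a_3 h^3 + O(h^4).
\]
Note $a_1=\psi'(\Phi(q))>0$ (recall $\Phi(q)$ is the largest root of $\psi(\lambda)=q$, and by strict convexity of $\psi$ the derivative there is strictly positive whenever $q>0$; under Assumption~2, $\Phi(q)>0$ ensures the same for $q=0$), so the geometric-type expansion below is legitimate for $h$ small.

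Next I would invert this series. Factoring out $a_1 h$,
\[
\psi(z)-q = a_1 h\left(1 + \tfrac{a_2}{2a_1}h + \tfrac{a_3}{6a_1}h^2 + O(h^3)\right),
\]
so that
\[
\frac{1}{\psi(z)-q} = \frac{1}{a_1 h}\left(1 - \tfrac{a_2}{2a_1}h + \Big(\tfrac{a_2^2}{4a_1^2}-\tfrac{a_3}{6a_1}\Big)h^2 + O(h^3)\right),
\]
using the expansion $(1+u)^{-1}=1-u+u^2-\cdots$ with $u=\tfrac{a_2}{2a_1}h+\tfrac{a_3}{6a_1}h^2+O(h^3)$ and collecting terms up to order $h^2$. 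Subtracting this from the first term $\tfrac{1}{a_1 h}$ of $F^{(q)}$, the $h^{-1}$ contributions cancel exactly, leaving
\[
F^{(q)}(z) = \frac{1}{a_1}\left(\tfrac{a_2}{2a_1} - \Big(\tfrac{a_2^2}{4a_1^2}-\tfrac{a_3}{6a_1}\Big)h + O(h^2)\right)
= \frac12\frac{a_2}{a_1^2} + \left[\frac16\frac{a_3}{a_1^2} - \frac14\frac{a_2^2}{a_1^3}\right]h + O(h^2),
\]
which is precisely \equref{F_asymptotic_z_to_Phiq} after substituting back $h=z-\Phi(q)$.

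There is no real obstacle here; the only points requiring a word of care are (i) justifying that $\psi$ admits the needed Taylor expansion near $\Phi(q)$ — which holds because $\psi$ is analytic on a neighbourhood of $\Phi(q)$ in $\{\re z>0\}$ and, in fact, the Laplace exponent extends analytically to the right half-plane past $\Phi(q)$ — and (ii) bookkeeping the algebra of the geometric series to the correct order, which is entirely routine. I would present the expansion of $\psi(z)-q$, then its reciprocal, then the cancellation of the polar parts, and finally collect the constant and linear coefficients, remarking that the $O((z-\Phi(q))^2)$ error is uniform on a small disc about $\Phi(q)$ by the standard remainder estimate for analytic functions.
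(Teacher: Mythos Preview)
Your proposal is correct and follows exactly the approach indicated in the paper, which simply states that the result follows from writing down the Taylor expansion of the right-hand side of the definition of $F^{(q)}$ centred at $z=\Phi(q)$. You have carried out that expansion in full detail, and the algebra checks out.
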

\begin{proof}
 The proof follows easily by writing down the Taylor expansion of the right-hand side in (\ref{def_Fqz}) centered at $z=\Phi(q)$.
\end{proof}

As we have seen in Section \ref{section:fluctuation identities} and in many other instances, virtually all fluctuation identities for spectrally negative L\'evy processes can be expressed in terms of 
the following three objects: the scale function $W^{(q)}(x)$, its derivative $W^{(q)}{}'(x)$ and a function $Z^{(q)}(x)$ defined by (\ref{zedque}), which is 
essentially an indefinite integral of the scale function. 
Therefore, in addition to the scale function itself, it is also important 
to be able to compute its derivative and indefinite integral. 
It turns out that computation of all these quantities can be done in exactly the same way.
Again, the first step is to remove the exponential growth as $x\to +\infty$ and express everything in terms 
of the potential density $\hat u^{(q)}(x)$ as follows
 \begin{equation}\label{eqn_Wq_prime_uq}
  W^{(q)}{}'(x)=\frac{\Phi(q)\E^{\Phi(q)x}}{\psi'(\Phi(q))} -\hat u^{(q)}{}'(x), \;\;\; x \ge 0,
  \end{equation}
 and
 \begin{equation}\label{eqn_Zq_vq}
  Z^{(q)}(x)=1+\frac{q}{\Phi(q)}  \frac{\E^{\Phi(q)x}-1}{\psi'(\Phi(q))} - q v^{(q)}(x),
 \end{equation}
 where we have defined
\begin{equation}\label{def_vqx}
 v^{(q)}(x)=\int_{0}^x \hat u^{(q)}(y) \d y.
\end{equation}
 We see that the problem of computing $W^{(q)}{}'(x)$ and $Z^{(q)}(x)$ is equivalent to the problem of computing $\hat u^{(q)}{}'(x)$ and $v^{(q)}(x)$. 
The following result is an analogue of proposition \ref{uq_laplace_transform} and it gives us Laplace transforms of $\hat u^{(q)}{}'(x)$ and $v^{(q)}(x)$.
\begin{proposition}\label{prop_Laplace_uq_vq}
 Assume that $q\geq 0$  and $\Phi(q)>0$. Then for $\re(z)>0$
\begin{equation}\label{Laplace_uq_prime}
 \int_0^{\infty} \hat u^{(q)}{}'(x) \E^{-zx} \d x=  z F^{(q)}(z) +W^{(q)}(0^+)-\frac{1}{\psi'(\Phi(q))}, 
\end{equation}
and
\begin{equation}\label{Laplace_vq}
 \int_0^{\infty} v^{(q)}(x) \E^{-zx} \d x = \frac{F^{(q)}(z)}{z}.
\end{equation}
\end{proposition}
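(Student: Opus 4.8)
The plan is to establish both Laplace transform identities directly from Proposition \ref{uq_laplace_transform}, using only elementary manipulations of Laplace transforms: the differentiation rule and the integration rule. Recall that by definition $\hat u^{(q)}(x)$ is a density of a potential measure, hence locally integrable on $[0,\infty)$, and by the discussion following (\ref{eqn_Wq_uq}) it is continuous on $(0,\infty)$ with finite one-sided derivatives everywhere (under Assumption 1, away from the finitely many exceptional points in the bounded variation case), and it is bounded by $1/\psi'(\Phi(q))$ under Assumption 2. These regularity facts are exactly what is needed to justify the integrations by parts below.

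\textbf{Step 1: the transform of $v^{(q)}$.} Since $v^{(q)}(x)=\int_0^x \hat u^{(q)}(y)\,\D y$ is the primitive of $\hat u^{(q)}$ vanishing at $0$, the standard Laplace transform rule for primitives gives, for $\re(z)>0$,
\[
\int_0^\infty v^{(q)}(x)\E^{-zx}\,\D x = \frac{1}{z}\int_0^\infty \hat u^{(q)}(x)\E^{-zx}\,\D x = \frac{F^{(q)}(z)}{z},
\]
where the last equality is (\ref{eqn_Laplace_transform_uq}). To be careful about convergence one checks that $v^{(q)}$ grows at most linearly (as $\hat u^{(q)}$ is bounded), so the transform is finite for $\re(z)>0$ and the interchange of integration order (Fubini) used to prove the primitive rule is legitimate. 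This yields (\ref{Laplace_vq}).

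\textbf{Step 2: the transform of $\hat u^{(q)}{}'$.} Here I would integrate by parts: for $\re(z)>0$,
\[
\int_0^\infty \hat u^{(q)}{}'(x)\E^{-zx}\,\D x = \Big[\hat u^{(q)}(x)\E^{-zx}\Big]_0^\infty + z\int_0^\infty \hat u^{(q)}(x)\E^{-zx}\,\D x = -\hat u^{(q)}(0^+) + zF^{(q)}(z).
\]
The boundary term at infinity vanishes because $\hat u^{(q)}$ is bounded and $\re(z)>0$; the boundary term at $0$ contributes $-\hat u^{(q)}(0^+)$. Finally I would identify $\hat u^{(q)}(0^+)$ using (\ref{eqn_Wq_uq}) evaluated in the limit $x\downarrow 0$: since $W^{(q)}(0^+)=1/\psi'(\Phi(q))-\hat u^{(q)}(0^+)$, we get $\hat u^{(q)}(0^+)=1/\psi'(\Phi(q))-W^{(q)}(0^+)$, so $-\hat u^{(q)}(0^+)=W^{(q)}(0^+)-1/\psi'(\Phi(q))$, which gives precisely (\ref{Laplace_uq_prime}).

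\textbf{Main obstacle.} The only real subtlety — and the step I expect to require the most care — is the justification of the integration by parts in Step 2 when $X$ has paths of bounded variation and $\Pi$ has atoms, since then $\hat u^{(q)}{}'$ fails to exist at finitely many points. Under Assumption 1 this is harmless: $\hat u^{(q)}$ is absolutely continuous on each subinterval between consecutive exceptional points (it is the difference of the smooth exponential term and the continuous function $W^{(q)}$, which is piecewise $C^1$ by Corollary \ref{corr-for-alexey}), so the integration by parts may be carried out piece by piece and the interior boundary contributions telescope and cancel because $\hat u^{(q)}$ itself is continuous. Alternatively, and perhaps more cleanly, one can avoid differentiation entirely by invoking (\ref{eqn_Wq_uq}) together with (\ref{eqn_Wq_prime_uq}): since $W^{(q)}{}'(x)=\Phi(q)\E^{\Phi(q)x}/\psi'(\Phi(q))-\hat u^{(q)}{}'(x)$ and both $W^{(q)}$ and $\hat u^{(q)}$ are known transforms, one reads off the transform of $\hat u^{(q)}{}'$ from the transform of $W^{(q)}{}'$, the latter obtained from (\ref{integratedbyparts}) (integrating the scale-function transform by parts) combined with the subtraction (\ref{def_Fqz}). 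Either route gives the stated formulas; I would present the direct integration-by-parts argument for transparency and remark on the alternative.
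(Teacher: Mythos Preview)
Your proposal is correct and follows exactly the approach indicated in the paper, which simply states that the result follows from Proposition~\ref{uq_laplace_transform} and integration by parts. You have merely filled in the details the paper omits: the primitive rule for $v^{(q)}$, the integration by parts for $\hat u^{(q)}{}'$ with boundary term identified via (\ref{eqn_Wq_uq}), and the careful handling of the finitely many non-differentiability points under Assumption~1.
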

\begin{proof}
 The proof follows from proposition \ref{uq_laplace_transform} and integration by parts.
\end{proof}

Now we have reduced all three problems to an equivalent ``standardized'' form. 
Equations (\ref{eqn_Wq_uq}), (\ref{eqn_Wq_prime_uq}), (\ref{eqn_Zq_vq}) and  Propositions (\ref{uq_laplace_transform}) and (\ref{prop_Laplace_uq_vq}) show that 
the problem of computing  of  $W^{(q)}(x)$, $W^{(q)}{}'(x)$ and $Z^{(q)}(x)$  is equivalent to a standard Laplace inversion problem. 
One has an explicit expression for the function $g(z)$, which is analytic in the half-plane $\re(z)>0$ and is equal to the Laplace transform of $f(x)$
\begin{equation}\label{eqn_f_g_Laplace}
  \int_0^{\infty} \E^{-zx} f(x) \d x= g(z), \;\;\; \re(z)>0,
\end{equation}
and one wants to compute the function $f(x)$ for several values of $x>0$. 
From now on we will concentrate on solving this problem.

This Laplace inversion problem has been well-studied and it has generated an enormous amount of literature. We will just mention here an excellent textbook
by Cohen \cite{Cohen}, very helpful reviews and research articles by Abate, Choudhurry, Whitt and Valko \cite{AbateWhitt2006, AbateChWhitt1999, AbateValko2006}
and works of Filon \cite{Filon}, Bailey and Swarztrauber \cite{Bailey_FFT} and Iserles \cite{Iserles}.

 This chapter is organized as follows. In Sections \ref{sec_Filon} and \ref{sec_mp_methods} we present four general numerical methods for computing $f(x)$. The first approach starts with the Bromwich integral and is based on 
Filon's method coupled with fractional discrete Fourier transform and Fast Fourier Transform techniques. The next three
methods, i.e. Gaver-Stehfest, Euler and Talbot algorithms, also start with the Bromwich integral, but the discretization of this integral is done in a 
different way, and in every case (except for Talbot method) some  acceleration procedure is applied. 
These three methods typically require multi-precision arithmetic. 
In Sections \ref{sec_rational} and \ref{sec_meromorphic} we present two families of processes for which the scale function can be computed explicitly, 
as a finite sum or an infinite series, which involve exponential functions, derivative of the Laplace exponent $\psi(z)$ and the solutions to equation $\psi(z)=q$. In this case computing the scale function can be done in an extremely efficient and accurate way, and later we will use these processes as benchmarks to test the 
performance of the four general methods. In Section \ref{sec_numerical} we discuss the results of several
numerical experiments and in Section \ref{sec_conclusion} we present our conclusions and provide some recommendations on how to choose the right numerical algorithm.

\section{Filon's method and fractional fast Fourier transform}\label{sec_Filon}

The starting point for Filon's method is an expression for $f(x)$ which identifies it as a Bromwich integral. That is,
\begin{equation}\label{uqx_bromwich_1}
f(x) = \frac{1}{2\pi \i } \int\limits_{c+\i \r} g(z) \E^{zx} \d z, \;\;\; x > 0,
\end{equation}
where $c$ is an arbitrary positive constant.  
 Since we are only interested in $f(x)$ for positive values of $x$, e	equation (\ref{uqx_bromwich_1}) can be written in
terms of the cosine transform as follows,
\begin{equation}\label{eq_cosine_transform}
 f(x)=\frac{2\E^{cx}}{\pi } \int_0^{\infty} \re\left[ g(c+\i u) \right] \cos(ux) \d u, \;\;\; x\ge 0.
\end{equation}
Now our plan is to compute the above integral numerically. 

The integral in (\ref{eq_cosine_transform}) is an {\it oscillatory integral}, which means that the integrand is an oscillating function.
Evaluating oscillatory integrals is not as straightforward as it may seem, and one should be careful to choose the right numerical method. For example, in 
 (\ref{eq_cosine_transform}), the cosine function can cause problems. When $x$ is large it oscillates rapidly with the period $2\pi /x$. 
Therefore if we simply 
discretize this integral using the trapezoid or Simpson's rule, we have to make sure that the spacing of the 
discretization $h$ satisfies  $h\ll 2\pi/x$. When $x$ is large, this restriction forces us to take $h$ to be a very small number, 
therefore we need a huge number of discretization points and the algorithm becomes slow and inefficient. 
The main benefit of Filon's method is that it helps one to avoid all these problems. 

Filon's method applies to computing oscillatory integrals over a finite interval $[a,b]$ of the form 
\begin{equation}\label{def_Fcfy}
  {\mathcal F}_c G (x)=\int_a^b G(u) \cos(ux) \d u.
\end{equation}
Here ${\mathcal F}_c G$ stands for {\it cosine transform} of the function $G$. In order to describe the intuition behind Filon's method, let us revisit Simpson's rule. It is well known that Simpson's rule for the integral in (\ref{def_Fcfy}) can be obtained in the following way. 
We discretize the interval $[a,b]$, approximate function $u \mapsto G(u) \cos(ux)$ by the second order Lagrange interpolating polynomials in the $u$-variable on each subinterval, and then integrate this approximation over $[a,b]$. Filon's method goes along the same steps,
except for one crucial difference. The function $G(u)$ is approximated by second order Lagrange interpolating polynomials,  which is then multiplied by $\cos(ux)$ and integrated over $[a,b]$. Due to the fact that the product of trigonometric functions and polynomials can be integrated explicitly, we still have an explicit formula. In doing this we separate the effects of oscillation and approximation. The approximation for the function $G(u)$ is usually  quite smooth and does not change very fast. Then any effect of oscillation disappears since we compute the integral against $\cos(ux)$ explicitly.

Let us describe this algorithm in full detail. Our main references are \cite{Filon}, \cite{Fosdick} and \cite{Iserles}. We take $N$ to be an integer number and define $h=(b-a)/(2N)$ and $u_n=a+nh$, $0\le n \le 2N$. Then we 
denote $g_{n}=G(u_n)$, $0\le n \le 2N$ and introduce the vectors ${\bf u}=[u_0,u_1,\dots,u_{2N}]$ and 
${\bf g}=[g_0,g_1,\dots,g_{2N}]$. For $k \in \{1,2\}$ we define
\begin{equation}\label{def_cfxy}
 {\mathcal C}_k({\bf g},{\bf u},x)=\sum\limits_{n=0}^{N-1} g_{2n+k} \cos(x u_{2n+k}).
\end{equation}
Next, on each subinterval $[u_{2n}, u_{2n+2}]$, 
$0\le n < N$ we approximate $G(u)$ by a Lagrange polynomial of degree two, which gives us a composite approximation of the following form
\begin{eqnarray*}
 G(u;N)&=& \sum\limits_{n=0}^{N-1} {\mathbf 1}_{\{u_{2n} \le u < u_{2n+2}\}}
 \bigg[ g_{2n+1} + \frac{1}{2h} (g_{2n+2}-g_{2n}) (u-u_{2n+1}) \\ &&\qquad \qquad + \frac{1}{2h^2} (g_{2n+2}-2g_{2n+1} + g_{2n} ) (u-u_{2n+1})^2 \bigg].
\end{eqnarray*}
Multiplying  this function by $\cos(ux)$, integrating over the interval $[a,b]$ and simplifying the resulting expression we obtain
the final expression for Filon's method,
\begin{eqnarray}\label{Filons_method}
{\mathcal F}_c G(x;N)&=&\int\limits_a^b G(u;N) \cos(ux) \d u \\ \nonumber 
 &=& h A(hx) (G(b) \sin(b x)-G(a) \sin(a x)) \\ \nonumber
  &+&
h B(hx) \left[ {\mathcal C}_2({\bf g},{\bf u},x) -\frac12 (G(b) \cos(bx) - G(a) \cos(ax)) \right] \\ 
\nonumber &+& h C(hx)  {\mathcal C}_1({\bf g},{\bf u},x),
\end{eqnarray}
where 
\begin{eqnarray}
 A(\theta)&=&\frac{1}{\theta} + \frac{\sin(2\theta)}{2\theta^2}-\frac{2\sin(\theta)^2}{\theta^3}, \\
 B(\theta)&=&2\left[ \frac{1+\cos(\theta)^2}{\theta^2}-\frac{\sin(2\theta)}{\theta^3} \right], \\
 C(\theta)&=& 4 \left[ \frac{\sin(\theta)}{\theta^3}-\frac{\cos(\theta)}{\theta^2} \right].
\end{eqnarray}
Note that by construction,  Filon's approximation
is exact for polynomials of degree two or less. It is also known that the error of Filon's approximation is $O(h^3)$, provided that $G^{(3)}(u)$ is continuous,
see \cite{Fosdick}.

We see that in order to evaluate Filon's approximation (\ref{Filons_method}) we have to compute two finite sums 
${\mathcal C}_k({\bf g},{\bf u},x)$ defined by (\ref{def_cfxy}). If we want to compute $ {\mathcal F}_c G (x;N)$ for just a single value of $x$ then
this obviously requires $O(N)$ operations. By the same reasoning, if we want to compute
$ {\mathcal F}_c G (x;N)$ for $N$ equally spaced points ${\bf x}=[x_0,x_1,...,x_{N-1}]$, then we would have to perform $O(N^2)$ operations. 
However, the special structure of these sums makes it possible to perform the latter computations in just $O(N\ln(N))$ computations. The main idea 
 is to use {\it fast Fourier transform} (FFT). This works as follows. Assume that we want 
to compute $ {\mathcal F}_c G (x;N)$ for $N_x=N$ values $x_m=x_0+m \delta_x$, $0\le m < N$. We rewrite the finite sums in (\ref{Filons_method}) in the following form
\begin{equation}\label{formula_FFT}
{\mathcal C}_k({\bf g},{\bf u},x_m)=\re\left[ \E^{\i (a+kh) x_m} \sum\limits_{n=0}^{N-1} g_{k,n}  \E^{ \i (2h \delta_x) n m } \right],
\end{equation}
where we have defined $g_{k,n}=g_{2n+k}  \E^{ \i 2h n x_0}$. Then, assuming that parameters $h$ and $\delta_x$ satisfy 
\begin{equation}\label{h_delta_x_restriction}
h\delta_x=\frac{\pi}{N},
\end{equation}
 the expression in the right-hand side of (\ref{formula_FFT}) is exactly in the form of the discrete Fourier transform (see \cite{Bailey_FFT}), and it is well-known
that it can be evaluated {\it for all } $m=0,1,..,N-1$  in just $O(N\ln(N))$ using the fast Fourier transform technique. 

The restriction (\ref{h_delta_x_restriction}) is quite unpleasant as it does not allow us to choose the spacing between the discretization points in the $u$-domain in 
(\ref{def_Fcfy}) independently of the spacing in the $x$-domain. However there is an easy solution to this problem, namely the fractional discrete Fourier transform (see \cite{Bailey_FFT}), which is defined as a linear transformation, which maps a vector ${\mathbf v}=[v_0,v_1,\dots,v_{N-1}]$ into a vector ${\mathbf V}=[V_0,V_1,\dots,V_{N-1}]$
\begin{equation}\label{dfrac}
 V_m=\sum\limits_{n=0}^{N-1} v_{n}  \E^{ \i \alpha n m }, \;\;\; m=0,1,\dots,N-1.
\end{equation}
It turns out that for any $\alpha \in \r$ one can still compute the values of $V_m$ for $m=0,1,..,N-1$ in just $O(N\ln(N))$ operations, see \cite{Bailey_FFT}
 for all the details.

Let us summarize the main steps of algorithm. 
 First, choose a small number $c>0$, so that the factor $\E^{cx}$ is not too large for the values of $x$ that interest us. 
 Then set $a=0$ and choose the value of the cutoff, i.e. a large number $b>0$ such that the integral 
\begin{equation*}
\int_b^{\infty} \re\left[ g(c+\i u) \right] \cos(ux) \d u
\end{equation*}
is sufficiently small. Then, choose the number of discretization points $N$ in the $u$-domain and define $u_n=nb/(2N)$, $0\le n \le 2N$ and 
$g_n=\re\left[ g(c+\i u_n) \right]$. Choose $\delta_x$ and $x_0$ and compute the approximation (\ref{Filons_method}) using the fractional Fast Fourier Transform. 
This gives us $N$ values of  $f(x_0+m \delta_x)$ for $0\le m < N$, with the error bound $O(N^{-3})$, at the computational cost of $O(N\ln(N))$ operations.

There is another trick that might be very useful when implementing Filon's method.  In many examples the integrand $\re\left[ g(c+\i u) \right]$ in
(\ref{eq_cosine_transform}) changes quite rapidly when $u$ is small while it changes slowly for large values of $u$. This means that we would have better precision and would need fewer discretization points if we were able to place more of them near $u=0$ and fewer of them for large $u$. This can be easily achieved by dividing the domain of integration
\begin{equation}\label{subdividing_domain}
\int_0^{b} \re\left[ g(c+\i u) \right] \cos(ux) \d u= \sum \limits_{j=0}^{n-1} \int_{b_j}^{b_{j+1}} \re\left[ g(c+\i u) \right] \cos(ux) \d u,
\end{equation}
where $0=b_0<b_1<\dots<b_n=b$. Each integral over $[b_j,b_{j+1}]$ can be evaluated using Filon's method to produce results on the same grid of the $x$-variable.
Choosing $b_j$ so that the spacing $b_{j+1} - b_j$ increases allows us to concentrate more points where they are needed (near $u=0$) and fewer points
in the regions far away from $u=0$.

\section{Methods requiring multi-precision arithmetic}\label{sec_mp_methods}

The methods presented in this section can give excellent performance, but the price that one has to pay is that they all require multi-precision arithmetic.
Our main references for this section are \cite{AbateWhitt2006}, \cite{AbateValko2006}, \cite{AbateChWhitt1999} and \cite{Cohen}. These methods are grouped together since they all give a similar expression for the approximation
to $f(x)$ 
\begin{equation*}
 f(x) \sim \frac{1}{x} \sum\limits_{n=0}^M a_n g\left(\frac{b_n}{x} \right),
\end{equation*}
where the coefficients $a_n$ and $b_n$ depend only on $M$ and do not depend on functions $f$ and $g$.

\subsection{The Gaver-Stehfest algorithm}\label{subsec_Gaver}
The first method that we discuss is the Gaver-Stehfest algorithm, see \cite{AbateWhitt2006} and Section 7.2 in \cite{Cohen}. 
This algorithm is based on the Gaver's approximation \cite{Gaver}, which can be considered as a discrete analogue of the Post-Widder formula (see Section 2.3 in \cite{Cohen})
\begin{equation*}
f(x)=\lim\limits_{k\to \infty} \frac{(-1)^k}{k!} \left(\frac{k}{x}\right)^{k+1} g^{(k)}\left( \frac{k}{t} \right).
\end{equation*}
It turns out that both Gaver's and Post-Widder formulas have a very slow convergence rate, therefore one has to apply some acceleration algorithm, 
such as Salzer transformation for the Gaver's formula, which was proposed by Stehfest \cite{Stehfest}, or Richardson extrapolation 
for Post-Widder formula which was used by Veillette and Taqqu \cite{Veilette_Taqqu}. 

We refer to \cite{AbateWhitt2006} for all the details and background on the Gaver-Stehfest algorithm, here we just present the final expression.
The function $f(x)$ is approximated by $f^{GS}(x;M)$, which depends on a single integer parameter $M$ and is defined as
\begin{equation}\label{Gaver_method}
f^{GS}(x;M)=\frac{\ln(2)}{x} \sum\limits_{n=1}^{2M} a_n g\left( n \ln(2)x^{-1} \right),
\end{equation}
where the coefficients $a_n$ are given by
\begin{equation*}
 a_n=(-1)^{M+n} \sum\limits_{j=[(n+1)/2]}^{n \wedge M} \frac{j^{M+1} }{ M!} \binom{M}{j} \binom{2j}{j} \binom {j}{n-j}.  
\end{equation*}
Note that the coefficients $a_n$ are defined as finite sums of possibly very large numbers, and that these coefficients have alternating signs. 
This means that we will lose accuracy in (\ref{Gaver_method}) due to subtracting very large numbers, thus we have to use multi-precision arithmetic
to avoid this problem. 
Abate and Valko \cite{AbateValko2006} (see also \cite{AbateWhitt2006}) recommend the following ``rule of thumb''. If we want $j$ significant digits in our approximation, we should set $M=\lceil 1.1 j \rceil$ (the least integer greater than or equal to $1.1j$)  and set the system precision at $\lceil 2.2M \rceil$.  It is also useful to check the accuracy of computation of $a_n$ using the fact that
\begin{equation*}
 \sum\limits_{n=1}^{2M} a_n=0.
\end{equation*}

We see that Gaver-Stehfest algorithm should have efficiency around $0.9/2.2 \sim 0.4$, which is the ratio of the system precision to the 
number of significant digits produced by 
the approximation. While algorithms presented below all have higher efficiency, the Gaver-Stehfest algorithm has a big advantage that it does not require the use
of complex numbers. This can improve performance, since it is faster to perform computations with real numbers than with complex numbers. This feature is also useful, because not every multi-precision software has a built-in support for complex numbers.

\subsection{The Euler algorithm}\label{sec_Euler}

The next two algorithms are based on the Bromwich integral representation, which follows from (\ref{uqx_bromwich_1}) after the change of variables of 
integration $u \mapsto v/x$ 
\begin{equation}\label{uq_Bromwich_2}
 f(x)=\frac{\E^{c}}{2\pi x} \int\limits_{\r} g \left(\frac{c+\i v}{x} \right) \E^{\i v} \d v, \;\;\; x\in \r.
\end{equation}
Note that we have also changed $c \mapsto c/x$. 
The main idea behind Euler algorithm is to approximate the integral in (\ref{uq_Bromwich_2}) by a trapezoid rule and then apply Euler acceleration method 
to improve the convergence rate. Again, all the details can be found in  \cite{AbateChWhitt1999} and \cite{AbateWhitt2006}. We present here only 
the final form of this approximation 
\begin{equation}\label{Euler_method}
f^{E}(x;M)=\frac{10^{\frac{M}{3}}}{x} \sum\limits_{n=0}^{2M} (-1)^n a_n \re \left[  g\left( 
\left(\ln\left(10^\frac{M}{3}\right)+\pi \i n \right) x^{-1} \right) \right],
\end{equation}
where the coefficients $a_n$ are defined as
\begin{eqnarray*}
&& a_0=\frac{1}{2}, \;\;\; a_n=1, \; {\textnormal{ for }} \; 1\le n \le M, \;\;\; a_{2M}=2^{-M}, \\
 && a_{2M-k}=a_{2M-k+1}+2^{-M} \binom{M}{n}, \; {\textnormal{ for }} \; 1 \le  n < M.
\end{eqnarray*}
Note that while coefficients $a_n$ are real, formula (\ref{Euler_method}) still requires evaluation of $g(z)$ for complex values of $z$. 

In the case of Euler algorithm, Abate and Whitt \cite{AbateWhitt2006} recommend to set $M=\lceil 1.7 j \rceil$ if $j$ significant digits of  are required, and 
then set the system precision at $M$. Coefficients $a_n$ can be precomputed, and the accuracy can be verified via condition
\begin{equation*}
 \sum\limits_{n=0}^{2M} (-1)^n a_n=0.
\end{equation*}
We see that the efficiency of the Euler algorithm should be around $1/1.7 \sim 0.6$. 

\subsection{The fixed Talbot algorithm}\label{sec_Talbot}

The Talbot algorithm \cite{Talbot} also starts with integral representation (\ref{uq_Bromwich_2}), but then the contour of integration is transformed so that $\re(z) \to -\infty$ on this contour. Note that $\re(z)$ is constant on the contour of integration in the Bromwich integral (\ref{uq_Bromwich_2}). 
This transformation of the contour of integration has a great benefit in that the integrand $g(z) \exp(zx)$ converges to zero much faster. On the negative side, this method 
only works when (i) $g(z)$ can be analytically continued into the domain $|{\textnormal{arg}}(z)|<\pi$, and  (ii)  $g(z)$ has no singularities far away from the negative half-line. 
In our case these two conditions are satisfied for processes whose jumps have completely monotone density, as in this case all of the singularities
of $F^{(q)}(z)$ lie on the negative half-line. Meromorphic L\'evy  processes, presented in Section \ref{sec_meromorphic}, exhibit this property for example. It is also worthy of note for future reference that such processes are dense in the class of 
processes with completely monotone jumps. As we will see later, Talbot method gives an excellent performance for processes with completely monotone jumps.

Again, we refer to \cite{AbateWhitt2006}, \cite{AbateValko2006} and \cite{Cohen} for all the details, and present here only the final form of the 
approximation
\begin{equation}\label{Talbot_method}
f^{T}(x;M)=\frac{1}{x} \sum\limits_{n=0}^{M-1} \im \left[ a_n 
g\left( b_n x^{-1} \right) \right],
\end{equation}
where 
\begin{equation*}
b_0=\frac{2M}{5}, \;\;\; b_n=\frac{2 \pi n}{5} \left( \cot\left(\frac{\pi n}{M} \right)+\i\right),  \; {\textnormal{ for }} \; 1 \le  n < M,
\end{equation*}
and 
\begin{equation*}
a_0=\frac{\i}{5} \E^{b_0}, \;\;\; a_n= \left(b_n- \frac{5}{2M} |b_n|^2\right) \frac{\E^{b_n}}{n \pi},  \; {\textnormal{ for }} \; 1 \le  n < M.
\end{equation*}
For this algorithm, Abate and Valko \cite{AbateValko2006} (see also \cite{AbateWhitt2006}) recommend using 
the same precision parameters as for the Euler algorithm:  one should set $M=\lceil 1.7 j \rceil$ if $j$ significant digits of  are required, and 
then set the system precision at $M$. The efficiency of this algorithm should also be close to $0.6$.

\section{Processes with jumps of rational transform}\label{sec_rational}

It is well known that the Wiener-Hopf factorization and many related fluctuation identities can be obtained in closed form 
for processes with jumps of rational transform. The Wiener-Hopf factorization for the two-sided processes having positive and/or negative phase-type jumps  
(a subclass of jumps of rational transform) was studied by
Mordecki \cite{Mordecki2},  Asmussen, Avram and Pistorius \cite{Asmussen}  and Pistorius \cite{Pistorius_2006}, while Wiener-Hopf factorization for a more general class of processes having positive jumps of rational transform was obtained by Lewis and Mordecki \cite{Mordecki}. 
Expressions for the scale functions for spectrally-negative processes with jumps of rational transform follow implicitly from these papers, 
their explicit form was obtained in a recent paper by Egami and Yamazaki \cite{Egami}. 

In order to specify these processes, let us define the density of the L\'evy measure as follows
\begin{equation}\label{def_pi_rational}
\pi(x)={\mathbf 1}_{\{x<0\}} \sum\limits_{j=1}^m a_j |x|^{m_j-1} \E^{\rho_j x},
\end{equation}
where $m_j \in {\mathbb N}$ and $\re(\rho_j)>0$. It is easy to see that the Laplace exponent is given by
\begin{equation}\label{rational_Laplace_exponent}
 \psi(z)=\frac{\sigma^2}2 z^2 + \mu z + \sum\limits_{j=1}^m a_j (m_j-1)! \left[ (\rho_j+z)^{-m_j}- \rho_j^{-m_j} \right]
\end{equation}
and that $\psi(z)$ is a rational function
\begin{equation}\label{eqn_psi_rational}
 \psi(z) = \frac{P(z)}{Q(z)},
\end{equation}
where ${\textnormal {deg}}(Q)=M=\sum_{j=1}^m m_j$ and ${\textnormal {deg}}(P)=N$, where $N=M+2$ (resp. $M+1$) if $\sigma>0$ (resp. $\sigma=0$). The next proposition
gives us valuable information about solutions of the equation $\psi(z)=q$.
\begin{proposition}\label{prop_roots_rational}
\indent
\begin{description}[(iii)]
 \item[(i)]   For $q > 0$ or $q=0$ and $\psi'(0)<0$ equation $\psi(z)=q$ has one solution $z=\Phi(q)$ in the half-plane $\re(z) > 0$ and $N-1$ solutions in
the half-plane $\re(z)<0$. 
 \item[(ii)]  For $q=0$ and $\psi'(0)>0$ (resp. $\psi'(0)=0$) equation $\psi(z)=q$ has a solution $z=0$ of multiplicity one (resp. two)
and $N-1$ (resp. $N-2$) solutions in the half-plane $\re(z)<0$. 
 \item[(iii)]  There exist at most $M+N-1$ complex numbers $q$ such that the equation $\psi(z)=q$ has solutions of multiplicity greater than one.
\end{description}
\end{proposition}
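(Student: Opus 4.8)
The plan is to read $\psi(z)=q$ as the polynomial equation $P(z)-qQ(z)=0$. Since $\deg P=N>M=\deg Q$, this polynomial has degree exactly $N$, hence $N$ roots counted with multiplicity, and every pole of $\psi$ (a root of $Q$) lies in $\{\re(z)<0\}$. It therefore suffices to locate these $N$ roots, and for (i)--(ii) the essential inputs are the Wiener--Hopf factorization together with a sign computation of $\re(\psi(\i u)-q)$ on the imaginary axis. Part (iii) is of a different, purely algebraic nature.

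Consider first $q>0$. From the proof of Corollary~\ref{infexp} (equivalently, rearranging (\ref{WHdown})) one has, for $\beta\geq 0$,
\[
\frac{\beta-\Phi(q)}{\psi(\beta)-q}=\frac{\Phi(q)}{q}\,E\bigl(\E^{\beta\underline{X}_{\mathbf{e}_q}}\bigr),
\]
the value at $\beta=\Phi(q)$ being the limit $1/\psi'(\Phi(q))$. Since $\underline{X}_{\mathbf{e}_q}\leq 0$, the right-hand side is holomorphic and bounded by $\Phi(q)/q$ on $\{\re(z)\geq 0\}$; agreeing with the left-hand side on $[0,\infty)$, it forces the left-hand side to extend holomorphically across $\{\re(z)>0\}$, so $\psi(\cdot)-q$ has no zero there other than $z=\Phi(q)$ (which is cancelled by the numerator), and that zero is simple because $\psi'(\Phi(q))>0$. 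On the imaginary axis, using (\ref{laplaceLK}), $\re\bigl(\psi(\i u)-q\bigr)=-\tfrac{\sigma^2}{2}u^2-\int_{(-\infty,0)}(1-\cos(ux))\,\Pi(\D x)-q\leq -q<0$, so $\psi(\cdot)-q$ has no zero on $\{\re(z)=0\}$ either. Hence exactly one of the $N$ roots, namely $z=\Phi(q)$, has $\re(z)\geq 0$, and the remaining $N-1$ have $\re(z)<0$.

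For $q=0$ I would use the spatial factorization (\ref{WHfact}), $\psi(z)=(z-\Phi(0))\phi(z)$, where $\phi(z)=\kappa+\tfrac{\sigma^2}{2}z+\int_{(0,\infty)}(1-\E^{-zx})\,\Upsilon(\D x)$ with $\kappa=\psi'(0+)\vee 0$ is the Laplace exponent of the descending ladder height process. Since $P(\Phi(0))=\psi(\Phi(0))Q(\Phi(0))=0$ we may write $P(z)=(z-\Phi(0))\widetilde P(z)$ with $\deg\widetilde P=N-1$, so $\phi=\widetilde P/Q$ has exactly $N-1$ zeros. For $\re(z)\geq 0$ and $x>0$ one has $\re(1-\E^{-zx})\geq 1-\E^{-x\re(z)}\geq 0$, hence $\re\phi(z)\geq\kappa+\tfrac{\sigma^2}{2}\re(z)\geq 0$, and this is strictly positive when $\re(z)>0$ (since $\kappa>0$, or $\sigma\neq 0$, or $\Upsilon\not\equiv 0$ — at least one holds); so $\phi$ has no zero in the open right half-plane, and inspection of $\im\phi(\i u)$ (using the positive drift $\delta$ when $X$ has bounded variation, and the fact that unbounded variation forces $\Pi$ to be non-lattice otherwise) rules out zeros on $\{\re(z)=0\}$ apart from $z=0$, which occurs precisely when $\kappa=0$, i.e. $\psi'(0+)\leq 0$. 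Distributing the $N-1$ zeros of $\phi$ together with the factor $z-\Phi(0)$ then yields the three sub-cases: if $\psi'(0+)>0$ then $\Phi(0)=0$ and $\phi(0)=\kappa>0$, giving a simple root at $0$ and $N-1$ zeros of $\phi$ in $\{\re(z)<0\}$; if $\psi'(0+)=0$ then $\Phi(0)=0$ and $\phi(0)=0$ with $\phi'(0)>0$, so $\psi=z\phi$ has a root of multiplicity two at $0$ and the remaining $N-2$ zeros of $\phi$ lie in $\{\re(z)<0\}$; and if $\psi'(0+)<0$ then $\Phi(0)>0$, $\phi(0)=0$ while $\phi(\Phi(0))=\psi'(\Phi(0))>0$, so $\psi$ has the simple root $z=\Phi(0)$ with $\re(z)>0$, a simple root at $z=0$, and $N-2$ roots with $\re(z)<0$ — which is (i) provided the root at the origin is counted among those of non-positive real part. (Alternatively, all of (i)--(ii) can be obtained by letting $q\downarrow 0$ in the picture above and using the Taylor expansion of $\psi$ at $0$ to see which roots cross the imaginary axis at the origin.)

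For (iii), note that $z_0$ is a root of $\psi(z)=q$ of multiplicity at least two if and only if $\psi(z_0)=q$ and $\psi'(z_0)=0$; hence the set of $q$ in question is exactly the set of critical values $\{\psi(z):\psi'(z)=0,\ z\ \text{not a pole of }\psi\}$. Writing $\psi'=(P'Q-PQ')/Q^2$, the numerator $P'Q-PQ'$ has leading term $(N-M)p_Nq_Mz^{N+M-1}$, where $p_N$ and $q_M$ are the leading coefficients of $P$ and $Q$; this is nonzero because $N\neq M$, so $P'Q-PQ'$ has degree exactly $N+M-1$ and hence at most $N+M-1$ distinct roots. Therefore $\psi$ has at most $N+M-1$ critical points, and so at most $N+M-1$ distinct critical values, which is the assertion. (One could in fact do slightly better, since each $-\rho_j$ is a root of $P'Q-PQ'$ of order $m_j-1$ but not a critical point of $\psi$; this refinement is not needed.) The step I expect to require the most care is the analysis in (i)--(ii): making the meromorphic-continuation argument for $q>0$ rigorous, and, for $q=0$, correctly treating the degenerate situations on the imaginary axis and bookkeeping the root sitting at the origin; part (iii) is essentially a degree count.
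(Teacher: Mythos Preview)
Your argument for (iii) is exactly the paper's: multiple roots force $\psi'(z_0)=0$, hence $P'Q-PQ'$ vanishes at $z_0$, and this polynomial has degree $M+N-1$, so there are at most $M+N-1$ critical points and hence at most $M+N-1$ critical values. You add the computation of the leading coefficient $(N-M)p_Nq_M\neq 0$ to certify the degree, which the paper omits (and in fact the paper's version contains the typo $P'Q-PQ$ in place of $P'Q-PQ'$). So on (iii) there is nothing to compare.

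For (i)--(ii) the paper simply declares the result ``trivial'' from the rational form (\ref{eqn_psi_rational}) and ``the general theory of scale functions'', without further detail. Your route---analytic continuation of $(\beta-\Phi(q))/(\psi(\beta)-q)$ via (\ref{WHdown}) for $q>0$, together with the sign of $\re\psi(\i u)$ on the imaginary axis, and the spatial factorization $\psi(z)=(z-\Phi(0))\phi(z)$ for $q=0$---is a perfectly good way to make that ``general theory'' explicit, and everything you write is correct. The one point you flag is genuine: when $q=0$ and $\psi'(0)<0$, the root $z=0$ sits on the imaginary axis, so the statement ``$N-1$ solutions in $\re(z)<0$'' is slightly imprecise (there are $N-2$ with $\re(z)<0$ together with the simple root at $0$); this is an inaccuracy in the proposition's wording, not in your argument. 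Your imaginary-axis analysis for $\phi$ when $\kappa=0$ is a bit sketchy as written, but in the rational setting $\Pi$ has a density by (\ref{def_pi_rational}), so $\Upsilon$ is non-lattice and $\re\phi(\i u)=\int_{(0,\infty)}(1-\cos(ux))\,\Upsilon(\D x)>0$ for $u\neq 0$, which completes that step cleanly.
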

\begin{proof}
 The proof of (i) and (ii) is trivial and follows from (\ref{eqn_psi_rational}) and the general theory of scale functions. 
Let us prove (iii). Assume that the equation $\psi(z)=q$ has a solution $z=z_0$
of multiplicity greater than one, therefore $\psi'(z_0)=0$. Using (\ref{eqn_psi_rational}) we find that $\psi'(z_0)=0$ implies $P'(z_0)Q(z_0)-P(z_0)Q(z_0)=0$. 
The polynomial $H(z)=P'(z)Q(z)-P(z)Q(z)$ has degree $M+N-1$, thus there exist at most $M+N-1$ distinct points $z_k$ for which $\psi'(z_k)=0$, 
which implies that there exist at most $M+N-1$ values $q$, given by $q_k=\psi(z_k)$, for which the equation $\psi(z)=q$ has solutions of multiplicity greater than one. 
\end{proof}

The statement in Proposition \ref{prop_roots_rational} (iii) is quite important for numerical calculations. While it's proof is quite elementary, we were not able to locate this result in the existing literature. The implication of this result is that for a generic 
L\'evy measure defined by (\ref{def_pi_rational}) and general $q\ge 0$ it is extremely unlikely that equation $\psi(z)=q$ has solutions
of multiplicity greater than one.
So for all practical purposes (unless we are dealing with the case $\psi'(0)=q=0$) we can assume that all the solutions of $\psi(z)=q$ have multiplicity equal to one,
as we will see later this will considerably simplify all the formulas. 

Computing the scale function, or equivalently, the potential density $\hat u^{(q)}(x)$ is a trivial task for this class of processes. Let us consider the general case, and assume that equation $\psi(z)=q$ has $n$ distinct solutions $-\zeta_1,-\zeta_2,...,-\zeta_n$ in the half-plane $\re(z)<0$, and the multiplicity of 
 $z=-\zeta_j$ is equal to $n_j$.  Due to (\ref{eqn_psi_rational}) it is clear that $N-1=\sum_{j=1}^{n} n_j$. Rewriting the rational function $1/(\psi(z)-q)$ as partial fractions
\begin{equation}\label{eqn_partial_fractions}
\frac{1}{\psi(z)-q}=\frac{1}{\psi'(\Phi(q))(z-\Phi(q))}+\sum\limits_{j=1}^n \sum\limits_{k=1}^{n_j} \frac{c_{j,k}}{(z+\zeta_j)^k},
\end{equation}
and using Proposition \ref{uq_laplace_transform} we can identify $\hat u^{(q)}(x)$ as follows
 \begin{equation}\label{uqx_rational}
 \hat u^{(q)}(x)=-\sum\limits_{j=1}^n \E^{-\zeta_j x} \sum\limits_{k=1}^{n_j} \frac{c_{j,k} }{(k-1)!} x^{k-1}, \;\;\; x>0.
 \end{equation}

Note, that if all the solutions $\zeta_i$ have multiplicity one, i.e. $n_j=1$ and $n+1=N={\textnormal{deg}}(P)$, then (\ref{eqn_partial_fractions}) implies that
\begin{equation*}
 c_{j,1}=\frac{1}{\psi'(-\zeta_j)},
\end{equation*}
and we have a much simpler expression for the potential density
\begin{equation}\label{uqx_rational_multiplicity1}
  \hat u^{(q)}(x)=-\sum\limits_{j=1}^{N-1} \frac{\E^{-\zeta_j x}}{\psi'(-\zeta_j)}, \;\;\; x \ge 0.
\end{equation}
We would like to stress again that  Proposition \ref{prop_roots_rational} (iii) tells us that $\hat u^{(q)}(x)$ can be computed with the help of (\ref{uqx_rational_multiplicity1}) for all but a finite number of $q$, unless we are dealing with the case $\psi'(0)=q=0$.

\section{Meromorphic L\'evy processes}\label{sec_meromorphic}

The main advantage of processes with jumps of rational transform is that the numerical computations are very simple and straightforward. Everything
boils down to solving a polynomial equation $\psi(z)=q$ and performing a partial fraction decomposition of a rational function $1/(\psi(z)-q)$. 
On the negative side, it is clear that these processes can only have compound Poisson jumps, 
while  in applications it is often necessary to have processes with jumps of infinite activity or even of infinite variation. 
Meromorphic L\'evy processes, which were recently introduced  in \cite{meromorphic}, solve precisely this problem.  They allow for much more flexible modeling of the small jump behavior, yet all the computations can be done with the same efficiency as for processes with jumps of rational transform. 

In order to define a spectrally-negative Meromorphic process $X$, let us consider the function 
\begin{equation}\label{def_Levy_measure}
\pi(x)={\mathbf 1}_{\{x<0\}}\sum\limits_{j=1}^{\infty} a_j \E^{\rho_j x},
\end{equation}
where the coefficients $a_j$ and $\rho_j$ are positive and $\rho_j$ increase to $+\infty$ as $j \to +\infty$.  
As was shown in \cite{Kuz_theta}, convergence of the series 
\begin{equation*}
 \sum_{j\ge 1 } \frac{ a_j }{ \rho_j^{3}}<\infty
\end{equation*}
implies convergence of the integral
\begin{equation*}
 \int_{-\infty}^0 x^2 \pi (x) \d x < \infty,
\end{equation*}
thus $\pi(x)$ can be used to define the density of the L\'evy measure. 
Note that by constuction $\pi(-x)$ is a completely monotone function.

Using the L\'evy-Khintchine formula we find that the Laplace exponent is given by 
 \begin{equation}\label{eqn_psi_partial_fractions}
 \psi(z)=\frac 12 \sigma^2 z^2 + \mu z+z^2 \sum\limits_{j\ge 1} \frac{ a_j}{\rho_j^2 ( \rho_j+z)}
, \;\;\; z\in \c,
 \end{equation}
and we see that $\psi(z)$ is a meromorpic function which has only negative poles at points $z=-\rho_j$.  From the general theory we know that for $q\ge 0$ 
equation $\psi(z)=q$
has a unique solution $z=\Phi(q)$ in the half-plane $\re(z)>0$, and we also know that this solution is real.  
It can be proven (see \cite{Kuz_theta}) that the same is true for equation $\psi(-z)=q$. For $q\ge 0$ all the solutions $z=\zeta_j$ of $\psi(-z)=q$ in the halfplane $\re(z) \ge  0$ are real and they satisfy the interlacing property  
 \begin{equation}\label{interlacing_property}
  0\le \zeta_1<\rho_1 < \zeta_2 < \rho_2 < \dots.
 \end{equation}
When $q=0$ and $\psi'(0)=\e[X_1]\le 0$ we have $\zeta_1=0$, otherwise $\zeta_1>0$.

The following proposition gives an explicit formula for the potential density, generalizing (\ref{uqx_rational_multiplicity1}), which is expressed in terms of the roots $\zeta_j$ and the first derivative of the Laplace exponent. 
 \begin{proposition}\label{prop_meromorphic}

\indent
 \begin{description}[(iii)]
  \item[(i)] If $X$ is Meromorphic, then for all $q>0$ function $\hat u^{(q)}(x)$ is an infinite mixture of exponential functions with positive coefficients 
  \begin{equation}\label{uqx_meromorphic}
  \hat u^{(q)}(x)=-\sum\limits_{j=1}^{\infty} \frac{\E^{-\zeta_j x}}{\psi'(-\zeta_j)}, \;\;\; x \ge 0.
\end{equation}
 \item[(ii)] $\;$ If for some $q>0$ function $\hat u^{(q)}(x)$ is an infinite mixture of exponential functions with positive coefficients, then
 $X$ is a Meromorphic process.
 \end{description}  
 \end{proposition}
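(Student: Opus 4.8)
The plan is to run both implications off Proposition~\ref{uq_laplace_transform} together with the explicit form \eqref{def_Fqz} of $F^{(q)}(z)$, the Laplace transform of $\hat u^{(q)}$ on $\re(z)>0$; everything then reduces to the analytic structure of $z\mapsto(\psi(z)-q)^{-1}$, since the remaining term of \eqref{def_Fqz} is an elementary simple pole at $\Phi(q)$.

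For part (i), the starting observation is that by \eqref{eqn_psi_partial_fractions} the function $\psi$ extends to a meromorphic function on $\mathbb{C}$ whose only singularities are the simple poles at $z=-\rho_j$. Hence $(\psi(z)-q)^{-1}$ is meromorphic with simple poles precisely at the zeros of $\psi(z)-q$; for $q>0$ these are $z=\Phi(q)$ and $z=-\zeta_j$, $j\ge1$, where the $\zeta_j$ solve $\psi(-z)=q$ and obey the interlacing property \eqref{interlacing_property}, so in particular $0<\zeta_1<\rho_1<\zeta_2<\cdots$. A one line residue computation shows that the residue of $(\psi(z)-q)^{-1}$ at $-\zeta_j$ equals $1/\psi'(-\zeta_j)$, while its residue at $\Phi(q)$ equals $1/\psi'(\Phi(q))$, which is exactly cancelled by the first term of $F^{(q)}$, so $F^{(q)}$ is holomorphic on the closed half-plane $\re(z)\ge0$ and meromorphic with only the poles $-\zeta_j$. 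I would then establish the Mittag--Leffler expansion
\[
F^{(q)}(z)=-\sum_{j\ge1}\frac{1}{\psi'(-\zeta_j)}\,\frac{1}{z+\zeta_j}
\]
by integrating $(\psi(w)-q)^{-1}(w-z)^{-1}$ around a sequence of expanding circles whose radii stay bounded away from the poles $\rho_j$, and controlling the contour integrals via growth estimates on $\psi$ read off from \eqref{eqn_psi_partial_fractions}; this is the step that carries the analytic weight. Inverting term by term through $1/(z+\zeta_j)=\int_0^\infty\E^{-zx}\E^{-\zeta_j x}\,\D x$ produces \eqref{uqx_meromorphic}, and positivity of the coefficients $-1/\psi'(-\zeta_j)$ follows by inspecting the graph of the real function $\psi$ on each interval between consecutive poles: there $\psi$ runs monotonically from one signed infinity to the other and crosses the level $q$ exactly once, at $-\zeta_j$, necessarily with $\psi'(-\zeta_j)<0$.

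For the converse, part (ii), suppose that for some $q>0$ we have $\hat u^{(q)}(x)=\sum_{j\ge1}b_j\E^{-\zeta_j x}$ with all $b_j>0$ and infinitely many terms, where without loss of generality $\zeta_j\uparrow\infty$. Taking Laplace transforms and invoking Proposition~\ref{uq_laplace_transform} together with \eqref{def_Fqz} again yields
\[
\frac{1}{\psi(z)-q}=\frac{1}{\psi'(\Phi(q))(z-\Phi(q))}-\sum_{j\ge1}\frac{b_j}{z+\zeta_j},
\]
so the right-hand side is a meromorphic function all of whose poles are simple and real (at $\Phi(q)$ and at the $-\zeta_j$); consequently $\psi(z)-q$, hence $\psi$, extends to a meromorphic function on $\mathbb{C}$. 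The poles of $\psi$ are the zeros of the displayed right-hand side, and a sign and monotonicity analysis of that real meromorphic function shows these zeros are all real, negative and simple, with exactly one, say $-\rho_j$, in each gap $(-\zeta_{j+1},-\zeta_j)$. It then remains to read off the principal part of $\psi$ at each $-\rho_j$ and to check, using the L\'evy--Khintchine representation \eqref{laplaceLK} of $\psi$, that these principal parts force $\Pi$ to be absolutely continuous on $(-\infty,0)$ with density $\sum_j a_j\E^{\rho_j x}$ for suitable $a_j>0$, i.e.\ that $X$ is Meromorphic in the sense of \eqref{def_Levy_measure}; here the equivalence between meromorphy of $\psi$ with negative real poles and $X$ being a Meromorphic process may be quoted from \cite{meromorphic, Kuz_theta}. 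I expect this last identification, rather than the complex analysis, to be the delicate point, since one must exclude meromorphic Laplace exponents whose pole structure is incompatible with a genuine L\'evy density of the prescribed exponential-mixture form.
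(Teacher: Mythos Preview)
Your outline is essentially the approach the paper itself sketches: it proves (i) via Proposition~\ref{uq_laplace_transform} together with ``standard analytical techniques'' (i.e.\ the Mittag--Leffler/residue expansion of $(\psi(z)-q)^{-1}$, with details deferred to \cite{meromorphic,Kuz_Morales}), and (ii) via the method of Theorem~1 in \cite{Kuz_theta}. The paper gives no details beyond these references, so your write-up is in fact more explicit than the text.

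One caveat on your positivity argument in (i): the claim that $\psi$ ``runs monotonically from one signed infinity to the other'' on each inter-pole interval is not true in general --- the linear term $\mu z$ can create a local extremum on $(-\rho_1,0)$, for instance. What is true, and sufficient, is that on each such interval $\psi$ starts at $+\infty$ and ends at a value $\le 0$ (namely $0$ on $(-\rho_1,0)$ and $-\infty$ on $(-\rho_{j+1},-\rho_j)$); combined with the interlacing property \eqref{interlacing_property}, which guarantees \emph{exactly one} crossing of the level $q>0$, this forces that crossing to occur on a decreasing branch, hence $\psi'(-\zeta_j)<0$. Alternatively, positivity is obtained in \cite{meromorphic} through the explicit Wiener--Hopf factorisation.
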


The first statement of the above proposition can be proved using Proposition \ref{uq_laplace_transform} and standard analytical techniques. See for example Corollary 2 and Remark 2  in \cite{meromorphic} as well as 
\cite{Kuz_Morales}. The second statement can be established using the same technique as in the proof of Theorem 1 in 
\cite{Kuz_theta}. In both cases  we leave all the details to the reader.

Proposition \ref{prop_meromorphic} shows that we can compute the potential density and the scale function very easily, provided that we know $\zeta_j$ and $\psi'(-\zeta_j)$. There seems little hope to find $\zeta_j$ explicitly for a general $q\geq 0$ and hence 
these quantities have to be computed numerically, which in turn requires multiple evaluations of $\psi(z)$. 
Computing $\psi(z)$ with the help of the partial fraction decomposition (\ref{eqn_psi_partial_fractions}) is not the best way to do it, as in general the series will converge rather slowly. 
Therefore it is important to find examples of Meromorphic processes for which $\psi(z)$ can be computed explicitly. Below we present several such examples,
the details can be found in \cite{Kuz_beta, Kuz_theta}.

\begin{description}[(iii)]
 \item[(i)]    $\;$$\theta$-process with parameter $\lambda\in\{3/2,5/2\}$
 \begin{eqnarray}
\nonumber
 \psi(z)=\frac{1}{2} \sigma^2 z^2 +\mu z &+& c(-1)^{\lambda-1/2} \left(\alpha+z/\beta \right)^{\lambda-1} 
 \coth\left( \pi \sqrt{\alpha+z/\beta} \right) \\  &-& c (-1)^{\lambda-1/2}\alpha^{\lambda-1} 
 \coth\left( \pi \sqrt{\alpha} \right) \;.\label{eq:theta_2}
 \end{eqnarray}
 \item[(ii)] $\;$ $\beta$-process with parameter $\lambda \in (1,2) \cup (2,3)$
 \begin{equation}
 \psi(z)=\frac{1}{2} \sigma^2 z^2 +\mu z + c {\textnormal{B}}(1+\alpha+z/\beta,1-\lambda)-c {\textnormal{B}}(1+\alpha,1-\lambda) \;, \label{eq:beta}
 \end{equation}
 where ${\textnormal{B}}(x,y)=\Gamma(x)\Gamma(y)/\Gamma(x+y)$ is the Beta function.
\end{description}

The admissible set of parameters is  $\sigma\ge 0$, $\mu \in \r$, $c>0$, $\alpha>0$ and $\beta>0$. 
The parameters $a_j$ and $\rho_j$, which define the L\'evy measure via (\ref{def_Levy_measure}), are given as follows (see \cite{Kuz_theta,Kuz_beta}): 
in the case of $\theta$-process we have
\begin{equation}\label{aj_rhoj_theta}
a_j=\frac{2}{\pi} c \beta j^{2\lambda-1} , \;\;\; \rho_j=\beta(\alpha+j^2)\;,
\end{equation}
and in the case of $\beta$-process 
 \begin{equation}\label{aj_rhoj_beta}
 a_j=c\beta \binom{j+\lambda-2}{j-1}, \;\;\; \rho_j=\beta(\alpha+j)\;.
\end{equation}
  In the case of $\beta$-process we also have an explicit formula for the density of the L\'evy measure,
 \begin{equation*} 
 \pi(x)= c\beta \frac{\E^{(1+\alpha) \beta x}}{(1-\E^{\beta x})^{\lambda}}, \;\;\; x<0\;.
 \end{equation*}

Moreover, it can be shown  
that the density of the L\'evy measure $\pi(x)$ satisfies  (up to a multiplicative constant)
 \begin{eqnarray*}
 \pi(x) &\sim& |x|^{-\lambda}, \;\; \textnormal{ as } \; x \to 0^- \;,\\
  \pi(x) &\sim& \E^{\beta(1+\alpha)x}, \;\; \textnormal{ as } \; x \to -\infty\;.  
 \end{eqnarray*}
In particular, we see that the L\'evy measure always has exponential tails and the rate of decay of $\pi(x)$ is controlled by parameters $\alpha$ and $\beta$.  See \cite{Kuz_theta,Kuz_beta}.
The parameter $c$ controls the overall ``intensity'' of jumps,  while $\lambda$ is responsible for the behavior of small jumps: if $\lambda \in (1,2)$ (resp. $\lambda \in (2,3)$) then the jump part of the process is of infinite activity and finite (resp. infinite) variation.

The beta family of processes is more general than the theta family, as it allows for a greater range of the parameter $\lambda$, which
gives us more flexibility in modeling the behavior of small jumps. 
However, processes in the theta family have the advantage that the Laplace exponent is given in terms of elementary functions, 
which helps to implement faster numerical algorithms.

Computing the scale function for Meromorphic processes is very simple. 
The first step is to compute the values of $\zeta_j$, which are defined as the solutions 
to $\psi(-z)=q$. The interlacing property (\ref{interlacing_property}) gives us left/right bounds for each $\zeta_j$ (recall that $\rho_j$ are known explicitly), thus
we know that on each interval $(\rho_j, \rho_{j+1})$ there is a unique solution to $\psi(-z)=q$, and this solution can be found very efficiently 
using bisection/Newton's method. 

There is one additional trick that can greatly reduce the computation time for this first step. We will explain the main idea on the example of a $\beta$-process. 
Formula  (\ref{aj_rhoj_beta}) shows that the spacing between the poles $\rho_j$ is constant and is equal to $\beta$. 
This fact and the behavior of $\psi(z)$ as $\re(z) \to \infty$ implies that for $j$ large, the difference $\zeta_{j+1}-\zeta_j$ would also be very close to $\beta$. Therefore, once we have computed $\zeta_j$ for a sufficiently large $j$, we can use $\zeta_j+\beta$ as a starting point for our search for $\zeta_{j+1}$. In practice, 
starting Newton's method from this point gives the required accuracy in just one or two iterations. A corresponding strategy can be 
developed for  the $\theta$-processes: one should use the fact that $\sqrt{\rho_j -\alpha \beta }$ have constant spacing equal to $\sqrt{\beta}$. 

Once we have precomputed the values of $\zeta_j$, we compute the coefficients $\psi'(-\zeta_j)$, which is an easy task since we have 
an explicit formula
for $\psi'(z)$. Now we can evaluate $\hat u^{(q)}(x)$ by truncating the infinite 
series in (\ref{uqx_meromorphic}). Note that the infinite series converges exponentially fast (and much faster in the case of $\theta$-process), 
so unless $x$ is very small, one really needs just a few  terms to have  good precision. 
On the other hand, when $x$ is extremely small, it is better to compute the scale function by
using the information about $W^{(q)}(0^+)$ and $W^{(q)}{}'(0^+)$  given in  Lemmas \ref{W(0)} and \ref{Wprime} and applying interpolation. 

\section{Numerical examples}\label{sec_numerical}

In this section we present the results of several numerical experiments. Before we start discussing the details of these experiments, let us 
describe the computing environment. All the code was written in Fortran90 and compiled with the help of Intel$\textsuperscript{\textregistered}$ Fortran compiler. For the methods which require multi-precision arithmetic we have the following two options. 
The first one is the {\it quad} data type (standard on Fortran90),  which uses 128 bits to represent a real number and 
allows for the precision of approximately 32 decimal digits. 
Recall that the {\it double} data type (standard on C/C++/Fortran/Matlab and other programming languages) 
uses 64 bits and gives approximately 16 decimal digits.  The second option is to use MPFUN90 multi-precision library
developed by Bailey \cite{BaileyMPFUN90}. This is an excellent library for Fortran90, which is very 
efficient and easy to use. It allows one to perform computations with precision of
thousands of digits, though in our experiments we've never used more than two hundred digits. 
All the computations were performed on a standard 2008 laptop (Intel Core 2 Duo 2.5 GHz processor and 3 GB of RAM).

In our first numerical experiment we will compare the performance of the four methods described in Sections \ref{sec_Filon} and \ref{sec_mp_methods}
 in the case of a $\theta$-process with $\lambda=3/2$ (process with jumps of finite variation and infinite activity), whose Laplace exponent is given by (\ref{eq:theta_2}). We will consider two cases $\sigma=0$ or $\sigma=0.25$ and will fix the other parameters as follows 
\begin{equation*}
  \mu=2, \; c=1,  \; \alpha=1, \; \beta=0.5.
\end{equation*}
Moreover, {\bf everywhere in this section we fix $q=0.5$}. 

As the benchmark for this experiment, we compute the values of the scale function $W^{(q)}(x)$ for one hundred values of $x$, given by $x_i=i/20$, $i=1,2,\dots,100$. 
This part is done using the algorithm described in Section \ref{sec_numerical}.  The infinite series (\ref{uqx_meromorphic}) is truncated at $j=1000$
and the system precision is set at 200 digits (using MPFUN90 library). This guarantees that our benchmark is within 1.0e-150 of the exact value.

Next we compute the approximation $\tilde W^{(q)}(x)$ using Filon/ Gaver-Stehfest/ Euler/ Talbot algorithms for the same set of values of $x$. 
As the measure of accuracy of the approximation we will take the maximum
of the relative error
\begin{equation}
{\textnormal{relative error}}=\max\limits_{1\le i \le 100} \frac{|\tilde W^{(q)}(x_i)-W^{(q)}(x_i)|}{ W^{(q)}(x_i)}.
\end{equation}

Let us specify the parameters for the Filon's method, as described in Section \ref{sec_Filon}. 
We truncate the integral at $b=10^7$ (or $b=10^9$)  in (\ref{subdividing_domain}) and divide the domain of integration into ten sub-intervals, i.e.
$n=10$. Moreover, we set $b_j=b (j/n)^5$ for $j=1,2,\dots,n$ 
so that there are more discretization points 
 close to $u=0$. The number of discretization points per each sub-interval $[b_j,b_{j+1}]$ is fixed at  $N_x=10^4$ (or $N_x=10^5$). 
Note that Filon's method as described in Section \ref{sec_Filon} produces $N_x$ values of $W^{(q)}(x)$. 
The spacing between the points in $x$-domain is chosen at 
$\delta_x=5/N_x$. In this way the grid $x_m=m\delta_x$ covers the whole interval $[0,5]$. 
In order to perform the Fast Fourier Transform we use Intel$\textsuperscript{\textregistered}$ MKL library.

\begin{table}[!ht]
\centering
\bigskip
\begin{tabular}{|c||c|c|c|c|c|}
\hline \rule[0pt]{0pt}{11pt}  
   \;\; algorithm \;\; & \;\;\; rel. error  \;\;\;  & \;\; time (seconds) \;\; & \;\;\; $N_x$ \;\;\; & \;\;\; $M$ \;\;\;  & \;\; system precision \;\;  \\ [0.5ex] \hline \hline
   \rule[0pt]{0pt}{11pt}    Filon ($b=10^7$)  &  3.0e-12  & 0.18 & 10 000 & -- & double (64 bits)     \\ [0.5ex]\hline
   \hline 
   \rule[0pt]{0pt}{11pt}    Gaver-Stehfest \; &  9.2e-12  & 0.4 & 100 & 20 & 44 (MPFUN90)     \\ [0.5ex]\hline
   \rule[0pt]{0pt}{11pt}    Euler  &  9.0e-14  & 0.05 & 100  & 20 & quad (128 bits)     \\ [0.5ex]\hline
   \rule[0pt]{0pt}{11pt}    Talbot  &  2.1e-13  & 0.025 & 100  & 20 & quad (128 bits)     \\ [0.5ex]\hline
   \hline 
   \rule[0pt]{0pt}{11pt}    Gaver-Stehfest \; &  2.9e-21  & 1.5 & 100 & 40 & 88 (MPFUN90)     \\ [0.5ex]\hline
   \rule[0pt]{0pt}{11pt}    Euler  &  2.7e-25  & 1.5 & 100  & 40 & 40 (MPFUN90)    \\ [0.5ex]\hline
   \rule[0pt]{0pt}{11pt}    Talbot  &  2.4e-25  & 0.7 & 100  & 40 & 40 (MPFUN90)     \\ [0.5ex]\hline
   \hline 
   \rule[0pt]{0pt}{11pt}    Gaver-Stehfest \; &  2.2e-39  & 6.3  & 100  & 80 & 176 (MPFUN90)     \\ [0.5ex]\hline
   \rule[0pt]{0pt}{11pt}    Euler 	      &  2.1e-48  & 5.0  & 100  & 80 & 80 (MPFUN90)    \\ [0.5ex]\hline
   \rule[0pt]{0pt}{11pt}    Talbot  	      &  3.1e-49  & 2.5  & 100  & 80 & 80 (MPFUN90)     \\ [0.5ex]\hline
\end{tabular}
\vspace{0.2cm}
\caption{Computing $W^{(q)}(x)$ for the $\theta$-process with $\sigma=0.25$.}\label{tab_theta1}
\end{table}

The results of this numerical experiment for computing $W^{(q)}(x)$ are presented in Table \ref{tab_theta1}. We see that each of the Gaver-Stehfest/Euler/Talbot algorithms produce very accurate results. When $M=40$ we obtain more than twenty significant digits and when $M=80$ we get almost fifty significant digits. 
The computation time varies with each algorithm and also depends on parameters. 
Note that when we use the quad date type for Euler/Talbot methods with $M=20$, the computations are very  fast, on the order of one hundredth of a second. When we use MPFUN90 library the computation time increases significantly. This is due to the fact that quad is a native data type in Fortran90 and computations done with an external multi-precision library are slower. 
Next we see that Talbot algorithm is always about two times faster 
than Euler method. This is due to the fact that the finite sum in (\ref{Talbot_method}) has $M$ terms while Euler method (\ref{Euler_method}) requires
summation of $2M+1$ terms. We would like to stress again that the time values presented in this table are for computing the scale function for $N_x$ different values
of $x$. For example, if one wants to compute the scale function for a {\it single} value of $x$ using Talbot method with $M=40$, it would take just {\it seven milliseconds}. 

 Filon's method also performs well, computing the scale function for $10 000$ values of $x$ with accuracy of around 12 decimal digits
 in just 0.18 seconds.
As we see from the table, the Talbot algorithm appears to be the most efficient. This is not surprising given the discussion in Section \ref{sec_Talbot} where it was pointed out that this algorithm is well suited for processes  with completely monotone jumps (see also results by Albrecher, Avram and Kortschak \cite{Albrecher} on ruin probabilities for completely monotone claim distributions).

\begin{table}[!ht]
\centering
\bigskip
\begin{tabular}{|c||c|c|c|c|c|}
\hline \rule[0pt]{0pt}{11pt}  
   \;\; algorithm \;\; & \;\;\; rel. error  \;\;\;  & \;\; time (seconds) \;\; & \;\;\; $N_x$ \;\;\; & \;\;\; $M$ \;\;\;  & \;\; system precision \;\;  \\ [0.5ex] \hline \hline
   \rule[0pt]{0pt}{11pt}    Filon ($b=10^9$)  &  8.0e-5  & 2.9 & $10^5$ & -- & double (64 bits)     \\ [0.5ex]\hline
   \hline 
   \rule[0pt]{0pt}{11pt}    Gaver-Stehfest \; &  4.6e-12  & 0.45 & 100 & 20 & 44 (MPFUN90)     \\ [0.5ex]\hline
   \rule[0pt]{0pt}{11pt}    Euler  &  1.7e-13  & 0.05 & 100  & 20 & quad (128 bits)     \\ [0.5ex]\hline
   \rule[0pt]{0pt}{11pt}    Talbot  &  1.5e-12  & 0.023 & 100  & 20 & quad (128 bits)     \\ [0.5ex]\hline
   \hline 
   \rule[0pt]{0pt}{11pt}    Gaver-Stehfest \; &  7.6e-21  & 1.5 & 100 & 40 & 88 (MPFUN90)     \\ [0.5ex]\hline
   \rule[0pt]{0pt}{11pt}    Euler  &  4.3e-25  & 1.4 & 100  & 40 & 40 (MPFUN90)    \\ [0.5ex]\hline
   \rule[0pt]{0pt}{11pt}    Talbot  &  2.9e-24  & 0.71 & 100  & 40 & 40 (MPFUN90)     \\ [0.5ex]\hline
   \hline 
   \rule[0pt]{0pt}{11pt}    Gaver-Stehfest \; &  1.7e-38  & 6.3  & 100  & 80 & 176 (MPFUN90)     \\ [0.5ex]\hline
   \rule[0pt]{0pt}{11pt}    Euler 	      &  2.8e-48  & 4.9  & 100  & 80 & 80 (MPFUN90)    \\ [0.5ex]\hline
   \rule[0pt]{0pt}{11pt}    Talbot  	      &  9.1e-48  & 2.4  & 100  & 80 & 80 (MPFUN90)     \\ [0.5ex]\hline
\end{tabular}
\vspace{0.2cm}
\caption{Computing $W^{(q)}{}'(x)$ for the $\theta$-process with $\sigma=0$.}\label{tab_theta2}
\end{table}

In Table \ref{tab_theta2} we present the results of computing the first derivative of the scale function. 
The results are very similar to those presented in 
Table \ref{tab_theta1}, the major difference now is that Filon's method is not producing a very good accuracy. 
This happens because the integrand in 
(\ref{eq_cosine_transform}) decays very slowly (see Proposition \ref{prop_Laplace_uq_vq}), 
and even though we have increased $b$ and $N_x$, we still  get only five significant digits.

For our second numerical experiment we will consider a process with jumps of rational transform, which are not completely monotone. 
We define the density of the L\'evy measure as follows
\begin{equation}
 \pi(x) = {\mathbf 1}_{\{x<0\}} \E^{-x} (1+\cos(ax)).
\end{equation}
Note that this is an example of a process which has jumps of rational transform but whose distribution does not belong to the phase-type class. Recall that any distribution from the phase-type class can be characterized as the time to absorption  in a finite state Markov chain with one absorbing state when the chain is started from a particular state.
One can easily see that $\pi(-x)$ can not be the density of such an absorption time, since it is equal to zero for $x=(2k+1)\pi/(a)$. 
The Laplace exponent of this process can be computed using formula (\ref{rational_Laplace_exponent})
\begin{equation*}
 \psi(z)=\frac12 \sigma^2 z^2 + \mu z + \frac{1}{z+1} + \frac{z+1}{(z+1)^2+a^2}-1 - \frac{1}{1+a^2}.
\end{equation*}
We fix the parameters at
\begin{equation*}
\sigma=0.25, \; \mu=2, \; a=4.
\end{equation*}
For this set of parameters we have $\Phi(q) \sim 0.37519$ (recall that we have fixed $q=0.5$ throughout this section) and all the roots of $\psi(-z)=q$ are simple
and are located at 
\begin{equation*}
 \zeta_{1,2} \sim 0.97265 \pm 4.0518\i, \; \zeta_3 \sim 0.64448, \;\;\; \zeta_4 \sim 64.7854
\end{equation*}

\begin{table}[!ht]
\centering
\bigskip
\begin{tabular}{|c||c|c|c|c|c|}
\hline \rule[0pt]{0pt}{11pt}  
   \;\; algorithm \;\; & \;\;\; rel. error  \;\;\;  & \;\; time (seconds) \;\; & \;\;\; $N_x$ \;\;\; & \;\;\; $M$ \;\;\;  & \;\; system precision \;\;  \\ [0.5ex] \hline \hline
   \rule[0pt]{0pt}{11pt}    Filon ($b=10^7$)  &  1.2e-11  & 0.14 & 10 000 & -- & double (64 bits)     \\ [0.5ex]\hline
   \hline 
   \rule[0pt]{0pt}{11pt}    Gaver-Stehfest \; &  1.2e-5  & 0.06   & 100  & 20 & 44 (MPFUN90)     \\ [0.5ex]\hline
   \rule[0pt]{0pt}{11pt}    Euler  	      &  9.1e-14 & 0.048  & 100  & 20 & quad (128 bits)     \\ [0.5ex]\hline
   \rule[0pt]{0pt}{11pt}    Talbot  	      &  9.2e-5  & 0.022   & 100  & 20 & quad (128 bits)     \\ [0.5ex]\hline
   \hline 
   \rule[0pt]{0pt}{11pt}    Gaver-Stehfest \; &  5.5e-10  & 0.23 & 100  & 40 & 88 (MPFUN90)     \\ [0.5ex]\hline
   \rule[0pt]{0pt}{11pt}    Euler  	      &  2.6e-25  & 0.44 & 100  & 40 & 40 (MPFUN90)    \\ [0.5ex]\hline
   \rule[0pt]{0pt}{11pt}    Talbot  	      &  2.8e-13  & 0.22 & 100  & 40 & 40 (MPFUN90)     \\ [0.5ex]\hline
   \hline 
   \rule[0pt]{0pt}{11pt}    Gaver-Stehfest \; &  1.5e-27  & 1.2   & 100  & 80 & 176 (MPFUN90)     \\ [0.5ex]\hline
   \rule[0pt]{0pt}{11pt}    Euler 	      &  2.0e-48  & 1.2   & 100  & 80 & 80 (MPFUN90)    \\ [0.5ex]\hline
   \rule[0pt]{0pt}{11pt}    Talbot  	      &  2.9e-49  & 0.6   & 100  & 80 & 80 (MPFUN90)     \\ [0.5ex]\hline
\end{tabular}
\vspace{0.2cm}
\caption{Computing $W^{(q)}(x)$ for the process with jumps of rational transform.}\label{tab_rational}
\end{table}

Once we have these roots 
 the potential density $\hat u^{(q)}(x)$ can be computed from  (\ref{uqx_rational_multiplicity1}) and the scale function
using relation (\ref{eqn_Wq_uq}). The parameters for Filon's method are the same as in our previous experiment with $\theta$-process. The results are presented in 
Table \ref{tab_rational} and are seen to be  similar to those of the  $\theta$-process, with the exception  that the Talbot method is not 
performing as well for small values of $M$. This is most likely due to the fact that function $F^{(q)}(z)$ has non-real singularities in the half-plane $\re(z)<0$
 and,  as  discussed in Section \ref{sec_Talbot}, this may cause problems for the Talbot method. 
However, when $M=80$, the Talbot method is performing very well just as it did for the case of the  $\theta$-process.

For our final numerical experiment, we consider a spectrally-negative L\'evy process with the L\'evy measure  defined by
\begin{eqnarray}\label{def_Pi_example}
 \Pi(\d x)&=&c_1 {\mathbf 1}_{\{x<0\}}\frac{\E^{\lambda_1 x}}{|x|^{1+\alpha_1}}\d x+
	c_2  {\mathbf 1}_{\{x<0\}}\frac{1}{|x|^{1+\alpha_2}}\d x \\ \nonumber
  &+& c_3 \delta_{-a_3}(\d x)+ c_4   {\mathbf 1}_{\{x<-a_4\}} \E^{x} \d x.
\end{eqnarray}
Here the admissible range of parameters is 
$c_i \ge 0$, $\lambda_i>0$, $\alpha_1 \in (-\infty,2)\setminus \{ 0, 1\}$, $\alpha_2 \in (0,2)\setminus \{ 1\}$ and $a_i>0$. 
Using the L\'evy-Khintchine formula and standard results on tempered stable processes (see proposition 4.2 in \cite{Cont}) we find that the Laplace exponent of $X$ can be computed as follows  
\begin{eqnarray}\label{def_psi_example}
 \psi(z)&=& \frac{1}{2}\sigma^2 z^2 + \mu z + c_1 \Gamma(-\alpha_1) \left[ (z+\lambda_1)^{\alpha_1}-\lambda_1^{\alpha_1} - z \alpha_1 \lambda_1^{\alpha_1-1} \right] \\
 \nonumber 
 &+&  c_2 \Gamma(-\alpha_2) z^{\alpha_2} + c_3 \left[ \E^{-a_3 z}-1 \right] + c_4 \left[ \frac{\E^{-a_4 z}}{z+1} -1 \right].
\end{eqnarray}

\begin{table}[!ht]
\centering
\bigskip
\begin{tabular}{|c||c|c|c|c|c|c|c|c|c|c|c|}
\hline \rule[0pt]{0pt}{11pt}  
    & \; $\sigma$ \; & \; $\mu$\; & \;$c_1$ \;  & \; $\lambda_1$ \; & \; $\alpha_1$ \; & \; $c_2$ \;  & \; $\alpha_2$ \; & \; $c_3$ \; & \; $a_3$ \; & \; $c_4$ \; & \; $a_4$ \;   \\ [0.5ex] \hline \hline
   \rule[0pt]{0pt}{11pt}   
    Set 1 \;&  0 & 2 & 1 & 1 & 0.5 & 1 & 1.5 & 0 & -- & 0 & --     \\ [0.5ex]\hline
   \rule[0pt]{0pt}{11pt}   
    Set 2 \;&  0 & 2 & \;0.05\; & 1 & -5 & \;0.25\; & 1.5 & 0 & -- & 0 & --     \\ [0.5ex]\hline
   \rule[0pt]{0pt}{11pt}   
    Set 3 \;&  \;0.25\; & 2 & 1 & 1 & 0.5 & 0 & -- & 0 & -- & 1 & 1     \\ [0.5ex]\hline
   \rule[0pt]{0pt}{11pt}   
    Set 4 \;&  0 & 2 & 1 & 1 & 0.5 & 0 & -- & 1 & 1 & 0 & --     \\ [0.5ex]\hline
\end{tabular}
\vspace{0.2cm}
\caption{Parameter sets.}\label{tab_parameter_sets}
\end{table}

We see that the L\'evy measure (\ref{def_Pi_example}) is a mixture of (a) the L\'evy measure of a tempered stable process, 
(b) the L\'evy measure of a stable process, 
(c) an atom at $-a_3$ and (d) an exponential jump distribution shifted by $-a_4$.  Table \ref{tab_parameter_sets}  specifies parameter choices for four different sets of L\'evy processes
 which capture the following characteristics.
\begin{description}[Set 1:]
 \item[Set 1:] Jumps of infinite variation with a completely monotone L\'evy density; no Gaussian component.
 \item[Set 2:] Jumps of infinite variation which have smooth, but not completely monotone L\'evy density; no Gaussian component.
 \item[Set 3:] Jumps of infinite activity, finite variation and discontinuous L\'evy density; non-zero Gaussian component.
 \item[Set 4:]  Jumps of infinite  activity, finite variation, the L\'evy measure has an atom; no Gaussian component.
\end{description}

In order to compare the performance of the algorithms we need to have a benchmark. In this experiment we don't have any explicit formula for
the scale function, thus we have to compute the benchmark numerically. We have chosen to use Filon's method for this purpose 
as it is quite robust. For each case we have tried different parameters to ensure that we have at least 10-11 digits of accuracy. For example, for the parameter
Set 4 we fix $b=10^8$, $c=0.1$, divide the interval of integration
in (\ref{subdividing_domain}) into 
1000 subintervals and set $b_j=b (j/n)^2$ for $j=1,2,\dots,n$ and $n=1000$. Finally we fix the number of discretization points for each subinterval at 
$N=N_x=5 \times 10^6$. 

As in our previous numerical experiment, we compute the scale function for one hundred values of $x$, given by $x_i=i/20$, $i=1,2,\dots,100$. The relative
errors presented below are maximum relative errors over this range of $x$-values.

\begin{table}[!ht]
\centering
\bigskip
\begin{tabular}{|c||c|c|c|c|c|}
\hline \rule[0pt]{0pt}{11pt}  
   \;\; algorithm \;\; & \;\;\; rel. error  \;\;\;  & \;\; time (seconds) \;\; & \;\;\; $N_x$ \;\;\; & \;\;\; $M$ \;\;\;  & \;\; system precision \;\;  \\ [0.5ex] \hline \hline
   \rule[0pt]{0pt}{11pt}    Filon ($b=10^7$)  &  2.6e-10  & 0.46 & 10 000 & -- & double (64 bits)     \\ [0.5ex]\hline
   \hline 
   \rule[0pt]{0pt}{11pt}    Gaver-Stehfest \; &  1.2e-12  & 1.1 & 100 & 20 & 44 (MPFUN90)     \\ [0.5ex]\hline
   \rule[0pt]{0pt}{11pt}    Euler  	      &  1.3e-12  & 0.05 & 100  & 20 & quad (128 bits)     \\ [0.5ex]\hline
   \rule[0pt]{0pt}{11pt}    Talbot  	      &  7.3e-13  & 0.024 & 100  & 20 & quad (128 bits)     \\ [0.5ex]\hline
\end{tabular}
\vspace{0.2cm}
\caption{Computing $W^{(q)}(x)$ for the process with parameter Set 1.}\label{tab_compl_monotone}
\end{table}

First we compute the scale function for the parameter Set 1 and present the results in Table \ref{tab_compl_monotone}. In this case we have a process
with completely monotone jumps, and we see that the situation is exactly the same as in our earlier experiment with $\theta$-process. 
All four methods provide good accuracy, although the Talbot algorithm is faster and gives better accuracy.

\begin{table}[!ht]
\centering
\bigskip
\begin{tabular}{|c||c|c|c|c|c|}
\hline \rule[0pt]{0pt}{11pt}  
   \;\; algorithm \;\; & \;\;\; rel. error  \;\;\;  & \;\; time (seconds) \;\; & \;\;\; $N_x$ \;\;\; & \;\;\; $M$ \;\;\;  & \;\; system precision \;\;  \\ [0.5ex] \hline \hline
   \rule[0pt]{0pt}{11pt}    Filon ($b=10^7$)  &  8.4e-10  & 0.47 & 10 000 & -- & double (64 bits)     \\ [0.5ex]\hline
   \hline 
   \rule[0pt]{0pt}{11pt}    Gaver-Stehfest \; &  3.9e-12  & 1.1 & 100 & 20 & 44 (MPFUN90)     \\ [0.5ex]\hline
   \rule[0pt]{0pt}{11pt}    Euler 	      &  3.9e-12  & 0.05 & 100  & 20 & quad (128 bits)     \\ [0.5ex]\hline
   \rule[0pt]{0pt}{11pt}    Talbot  	      &  3.6e-12  & 0.024 & 100  & 20 & quad (128 bits)     \\ [0.5ex]\hline
\end{tabular}
\vspace{0.2cm}
\caption{Computing $W^{(q)}(x)$ for the process with parameter Set 2.}\label{tab_smooth}
\end{table}

In Table \ref{tab_smooth} we show the results for the parameter Set 2. In this case the jumps have smooth, but not completely monotone density. However, this 
does not seem to affect the results, and the performance of all four algorithms is similar to the case of parameter Set 1.

\begin{table}[!ht]
\centering
\bigskip
\begin{tabular}{|c||c|c|c|c|c|}
\hline \rule[0pt]{0pt}{11pt}  
   \;\; algorithm \;\; & \;\;\; rel. error  \;\;\;  & \;\; time (seconds) \;\; & \;\;\; $N_x$ \;\;\; & \;\;\; $M$ \;\;\;  & \;\; system precision \;\;  \\ [0.5ex] \hline \hline
   \rule[0pt]{0pt}{11pt}    Filon ($b=10^7$)  &  4.6e-11  & 0.46 & 10 000 & -- & double (64 bits)     \\ [0.5ex]\hline
   \hline 
   \rule[0pt]{0pt}{11pt}    Gaver-Stehfest \; &  3.3e-5  & 1.1   & 100  & 20 & 44 (MPFUN90)     \\ [0.5ex]\hline
   \rule[0pt]{0pt}{11pt}    Euler  	      &  1.4e-6  & 0.06  & 100  & 20 & quad (128 bits)     \\ [0.5ex]\hline
   \rule[0pt]{0pt}{11pt}    Talbot  	      &  5.0e-3  & 0.028 & 100  & 20 & quad (128 bits)     \\ [0.5ex]\hline
   \hline 
   \rule[0pt]{0pt}{11pt}    Gaver-Stehfest \; &  7.0e-6  & 4.5 & 100  & 40 & 88 (MPFUN90)     \\ [0.5ex]\hline
   \rule[0pt]{0pt}{11pt}    Euler  	      &  9.1e-7  & 4.7 & 100  & 40 & 40 (MPFUN90)    \\ [0.5ex]\hline
   \rule[0pt]{0pt}{11pt}    Talbot  	      &  1.7e-3  & 2.3 & 100  & 40 & 40 (MPFUN90)     \\ [0.5ex]\hline
   \hline 
   \rule[0pt]{0pt}{11pt}    Gaver-Stehfest \; &  1.1e-6  & 22  & 100  & 80 & 176 (MPFUN90)     \\ [0.5ex]\hline
   \rule[0pt]{0pt}{11pt}    Euler 	      &  3.0e-6  & 18  & 100  & 80 & 80 (MPFUN90)    \\ [0.5ex]\hline
   \rule[0pt]{0pt}{11pt}    Talbot  	      &  2.9e-4  & 8.8 & 100  & 80 & 80 (MPFUN90)     \\ [0.5ex]\hline
\end{tabular}
\vspace{0.2cm}
\caption{Computing $W^{(q)}(x)$ for the process with parameter Set 3.}\label{tab_C3}
\end{table}

In Table \ref{tab_C3} we see the first example where the jump density (and therefore the scale function) is non-smooth. 
In this case we have $W^{(q)}(x) \in C^3(0,\infty)$ (see Theorem \ref{II}), and we would expect that the discontinuity in the fourth derivative of 
$W^{(q)}(x)$ shouldn't affect 
the performance of our numerical methods. Unfortunately, this is not the case. We see that,  while Filon's method is still performing well, 
the Talbot algorithm provides only 2-3 digits of precision. The Gaver-Stehfest and Euler algorithms do a slightly better job and produce 5-6 digits of precision. 
However, when we increase $M$ we do not see a significant decrease in the error, and, in fact, it is not clear whether these three methods 
would converge to the right values at all. This behaviour, specifically problems with Laplace inversion when the target function is non-smooth, is quite well-known. See for example the discussion in \cite{AbateChWhitt1999} and Section 14 of \cite{AbateWhitt1992}. 

\begin{table}[!ht]
\centering
\bigskip
\begin{tabular}{|c||c|c|c|c|c|}
\hline \rule[0pt]{0pt}{11pt}  
   \;\; algorithm \;\; & \;\;\; rel. error  \;\;\;  & \;\; time (seconds) \;\; & \;\;\; $N_x$ \;\;\; & \;\;\; $M$ \;\;\;  & \;\; system precision \;\;  \\ [0.5ex] \hline \hline
   \rule[0pt]{0pt}{11pt}    Filon ($b=10^7$)  & \; 5.3e-8 (4.6e-11) \;  & 5.1 & $10^5$ & -- & double (64 bits)     \\ [0.5ex]\hline
   \hline 
   \rule[0pt]{0pt}{11pt}    Gaver-Stehfest \; &  1.6e-3  & 1.1   & 100  & 20 & 44 (MPFUN90)     \\ [0.5ex]\hline
   \rule[0pt]{0pt}{11pt}    Euler  	      &  3.2e-4  & 0.05  & 100  & 20 & quad (128 bits)     \\ [0.5ex]\hline
   \rule[0pt]{0pt}{11pt}    Talbot  	      &  4.6e-3  & 0.024 & 100  & 20 & quad (128 bits)     \\ [0.5ex]\hline
   \hline 
   \rule[0pt]{0pt}{11pt}    Gaver-Stehfest \; &  8.0e-4  & 4.5 & 100  & 40 & 88 (MPFUN90)     \\ [0.5ex]\hline
   \rule[0pt]{0pt}{11pt}    Euler  	      &  3.2e-4  & 4.6 & 100  & 40 & 40 (MPFUN90)    \\ [0.5ex]\hline
   \rule[0pt]{0pt}{11pt}    Talbot  	      &  2.4e-3  & 2.2 & 100  & 40 & 40 (MPFUN90)     \\ [0.5ex]\hline
   \hline 
   \rule[0pt]{0pt}{11pt}    Gaver-Stehfest \; &  3.9e-4  & 22  & 100  & 80 & 176 (MPFUN90)     \\ [0.5ex]\hline
   \rule[0pt]{0pt}{11pt}    Euler 	      &  8.1e-3  & 18  & 100  & 80 & 80 (MPFUN90)    \\ [0.5ex]\hline
   \rule[0pt]{0pt}{11pt}    Talbot  	      &  1.2e-3  & 8.7 & 100  & 80 & 80 (MPFUN90)     \\ [0.5ex]\hline
\end{tabular}
\vspace{0.2cm}
\caption{Computing $W^{(q)}(x)$ for the process with parameter Set 4.}\label{tab_non_smooth}
\end{table}

Finally, in Table \ref{tab_non_smooth} we present the most extreme case of parameter Set 4. 
In this case the scale function is not even continuously
differentiable, so we should expect to see some interesting behavior of our numerical algorithms. We see that Filon's method is still performing very well. Indeed, the maximum relative error is around 5.3e-8. The error achieves its maximum at $x=1$, which is  not surprising since this is the point where
$W^{(q)}(x)$ is not differentiable (see figure \ref{fig_Set4}). If we remove this point, then the maximum relative error drops down to 4.6e-11, 
which is an extremely 
good degree of accuracy for such non-smooth function. By contrast, the other three algorithms
are all struggling to produce even four digits of precision. Again, it is not clear whether these algorithms converge 
as $M$ increases. In particular we see that the error of the Euler method appears to increase. 

We obtain an overview of what is happening in this case by inspecting the graph of $W^{(q)}{}'(x)$ (see Figure \ref{fig_Set4}). 
We see that $W^{(q)}{}'(x)$ has a jump at $x=1$, which is the 
point where the L\'evy measure has an atom. This behavior is expected due to Corollary \ref{corr-for-alexey}. 
Then we observe that while $W^{(q)}{}'(x)$ is continuous at all points $x\ne 1$, the second derivative
$W^{(q)}{}''(x)$ has a jump at $x=2$. Although there is no theoretical justification for the appearance of such a jump in this text, this observation  is consistent with the results on discontinuities in higher derivatives of  potential measures of 
 subordinators obtained by D\"{o}ring and Savov \cite{Doring_Savov}. 
 
\begin{figure}
\centering
\subfloat[][ $W^{(q)}{}'(x)$]{\label{fig_set1_2d}\includegraphics[height =4cm]{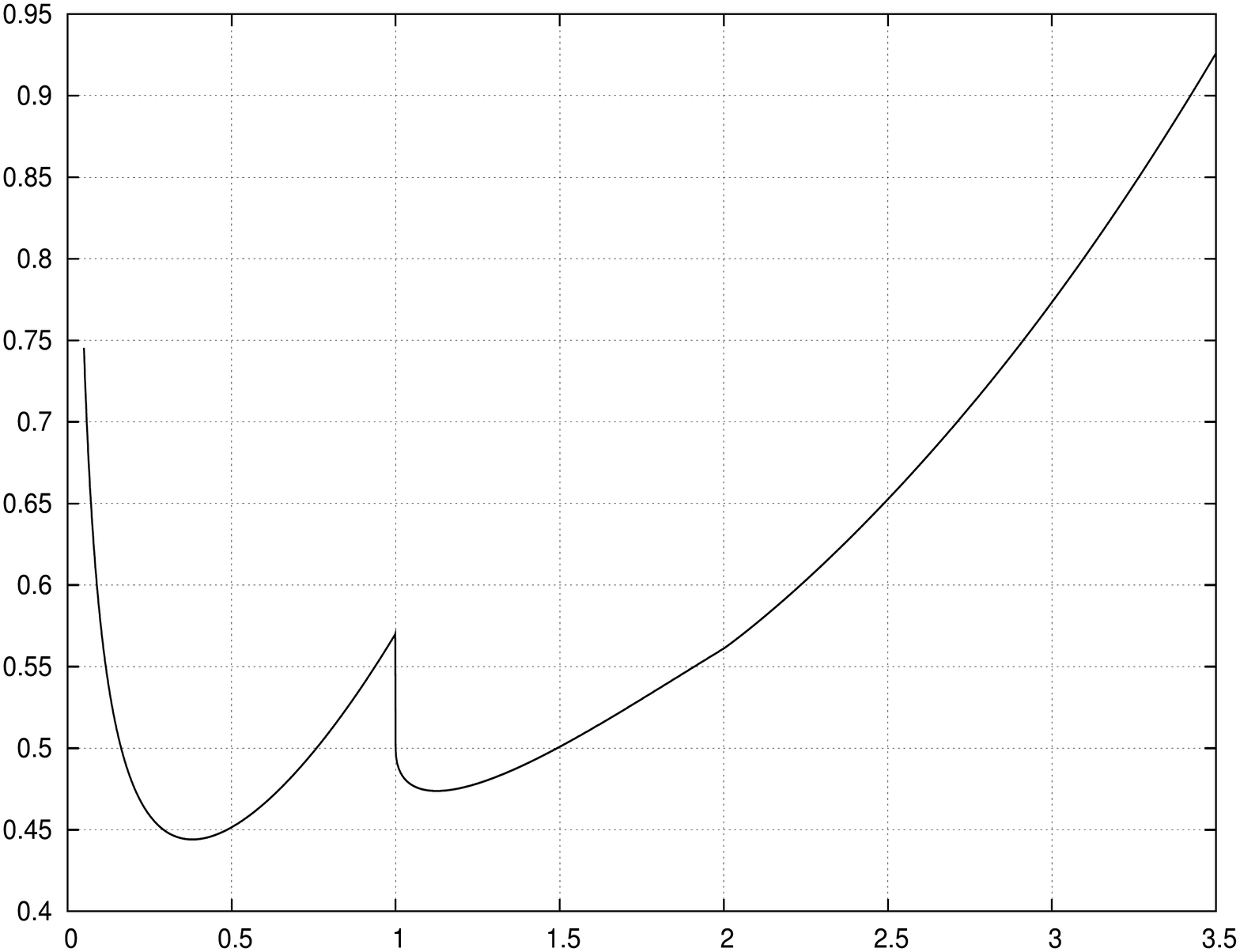}} 
\subfloat[][ $\log_{10}($relative error) with $M=20$]{\label{fig_set1_1d}\includegraphics[height =4cm]{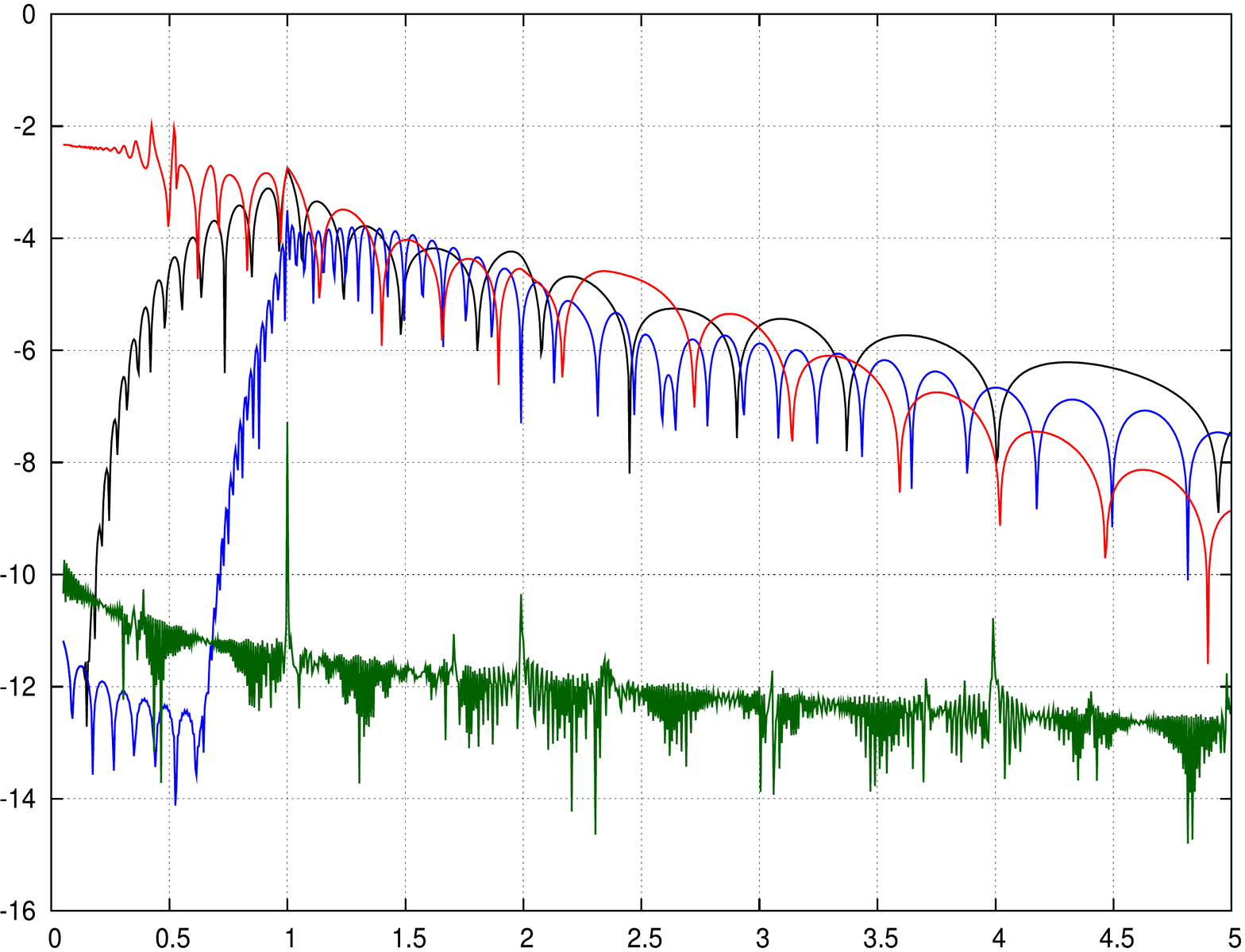}} \\
\caption{Computing $W^{(q)}(x)$ for the process with parameter Set 4. The relative errors produced by the Filon, Gaver-Stehfest, Euler and Talbot methods correspond to the green, black, blue and red curves respectively.} 
\label{fig_Set4}
\end{figure}

\section{Conclusion}\label{sec_conclusion}

In the previous section we have presented results of several numerical experiments, which allow us to compare the performance of four 
different methods for computing the scale function. 

Filon's method has the  advantage of being the most robust. We are guaranteed to have an approximation 
which is within $O(N^{-3})$ of the exact value, provided that we have chosen the cutoff $b$ to be large enough. 
Another advantage of this method is that it gives not just a single value of $W^{(q)}(x)$, but an array of $N$ values $W^{(q)}(x_i)$ (where
the $x_i$ are equally spaced)  at the computational cost of just $O(N \ln(N))$ operations. 
This means that by increasing $N$ we get a better accuracy and, at the same time, more values of $W^{(q)}(x)$, and the computational time would increase  
almost linearly in $N$. This is different from the other three methods, where the computational time needed to produce $N$ values of $W^{(q)}(x)$ is $O(NM)$. 

At the same time, there are several disadvantages relating to Filon's method. First of all, this algorithm requires quite some effort to  program. 
Secondly,  when the integrand decreases very slowly, it may be necessary to take  an extremely large value of the cutoff value  $b$. This would have to be compensated by an increase in the number of discretization points $N$ (see Table \ref{tab_theta2}). Finally, unlike the other three methods, which depend on a single parameter $M$, 
Filon's method has two important parameters, $b$ and $N$ (as well as $b_j$ and $n$ if one uses (\ref{subdividing_domain})), and it is not a priori 
clear how to choose these parameters. 
Usually, reasonable values of $b$ and $N$ can be found after some experimentation, 
which is of course time consuming.

The essential property of the other three methods, i.e the Gaver-Stehfest, Euler and Talbot algorithms, is that all of them require multi-precision arithmetic. 
 If the scale function is smooth and we need just 10-12 digits of precision, then we can use the quad (128 bits) precision, 
which is available in some programming languages, such as Fortran90. 
In all other cases one has to use specialized software which is capable of working with high-precision numbers. Whilst  very efficient multi-precision arithmetic libraries do exist, such as MPFUN90 by D.H. Bailey \cite{BaileyMPFUN90}, they are usually much slower than the native double/quad precision of any programming language.

An advantage of the Gaver-Stehfest/Euler/Talbot methods is their simplicity. 
These algorithms are very easy to program, and they all depend only on just a single integer parameter $M$. 

The performance of the the Gaver-Stehfest, Euler and Talbot algorithms depends a lot on the smoothness of the scale function $W^{(q)}(x)$. 
When $W^{(q)}(x)$ is not sufficiently smooth it might be better to use Filon's method.
 When the process has completely monotone jumps,  Talbot's algorithm is arguably preferable to the other methods as it produces excellent accuracy. Moreover,  it is almost twice as fast as  its nearest competitor, the Euler algorithm. When the process has a 
smooth, but not completely monotone, jump density the Euler algorithm is recommendable. One can still use the Talbot algorithm in this case, 
however, this should be done with a caution as it is not guaranteed to work (see the discussion in Section \ref{sec_Talbot} and the results of 
our second numerical experiment). 

Finally, the Gaver-Stehfest algorithm is somewhat special. It is the only algorithm which does not require the evaluation of  $\psi(z)$ at complex values of $z$. 
Our results show that this method is not the fastest and not the most precise, but it can still be useful in some situations. 
For example, 
this method might be the only available option when the Laplace exponent $\psi(z)$ is given by a  
 very complicated formula which is valid for real values of $z$ and, moreover, it is not clear how to analytically continue it into the complex half-plane $\re(z)>0$. 
The Gaver-Stehfest algorithm can also be helpful 
when one is using  multi-precision software which does not support operations on complex numbers.


\newpage

%

\newpage
\addcontentsline{toc}{chapter}{References}



\endgroup
\end{document}